\newtheorem{Proposition}[theorem]{Proposition}
\newtheorem{Theorem}[theorem]{Theorem}
\newtheorem{Lemma}[theorem]{Lemma}
\newtheorem{Corollary}[theorem]{Corollary}
\newtheorem{Remark}[theorem]{Remark}
\newtheorem{Definition}[theorem]{Definition}
\crefname{Corollary}{Corollary}{Corollaries}
\def\dashint{\pd@mint{-}}
\newcommand*{\pd@mint}[1]{%
  \pd@mint@l{#1}{}%
}
\newcommand*{\pd@mint@l}[2]{%
  \@ifnextchar\limits{%
    \pd@mint@l{#1}%
  }{%
    \@ifnextchar\nolimits{%
      \pd@mint@l{#1}%
    }{%
      \@ifnextchar\displaylimits{%
        \pd@mint@l{#1}%
      }{%
        \pd@mint@s{#2}{#1}%
      }%
    }%
  }%
}
\newcommand*{\pd@mint@s}[2]{%
  \@ifnextchar_{%
    \pd@mint@sub{#1}{#2}%
  }{%
    \@ifnextchar^{%
      \pd@mint@sup{#1}{#2}%
    }{%
      \pd@mint@{#1}{#2}{}{}%
    }%
  }%
}
\def\pd@mint@sub#1#2_#3{%
  \@ifnextchar^{%
    \pd@mint@sub@sup{#1}{#2}{#3}%
  }{%
    \pd@mint@{#1}{#2}{#3}{}%
  }%
}
\def\pd@mint@sup#1#2^#3{%
  \@ifnextchar_{%
    \pd@mint@sup@sub{#1}{#2}{#3}%
  }{%
    \pd@mint@{#1}{#2}{}{#3}%
  }%
}
\def\pd@mint@sub@sup#1#2#3^#4{%
  \pd@mint@{#1}{#2}{#3}{#4}%
}
\def\pd@mint@sup@sub#1#2#3_#4{%
  \pd@mint@{#1}{#2}{#4}{#3}%
}
\newcommand*{\pd@mint@}[4]{%
  \mathop{}%
  \mkern-\thinmuskip
  \mathchoice{%
    \pd@mint@@{#1}{#2}{#3}{#4}%
    \displaystyle\textstyle\scriptstyle
  }{%
    \pd@mint@@{#1}{#2}{#3}{#4}%
    \textstyle\scriptstyle\scriptstyle
  }{%
    \pd@mint@@{#1}{#2}{#3}{#4}%
    \scriptstyle\scriptscriptstyle\scriptscriptstyle
  }{%
    \pd@mint@@{#1}{#2}{#3}{#4}%
    \scriptscriptstyle\scriptscriptstyle\scriptscriptstyle
  }%
  \mkern-\thinmuskip
  \int#1%
  \ifx\\#3\\\else_{#3}\fi
  \ifx\\#4\\\else^{#4}\fi  
}
\newcommand*{\pd@mint@@}[7]{%
  \begingroup
  \sbox0{$#5\int\m@th$}%
  \sbox2{$#5\int_{}\m@th$}%
  \dimen2=\wd0 %
  \let\pd@mint@limits=#1\relax
  \ifx\pd@mint@limits\relax
  \sbox4{$#5\int_{\kern1sp}^{\kern1sp}\m@th$}%
  \ifdim\wd4>\wd2 %
  \let\pd@mint@limits=\nolimits
  \else
  \let\pd@mint@limits=\limits
  \fi
  \fi
  \ifx\pd@mint@limits\displaylimits
  \ifx#5\displaystyle
  \let\pd@mint@limits=\limits
  \fi
  \fi
  \ifx\pd@mint@limits\limits
  \sbox0{$#7#3\m@th$}%
  \sbox2{$#7#4\m@th$}%
  \ifdim\wd0>\dimen2 %
  \dimen2=\wd0 %
  \fi
  \ifdim\wd2>\dimen2 %
  \dimen2=\wd2 %
  \fi
  \fi
  \rlap{%
    $#5%
    \vcenter{%
      \hbox to\dimen2{%
        \hss
        $#6{#2}\m@th$%
        \hss
      }%
    }%
    $%
  }%
  \endgroup
}
\newcommand*\TblDgRateOne{%
  \begin{table}[H]
    \caption{Experimental order of convergence EOC$_\ell$, $\ell \in \{1,\ldots,6\}$,
             for the primal IIDG formulation, i.e., \eqref{eq:discrete_primal_DG} with $\tilde{\nabla}_{h_{\ell}}=\nabla_{h_{\ell}}$,
             with $\beta=1.01               $ and, thus, $\rho=1.0$.}
    \label{tbl:DG_rate_1.0}
    \vspace{1ex}
    \setlength\tabcolsep{8pt}
    \centering
    \begin{tabular}{|c||c|c||c||c|c|}
      \hline
      \cellcolor{lightgray}\diagbox[height=1.1\line,width=0.1275\dimexpr\linewidth]{\vspace{-0.5mm}\hspace*{-2mm}$h_\ell$}{\\[-5mm] $p$\hspace*{-2mm}}
      & \cellcolor{lightgray}1.5 & \cellcolor{lightgray}1.7 & \cellcolor{lightgray}2.0 & \cellcolor{lightgray}3.0 & \cellcolor{lightgray}4.5
      \\ \hline\hline
      \cellcolor{lightgray}$0.0834$ & $0.933$ & $0.931$ & $0.919$ & $0.955$ & $0.967$ \\ \hline
      \cellcolor{lightgray}$0.0417$ & $0.952$ & $0.955$ & $0.949$ & $0.976$ & $0.984$ \\ \hline
      \cellcolor{lightgray}$0.0208$ & $0.961$ & $0.961$ & $0.959$ & $0.979$ & $0.989$ \\ \hline
      \cellcolor{lightgray}$0.0104$ & $0.963$ & $0.963$ & $0.963$ & $0.981$ & $0.993$ \\ \hline
      \cellcolor{lightgray}$0.0052$ & $0.966$ & $0.965$ & $0.965$ & $0.981$ & $0.994$ \\ \hline
      \cellcolor{lightgray}$0.0026$ & $0.967$ & $0.967$ & $0.967$ & $0.982$ & $0.994$ \\ \hline
      \hline
      \cellcolor{lightgray}\small\textrm{Expected}
      & 1.0 & 1.0 & 1.0 & 1.0 & 1.0
      \\ \hline
    \end{tabular}
  \end{table}
}
\newcommand*\TblDgRateHalf{%
  \begin{table}[H]
    \caption{Experimental order of convergence EOC$_\ell$, $\ell \in \{1,\ldots,6\}$,
             for the primal IIDG formulation, i.e., \eqref{eq:discrete_primal_DG} with $\tilde{\nabla}_{h_{\ell}}=\nabla_{h_{\ell}}$,
             with $\beta=1.01 - \frac{1}{p} $ and, thus, $\rho=0.5$.}
    \label{tbl:DG_rate_0.5}
    \vspace{1ex}
    \setlength\tabcolsep{8pt}
    \centering
    \begin{tabular}{|c||c|c||c||c|c|}
      \hline
      \cellcolor{lightgray}\diagbox[height=1.1\line,width=0.1275\dimexpr\linewidth]{\vspace{-0.5mm}\hspace*{-2mm}$h_\ell$}{\\[-5mm] $p$\hspace*{-2mm}}
      & \cellcolor{lightgray}1.5 & \cellcolor{lightgray}1.7 & \cellcolor{lightgray}2.0 & \cellcolor{lightgray}3.0 & \cellcolor{lightgray}4.5
      \\ \hline\hline
      \cellcolor{lightgray}$0.0834$ & $0.672$ & $0.659$ & $0.662$ & $0.758$ & $0.863$ \\ \hline
      \cellcolor{lightgray}$0.0417$ & $0.631$ & $0.613$ & $0.616$ & $0.684$ & $0.818$ \\ \hline
      \cellcolor{lightgray}$0.0208$ & $0.585$ & $0.569$ & $0.573$ & $0.612$ & $0.762$ \\ \hline
      \cellcolor{lightgray}$0.0104$ & $0.553$ & $0.542$ & $0.544$ & $0.566$ & $0.667$ \\ \hline
      \cellcolor{lightgray}$0.0052$ & $0.533$ & $0.526$ & $0.528$ & $0.541$ & $0.600$ \\ \hline
      \cellcolor{lightgray}$0.0026$ & $0.522$ & $0.518$ & $0.519$ & $0.528$ & $0.562$ \\ \hline
      \hline
      \cellcolor{lightgray}\small\textrm{Expected}
      & 0.5 & 0.5 & 0.5 & 0.5 & 0.5
      \\ \hline
    \end{tabular}
  \end{table}
}
\newcommand*\TblDgRateFifth{%
  \begin{table}[H]
    \caption{Experimental order of convergence EOC$_\ell$, $\ell \in \{1,\ldots,6\}$,
             for the primal IIDG formulation, i.e., \eqref{eq:discrete_primal_DG} with $\tilde{\nabla}_{h_{\ell}}=\nabla_{h_{\ell}}$,
             with $\beta=1.01 - \frac{8}{5p}$ and, thus, $\rho=0.2$.}
    \label{tbl:DG_rate_0.2}
    \vspace{1ex}
    \setlength\tabcolsep{8pt}
    \centering
    \begin{tabular}{|c||c|c||c||c|c|}
      \hline
      \cellcolor{lightgray}\diagbox[height=1.1\line,width=0.1275\dimexpr\linewidth]{\vspace{-0.5mm}\hspace*{-2mm}$h_\ell$}{\\[-5mm] $p$\hspace*{-2mm}}
      & \cellcolor{lightgray}1.5 & \cellcolor{lightgray}1.7 & \cellcolor{lightgray}2.0 & \cellcolor{lightgray}3.0 & \cellcolor{lightgray}4.5
      \\ \hline\hline
      \cellcolor{lightgray}$0.0834$ & $0.475$ & $0.560$ & $0.355$ & $0.330$ & $0.461$ \\ \hline
      \cellcolor{lightgray}$0.0417$ & $0.332$ & $0.374$ & $0.267$ & $0.255$ & $0.315$ \\ \hline
      \cellcolor{lightgray}$0.0208$ & $0.258$ & $0.273$ & $0.231$ & $0.228$ & $0.253$ \\ \hline
      \cellcolor{lightgray}$0.0104$ & $0.227$ & $0.231$ & $0.217$ & $0.219$ & $0.232$ \\ \hline
      \cellcolor{lightgray}$0.0052$ & $0.215$ & $0.216$ & $0.212$ & $0.216$ & $0.226$ \\ \hline
      \cellcolor{lightgray}$0.0026$ & $0.211$ & $0.211$ & $0.211$ & $0.215$ & $0.224$ \\ \hline
      \hline
      \cellcolor{lightgray}\small\textrm{Expected}
      & 0.2 & 0.2 & 0.2 & 0.2 & 0.2
      \\ \hline
    \end{tabular}
  \end{table}
}
\newcommand*\TblLdgRateOne{%
  \begin{table}[H]
    \caption{Experimental order of convergence EOC$_\ell$, $\ell \in \{1,\ldots,6\}$,
             for the primal LDG formulation, i.e., \eqref{eq:discrete_primal_DG} with $\tilde{\nabla}_{h_{\ell}}=\pmb{\mathsf{\mathcal{G}}}_{h_{\ell}}^1$,
             with $\beta=1.01               $ and, thus, $\rho=1.0$.}
    \label{tbl:LDG_rate_1.0}
    \vspace{1ex}
    \setlength\tabcolsep{8pt}
    \centering
    \begin{tabular}{|c||c|c||c||c|c|}
      \hline
      \cellcolor{lightgray}\diagbox[height=1.1\line,width=0.1275\dimexpr\linewidth]{\vspace{-0.5mm}\hspace*{-2mm}$h_\ell$}{\\[-5mm] $p$\hspace*{-2mm}}
      & \cellcolor{lightgray}1.5 & \cellcolor{lightgray}1.7 & \cellcolor{lightgray}2.0 & \cellcolor{lightgray}3.0 & \cellcolor{lightgray}4.5
      \\ \hline\hline
      \cellcolor{lightgray}$0.0834$ & $0.932$ & $0.928$ & $0.910$ & $0.939$ & $0.970$ \\ \hline
      \cellcolor{lightgray}$0.0417$ & $0.951$ & $0.953$ & $0.944$ & $0.963$ & $0.972$ \\ \hline
      \cellcolor{lightgray}$0.0208$ & $0.961$ & $0.960$ & $0.956$ & $0.971$ & $0.978$ \\ \hline
      \cellcolor{lightgray}$0.0104$ & $0.963$ & $0.963$ & $0.961$ & $0.976$ & $0.985$ \\ \hline
      \cellcolor{lightgray}$0.0052$ & $0.965$ & $0.965$ & $0.964$ & $0.979$ & $0.989$ \\ \hline
      \cellcolor{lightgray}$0.0026$ & $0.967$ & $0.966$ & $0.967$ & $0.980$ & $0.991$ \\ \hline
      \hline
      \cellcolor{lightgray}\small\textrm{Expected}
      & 1.0 & 1.0 & 1.0 & 1.0 & 1.0
      \\ \hline
    \end{tabular}
  \end{table}
}
\newcommand*\TblLdgRateHalf{%
  \begin{table}[H]
    \caption{Experimental order of convergence EOC$_\ell$, $\ell \in \{1,\ldots,6\}$,
             for the primal LDG formulation, i.e., \eqref{eq:discrete_primal_DG} with $\tilde{\nabla}_{h_{\ell}}=\pmb{\mathsf{\mathcal{G}}}_{h_{\ell}}^1$,
             with $\beta=1.01 - \frac{1}{p} $ and, thus, $\rho=0.5$.}
    \label{tbl:LDG_rate_0.5}
    \vspace{1ex}
    \setlength\tabcolsep{8pt}
    \centering
    \begin{tabular}{|c||c|c||c||c|c|}
      \hline
      \cellcolor{lightgray}\diagbox[height=1.1\line,width=0.1275\dimexpr\linewidth]{\vspace{-0.5mm}\hspace*{-2mm}$h_\ell$}{\\[-5mm] $p$\hspace*{-2mm}}
      & \cellcolor{lightgray}1.5 & \cellcolor{lightgray}1.7 & \cellcolor{lightgray}2.0 & \cellcolor{lightgray}3.0 & \cellcolor{lightgray}4.5
      \\ \hline\hline
      \cellcolor{lightgray}$0.0834$ & $0.671$ & $0.659$ & $0.658$ & $0.744$ & $0.857$ \\ \hline
      \cellcolor{lightgray}$0.0417$ & $0.630$ & $0.613$ & $0.614$ & $0.675$ & $0.824$ \\ \hline
      \cellcolor{lightgray}$0.0208$ & $0.584$ & $0.569$ & $0.572$ & $0.610$ & $0.747$ \\ \hline
      \cellcolor{lightgray}$0.0104$ & $0.553$ & $0.542$ & $0.544$ & $0.566$ & $0.661$ \\ \hline
      \cellcolor{lightgray}$0.0052$ & $0.533$ & $0.526$ & $0.528$ & $0.541$ & $0.599$ \\ \hline
      \cellcolor{lightgray}$0.0026$ & $0.522$ & $0.518$ & $0.519$ & $0.528$ & $0.562$ \\ \hline
      \hline
      \cellcolor{lightgray}\small\textrm{Expected}
      & 0.5 & 0.5 & 0.5 & 0.5 & 0.5
      \\ \hline
    \end{tabular}
  \end{table}
}
\newcommand*\TblLdgRateFifth{%
  \begin{table}[H]
    \caption{Experimental order of convergence EOC$_\ell$, $\ell \in \{1,\ldots,6\}$,
             for the primal LDG formulation, i.e., \eqref{eq:discrete_primal_DG} with $\tilde{\nabla}_{h_{\ell}}=\pmb{\mathsf{\mathcal{G}}}_{h_{\ell}}^1$,
             with $\beta=1.01 - \frac{8}{5p}$ and, thus, $\rho=0.2$.}
    \label{tbl:LDG_rate_0.2}
    \vspace{1ex}
    \setlength\tabcolsep{8pt}
    \centering
    \begin{tabular}{|c||c|c||c||c|c|}
      \hline
      \cellcolor{lightgray}\diagbox[height=1.1\line,width=0.1275\dimexpr\linewidth]{\vspace{-0.5mm}\hspace*{-2mm}$h_\ell$}{\\[-5mm] $p$\hspace*{-2mm}}
      & \cellcolor{lightgray}1.5 & \cellcolor{lightgray}1.7 & \cellcolor{lightgray}2.0 & \cellcolor{lightgray}3.0 & \cellcolor{lightgray}4.5
      \\ \hline\hline
      \cellcolor{lightgray}$0.0834$ & $0.475$ & $0.557$ & $0.351$ & $0.323$ & $0.440$ \\ \hline
      \cellcolor{lightgray}$0.0417$ & $0.332$ & $0.373$ & $0.266$ & $0.254$ & $0.308$ \\ \hline
      \cellcolor{lightgray}$0.0208$ & $0.258$ & $0.273$ & $0.230$ & $0.228$ & $0.252$ \\ \hline
      \cellcolor{lightgray}$0.0104$ & $0.227$ & $0.231$ & $0.217$ & $0.219$ & $0.232$ \\ \hline
      \cellcolor{lightgray}$0.0052$ & $0.215$ & $0.216$ & $0.212$ & $0.216$ & $0.226$ \\ \hline
      \cellcolor{lightgray}$0.0026$ & $0.211$ & $0.211$ & $0.211$ & $0.215$ & $0.224$ \\ \hline
      \hline
      \cellcolor{lightgray}\small\textrm{Expected}
      & 0.2 & 0.2 & 0.2 & 0.2 & 0.2
      \\ \hline
    \end{tabular}
  \end{table}
}
\newcommand*\TblCrRateOne{%
  \begin{table}[H]
    \caption{Experimental order of convergence EOC$_\ell$, $\ell \in \{1,\ldots,6\}$,
             for the primal Crouzeix--Raviart formulation (without jump stabilisation), i.e., $\tilde{\nabla}_{h_{\ell}}=\nabla_{h_{\ell}}$ and $\alpha=0$,
             with $\beta=1.01               $ and, thus, $\rho=1.0$.}
    \label{tbl:CR_rate_1.0}
    \vspace{1ex}
    \setlength\tabcolsep{8pt}
    \centering
    \begin{tabular}{|c||c|c||c||c|c|}
      \hline
      \cellcolor{lightgray}\diagbox[height=1.1\line,width=0.1275\dimexpr\linewidth]{\vspace{-0.5mm}\hspace*{-2mm}$h_\ell$}{\\[-5mm] $p$\hspace*{-2mm}}
      & \cellcolor{lightgray}1.5 & \cellcolor{lightgray}1.7 & \cellcolor{lightgray}2.0 & \cellcolor{lightgray}3.0 & \cellcolor{lightgray}4.5
      \\ \hline\hline
      \cellcolor{lightgray}$0.0834$ & $0.770$ & $0.817$ & $0.878$ & $0.899$ & $1.061$ \\ \hline
      \cellcolor{lightgray}$0.0417$ & $0.783$ & $0.834$ & $0.903$ & $0.903$ & $0.929$ \\ \hline
      \cellcolor{lightgray}$0.0208$ & $0.835$ & $0.883$ & $0.944$ & $0.963$ & $0.995$ \\ \hline
      \cellcolor{lightgray}$0.0104$ & $0.873$ & $0.913$ & $0.961$ & $0.981$ & $1.006$ \\ \hline
      \cellcolor{lightgray}$0.0052$ & $0.900$ & $0.931$ & $0.966$ & $0.986$ & $1.004$ \\ \hline
      \cellcolor{lightgray}$0.0026$ & $0.918$ & $0.942$ & $0.968$ & $0.986$ & $1.000$ \\ \hline
      \hline
      \cellcolor{lightgray}\small\textrm{Expected}
      & 1.0 & 1.0 & 1.0 & 1.0 & 1.0
      \\ \hline
    \end{tabular}
  \end{table}
}
\newcommand*\TblCrRateHalf{%
  \begin{table}[H]
    \caption{Experimental order of convergence EOC$_\ell$, $\ell \in \{1,\ldots,6\}$,
             for the primal Crouzeix--Raviart formulation (without jump stabilisation), i.e., $\tilde{\nabla}_{h_{\ell}}=\nabla_{h_{\ell}}$ and $\alpha=0$,
             with $\beta=1.01 - \frac{1}{p} $ and, thus, $\rho=0.5$.}
    \label{tbl:CR_rate_0.5}
    \vspace{1ex}
    \setlength\tabcolsep{8pt}
    \centering
    \begin{tabular}{|c||c|c||c||c|c|}
      \hline
      \cellcolor{lightgray}\diagbox[height=1.1\line,width=0.1275\dimexpr\linewidth]{\vspace{-0.5mm}\hspace*{-2mm}$h_\ell$}{\\[-5mm] $p$\hspace*{-2mm}}
      & \cellcolor{lightgray}1.5 & \cellcolor{lightgray}1.7 & \cellcolor{lightgray}2.0 & \cellcolor{lightgray}3.0 & \cellcolor{lightgray}4.5
      \\ \hline\hline
      \cellcolor{lightgray}$0.0834$ & $0.517$ & $0.546$ & $0.599$ & $0.643$ & $0.910$ \\ \hline
      \cellcolor{lightgray}$0.0417$ & $0.550$ & $0.574$ & $0.607$ & $0.643$ & $0.675$ \\ \hline
      \cellcolor{lightgray}$0.0208$ & $0.554$ & $0.565$ & $0.579$ & $0.643$ & $0.697$ \\ \hline
      \cellcolor{lightgray}$0.0104$ & $0.540$ & $0.545$ & $0.550$ & $0.614$ & $0.679$ \\ \hline
      \cellcolor{lightgray}$0.0052$ & $0.525$ & $0.528$ & $0.531$ & $0.583$ & $0.651$ \\ \hline
      \cellcolor{lightgray}$0.0026$ & $0.515$ & $0.518$ & $0.521$ & $0.560$ & $0.625$ \\ \hline
      \hline
      \cellcolor{lightgray}\small\textrm{Expected}
      & 0.5 & 0.5 & 0.5 & 0.5 & 0.5
      \\ \hline
    \end{tabular}
  \end{table}
}
\newcommand*\TblCrRateFifth{%
  \begin{table}[H]
    \caption{Experimental order of convergence EOC$_\ell$, $\ell \in \{1,\ldots,6\}$,
             for the primal Crouzeix--Raviart formulation (without jump stabilisation), i.e., $\tilde{\nabla}_{h_{\ell}}=\nabla_{h_{\ell}}$ and $\alpha=0$,
             with $\beta=1.01 - \frac{8}{5p}$ and, thus, $\rho=0.2$.}
    \label{tbl:CR_rate_0.2}
    \vspace{1ex}
    \setlength\tabcolsep{8pt}
    \centering
    \begin{tabular}{|c||c|c||c||c|c|}
      \hline
      \cellcolor{lightgray}\diagbox[height=1.1\line,width=0.1275\dimexpr\linewidth]{\vspace{-0.5mm}\hspace*{-2mm}$h_\ell$}{\\[-5mm] $p$\hspace*{-2mm}}
      & \cellcolor{lightgray}1.5 & \cellcolor{lightgray}1.7 & \cellcolor{lightgray}2.0 & \cellcolor{lightgray}3.0 & \cellcolor{lightgray}4.5
      \\ \hline\hline
      \cellcolor{lightgray}$0.0834$ & $0.692$ & $0.660$ & $0.358$ & $0.347$ & $0.469$ \\ \hline
      \cellcolor{lightgray}$0.0417$ & $0.459$ & $0.443$ & $0.279$ & $0.293$ & $0.344$ \\ \hline
      \cellcolor{lightgray}$0.0208$ & $0.365$ & $0.341$ & $0.240$ & $0.267$ & $0.326$ \\ \hline
      \cellcolor{lightgray}$0.0104$ & $0.286$ & $0.267$ & $0.221$ & $0.243$ & $0.298$ \\ \hline
      \cellcolor{lightgray}$0.0052$ & $0.240$ & $0.230$ & $0.214$ & $0.229$ & $0.274$ \\ \hline
      \cellcolor{lightgray}$0.0026$ & $0.219$ & $0.216$ & $0.211$ & $0.222$ & $0.256$ \\ \hline
      \hline
      \cellcolor{lightgray}\small\textrm{Expected}
      & 0.2 & 0.2 & 0.2 & 0.2 & 0.2
      \\ \hline
    \end{tabular}
  \end{table}
}
\newcommand*\TblMixedDgRateOne{%
  \begin{table}[H]
    \caption{Experimental order of convergence EOC$_\ell$, $\ell \in \{1,\ldots,6\}$,
             for the mixed LDG formulation,
             with $\beta=1.01               $ and, thus, $\rho=1.0$.}
    \label{tbl:mixed_DG_rate_1.0}
    \vspace{1ex}
    \setlength\tabcolsep{8pt}
    \centering
    \begin{tabular}{|c||c|c||c||c|c|}
      \hline
      \cellcolor{lightgray}\diagbox[height=1.1\line,width=0.1275\dimexpr\linewidth]{\vspace{-0.5mm}\hspace*{-2mm}$h_\ell$}{\\[-5mm] $p$\hspace*{-2mm}}
      & \cellcolor{lightgray}1.5 & \cellcolor{lightgray}1.7 & \cellcolor{lightgray}2.0 & \cellcolor{lightgray}3.0 & \cellcolor{lightgray}4.5
      \\ \hline\hline
      \cellcolor{lightgray}$0.0834$ & $0.935$ & $0.932$ & $0.916$ & $0.932$ & $0.935$ \\ \hline
      \cellcolor{lightgray}$0.0417$ & $0.949$ & $0.951$ & $0.945$ & $0.965$ & $0.976$ \\ \hline
      \cellcolor{lightgray}$0.0208$ & $0.958$ & $0.958$ & $0.956$ & $0.973$ & $0.986$ \\ \hline
      \cellcolor{lightgray}$0.0104$ & $0.959$ & $0.961$ & $0.961$ & $0.976$ & $0.991$ \\ \hline
      \cellcolor{lightgray}$0.0052$ & $0.952$ & $0.964$ & $0.964$ & $0.978$ & $0.992$ \\ \hline
      \cellcolor{lightgray}$0.0026$ & $0.920$ & $0.965$ & $0.966$ & $0.979$ & $0.991$ \\ \hline
      \hline
      \cellcolor{lightgray}\small\textrm{Expected}
      & 1.0 & 1.0 & 1.0 & 1.0 & 1.0
      \\ \hline
    \end{tabular}
  \end{table}
}
\newcommand*\TblMixedDgRateHalf{%
  \begin{table}[H]
    \caption{Experimental order of convergence EOC$_\ell$, $\ell \in \{1,\ldots,6\}$,
             for the mixed LDG formulation,
             with $\beta=1.01 - \frac{1}{p} $ and, thus, $\rho=0.5$.}
    \label{tbl:mixed_DG_rate_0.5}
    \vspace{1ex}
    \setlength\tabcolsep{8pt}
    \centering
    \begin{tabular}{|c||c|c||c||c|c|}
      \hline
      \cellcolor{lightgray}\diagbox[height=1.1\line,width=0.1275\dimexpr\linewidth]{\vspace{-0.5mm}\hspace*{-2mm}$h_\ell$}{\\[-5mm] $p$\hspace*{-2mm}}
      & \cellcolor{lightgray}1.5 & \cellcolor{lightgray}1.7 & \cellcolor{lightgray}2.0 & \cellcolor{lightgray}3.0 & \cellcolor{lightgray}4.5
      \\ \hline\hline
      \cellcolor{lightgray}$0.0834$ & $0.691$ & $0.680$ & $0.673$ & $0.744$ & $0.838$ \\ \hline
      \cellcolor{lightgray}$0.0417$ & $0.640$ & $0.622$ & $0.620$ & $0.677$ & $0.826$ \\ \hline
      \cellcolor{lightgray}$0.0208$ & $0.591$ & $0.575$ & $0.575$ & $0.612$ & $0.745$ \\ \hline
      \cellcolor{lightgray}$0.0104$ & $0.556$ & $0.545$ & $0.546$ & $0.568$ & $0.660$ \\ \hline
      \cellcolor{lightgray}$0.0052$ & $0.535$ & $0.528$ & $0.529$ & $0.543$ & $0.600$ \\ \hline
      \cellcolor{lightgray}$0.0026$ & $0.522$ & $0.519$ & $0.520$ & $0.529$ & $0.564$ \\ \hline
      \hline
      \cellcolor{lightgray}\small\textrm{Expected}
      & 0.5 & 0.5 & 0.5 & 0.5 & 0.5
      \\ \hline
    \end{tabular}
  \end{table}
}
\newcommand*\TblMixedDgRateFifth{%
  \begin{table}[H]
    \caption{Experimental order of convergence EOC$_\ell$, $\ell \in \{1,\ldots,6\}$,
             for the mixed LDG formulation,
             with $\beta=1.01 - \frac{8}{5p}$ and, thus, $\rho=0.2$.}
    \label{tbl:mixed_DG_rate_0.2}
    \vspace{1ex}
    \setlength\tabcolsep{8pt}
    \centering
    \begin{tabular}{|c||c|c||c||c|c|}
      \hline
      \cellcolor{lightgray}\diagbox[height=1.1\line,width=0.1275\dimexpr\linewidth]{\vspace{-0.5mm}\hspace*{-2mm}$h_\ell$}{\\[-5mm] $p$\hspace*{-2mm}}
      & \cellcolor{lightgray}1.5 & \cellcolor{lightgray}1.7 & \cellcolor{lightgray}2.0 & \cellcolor{lightgray}3.0 & \cellcolor{lightgray}4.5
      \\ \hline\hline
      \cellcolor{lightgray}$0.0834$ & $0.583$ & $0.655$ & $0.406$ & $0.361$ & $0.489$ \\ \hline
      \cellcolor{lightgray}$0.0417$ & $0.403$ & $0.450$ & $0.291$ & $0.271$ & $0.340$ \\ \hline
      \cellcolor{lightgray}$0.0208$ & $0.290$ & $0.312$ & $0.240$ & $0.234$ & $0.266$ \\ \hline
      \cellcolor{lightgray}$0.0104$ & $0.240$ & $0.247$ & $0.220$ & $0.221$ & $0.237$ \\ \hline
      \cellcolor{lightgray}$0.0052$ & $0.220$ & $0.222$ & $0.213$ & $0.217$ & $0.228$ \\ \hline
      \cellcolor{lightgray}$0.0026$ & $0.212$ & $0.213$ & $0.211$ & $0.216$ & $0.224$ \\ \hline
      \hline
      \cellcolor{lightgray}\small\textrm{Expected}
      & 0.2 & 0.2 & 0.2 & 0.2 & 0.2
      \\ \hline
    \end{tabular}
  \end{table}
}
\def\mathp{$p\mkern 1.5mu$}
\title{%
  Quasi-optimal Discontinuous Galerkin discretisations of the \mathp-Dirichlet problem\thanks{%
    Submitted to the editors November~27, 2023.
    \funding{%
      AK acknowledges support from the Deutsche Forschungsgemeinschaft  (DFG, German Research
      Foundation) --- within the Walter--Benjamin-Program (project number:
      525389262) and the hospitality of the University of Pisa.
      JB's contribution to this work has been supported by Charles University
      Research program No.\ UNCE/SCI/023.
    }
  }
}
\author{%
    Jan Blechta\thanks{%
    Faculty of Mathematics and Physics, Charles University, 186\;75 Prague, Czech Republic
    (\email{blechta@karlin.mff.cuni.cz}).}
    \and
    Pablo Alexei Gazca-Orozco\thanks{%
    Department of Applied Mathematics, University of Freiburg, 79104, Freiburg, Germany
    (\email{alexei.gazca@mathematik.uni-freiburg.de}, \email{rose@mathematik.uni-freiburg.de}).}
    \and
    Alex Kaltenbach\thanks{%
    Institute of Mathematics, Technical University of Berlin, 10623, Berlin, Germany
    (\email{kaltenbach@math.tu-berlin.de}).}
    \and
    Michael R\r{u}\v{z}i\v{c}ka\footnotemark[3]
}
\begin{document}
\maketitle

\begin{abstract}
    The classical arguments employed when obtaining error estimates of Finite
    Element (FE) discretisations of elliptic problems lead to more restrictive
    assumptions on the regularity of the exact solution when applied to
    non-conforming methods. The so-called minimal regularity estimates
    available in the literature relax some of these assumptions, but are not
    truly of \emph{minimal regularity}, since a~data oscillation term appears
    in the error estimate. Employing an approach based on a~smoothing operator,
    we derive for the first time error estimates for Discontinuous Galerkin
    (DG) type discretisations of non-linear problems with
    $(p,\delta)$-structure that only assume the natural $W^{1,p}$-regularity of
    the exact solution, and which do not contain any oscillation terms.
\end{abstract}

\begin{keywords}
	\mathp-Dirichlet problem,
    Discontinuous Galerkin,
    \emph{a priori} error estimates,
    quasi-optimality,
    best-approximation
\end{keywords}

\begin{MSCcodes}
  35J66,  
  35J92,  
  65N12,  
  65N30   
\end{MSCcodes}

\tableofcontents

\section{Introduction}
\begingroup\setlength\emergencystretch{\hsize}\hbadness=10000
In this paper, we examine Local Discontinuous Galerkin (LDG) and
Incomplete Interior~penalty Discontinuous Galerkin (IIDG)
discretisations of non-linear problems of \textit{$p$-Dirichlet
type},~i.e.,
\begin{align}
    \begin{aligned}
        \label{eq:PDE}
        -\mathop{\mathrm{div}}\nolimits\pmb{\mathsf{\mathcal{S}}}(\nabla \boldsymbol{u}) &= \boldsymbol{f}  &&\quad\text{ in } \Omega\,,\\
        \boldsymbol{u} &= \bm{0}  &&\quad\text{ on }\partial\Omega\,.
    \end{aligned}
\end{align}
\endgroup
Here $\Omega \subseteq \mathbb{R}^d$, $d\geq 2$, is a bounded polyhedral domain having a  Lipschitz continuous boundary $\partial\Omega$, $\bm{f}\colon \Omega \to \mathbb{R}^n$ is a given vector field, and we seek a vector field $\boldsymbol{u}\colon \overline{\Omega} \to \mathbb{R}^n$ solving the system \cref{eq:PDE}.
	The non-linear operator $\pmb{\mathsf{\mathcal{S}}} \colon \mathbb{R}^{n\times d}\to
	\mathbb{R}^{n\times d}$ is assumed to have \textit{$(p,\delta)$-structure}, cf.\ \cref{assum:extra_stress}; the prototypical example falling in to this class
	is
	\begin{align*}
        \SwapAboveDisplaySkip
		\pmb{\mathsf{\mathcal{S}}}(\nabla \boldsymbol{u}) = (\delta +|\nabla \boldsymbol{u}|)^{p-2}\nabla\boldsymbol{u}\,, 
	\end{align*}
	where $p\in (1,\infty)$ and  $\delta \ge 0$.
	
	The central objective of this work is to establish
	a \textit{quasi-optimal} (a priori) error estimate, i.e., 
	a best-approximation result of the form\enlargethispage{1mm}
	\begin{align}\label{eq:error_estimate}
          \begin{aligned}
            &\|\pmb{\mathsf{\mathcal{F}}}(\tilde{\nabla}_h \boldsymbol{u}_h) - \pmb{\mathsf{\mathcal{F}}}(\nabla
            \boldsymbol{u})\|^2_{2,\Omega} +
            m_{\varphi_{\boldsymbol{\beta}_h(\boldsymbol{u}_h)},h}(\boldsymbol{u}_h) \\&\lesssim
            \inf_{\boldsymbol{v}_h \in V_h} \left(\|\pmb{\mathsf{\mathcal{F}}}(\tilde{\nabla}_h \boldsymbol{v}_h) -
              \pmb{\mathsf{\mathcal{F}}}(\nabla \boldsymbol{u})\|^2_{2,\Omega} +
              m_{\varphi_{\boldsymbol{\beta}_h(\boldsymbol{u}_h)},h}(\boldsymbol{v}_h)\right)\,,
          \end{aligned}
	\end{align}
	where $\boldsymbol{u}_h\in V_h$ is the discrete solution, $\tilde{\nabla}_h\colon W^{1,p}(\mathcal{T}_h)^n\to L^p(\Omega)^{n\times d}$ is a suitable discrete~gradient, and
	$\pmb{\mathsf{\mathcal{F}}}$ and $m_{\varphi_{\boldsymbol{\beta}_h(\boldsymbol{u}_h)},h}$ are appropriate measures of the error related to the $(p,\delta)$-structure~of~$\pmb{\mathsf{\mathcal{S}}}$; here, $V_h$ is typically a space of broken polynomials, i.e., $V_h \coloneqq \mathbb{P}^k(\mathcal{T}_h)^n$, on a~\mbox{triangulation}~$\mathcal{T}_h$~of~$\Omega$, which is meant to approximate the full space $W^{1,p}(\Omega)^n$. 
	Crucially, when deriving the estimate \cref{eq:error_estimate} we only assume the natural regularity of the continuous problem; namely, $\boldsymbol{u}\in W^{1,p}_0(\Omega)^n$ and $\boldsymbol{f}\in W^{-1,p'}(\Omega)^n$. Then, as a~corollary we obtain convergence rates, depending on additional regularity conditions.
	
	The main issue with classical approaches based on Strang's
	Lemma (cf.\ \cite{Strang.1972}; see also \cite[p.~106]{Braess.2007} and \cite[Sec.\ 2.3]{EG.2021}) or similar tools (cf.\ \cite{dkrt-ldg}) when deriving error estimates for
	non-conforming discretisations, is that they rely either on
	integrating-by-parts (and, e.g., requiring that $\mathop{\mathrm{div}}\nolimits\pmb{\mathsf{\mathcal{S}}}(\nabla
	\boldsymbol{u})$ is an integrable function) or on assuming that traces of
	$\pmb{\mathsf{\mathcal{S}}}(\nabla \boldsymbol{u})$ are well-defined on the mesh edges to handle
	consistency in the jump terms. The first work that got around
	the assumption of additional unnatural regularity is that of
	Gudi (cf.\ \cite{Gud.2010}), where the author proved, using~tools~from~a posteriori error analysis, the following estimate for the
	Dirichlet~problem,~assuming~just~${\boldsymbol{u}\in W^{1,2}_0(\Omega)^n}$:
	\begin{align}
		\|\boldsymbol{u} - \boldsymbol{u}_h\|_{1,h} \lesssim \inf_{\boldsymbol{v}_h \in V_h} \|\boldsymbol{u} - \boldsymbol{v}_h\|_{1,h} 
		+ \mathrm{osc}_h(\boldsymbol{f})\,,
	\end{align}
	where $\|\cdot\|_{1,h}$ is a broken Sobolev norm and $\mathrm{osc}_h(\boldsymbol{f})$ is a measure for the oscillation of $\boldsymbol{f}\in L^2(\Omega)^n$. Such estimates are in the literature often qualified as being of \emph{minimal regularity}, and the idea has been extended to various other contexts; see, e.g., \cite{BN.2011,GN.2011,BGGS.2012,GGN.2013,LMZ.2014,BCGG.2014,LMNN.2018,BRW.2022} and, in particular,  \cite{AK.2023} for systems of \mathp-Dirichlet type. While this is certainly an improvement over the classical approach, it should be noted that such estimates are not quite minimal in their regularity assumptions, since an additional assumption on the data is needed, namely  ${-\Delta \boldsymbol{u}=\bm{f} \in L^2(\Omega)^n}$. In contrast, we strive for minimal regularity estimates which entail the \emph{equivalence of the error and the~distance~to~}$V_h$.
	
	A whole theory dealing with the characterisation of truly quasi-optimal discretisations of symmetric and elliptic linear problems in $W^{1,2}_0(\Omega)^n$ was developed in \cite{VZ.2018.I,VZ.2019.II,VZ.2018.III} (see also \cite{VZ.2019,KZ.2020,KVZ.2021} for similar results for the Stokes system). In those works, the authors established that quasi-optimality is achieved exactly when the discretisation is \emph{fully algebraically consistent} (i.e., if the exact solution belongs to the discrete space, then it coincides with the discrete solution),~and~\emph{fully~stable}~(i.e., the map $(\bm{f}\mapsto \boldsymbol{u}_h)\colon W^{-1,2}(\Omega)^n\to V_h$ is well-defined and bounded). In particular, it is necessary that the scheme is \emph{entire}, meaning that it is well-defined for general forcing terms $\bm{f}\in W^{-1,2}(\Omega)^n$.
	This is achieved by working with modified schemes of the form: \hspace*{-0.15em}Find $\boldsymbol{u}_h\hspace*{-0.15em} \in\hspace*{-0.15em}  V_h$ such that for every ${\boldsymbol{v}_h\hspace*{-0.15em}\in\hspace*{-0.15em} V_h}$, it holds that
	\begin{align}\label{eq:modified_linear}
		(\tilde{\nabla}_h\boldsymbol{u}_h, \nabla \pmb{\mathsf{\mathcal{E}}}_h  \boldsymbol{v}_h)_\Omega = \langle \boldsymbol{f}, \pmb{\mathsf{\mathcal{E}}}_h  \boldsymbol{v}_h \rangle_{W^{1,p}_0(\Omega)}\,.
	\end{align}
	where \hspace*{-0.1mm}$\pmb{\mathsf{\mathcal{E}}}_h \colon \hspace*{-0.15em} V_h\hspace*{-0.15em} \to\hspace*{-0.15em}  W^{1,2}_0(\Omega)$ \hspace*{-0.1mm}is \hspace*{-0.1mm}a \hspace*{-0.1mm}bounded \hspace*{-0.1mm}operator \hspace*{-0.1mm}(a \hspace*{-0.1mm}so-called \hspace*{-0.1mm}\emph{smoothing \hspace*{-0.1mm}operator}) \hspace*{-0.1mm}leaving~\hspace*{-0.1mm}${V_h \hspace*{-0.15em} \cap \hspace*{-0.15em}  W^{1,2}_0(\Omega)}$ invariant; see also \cite{GP.2018}, where $\pmb{\mathsf{\mathcal{E}}}_h\colon V_h \to  W^{1,2}_0(\Omega)$ is applied to the unknown ${\boldsymbol{u}_h\in  V_h}$ as well. In order to obtain a quasi-optimal scheme, the smoothing operator  is constructed in such a way that it preserves certain moments. More precisely, in the DG setting, for every $ \boldsymbol{z}_h\in V_h$, one has that
	\begin{subequations}\label{eq:moments_preservation}
		\begin{alignat}{2}
			(\boldsymbol{q}_F,\pmb{\mathsf{\mathcal{E}}}_h  \boldsymbol{z}_h)_F &= (\boldsymbol{q}_F, \{\!\!\{ \boldsymbol{z}_h \}\!\!\})_F
			&&\quad \text{ for all } F\in \Gamma_h^i\,, \; \boldsymbol{q}_F\in \mathbb{P}^{k-1}(F)\,,  \label{eq:moments_preservation_face} \\
			(\boldsymbol{q}_K,\pmb{\mathsf{\mathcal{E}}}_h  \boldsymbol{z}_h)_K &= (\boldsymbol{q}_K, \boldsymbol{z}_h)_K
			&&\quad \text{ for all } K\in \mathcal{T}_h\,, \; \boldsymbol{q}_K \in \mathbb{P}^{k-2}(K)\,,  \label{eq:moments_preservation_int}
		\end{alignat}
	\end{subequations}
where $\{\!\!\{\cdot\}\!\!\}$ stands for facet average, $\Gamma_h^i$ is the set of internal facets, and
we set $\mathbb{P}^{-1}(K) \coloneqq \emptyset$; see \cref{sec:prelims} for more details on the DG notation.  

	An important consequence of the preservation properties \cref{eq:moments_preservation} (at least in the linear case) is that in practice one does not need to implement the smoothing operator on the left-hand side~of~\cref{eq:modified_linear}. To see this, take an arbitrary element $\pmb{\mathsf{T}}_h \in \Sigma_h \coloneqq  \mathbb{P}^{k-1}(\mathcal{T}_h)^{n\times d}$ (which will represent the flux). Then, using the preservation properties \cref{eq:moments_preservation}, for every $\boldsymbol{z}_h\in V_h$, integration-by-parts yields that
	\begin{align}\label{eq:equivalence_without_E}
		\begin{aligned}
			( \pmb{\mathsf{T}}_h,\nabla \pmb{\mathsf{\mathcal{E}}}_h  \boldsymbol{z}_h)_\Omega
			&=
			-( \mathop{\mathrm{div}}\nolimits_h \pmb{\mathsf{T}}_h, \pmb{\mathsf{\mathcal{E}}}_h  \boldsymbol{z}_h)_\Omega
			+ (\llbracket{\pmb{\mathsf{T}}_h\boldsymbol{n}}\rrbracket ,\pmb{\mathsf{\mathcal{E}}}_h \boldsymbol{z}_h)_{\Gamma_h^i} 
			+  (\{\!\!\{ \pmb{\mathsf{T}}_h \}\!\!\},\llbracket{\pmb{\mathsf{\mathcal{E}}}_h  \boldsymbol{z}_h \boldsymbol{n}}\rrbracket)_{\Gamma_h} \\
			&= 
			-( \mathop{\mathrm{div}}\nolimits_h \pmb{\mathsf{T}}_h,  \boldsymbol{z}_h)_\Omega
			+ (\llbracket{\pmb{\mathsf{T}}_h \boldsymbol{n}}\rrbracket, \boldsymbol{z}_h)_{\Gamma_h^i}
			\\&=	( \pmb{\mathsf{T}}_h,\pmb{\mathsf{\mathcal{G}}}_h \boldsymbol{z}_h)_\Omega \,,
		\end{aligned}
	\end{align}
	where $\pmb{\mathsf{\mathcal{G}}}_h\colon \hspace*{-0.1em} W^{1,p}(\mathcal{T}_h)^n\hspace*{-0.1em}\to\hspace*{-0.1em}
        L^p(\Omega)^{n\times d}$  is the usual DG gradient (cf.\
        \cref{sec:DG_gradient}). In other words, if the fluxes
        are broken polynomials of one degree less than  polynomial
        degree of the solution~$\boldsymbol{u}_h\in V_h$,~the smoothing operator
        needs to be implemented only in the right-hand side (akin~to~modifying only the forcing term as $\pmb{\mathsf{\mathcal{E}}}_h^* \bm{f}\in V_h^*$, where $\pmb{\mathsf{\mathcal{E}}}_h^* \colon\hspace*{-0.1em}(W^{1,p}(\mathcal{T}_h)^n)^*\hspace*{-0.1em} \to\hspace*{-0.1em} V_h^*$~is~the~\mbox{adjoint}~to~${\pmb{\mathsf{\mathcal{E}}}_h \colon \hspace*{-0.1em}V_h\hspace*{-0.1em}\to\hspace*{-0.1em} W^{1,p}(\mathcal{T}_h)^n}$).
	
	In this work, we first prove a best-approximation result of the
	type \cref{eq:error_estimate} for a non-linear analogue of the
	problem \cref{eq:modified_linear} (cf.\ \cref{thm:error_primal});
    since we do not have a~Hilbert
	structure~at~our~disposal (in contrast to the works
	\cite{VZ.2018.I,VZ.2019.II,VZ.2018.III}), our approach is
	based on working directly with an error equation. This is to
	our knowledge the first best-approximation result for
	non-conforming discretisations of non-linear systems of
	\mathp-Dirichlet type that is genuinely minimal in its regularity
	assumptions. One disadvantage of the scheme in the non-linear
	case is that for polynomial degree $k\geq 2$, the argument
	\cref{eq:equivalence_without_E} cannot be employed (since
	non-linear functions of element-wise polynomials are in general
	not element-wise polynomials) and it is, therefore, necessary to implement the smoothing operator also in the left-hand side. As an alternative to this, we propose also a mixed discretisation in which the flux variable belongs, by construction, to $\Sigma_h = \mathbb{P}^{k-1}(\mathcal{T}_h)^{n\times d}$ and,~as a result,~\cref{eq:equivalence_without_E}~applies. For this mixed discretisation, we prove a minimal regularity error estimate as well (cf.\ \cref{thm:error_estimate_mixed}).
	 
	\emph{This paper is organized as follows:} In \cref{sec:prelims}, we introduce the employed notation, define
	relevant function spaces, 
	basic assumptions on $\pmb{\mathsf{\mathcal{S}}}$, and the used discrete operators. In \cref{sec:smoothing_operator}, we recall the construction of the smoothing operator and prove an interpolation error estimate in terms of $N$-functions.
	In \cref{sec:primal}, we introduce the continuous  and the discrete primal problem, establish their well-posedness and the validity of a quasi-optimal (a priori) error estimate~of~the~form~\cref{eq:error_estimate}. Then, from the latter, we deduce the convergence of the discrete primal problem under \textit{minimal regularity} assumptions and derive fractional error decay rates given fractional regularity assumptions expressed in Nikolski\u{\i} spaces.
	In addition, aided by the quasi-optimal  error estimate, we carry out an ansatz class competition that shows that the approximation capabilities of LDG and IIDG approximations and continuous Lagrange approximations  of the problem \cref{eq:PDE}  are comparable. In \cref{sec:mixed}, we introduce the continuous   and the discrete mixed problem, establish their well-posedness and the validity of a quasi-optimal (a priori) error estimate~in~a~similar~form~as~\cref{eq:error_estimate}.
	In \cref{sec:experiments}, we carry out numerical experiments to complement the theoretical findings.
	
	\section{Preliminaries}\label{sec:prelims}
	
	We employ $c, C>0$ to denote generic constants, that may
	change from line to line and may depend only on
	the polynomial degree $k$,
	the chunkiness $\omega_0$,  the
	characteristics of $\pmb{\mathsf{\mathcal{S}}}$, and the dimensions $n$, $d$.
	
	Moreover, we~write $f \lesssim g$ if
	there exist a constant $c>0$ such that
	$f \le c\, g$, and ${f\sim g}$ if and only if there exists constants $c,C>0$ such
	that $c\, f \le g\le C\, f$.
	
	Throughout the entire paper, let $\Omega\subseteq \mathbb{R}^d$, $d \in  \mathbb{N}$, is a bounded,
	polyhedral Lipschitz domain~and $M\subseteq  \mathbb{R}^d$, $d \in  \mathbb{N}$,  a (Lebesgue) measurable set. Then, 
	for  every $k\in \ensuremath{\mathbb{N}}$ and $p\in [1,\infty]$, we employ the customary
	Lebesgue spaces $(L^p(M), \smash{\|\cdot\|_{p,M}}) $ and Sobolev
	spaces $(W^{k,p}(M), \smash{\|\cdot\|_{k,p,M}})$. The space $W^{1,p}_0(M)$
	is defined as the closure of the vector space of smooth and compactly supported functions $C^\infty_c(M)$ in  $W^{1,p}(M)$. We equip $\smash{W^{1,p}_0(M)}$ 
	with the norm~$\smash{\|\nabla\cdot\|_{p,M}}$. 
	
	We  always denote
	vector-valued functions by boldface letters~and~tensor-valued
	functions by capital boldface letters. The Euclidean scalar product
	between  two vectors $\boldsymbol{a}=(a_1,\dots,a_n)^\top$, ${\boldsymbol{b}=(b_1,\dots,b_n)^\top}\in \mathbb{R}^n$, $n\in \mathbb{N}$, is defined by $\boldsymbol{a} \cdot\boldsymbol{b}\coloneqq \sum_{i=1}^{n}{a_ib_i}$, while the
	Frobenius scalar product between two tensors $\pmb{\mathsf{A}}=(A_{ij}) _{1\leq i\leq n; 1\leq j\leq \ell},\pmb{\mathsf{B}}=(B_{ij}) _{1\leq i\leq n; 1\leq j\leq \ell}\in \mathbb{R}^{n\times \ell}$, $n,\ell\in \mathbb{N}$, is defined by
	$\pmb{\mathsf{A}}: \pmb{\mathsf{B}}\coloneqq \sum_{i=1}^{n}\sum_{j=1}^{\ell}{A_{ij}B_{ij}}$. Then, the Euclidean norm of a vector   $\boldsymbol{a}\in \mathbb{R}^n$, $n\in \mathbb{N}$, is defined by $\vert \boldsymbol{a} \vert \coloneqq \sqrt{\boldsymbol{a}\cdot\boldsymbol{a}}$, while the Frobenius norm of a tensor $\pmb{\mathsf{A}}\in \mathbb{R}^{n\times \ell}$, $n,\ell\in \mathbb{N}$,~is~defined~by~$\vert \pmb{\mathsf{A}} \vert \coloneqq \sqrt{\pmb{\mathsf{A}}:\pmb{\mathsf{A}}}$.

The  mean  value  of a~locally integrable function $f$  is
denoted by
$\langle{f}\rangle_M\allowbreak\coloneqq\smash{\dashint_M f
        \,\textup{d}x}\allowbreak\coloneqq \smash{\frac 1 {|M|}\int_M f
        \,\textup{d}x}$. Moreover,  we employ the notation
$(f,g)_M\coloneqq \int_M f g\,\textup{d}x$, whenever the
right-hand side is~\mbox{well-defined}.

	Drawing from the theory of Orlicz spaces $L^\psi (M)$ (cf.~\cite{ren-rao}) and generalized
	Orlicz~spaces~$L^{\psi(\cdot)} (M)$ (cf.~\cite{HH19}), we employ \mbox{N-functions}
	$\psi \colon \ensuremath{\mathbb{R}}^{\geq 0} \to \ensuremath{\mathbb{R}}^{\geq 0}$ and generalized \mbox{N-functions}
	$\psi \colon M \times \ensuremath{\mathbb{R}}^{\ge 0} \to \ensuremath{\mathbb{R}}^{\ge 0}$, i.e.,
	$\psi$ is a Carath\'eodory function such that $\psi(x,\cdot)$ is an
	N-function for a.e.\ $x \in M$,~\mbox{respectively}. The
	modular~is~defined~via
	$\rho_{\psi,M}(f)\coloneqq \int_M
	\psi(\left| f \right|)\,\textup{d}x $ if $\psi$ is an N-function, and via
	$ \rho_{\psi,M}(f)\coloneqq \int_M
	\psi(x,\left| f(x) \right|)\,\textup{d}x $, if $\psi$ is a generalized
	N-function. An N-function
	$\psi$ satisfies the $\Delta_2$-condition (in short,
	$\psi \in  \Delta_2$), if there exists
	$K> 2$ such that for every
	$t \ge 
	0$,~it~holds~that~${\psi(2\,t) \leq K\,
		\psi(t)}$. We denote the smallest such constant by
	$\Delta_2(\psi)>0$. We define the (convex) conjugate (generalized) N-function $\psi^*\colon M\times \mathbb{R}^{\ge 0}\to \mathbb{R}^{\ge 0}$ via $\psi^*(x,t)\coloneqq \sup_{s\ge 0}{ ts-\psi(x,s)}$ for all $t\ge 0$ and a.e.\ ${x\in M}$. If $\psi,
	\psi^* \in \Delta_2$,  then we have that
	\begin{align}
		\label{eq:psi'}
		\psi^* \circ \psi'\sim \psi\,,
	\end{align}
	with constants depending only on $\Delta_2(\psi),\Delta_2(
	\psi ^*)>0$. 
	We will also need the $\varepsilon$-Young inequality: for every
	$\varepsilon\hspace*{-0.1em}> \hspace*{-0.1em} 0$, there exits a constant $c_\varepsilon\hspace*{-0.1em}>\hspace*{-0.1em}0 $, depending only on $\Delta_2(\psi),\Delta_2( \psi ^*)\hspace*{-0.1em}<\hspace*{-0.1em}\infty$, such that for every $s,t\geq 0$, it holds that
	\begin{align}
		\label{ineq:young}
		\begin{aligned}
			t\,s&\leq \varepsilon \, \psi(t)+ c_\varepsilon \,\psi^*(s)\,.
		\end{aligned}
	\end{align}

	\subsection{Basic properties of the non-linear operator}
	
	Throughout the paper, we always assume that the non-linear operator 
	$\pmb{\mathsf{\mathcal{S}}}$
	has $(p,\delta)$-structure. 
	A detailed discussion and  proofs can be found, e.g., in
	\cite{die-ett,dr-nafsa}. 
	
	For $p \in (1,\infty)$~and~$\delta\ge 0$, we define a special N-function
	$\varphi=\varphi_{p,\delta}\colon\ensuremath{\mathbb{R}}^{\ge 0}\to \ensuremath{\mathbb{R}}^{\ge 0}$ via
	\begin{align} 
        \SwapAboveDisplaySkip
		\label{eq:def_phi} 
		\varphi(t)\coloneqq  \int _0^t \varphi'(s)\, \mathrm ds,\quad\text{where}\quad
		\varphi'(t) \coloneqq  (\delta +t)^{p-2} t\,,\quad\textup{ for all }t\ge 0\,.
	\end{align}
	The properties of $\varphi$ are discussed in detail in \cite{die-ett,dr-nafsa,kr-pnse-ldg-1}.
	An important tool in our analysis play {\em shifted N-functions}
	$\{\psi_a\}_{\smash{a \ge 0}}$,~cf.~\cite{DK08,dr-nafsa}. For a given N-function $\psi\colon\mathbb{R}^{\ge 0}\to \mathbb{R}^{\ge
		0}$ we define the family  of shifted N-functions ${\psi_a\colon\mathbb{R}^{\ge
			0}\to \mathbb{R}^{\ge 0}}$,~${a \ge 0}$,~via
	\begin{align}
		\label{eq:phi_shifted}
		\psi_a(t)\coloneqq  \int _0^t \psi_a'(s)\, \mathrm ds\,,\quad\text{where }\quad
		\psi'_a(t)\coloneqq \psi'(a+t)\frac {t}{a+t}\,,\quad\textup{ for all }t\ge 0\,.
	\end{align}

	\begin{Definition}[$(p,\delta)$-structure]\label{assum:extra_stress} 
		Let $\pmb{\mathsf{\mathcal{S}}}\in\allowbreak\hspace*{-0.05em} \allowbreak C^0(\mathbb{R}^{n \times
			d},\allowbreak\mathbb{R}^{n \times d} ) $ satisfy ${\pmb{\mathsf{\mathcal{S}}}(\mathbf{0})\hspace*{-0.05em}=\hspace*{-0.05em}\mathbf{0}}$. Then, we say
		that  $\pmb{\mathsf{\mathcal{S}}}$ has
		\textup{$(p,\delta)$-structure} if for some $p \in (1, \infty)$,
		$ \delta\in [0,\infty)$, and the N-function
		$\varphi=\varphi_{p,\delta}$ (cf.~\cref{eq:def_phi}), there
		exist constants $C_0, C_1 >0$ such that
		\begin{align}
			\label{eq:ass_S}
			\begin{aligned}
                \big({\pmb{\mathsf{\mathcal{S}}}}(\pmb{\mathsf{Q}}) - {\pmb{\mathsf{\mathcal{S}}}}(\pmb{\mathsf{P}})\big) \mathbin{:} (\pmb{\mathsf{Q}}-\pmb{\mathsf{P}}) &\ge C_0 \,\varphi_{\vert \pmb{\mathsf{Q}}\vert}(\left| \pmb{\mathsf{Q}} -
					\pmb{\mathsf{P}} \right|) \,,
				\\
				\left| \pmb{\mathsf{\mathcal{S}}}(\pmb{\mathsf{Q}}) - \pmb{\mathsf{\mathcal{S}}}(\pmb{\mathsf{P}}) \right| &\le C_1 \,
				\varphi'_{\vert \pmb{\mathsf{Q}}\vert}(\left| \pmb{\mathsf{Q}} -
					\pmb{\mathsf{P}} \right|)
			\end{aligned}
		\end{align}
		are satisfied for all $\pmb{\mathsf{Q}},\pmb{\mathsf{P}} \in \ensuremath{\mathbb{R}}^{n\times d}$
        with $\pmb{\mathsf{Q}}\neq \mathbf{0}$. The constants $C_0,C_1>0$ and $p\in
        (1,\infty)$ are called the {characteristics} of $\pmb{\mathsf{\mathcal{S}}}$.
	\end{Definition}

	\begin{Remark}\label{rem:phi}
		{\rm (i) 
			Assume that $\pmb{\mathsf{\mathcal{S}}}$ has $(p,\delta)$-structure for some
			$\delta \in [0,\delta_0]$. Then, if not otherwise stated, the
			constants in the estimates depend only on the characteristics~of~$\pmb{\mathsf{\mathcal{S}}}$~and on $\delta_0\ge 0$, but are independent of $\delta\ge 0$.
			
			(ii)     Let $\varphi$ be defined in \cref{eq:def_phi} and 
			$\{\varphi_a\}_{a\ge 0}$ be the corresponding family of the shifted \mbox{N-functions}. Then, the operators 
			$\pmb{\mathsf{\mathcal{S}}}_a\colon\mathbb{R}^{n\times d}\to \smash{\mathbb{R}_{\textup{sym}}^{n\times
					d}}$, $a \ge 0$, for every $a \ge 0$
				and~$\pmb{\mathsf{Q}} \in \mathbb{R}^{n\times d}$, defined via 
			\begin{align}
				\label{eq:flux}
				\pmb{\mathsf{\mathcal{S}}}_a(\pmb{\mathsf{Q}}) \coloneqq 
				\frac{\varphi_a'(\vert \pmb{\mathsf{Q}}\vert)}{\vert \pmb{\mathsf{Q}}\vert}\,
				\pmb{\mathsf{Q}} =(\delta +a +\vert \pmb{\mathsf{Q}}\vert)^{p-2}\pmb{\mathsf{Q}}\,, 
			\end{align}
			have $(p, \delta +a)$-structure.  In this case, the characteristics of
			$\pmb{\mathsf{\mathcal{S}}}_a$ depend only on ${p\in (1,\infty)}$ and are independent of
			$\delta \geq 0$ and $a\ge 0$.
			
			(iii)   Note that $\varphi_a(t) \sim (\delta+a+t)^{p-2} t^2$ and 
			${(\varphi_a)^*(t) \sim ((\delta+a)^{p-1} + t)^{p'-2} t^2}$ uniformly with respect to $t,a\ge 0$.  The
			families ${\{{\varphi_a}\}}_{a \ge 0}$ and ${\{{(\varphi_a)^*}\}}_{a \ge 0}$
			satisfy the $\Delta_2$-condition uniformly with respect to ${a \ge
				0}$, with $\Delta_2(\varphi_a) \lesssim 2^{\max {\{{2,p}\}}}$ and
			$\Delta_2((\varphi_a)^*) \lesssim 2^{\max {\{{2,p'}\}}}$,
			respectively. Moreover, for all $0\le a\le b$, we have that for every $t \ge0$, it holds that $(\varphi_a)^*(t)
			\ge (\varphi_b)^*(t) $, $\varphi_a(t)  \le \varphi_b(t) $~if~$p\ge 2$ and $(\varphi_a)^*(t)
			\le (\varphi_b)^*(t) $, $\varphi_a(t)  \ge \varphi_b(t) $ if $p\le 2$.
		}
	\end{Remark}
	
	Closely related to the non-linear operator  $\pmb{\mathsf{\mathcal{S}}}$ with
	$(p,\delta)$-structure are the non-linear operators
	$\pmb{\mathsf{\mathcal{F}}},\pmb{\mathsf{\mathcal{F}}}^*\colon\ensuremath{\mathbb{R}}^{n\times d}\to \ensuremath{\mathbb{R}}^{n\times d}$, 
	for every $\pmb{\mathsf{Q}}\in \mathbb{R}^{n\times d}$, defined via
	\begin{align}
		\begin{aligned}
			\pmb{\mathsf{\mathcal{F}}}(\pmb{\mathsf{Q}})&\coloneqq (\delta+\vert \pmb{\mathsf{Q}}\vert)^{\smash{\frac{p-2}{2}}}\pmb{\mathsf{Q}}\,,\\
			\pmb{\mathsf{\mathcal{F}}}^*(\pmb{\mathsf{Q}})&\coloneqq (\delta^{p-1}+\vert \pmb{\mathsf{Q}}\vert)^{\smash{\frac{p'-2}{2}}}\pmb{\mathsf{Q}}\,.
		\end{aligned}\label{eq:def_F}
	\end{align}
	The connections between
	$\pmb{\mathsf{\mathcal{S}}},\pmb{\mathsf{\mathcal{F}}},\pmb{\mathsf{\mathcal{F}}}^* \hspace{-0.05em}\colon\hspace{-0.05em}\ensuremath{\mathbb{R}}^{n \times d}
	\hspace{-0.05em}\to\hspace{-0.05em} \ensuremath{\mathbb{R}}^{n\times d}$ and
	$\varphi_a,(\varphi_a)^*\hspace{-0.05em}\colon\hspace{-0.05em}\ensuremath{\mathbb{R}}^{\ge
		0}\hspace{-0.05em}\to\hspace{-0.05em} \ensuremath{\mathbb{R}}^{\ge
		0}$,~${a\hspace{-0.05em}\ge\hspace{-0.05em} 0}$, are best explained
	by the following result (cf.~\cite{die-ett,dr-nafsa,dkrt-ldg}). 
	\begin{Proposition}
		\label{lem:hammer}
		Let $\pmb{\mathsf{\mathcal{S}}}$ have $(p,\delta)$-structure, let $\varphi$ be defined in \cref{eq:def_phi}, and let $\pmb{\mathsf{\mathcal{F}}},\pmb{\mathsf{\mathcal{F}}}^*$ be defined in \cref{eq:def_F}. Then, uniformly with respect to 
		$\pmb{\mathsf{Q}}, \pmb{\mathsf{P}} \in \ensuremath{\mathbb{R}}^{n \times d}$, we have that
		\begin{align}\label{eq:hammera}
			\begin{aligned}
				\big(\pmb{\mathsf{\mathcal{S}}}(\pmb{\mathsf{Q}}) - \pmb{\mathsf{\mathcal{S}}}(\pmb{\mathsf{P}})\big)
				:(\pmb{\mathsf{Q}}-\pmb{\mathsf{P}} ) &\sim  \left|  \pmb{\mathsf{\mathcal{F}}}(\pmb{\mathsf{Q}}) - \pmb{\mathsf{\mathcal{F}}}(\pmb{\mathsf{P}}) \right|^2
				\\
				&\sim \varphi_{\vert \pmb{\mathsf{Q}}\vert}(\left| \pmb{\mathsf{Q}}
					- \pmb{\mathsf{P}} \right|)
				\\
				&\sim(\varphi_{\vert \pmb{\mathsf{Q}}\vert})^*(\left| \pmb{\mathsf{\mathcal{S}}}(\pmb{\mathsf{Q}} ) - \pmb{\mathsf{\mathcal{S}}}(\pmb{\mathsf{P}} ) \right|)
				\\&\sim (\varphi^*) _{|\pmb{\mathsf{\mathcal{S}}} ( \pmb{\mathsf{Q}})|} (|\pmb{\mathsf{\mathcal{S}}} (\pmb{\mathsf{Q}}) - \pmb{\mathsf{\mathcal{S}}} ( \pmb{\mathsf{P}}) |)
				\,,
			\end{aligned}
		\end{align}
		\begin{align}
			\label{eq:hammere}
		\mspace{-10mu}	\left| \pmb{\mathsf{\mathcal{S}}}(\pmb{\mathsf{Q}}) - \pmb{\mathsf{\mathcal{S}}}(\pmb{\mathsf{P}}) \right| &\sim   \smash{\varphi'_{\vert \pmb{\mathsf{Q}}\vert}(\vert\pmb{\mathsf{Q}}-\pmb{\mathsf{P}}\vert )}\,,
		\end{align}
		\begin{align}
			\mspace{-70mu}\smash{\left|  \pmb{\mathsf{\mathcal{F}}}^*(\pmb{\mathsf{Q}}) - \pmb{\mathsf{\mathcal{F}}}^*(\pmb{\mathsf{P}}) \right|^2}
			\label{eq:hammerf}
			&\sim
			\smash{\smash{(\varphi^*)}_{\smash{\vert \pmb{\mathsf{Q}}\vert}}(\left| \pmb{\mathsf{Q}}
					- \pmb{\mathsf{P}} \right|)} \,,
			\\[2mm]
			\label{eq:F-F*3}
			\mspace{-70mu}\smash{\left| \pmb{\mathsf{\mathcal{F}}}^*(\pmb{\mathsf{\mathcal{S}}}(\pmb{\mathsf{P}}))-\pmb{\mathsf{\mathcal{F}}}^*(\pmb{\mathsf{\mathcal{S}}}(\pmb{\mathsf{Q}})) \right|^2}
			&\sim  \smash{\left| \pmb{\mathsf{\mathcal{F}}}(\pmb{\mathsf{P}})-\pmb{\mathsf{\mathcal{F}}}(\pmb{\mathsf{Q}}) \right|^2}\,.
		\end{align}
		The constants in \crefrange{eq:hammera}{eq:F-F*3}
		depend only on the characteristics of ${\pmb{\mathsf{\mathcal{S}}}}$.
	\end{Proposition}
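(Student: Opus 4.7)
The plan is to prove all four equivalences \crefrange{eq:hammera}{eq:F-F*3} by reducing everything to the polynomial-type growth of the shifted N-functions noted in \cref{rem:phi}(iii), combined with integral (mean-value) representations of the vector fields $\pmb{\mathsf{\mathcal{S}}},\pmb{\mathsf{\mathcal{F}}},\pmb{\mathsf{\mathcal{F}}}^*$. The pivotal technical observation, which I would establish first, is the symmetry/swap property
\begin{align*}
\varphi_{\vert \pmb{\mathsf{Q}}\vert}(\vert \pmb{\mathsf{Q}}-\pmb{\mathsf{P}}\vert)\sim \varphi_{\vert \pmb{\mathsf{P}}\vert}(\vert \pmb{\mathsf{Q}}-\pmb{\mathsf{P}}\vert)\sim(\delta+\vert \pmb{\mathsf{Q}}\vert+\vert \pmb{\mathsf{P}}\vert)^{p-2}\vert \pmb{\mathsf{Q}}-\pmb{\mathsf{P}}\vert^2,
\end{align*}
which follows from $\vert \pmb{\mathsf{Q}}\vert\leq \vert \pmb{\mathsf{P}}\vert+\vert \pmb{\mathsf{Q}}-\pmb{\mathsf{P}}\vert$ and the explicit form of $\varphi_a$, together with the analogous statement $(\varphi^*)_{\vert \pmb{\mathsf{\mathcal{S}}}(\pmb{\mathsf{Q}})\vert}(t)\sim((\delta+\vert \pmb{\mathsf{Q}}\vert)^{p-1}+t)^{p'-2}t^2$ for the conjugate family. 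These let me freely interchange base shifts in subsequent estimates.

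For \eqref{eq:hammera}, the lower bound $(\pmb{\mathsf{\mathcal{S}}}(\pmb{\mathsf{Q}})-\pmb{\mathsf{\mathcal{S}}}(\pmb{\mathsf{P}})){:}(\pmb{\mathsf{Q}}-\pmb{\mathsf{P}})\gtrsim \varphi_{\vert \pmb{\mathsf{Q}}\vert}(\vert \pmb{\mathsf{Q}}-\pmb{\mathsf{P}}\vert)$ is precisely the first inequality in \cref{assum:extra_stress}, while the matching upper bound comes from Cauchy--Schwarz applied to the second inequality in \cref{assum:extra_stress} combined with $t\,\varphi'_{\vert \pmb{\mathsf{Q}}\vert}(t)\sim \varphi_{\vert \pmb{\mathsf{Q}}\vert}(t)$. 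The equivalence with $\vert \pmb{\mathsf{\mathcal{F}}}(\pmb{\mathsf{Q}})-\pmb{\mathsf{\mathcal{F}}}(\pmb{\mathsf{P}})\vert^2$ I would derive by writing $\pmb{\mathsf{\mathcal{F}}}(\pmb{\mathsf{Q}})-\pmb{\mathsf{\mathcal{F}}}(\pmb{\mathsf{P}})=\int_0^1 D\pmb{\mathsf{\mathcal{F}}}(\pmb{\mathsf{P}}+\theta(\pmb{\mathsf{Q}}-\pmb{\mathsf{P}}))(\pmb{\mathsf{Q}}-\pmb{\mathsf{P}})\,\textrm{d}\theta$, using $\vert D\pmb{\mathsf{\mathcal{F}}}(\pmb{\mathsf{A}})\vert^2\sim (\delta+\vert \pmb{\mathsf{A}}\vert)^{p-2}$ (a direct computation from the definition \cref{eq:def_F}), and invoking the standard integration lemma $\int_0^1(\delta+\vert \pmb{\mathsf{P}}+\theta(\pmb{\mathsf{Q}}-\pmb{\mathsf{P}})\vert)^{p-2}\,\textrm{d}\theta\sim(\delta+\vert \pmb{\mathsf{Q}}\vert+\vert \pmb{\mathsf{P}}\vert)^{p-2}$; the symmetry step above then matches this to $\varphi_{\vert \pmb{\mathsf{Q}}\vert}(\vert \pmb{\mathsf{Q}}-\pmb{\mathsf{P}}\vert)/\vert \pmb{\mathsf{Q}}-\pmb{\mathsf{P}}\vert^2$. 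The remaining two members of \eqref{eq:hammera} follow from the Fenchel-type identity $\psi^*\circ\psi'\sim\psi$ of \cref{eq:psi'} applied to $\psi=\varphi_{\vert \pmb{\mathsf{Q}}\vert}$ and $\psi=\varphi^*$ on level $\vert \pmb{\mathsf{\mathcal{S}}}(\pmb{\mathsf{Q}})\vert$, combined with \eqref{eq:hammere}.

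Estimate \eqref{eq:hammere} itself is just the upper bound of \cref{assum:extra_stress} paired with the reverse inequality obtained by swapping the roles of $\pmb{\mathsf{Q}}$ and $\pmb{\mathsf{P}}$ and invoking the above symmetry property. For \eqref{eq:hammerf}, I would repeat the Jacobian/mean-value argument used for $\pmb{\mathsf{\mathcal{F}}}$: compute $\vert D\pmb{\mathsf{\mathcal{F}}}^*(\pmb{\mathsf{A}})\vert^2\sim(\delta^{p-1}+\vert \pmb{\mathsf{A}}\vert)^{p'-2}$ from the definition of $\pmb{\mathsf{\mathcal{F}}}^*$, integrate along the segment from $\pmb{\mathsf{P}}$ to $\pmb{\mathsf{Q}}$, and apply the same integration lemma with the exponent $p'-2$ to identify the result with $(\varphi^*)_{\vert \pmb{\mathsf{Q}}\vert}(\vert \pmb{\mathsf{Q}}-\pmb{\mathsf{P}}\vert)$. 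Finally, \eqref{eq:F-F*3} follows by applying \eqref{eq:hammerf} to the pair $(\pmb{\mathsf{\mathcal{S}}}(\pmb{\mathsf{Q}}),\pmb{\mathsf{\mathcal{S}}}(\pmb{\mathsf{P}}))$ and then invoking the chain $(\varphi^*)_{\vert \pmb{\mathsf{\mathcal{S}}}(\pmb{\mathsf{Q}})\vert}(\vert \pmb{\mathsf{\mathcal{S}}}(\pmb{\mathsf{Q}})-\pmb{\mathsf{\mathcal{S}}}(\pmb{\mathsf{P}})\vert)\sim\vert \pmb{\mathsf{\mathcal{F}}}(\pmb{\mathsf{Q}})-\pmb{\mathsf{\mathcal{F}}}(\pmb{\mathsf{P}})\vert^2$ already contained in \eqref{eq:hammera}. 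I expect the main obstacle to be purely bookkeeping: keeping the shift bases straight across swaps, and carefully justifying the integration lemma in the degenerate regime where $\vert \pmb{\mathsf{Q}}-\pmb{\mathsf{P}}\vert$ dominates $\vert \pmb{\mathsf{Q}}\vert+\vert \pmb{\mathsf{P}}\vert$, which requires a small case split and a separate treatment of the sub- and superquadratic ranges $p<2$ and $p\geq 2$ to ensure the hidden constants depend only on the characteristics of $\pmb{\mathsf{\mathcal{S}}}$ and not on $\delta$.
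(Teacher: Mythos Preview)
The paper does not actually prove this proposition: it is stated with the preface ``cf.~\cite{die-ett,dr-nafsa,dkrt-ldg}'' and no argument is given. Your sketch is essentially the standard route taken in those references (mean-value representation of $\pmb{\mathsf{\mathcal{F}}},\pmb{\mathsf{\mathcal{F}}}^*$, the integration lemma for $(\delta+|\pmb{\mathsf{P}}+\theta(\pmb{\mathsf{Q}}-\pmb{\mathsf{P}})|)^{p-2}$, and shift symmetry), so in that sense it matches the intended proof.

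One genuine slip: your derivation of the lower bound in \eqref{eq:hammere} does not work. Swapping $\pmb{\mathsf{Q}}$ and $\pmb{\mathsf{P}}$ in the second line of \eqref{eq:ass_S} just produces another \emph{upper} bound, $|\pmb{\mathsf{\mathcal{S}}}(\pmb{\mathsf{Q}})-\pmb{\mathsf{\mathcal{S}}}(\pmb{\mathsf{P}})|\le C_1\varphi'_{|\pmb{\mathsf{P}}|}(|\pmb{\mathsf{Q}}-\pmb{\mathsf{P}}|)$, not a lower one. The correct argument for $|\pmb{\mathsf{\mathcal{S}}}(\pmb{\mathsf{Q}})-\pmb{\mathsf{\mathcal{S}}}(\pmb{\mathsf{P}})|\gtrsim\varphi'_{|\pmb{\mathsf{Q}}|}(|\pmb{\mathsf{Q}}-\pmb{\mathsf{P}}|)$ combines the monotonicity inequality (first line of \eqref{eq:ass_S}) with Cauchy--Schwarz:
\[
|\pmb{\mathsf{\mathcal{S}}}(\pmb{\mathsf{Q}})-\pmb{\mathsf{\mathcal{S}}}(\pmb{\mathsf{P}})|\,|\pmb{\mathsf{Q}}-\pmb{\mathsf{P}}|\ge\big(\pmb{\mathsf{\mathcal{S}}}(\pmb{\mathsf{Q}})-\pmb{\mathsf{\mathcal{S}}}(\pmb{\mathsf{P}})\big){:}(\pmb{\mathsf{Q}}-\pmb{\mathsf{P}})\ge C_0\,\varphi_{|\pmb{\mathsf{Q}}|}(|\pmb{\mathsf{Q}}-\pmb{\mathsf{P}}|)\sim\varphi'_{|\pmb{\mathsf{Q}}|}(|\pmb{\mathsf{Q}}-\pmb{\mathsf{P}}|)\,|\pmb{\mathsf{Q}}-\pmb{\mathsf{P}}|,
\]
then divide by $|\pmb{\mathsf{Q}}-\pmb{\mathsf{P}}|$. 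With this fix the rest of your outline goes through.
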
 
	\begin{Remark}\label{rem:sa}
		{\rm
			For the operators $\pmb{\mathsf{\mathcal{S}}}_a\colon\mathbb{R}^{n\times d}\to\smash{\mathbb{R}_{\textup{sym}}^{n\times
					d}}$, $a \ge  0$, defined~in~\cref{eq:flux}, the assertions of \cref{lem:hammer} hold with $\varphi\colon\mathbb{R}^{\ge 0}\to \mathbb{R}^{\ge 0}$ replaced
			by $\varphi_a\colon\mathbb{R}^{\ge 0}\to \mathbb{R}^{\ge 0}$, $a\ge 0$.}
	\end{Remark}

	The following results can be found in~\cite{DK08,dr-nafsa}.
	
	\begin{Lemma}[Change of shift]\label{lem:shift-change}
		Let $\varphi$ be defined in \cref{eq:def_phi} and let $\pmb{\mathsf{\mathcal{F}}}$ be defined in \cref{eq:def_F}. Then,
		for each $\varepsilon>0$, there exists $c_\varepsilon\geq 1$ (depending only
		on~$\varepsilon>0$ and $p$) such that for every $\pmb{\mathsf{Q}},\pmb{\mathsf{P}}\in\smash{\ensuremath{\mathbb{R}}^{n \times d}}$ and $t\geq 0$, it holds that
		\begin{align*}
			\smash{\varphi_{\left| \pmb{\mathsf{P}} \right|}(t)}&\leq \smash{c_\varepsilon\, \varphi_{\vert \pmb{\mathsf{Q}}\vert}(t)
				+\varepsilon\, \left| \pmb{\mathsf{\mathcal{F}}}(\pmb{\mathsf{P}}) - \pmb{\mathsf{\mathcal{F}}}(\pmb{\mathsf{Q}}) \right|^2\,,}
			\\
			\smash{(\varphi_{\left| \pmb{\mathsf{P}} \right|})^*(t)}&\leq \smash{c_\varepsilon\, (\varphi_{\vert \pmb{\mathsf{Q}}\vert})^*(t)
				+\varepsilon\, \left| \pmb{\mathsf{\mathcal{F}}}(\pmb{\mathsf{P}}) - \pmb{\mathsf{\mathcal{F}}}(\pmb{\mathsf{Q}}) \right|^2}\,.
		\end{align*}
	\end{Lemma}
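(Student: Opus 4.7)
The plan is to pass from the shifted N-functions to their weighted-power representations from Remark~\ref{rem:phi}(iii)---namely $\varphi_a(t) \sim (\delta + a + t)^{p-2} t^2$ and $(\varphi_a)^*(t) \sim ((\delta + a)^{p-1} + t)^{p'-2} t^2$, uniformly in $a, t \geq 0$---and then to dispatch the resulting algebraic inequality via a case split combined with the $\varepsilon$-Young inequality \eqref{ineq:young}. Proposition~\ref{lem:hammer}, specifically \eqref{eq:hammera}, supplies the crucial identity
\begin{align*}
\left| \pmb{\mathsf{\mathcal{F}}}(\pmb{\mathsf{P}}) - \pmb{\mathsf{\mathcal{F}}}(\pmb{\mathsf{Q}}) \right|^2 \sim \varphi_{|\pmb{\mathsf{Q}}|}\bigl(|\pmb{\mathsf{P}} - \pmb{\mathsf{Q}}|\bigr) \sim \bigl(\delta + |\pmb{\mathsf{Q}}| + |\pmb{\mathsf{P}} - \pmb{\mathsf{Q}}|\bigr)^{p-2} |\pmb{\mathsf{P}} - \pmb{\mathsf{Q}}|^2 \,,
\end{align*}
which serves as the absorber on the right-hand side of both inequalities. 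I focus on the first; the second will follow by replaying the argument with $(p, \delta)$ replaced by $(p', \delta^{p-1})$, noting that \eqref{eq:hammera} also yields $|\pmb{\mathsf{\mathcal{F}}}(\pmb{\mathsf{P}}) - \pmb{\mathsf{\mathcal{F}}}(\pmb{\mathsf{Q}})|^2 \sim (\varphi_{|\pmb{\mathsf{Q}}|})^*(|\pmb{\mathsf{\mathcal{S}}}(\pmb{\mathsf{P}}) - \pmb{\mathsf{\mathcal{S}}}(\pmb{\mathsf{Q}})|)$, so the error term is preserved.

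First I would dispose of the regime $|\pmb{\mathsf{P}} - \pmb{\mathsf{Q}}| \leq \tfrac{1}{2}(\delta + |\pmb{\mathsf{Q}}|)$: a plain triangle inequality gives $\delta + |\pmb{\mathsf{P}}| \sim \delta + |\pmb{\mathsf{Q}}|$, hence $\varphi_{|\pmb{\mathsf{P}}|}(t) \sim \varphi_{|\pmb{\mathsf{Q}}|}(t)$ uniformly in $t \geq 0$, so the claim holds with the error term dropped. In the complementary regime $|\pmb{\mathsf{P}} - \pmb{\mathsf{Q}}| > \tfrac{1}{2}(\delta + |\pmb{\mathsf{Q}}|)$ one has $\delta + |\pmb{\mathsf{Q}}| + |\pmb{\mathsf{P}} - \pmb{\mathsf{Q}}| \sim |\pmb{\mathsf{P}} - \pmb{\mathsf{Q}}|$, so $|\pmb{\mathsf{\mathcal{F}}}(\pmb{\mathsf{P}}) - \pmb{\mathsf{\mathcal{F}}}(\pmb{\mathsf{Q}})|^2 \sim |\pmb{\mathsf{P}} - \pmb{\mathsf{Q}}|^p$ and $\delta + |\pmb{\mathsf{P}}| + t \leq c(|\pmb{\mathsf{P}} - \pmb{\mathsf{Q}}| + t)$, which reduces the target to bounding $c(|\pmb{\mathsf{P}} - \pmb{\mathsf{Q}}| + t)^{p-2} t^2$ by $c_\varepsilon \varphi_{|\pmb{\mathsf{Q}}|}(t) + \varepsilon |\pmb{\mathsf{P}} - \pmb{\mathsf{Q}}|^p$. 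For $p \geq 2$, I use $(a+b)^{p-2} \leq c(a^{p-2} + b^{p-2})$ to split the expression into $c\, t^p + c\, |\pmb{\mathsf{P}} - \pmb{\mathsf{Q}}|^{p-2} t^2$; the first summand is absorbed into $\varphi_{|\pmb{\mathsf{Q}}|}(t) \gtrsim t^p$, and the second is split via \eqref{ineq:young} applied with the N-function $r \mapsto r^{p/(p-2)}$ and its conjugate $r \mapsto r^{p/2}$ into $\varepsilon |\pmb{\mathsf{P}} - \pmb{\mathsf{Q}}|^p + c_\varepsilon t^p$. For $1 < p < 2$, the conjugate exponent $p/(p-2)$ is negative, so I instead parametrise $t = \lambda |\pmb{\mathsf{P}} - \pmb{\mathsf{Q}}|$: when $\lambda \leq 1$, $\varphi_{|\pmb{\mathsf{Q}}|}(t) \sim |\pmb{\mathsf{P}} - \pmb{\mathsf{Q}}|^{p-2} t^2 = \lambda^2 |\pmb{\mathsf{P}} - \pmb{\mathsf{Q}}|^p$ and the problem reduces to the scalar inequality $\lambda^p \leq c_\varepsilon \lambda^2 + \varepsilon$ on $[0,1]$ (itself an instance of \eqref{ineq:young}); when $\lambda > 1$, $\varphi_{|\pmb{\mathsf{Q}}|}(t) \sim t^p \gtrsim \varphi_{|\pmb{\mathsf{P}}|}(t)$ directly.

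The main obstacle is aligning the Young partition so that in every sub-case the residue splits cleanly into a large multiple of $\varphi_{|\pmb{\mathsf{Q}}|}(t)$ and an arbitrarily small multiple of $|\pmb{\mathsf{P}} - \pmb{\mathsf{Q}}|^p \sim |\pmb{\mathsf{\mathcal{F}}}(\pmb{\mathsf{P}}) - \pmb{\mathsf{\mathcal{F}}}(\pmb{\mathsf{Q}})|^2$ while simultaneously handling the superquadratic and subquadratic regimes, despite the Young exponent $p/(p-2)$ changing sign at $p = 2$. The $\varepsilon$-dependence of $c_\varepsilon$ comes out polynomial in $\varepsilon$ (e.g.\ $c_\varepsilon \sim \varepsilon^{(p-2)/p}$ in the subquadratic range), and the constants depend only on $p$ and the characteristics of $\pmb{\mathsf{\mathcal{S}}}$, as required by the statement.
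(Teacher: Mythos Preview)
The paper does not supply its own proof of this lemma; it simply cites \cite{DK08,dr-nafsa} (see the sentence immediately preceding the statement). So there is no in-paper argument to compare against. Your sketch follows the standard route taken in those references: pass to the explicit weighted-power form $\varphi_a(t)\sim(\delta+a+t)^{p-2}t^2$, split on the relative size of $|\pmb{\mathsf{P}}-\pmb{\mathsf{Q}}|$ and $\delta+|\pmb{\mathsf{Q}}|$, and in the ``far'' regime reduce to a scalar Young-type inequality, using \eqref{eq:hammera} to identify the residual with $|\pmb{\mathsf{\mathcal{F}}}(\pmb{\mathsf{P}})-\pmb{\mathsf{\mathcal{F}}}(\pmb{\mathsf{Q}})|^2$.

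One small imprecision worth tightening: in the sub-case $p<2$, $\lambda\le 1$, you write $\varphi_{|\pmb{\mathsf{Q}}|}(t)\sim|\pmb{\mathsf{P}}-\pmb{\mathsf{Q}}|^{p-2}t^2$, but in Case~2 you only have the one-sided bound $\delta+|\pmb{\mathsf{Q}}|+t<3|\pmb{\mathsf{P}}-\pmb{\mathsf{Q}}|$ (no matching lower bound in terms of $|\pmb{\mathsf{P}}-\pmb{\mathsf{Q}}|$), so what you actually get is $\varphi_{|\pmb{\mathsf{Q}}|}(t)\gtrsim|\pmb{\mathsf{P}}-\pmb{\mathsf{Q}}|^{p-2}t^2=\lambda^2|\pmb{\mathsf{P}}-\pmb{\mathsf{Q}}|^p$. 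That is exactly the direction you need, and combined with the upper bound $\varphi_{|\pmb{\mathsf{P}}|}(t)\lesssim\lambda^p|\pmb{\mathsf{P}}-\pmb{\mathsf{Q}}|^p$ (from $\delta+|\pmb{\mathsf{P}}|+t\ge t=\lambda|\pmb{\mathsf{P}}-\pmb{\mathsf{Q}}|$) the scalar inequality $\lambda^p\le c_\varepsilon\lambda^2+\varepsilon$ on $[0,1]$ closes the argument as you indicate. With that adjustment the sketch is sound.
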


	\subsection{Mesh regularity}
	
	Throughout the entire paper, $\{\mathcal{T}_h\}_{h>0}$ always denotes
	a~family of conforming
	triangulations of $\overline{\Omega}\subseteq \mathbb{R}^d$,
	$d\ge 2$, cf.\ \cite{BS08},  consisting of
	$d$-dimensional closed simplices~$K$.
	The parameter $h>0$, refers to the maximal mesh-size of $\mathcal{T}_h$, i.e., if we define $h_K\coloneqq \textup{diam}(K)$ for all $K\in \mathcal{T}_h$, then ${h\coloneqq \max_{K\in \mathcal{T}_h}{h_K}}$.
	For simplicity, we  assume  that  $h \le 1$.
	For a simplex $K \in \mathcal{T}_h$,
	we denote by $\rho_K>0$, the supremum of diameters~of~inscribed~balls. We assume that there exists a constant $\omega_0>0$, independent
	of $h>0$, such that ${h_K}{\rho_K^{-1}}\le
	\omega_0$ for every $K \in \mathcal{T}_h$. The smallest such constant is called the chunkiness of $\{\mathcal{T}_h\}_{h>0}$. 
	
	We define the sets of $(d-1)$-dimensional faces $\Gamma_h$, interior faces $\Gamma_h^{i}$, and boundary faces $\Gamma_h^{\partial}$ of the partition $\mathcal{T}_h$ via
	\begin{align*}
		\Gamma_h^{i}&\coloneqq  \{K\cap K'\mid K,K'\in \mathcal{T}_h\,,\text{dim}_{\mathscr{H}}(K\cap K')=d-1\}\,,\\[-0.5mm]
		\Gamma_h^{\partial}&\coloneqq\{K\cap\partial\Omega\mid K\in \mathcal{T}_h\,,\text{dim}_{\mathscr{H}}(K\cap
		\partial\Omega)=d-1\}\,,\\[-0.5mm]
		\Gamma_h&\coloneqq \Gamma_h^{i}\cup \Gamma_h^{\partial}\,,
	\end{align*}
	where for every subset  $S\subseteq \mathbb{R}^d$, we denote by $\text{dim}_{\mathscr{H}}(S)\coloneqq\inf\{d'\geq 0\mid \mathscr{H}^{d'}(S)=0\}$, the~Hausdorff dimension ($\mathscr{H}^{d'}(S)$ represents here the $d'$-dimensional Hausdorff measure of $S$). The (local) mesh-size function $h_{\mathcal{T}}\colon \overline{\Omega}\to \mathbb{R}$  is defined via ${h_{\mathcal{T}}|_K\coloneqq h_K} $ for all $K\in \mathcal{T}_h$.
	The (local) face-size function $h_{\Gamma}\colon \Gamma_h\to
        \mathbb{R}$  is defined via $h_{\Gamma}|_F\coloneqq h_F
        \coloneqq \text{diam}(F)$ for all $F\in \Gamma_h$.
	
	\subsubsection{Broken function spaces and projectors}
	
	For every $k \in \mathbb{N}_0$~and~$K\in \mathcal{T}_h$,
	we denote~by~$\mathbb{P}^k(K)$, the space of
	polynomials of degree at most $k$ on $K$.  Then, for given
	$k\in \mathbb{N}_0$, we define the space of \textit{broken
		polynomials of global degree at most $k$} via
	\begin{align*}
        \SwapAboveDisplaySkip
		\mathbb{P}^k(\mathcal T_h)&\coloneqq\big\{v_h\in
		L^\infty(\Omega)\mid v_h|_K\in \mathbb{P}^k(K)\text{ for all }K\in \mathcal{T}_h\big\}\,,
	\end{align*}
	and the space of \textit{broken
		polynomials} via
	$
	\mathbb{P}(\mathcal T_h) \coloneqq \bigcup_{k \in
		\ensuremath{\mathbb{N}}} \mathbb{P}^k(\mathcal T_h)\,.
	$ 
	In addition, 
	we define the \textit{space of element-wise continuous functions} via
	\begin{align*}
		C^0(\mathcal T_h)&\coloneqq\big\{w_h\in L^\infty(\Omega)\mid w_h|_K\in C^0(K)\text{ for all }K\in \mathcal{T}_h\big\}\,,
	\end{align*}
	and for
	given~$p\in (1,\infty)$, we define the \textit{broken Sobolev space} via
	\begin{align*}
		W^{1,p}(\mathcal T_h)&\coloneqq\big\{w_h\in L^p(\Omega)\mid w_h|_K\in W^{1,p}(K)\text{ for all }K\in \mathcal{T}_h\big\}\,.
	\end{align*}

	For  each $\boldsymbol{w}_h\in W^{1,p}(\mathcal{T}_h)^n$, we denote by $\nabla_h \boldsymbol{w}_h\in L^p(\Omega)^{n\times d}$ 
	the \textit{local gradient}: for every ${K\in\mathcal{T}_h}$ it is defined via
	$(\nabla_h \boldsymbol{w}_h)|_K\coloneqq \nabla(\boldsymbol{w}_h|_K)$ for all ${K\in\mathcal{T}_h}$.
	For each $K\in \mathcal{T}_h$, ${\boldsymbol{w}_h\in W^{1,p}(\mathcal{T}_h)^n}$ admits a trace ${\textrm{tr}^K(\boldsymbol{w}_h)\in L^p(\partial K)^n}$. For each face
	$F\in \Gamma_h$ of a given element $K\in \mathcal{T}_h$, we
        define this interior trace via 
	$\smash{\textup{tr}^K_F(\boldsymbol{w}_h)\in L^p(F)^n}$. Then, given some multiplication operator ${\odot\colon \mathbb{R}^n\times \mathbb{R}^d\to \mathbb{R}^\ell}$,~${\ell\in \mathbb{N}}$, for
	every $\boldsymbol{w}_h\in W^{1,p}(\mathcal{T}_h)^n$ and interior faces $F\in \Gamma_h^{i}$ shared by
	adjacent elements $K^-_F, K^+_F\in \mathcal{T}_h$,
	we denote by
	\begin{align*}
		\{\!\!\{\boldsymbol{w}_h\}\!\!\}_F&\coloneqq\tfrac{1}{2}\big(\textup{tr}_F^{K^+}(\boldsymbol{w}_h)+
		\textup{tr}_F^{K^-}(\boldsymbol{w}_h)\big)\in
		L^p(F)^n\,,\\
		\llbracket \boldsymbol{w}_h\odot \boldsymbol{n}\rrbracket_F
		&\coloneqq\textup{tr}_F^{K^+}(\boldsymbol{w}_h)\odot\boldsymbol{n}^+_F+
		\textup{tr}_F^{K^-}(\boldsymbol{w}_h)\odot\boldsymbol{n}_F^- 
		\in L^p(F)^\ell\,,
	\end{align*}
	the \textit{average} and \textit{jump}, respectively, of $\boldsymbol{w}_h$ on $F$.
	\!Moreover,  for every $\boldsymbol{w}_h\!\in\! W^{1,p}(\mathcal{T}_h)^n$ and boundary faces $F\in \Gamma_h^{\partial}$, we define \textit{boundary averages} and 
	\textit{boundary jumps}, respectively, via
	\begin{align*}
        \SwapAboveDisplaySkip
		\{\!\!\{\boldsymbol{w}_h\}\!\!\}_F&\coloneqq\textup{tr}^\Omega_F(\boldsymbol{w}_h) \in L^p(F)^n\,, \\
		\llbracket \boldsymbol{w}_h\odot\boldsymbol{n}\rrbracket_F&\coloneqq
		\textup{tr}^\Omega_F(\boldsymbol{w}_h)\odot\boldsymbol{n} \in L^p(F)^\ell\,.
	\end{align*}
	If there is no
	danger~of~confusion, we will omit the index $F\in \Gamma_h$; in particular,  if we interpret jumps and averages as global functions defined on the whole of $\Gamma_h$.
	In addition, for every $\boldsymbol{w}_h\in W^{1,p}(\mathcal{T}_h)^n$, we  introduce the DG norm via
	\begin{align*}
		\|\boldsymbol{w}_h\|_{h,p}\coloneqq\big(\|\nabla_h\boldsymbol{w}_h\|_{p,\Omega}^p+\big\|h^{-1/p'}_\Gamma\llbracket{\boldsymbol{w}_h\otimes\boldsymbol{n}}\rrbracket\big\|_{p,\Gamma_h}^p\big)^{1/p}\,,
	\end{align*}
	which turns $W^{1,p}(\mathcal{T}_h)^n$ into a Banach space. We denote by $\Pi_h^k \colon L^1(\Omega)\to \mathbb{P}^k(\mathcal{T}_h)$, the (local)~$L^2$-projection onto $\mathbb{P}^k(\mathcal{T}_h)$, defined for every $v \in
	L^1(\Omega)$ via  $(\Pi_h^k v,v_h)_{\Omega}=(v,v_h)_{\Omega}$ for all $v_h \in \mathbb{P}^k(\mathcal{T}_h)$.
	Analogously, we define the (local)
	$L^2$-projections into $\mathbb{P}^k(\mathcal{T}_h)^n$, i.e., $\Pi_h^k\colon L^1(\Omega)^n \to \mathbb{P}^k(\mathcal{T}_h)^n$, and into $\mathbb{P}^k(\mathcal{T}_h)^{n\times d}$, i.e., $\Pi_h^k\colon L^1(\Omega)^{n\times d} \to \mathbb{P}^k(\mathcal{T}_h)^{n\times d}$.
	
	\subsubsection{DG gradient and jump operator} \label{sec:DG_gradient}
	
	For every $k\in \mathbb{N}$, we define the \textit{(global)
		jump lifting operator}
	$\boldsymbol{\mathcal{R}}_h^k\colon W^{1,p}(\mathcal{T}_h)^n \to
	\mathbb{P}^{k-1} (\mathcal{T}_h)^{n\times d}$  (using Riesz representation)  for every
	$\boldsymbol{w}_h\in W^{1,p}(\mathcal{T}_h)^n $ 
	via
	\begin{align*}
		(\boldsymbol{\mathcal{R}}_h\boldsymbol{w}_h,\pmb{\mathsf{T}}_h)_{\Omega}\coloneqq\langle
		\llbracket\boldsymbol{w}_h\otimes{\mathbf{n}}\rrbracket,\{\!\!\{\pmb{\mathsf{T}}_h\}\!\!\}\rangle_{\Gamma_h}\quad
		\text { for all } \pmb{\mathsf{T}}_h\in \mathbb{P}^{k-1}(\mathcal{T}_h)^{n\times d}\,.
	\end{align*}
	Then, for every $k\in \mathbb{N}_0$, the \textit{DG gradient}
	$  \pmb{\mathsf{\mathcal{G}}}_h\colon W^{1,p}(\mathcal{T}_h)^n \to
	L^p(\Omega)^{n\times d}$, for every ${\boldsymbol{w}_h\in W^{1,p}(\mathcal{T}_h)^n}$, is defined via
	\begin{align*}
		\pmb{\mathsf{\mathcal{G}}}_h \boldsymbol{w}_h\coloneqq \nabla_h \boldsymbol{w}_h -\boldsymbol{\mathcal{R}}_h \boldsymbol{w}_h\quad\text{ in }L^p(\Omega)^{n\times d}\,.
	\end{align*}
	Owing to \cite[{(A.26)--(A.28)}]{dkrt-ldg}, there exists a constant $c>0$ such that $\boldsymbol{w}_h\in W^{1,p}(\mathcal{T}_h)^n$, it holds that
	\begin{align}\label{eq:eqiv0}
		c^{-1}\,\|\boldsymbol{w}_h\|_{h,p}\leq \big(\|\pmb{\mathsf{\mathcal{G}}}_h\boldsymbol{w}_h\|_{p,\Omega}^p+\big\|h^{-1/p'}_\Gamma\llbracket{\boldsymbol{w}_h\otimes\boldsymbol{n}}\rrbracket\big\|_{p,\Gamma_h}^p\big)^{1/p}\leq c\,\|\boldsymbol{w}_h\|_{h,p}\,.
	\end{align}
	
	For a generalized N-function $\psi\colon \Omega\times
	\ensuremath{\mathbb{R}}^{\ge 0}\to \ensuremath{\mathbb{R}}^{\ge 0}$, the pseudo-modular\footnote{The definition of a pseudo-modular can be found in \cite{Mu}. We extend the notion of DG Sobolev spaces to DG Sobolev--Orlicz spaces $W^{1,\psi }(\mathcal{T}_h)\coloneqq \{w_h\in L^1(\Omega)\mid w_h\in W^{1,1}(K)\allowbreak\text{with }\psi(\cdot,\nabla (w_h|_K))\in L^1(K)\text{ for all }K\in \mathcal{T}_K\}$.} 
	$m_{\psi,h}\colon\allowbreak W^{1,\psi}(\mathcal T_h)^n\to \mathbb{R}^{\ge 0}$,  for every $\boldsymbol{w}_h\in W^{1,\psi}(\mathcal T_h)^n$ 
	is defined via\enlargethispage{4mm}
	\begin{align*} 
         m_{\psi,h}(\boldsymbol{w}_h)&\coloneqq \smash{\sum_{F \in \Gamma_h}} h_F\int
                            _F\psi(h_F^{-1}\llbracket{\boldsymbol{w}_h\otimes
                            {\mathbf{n}}}\rrbracket)\, \textup{d}s \,. 
	\end{align*}
	For $\psi = \varphi_{p,0}$, it holds that
	$m_{\psi,h}(\boldsymbol{w}_h)=\|h^{-1/p'}_\Gamma\llbracket{\boldsymbol{w}_h\otimes
		{\mathbf{n}}}\rrbracket\|_{p,\Gamma_h}^p $~for~all~${\boldsymbol{w}_h\in W^{1,\psi}(\mathcal T_h)^n}$.

	\section{Construction of moment-preserving operators}\label{sec:smoothing_operator}
	
	We employ here the smoothing operator
	$\pmb{\mathsf{\mathcal{E}}}_h    \coloneqq  \pmb{\mathsf{\mathcal{E}}}_{h,k} \colon
	C^0(\mathcal{T}_h)^n \to {\mathbb{P}}^{k+d}(\mathcal{T}_h)^n\cap
	W^{1,1}_0(\Omega)^n$, $k \in \ensuremath{\mathbb{N}}$, 
	introduced in \cite{VZ.2018.III}.
	The operator,  for every $\boldsymbol{w}_h\in C^0(\mathcal{T}_h)^n$, is constructed as:
    \begin{alignat*}{2}
		\pmb{\mathsf{\mathcal{E}}}_h \boldsymbol{w}_h &\coloneqq  \pmb{\mathsf{\mathcal{A}}}_h \boldsymbol{w}_h  + \pmb{\mathsf{\mathcal{B}}}_h(\boldsymbol{w}_h  -\pmb{\mathsf{\mathcal{A}}}_h \boldsymbol{w}_h )
        \qquad&\text{in }{\mathbb{P}}^{k+d}(\mathcal{T}_h) \cap W^{1,p}_0(\Omega)^n\,, \\
		\shortintertext{where}
		\pmb{\mathsf{\mathcal{B}}}_h \boldsymbol{w}_h &\coloneqq  \pmb{\mathsf{\mathcal{B}}}^{(1)}_h\boldsymbol{w}_h + \pmb{\mathsf{\mathcal{B}}}^{(2)}_h(\boldsymbol{w}_h  - \pmb{\mathsf{\mathcal{B}}}^{(1)}_h\boldsymbol{w}_h)
        \qquad&\text{in }{\mathbb{P}}^{k+d}(\mathcal{T}_h) \cap W^{1,p}_0(\Omega)^n\,,
    \end{alignat*}
    \vskip-\belowdisplayskip\vskip\belowdisplayshortskip\noindent
    and
    \begin{align*}
        \SwapAboveDisplaySkip
		\pmb{\mathsf{\mathcal{A}}}_h&\coloneqq \pmb{\mathsf{\mathcal{A}}}_{h,k} \colon
		C^0(\mathcal{T}_h)^n\to {\mathbb{P}}^k(\mathcal{T}_h)^n \cap
		W^{1,1}_0(\Omega)^n\,,\\
		\pmb{\mathsf{\mathcal{B}}}_h^{(1)}&\coloneqq\pmb{\mathsf{\mathcal{B}}}_{h,k} ^{(1)} \colon
		C^0(\mathcal{T}_h)^n \to {\mathbb{P}}^{k+d-1}(\mathcal{T}_h)^n \cap
		W^{1,1}_0(\Omega)^n\,,\\
		\pmb{\mathsf{\mathcal{B}}}_h^{(2)}&\coloneqq\pmb{\mathsf{\mathcal{B}}}_{h,k}^{(2)} \colon
		C^0(\mathcal{T}_h)^n\to {\mathbb{P}}^{k+d}(\mathcal{T}_h)^n \cap
		W^{1,1}_0(\Omega)^n
	\end{align*}
	are defined via:
	\begin{itemize}
		\item \textbf{$\pmb{\mathsf{\mathcal{A}}}_h$ (simplified nodal averaging with 1 dof):} Denote by $\mathcal{L}^{\mathrm{int}}_k(\mathcal{T}_h)$ the set of interior Lagrange node on $\mathcal{T}_h$ of degree $k$ and for every $y\in  \mathcal{L}^{\mathrm{int}}_k(\mathcal{T}_h)$ by $\varphi_y^k\in \mathbb{P}^k(\mathcal{T}_h)$,~the~\mbox{corresponding} basis function. In~addition, for every 
		$y \hspace*{-0.1em} \in\hspace*{-0.1em} \mathcal{L}^{\mathrm{int}}_k(\mathcal{T}_h)$  fix an arbitrary  $K_y \hspace*{-0.1em} \in\hspace*{-0.1em}  \mathcal{T}_h$~such~that~$y \hspace*{-0.1em} \in\hspace*{-0.1em}  K_y $. Then, the operator $\pmb{\mathsf{\mathcal{A}}}_h \hspace*{-0.15em} \colon \hspace*{-0.175em}
		C^0(\mathcal{T}_h)^n\hspace*{-0.175em} \to \hspace*{-0.175em}{\mathbb{P}}^k(\mathcal{T}_h)\hspace*{-0.15em} \cap\hspace*{-0.15em}
		W^{1,1}_0(\Omega)^n$, for every $\boldsymbol{w}_h\hspace*{-0.175em}\in\hspace*{-0.175em} C^0(\mathcal{T}_h)^n$, is~\mbox{defined}~via
		\begin{align*}
			\pmb{\mathsf{\mathcal{A}}}_h  \boldsymbol{w}_h \coloneqq  \sum_{y \in \mathcal{L}^{\mathrm{int}}_k(\mathcal{T}_h)} (\boldsymbol{w}_h|_{K_y })(y ) \varphi_y ^k\quad\text{  in }{\mathbb{P}}^k(\mathcal{T}_h)^n \cap W^{1,1}_0(\Omega)^n\,.
		\end{align*}
		\item \textbf{$\pmb{\mathsf{\mathcal{B}}}_h^{(1)}$ (facet bubble smoother):} Denote for
		$F\in \Gamma_h$ by $\mathcal{L}_{k-1}(F)$ the set of Lagrange nodes on $F$~of~degree $k-1$ and by $\varphi_F \coloneqq \varphi_{y_1}^1\cdot \ldots\cdot\varphi_{y_d}^1 \in \mathbb{P}^d(F)\cap W^{1,1}_0(F)$, where $y_1,\dots,y_d\in \mathcal{L}_1(F)$ are such that
		$F=\textup{conv}\{y_1,\dots, y_d\}$, the corresponding facet bubble function.
		Then, the operator $\pmb{\mathsf{\mathcal{B}}}_h^{(1)} \colon 
		C^0(\mathcal{T}_h)^n \to {\mathbb{P}}^{k+d-1}(\mathcal{T}_h)^n \cap
		W^{1,1}_0(\Omega)^n$, for every $\boldsymbol{w}_h\in  C^0(\mathcal{T}_h)^n$,~is~\mbox{defined}~via
		\begin{align*}
			\pmb{\mathsf{\mathcal{B}}}^{(1)}_h\boldsymbol{w}_h \coloneqq  
            \smashoperator[l]{\sum_{F \in \Gamma_h^{i}}}
            \smashoperator[r]{\sum_{y \in \mathcal{L}_{k-1}(F)}}
			(\boldsymbol{\mathcal{Q}}_F \{\!\!\{ \boldsymbol{w}_h \}\!\!\})(y  )\, \varphi^{k-1}_y \, \varphi_F\quad\text{  in }{\mathbb{P}}^{k+d-1}(\mathcal{T}_h) ^n\cap W^{1,1}_0(\Omega)^n\,,
		\end{align*}
		where $\boldsymbol{\mathcal{Q}}_F\colon C^0(\mathcal{T}_h)^n \to \mathbb{P}^{k-1}(F)^n$ is the weighted
		$L^2$-projection, for every $\boldsymbol{w}_h\in C^0(\mathcal{T}_h)^n$ and $\boldsymbol{z}_h\in  \mathbb{P}^{k-1}(F)^n$, defined via
		\begin{align*}
			(\varphi_F\boldsymbol{\mathcal{Q}}_F \boldsymbol{w}_h, \boldsymbol{z}_h)_F =( \boldsymbol{v}_h, \boldsymbol{z}_h)_F \,.
		\end{align*}
		\item 
		\textbf{$\pmb{\mathsf{\mathcal{B}}}_h^{(2)}$ (interior bubble smoother):}  Denote for $K\hspace*{-0.1em}\in\hspace*{-0.1em} \mathcal{T}_h$ by $\mathcal{L}_k(K)$ the set of Lagrange~nodes on $K$ of degree $k$ 
		and by $\varphi_K\coloneqq \varphi_{y_1}^1\cdot \ldots\cdot\varphi_{y_{d+1}}^1 \hspace*{-0.15em}\in \hspace*{-0.1em}\mathbb{P}^{d+1}(K)\cap W^{1,1}_0(K)$, where $y_1,\dots,y_{d+1}\in \mathcal{L}_1(K)$ are such that
		$K=\textup{conv}\{y_1,\ldots, y_{d+1}\}$, the corresponding bubble function.
		\!Then,~the operator $\pmb{\mathsf{\mathcal{B}}}_h^{(2)} \colon 
		C^0(\mathcal{T}_h)^n \to {\mathbb{P}}^{k+d}(\mathcal{T}_h)^n \cap
		W^{1,1}_0(\Omega)^n$, for every $\boldsymbol{w}_h\in C^0(\mathcal{T}_h)^n$,~is~defined~via
		\begin{align*}
			\pmb{\mathsf{\mathcal{B}}}^{(2)}_h \boldsymbol{w}_h \coloneqq  \sum_{K \in \mathcal{T}_h} (\boldsymbol{\mathcal{Q}}_K \boldsymbol{w}_h)\, \varphi_K\,,
		\end{align*}
		where ${\boldsymbol{\mathcal{Q}}_K\colon C^0(\mathcal{T}_h)^n\to \mathbb{P}^{k-1}(K)^n}$ is the weighted $L^2$-projection,
		for every $\boldsymbol{w}_h\in C^0(\mathcal{T}_h)^n$ and $\boldsymbol{z}_h\in  \mathbb{P}^{k-1}(K)^n$, defined via
		\begin{align*}
			(\varphi_K\boldsymbol{\mathcal{Q}}_K \boldsymbol{w}_h, \boldsymbol{z}_h)_K =( \boldsymbol{w}_h, \boldsymbol{z}_h)_K\,.
		\end{align*}
		Note that $\pmb{\mathsf{\mathcal{B}}}^{(2)}_h$ is not needed when $k=1$.
	\end{itemize}

	The idea in all these cases is that the bubble smoother
	helps with the preservation of moments, but as the bubble
	smoothers are not stable, a nodal averaging operator
	is added~to~recover~stability. Namely, \cite[(3.17),
	(3.18)]{VZ.2018.III} and
	$\left| \llbracket{\boldsymbol{v}_h}\rrbracket \right|=\left| \llbracket{\boldsymbol{v}_h \otimes \boldsymbol{n}}\rrbracket \right|$, for
	every $\boldsymbol{v}_h\in {\mathbb{P}}^k(\mathcal{T}_h)^n$, imply that
	\begin{align}\label{eq:E_stability_L2}
		\|\nabla_h(\boldsymbol{v}_h - \pmb{\mathsf{\mathcal{E}}}_h  \boldsymbol{v}_h )\|_{2,K}
		\lesssim  \sum_{F\in \Gamma_h(K)}{\|h_\Gamma^{-1/2} \llbracket{\boldsymbol{v}_h \otimes \boldsymbol{n}}\rrbracket_F\|_{2,F}}\,,
	\end{align}
	where $\Gamma_h(K)\coloneqq \{F\in \Gamma_h\mid K\cap
	F\neq\emptyset\}$ and the constants depend only on $k$
	and $\omega_0$. We now proceed to generalise the estimate \cref{eq:E_stability_L2} to the Orlicz setting.
	
	\begin{Proposition}[Interpolation estimate]\label{prop:n-function_E}
		Let $\psi\colon \mathbb{R}^{\ge 0}\to\mathbb{R}^{\ge
			0} $ be an N-function with $\psi\in \Delta_2$,
		$\psi^*\in \Delta_2$, and let $k\in \mathbb{N}$. Then,  for every $\boldsymbol{v}_h\in \mathbb{P}^k(K)^n$ and $K\in \mathcal{T}_h$, it holds that
		\begin{align*}
			\frac{1}{\vert K\vert }\int_K{\psi(h_K\vert \tilde{\nabla}_h (\boldsymbol{v}_h-\pmb{\mathsf{\mathcal{E}}}_h  \boldsymbol{v}_h)\vert)\,\textup{d}x}\lesssim  \sum_{F\in \Gamma_h(K)}{\frac{1}{\vert F\vert }\int_F{\psi(\vert\llbracket{\boldsymbol{v}_h \otimes \boldsymbol{n}}\rrbracket_F\vert)\, \textup{d}s}}\,,
		\end{align*}
		where either $\tilde{\nabla}_h= {\nabla}_h$ or
		$\tilde{\nabla}_h =\pmb{\mathsf{\mathcal{G}}}_h$ and the constants depend only on $k$, $\Delta_2(\psi),\Delta_2(\psi^*)$ and $\omega_0$.
	\end{Proposition}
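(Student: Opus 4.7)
The plan is to reduce the Orlicz estimate to the $L^2$-stability bound \cref{eq:E_stability_L2} already recalled from \cite{VZ.2018.III}, using three finite-dimensional devices: monotonicity of $\psi$, norm equivalence on polynomial spaces, and the $\Delta_2$-conditions on $\psi$ and $\psi^*$ to push constants through $\psi$.

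The first ingredient I would establish is a modular-to-$L^\infty$ equivalence on polynomial spaces: for every fixed $m,\ell\in\mathbb{N}$ and every $\hat{p}\in \mathbb{P}^m(\hat{E})^\ell$ on a reference $d$- or $(d-1)$-simplex $\hat{E}$,
\begin{align*}
 \frac{1}{|\hat{E}|}\int_{\hat{E}}\psi(|\hat{p}|)\,\textup{d}\hat{x} \;\sim\; \psi\bigl(\|\hat{p}\|_{\infty,\hat{E}}\bigr)\,,
\end{align*}
with constants depending only on $m$, $d$, $\ell$, $\Delta_2(\psi)$, and $\Delta_2(\psi^*)$. The upper bound follows from monotonicity of $\psi$. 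For the lower bound, the inverse inequality $\|\hat{p}\|_{\infty,\hat{E}}\lesssim \frac{1}{|\hat{E}|}\int_{\hat{E}}|\hat{p}|\,\textup{d}\hat{x}$, which holds by norm equivalence on the finite-dimensional space $\mathbb{P}^m(\hat{E})^\ell$, combined with Jensen's inequality and $\Delta_2$ to absorb the implicit constant, yields the claim. An affine change of variables transfers the equivalence to an arbitrary $K\in\mathcal{T}_h$ or $F\in\Gamma_h$, since both sides are invariant under rescaling of the base set.

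Equipped with this, I would chain
\begin{align*}
 \frac{1}{|K|}\int_K\psi\bigl(h_K|\tilde\nabla_h(\boldsymbol{v}_h-\pmb{\mathsf{\mathcal{E}}}_h\boldsymbol{v}_h)|\bigr)\,\textup{d}x
 &\sim \psi\bigl(h_K\|\tilde\nabla_h(\boldsymbol{v}_h-\pmb{\mathsf{\mathcal{E}}}_h\boldsymbol{v}_h)\|_{\infty,K}\bigr) \\
 &\lesssim \psi\bigl(h_K|K|^{-1/2}\|\tilde\nabla_h(\boldsymbol{v}_h-\pmb{\mathsf{\mathcal{E}}}_h\boldsymbol{v}_h)\|_{2,K}\bigr)
\end{align*}
using the standard $L^\infty$-$L^2$ inverse inequality on polynomials. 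For $\tilde\nabla_h=\nabla_h$ the remaining $L^2$-norm is controlled directly by \cref{eq:E_stability_L2}. For $\tilde\nabla_h=\pmb{\mathsf{\mathcal{G}}}_h$, I would first observe that $\pmb{\mathsf{\mathcal{E}}}_h\boldsymbol{v}_h\in W^{1,1}_0(\Omega)^n$ has zero jumps and hence $\boldsymbol{\mathcal{R}}_h\pmb{\mathsf{\mathcal{E}}}_h\boldsymbol{v}_h=\mathbf{0}$, so that $\pmb{\mathsf{\mathcal{G}}}_h(\boldsymbol{v}_h-\pmb{\mathsf{\mathcal{E}}}_h\boldsymbol{v}_h)=\nabla_h(\boldsymbol{v}_h-\pmb{\mathsf{\mathcal{E}}}_h\boldsymbol{v}_h)-\boldsymbol{\mathcal{R}}_h\boldsymbol{v}_h$; combining \cref{eq:E_stability_L2} with the standard lifting bound $\|\boldsymbol{\mathcal{R}}_h\boldsymbol{v}_h\|_{2,K}\lesssim \sum_{F\in\Gamma_h(K)}\|h_\Gamma^{-1/2}\llbracket\boldsymbol{v}_h\otimes\boldsymbol{n}\rrbracket\|_{2,F}$ handles this case identically.

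Shape regularity gives $|K|\sim h_K^d$ and $|F|\sim h_K^{d-1}$, hence $h_K|K|^{-1/2}h_F^{-1/2}\sim |F|^{-1/2}$, which converts the $L^2$ bound into $\sum_{F\in\Gamma_h(K)}(\frac{1}{|F|}\int_F|\llbracket\boldsymbol{v}_h\otimes\boldsymbol{n}\rrbracket|^2\,\textup{d}s)^{1/2}$. Applying the face version of the modular-$L^\infty$ equivalence (first with the quadratic N-function, then with the general $\psi$) and the $\psi$-subadditivity $\psi(\sum_i a_i)\lesssim \sum_i \psi(a_i)$, which is a consequence of $\Delta_2$ together with the chunkiness bound $|\Gamma_h(K)|\lesssim 1$, yields the desired right-hand side $\sum_{F\in\Gamma_h(K)}\frac{1}{|F|}\int_F\psi(|\llbracket\boldsymbol{v}_h\otimes\boldsymbol{n}\rrbracket_F|)\,\textup{d}s$. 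The main care point is the bookkeeping of constants in the modular-$L^\infty$ equivalence, in particular the use of $\psi^*\in\Delta_2$ to control $\psi(\lambda t)$ in terms of $\psi(t)$ for $\lambda\in(0,1]$ via the dual doubling inequality; the rest is routine scaling once the polynomial equivalence is in hand.
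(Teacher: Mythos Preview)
Your proof is correct and follows essentially the same route as the paper's. Both arguments pass through the $L^\infty$ norm on $K$ via inverse inequalities, apply the $L^2$ stability bound \cref{eq:E_stability_L2}, and convert back on the faces using norm equivalence on polynomial spaces together with Jensen and $\Delta_2$; the paper just compresses your modular--$L^\infty$ equivalence into a single $L^\infty$-to-$L^1$ pointwise inequality \cref{prop:n-function_E.1} and then applies $\psi$ once, and for the $\pmb{\mathsf{\mathcal{G}}}_h$ case cites an Orlicz lifting bound directly rather than going through $L^2$, but the mechanics are the same.
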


	\begin{proof}
		We need to distinguish the cases $\tilde{\nabla}_h=\nabla_h$ and $\tilde{\nabla}_h=\pmb{\mathsf{\mathcal{G}}}_h$:
		
		\textit{Case $\tilde{\nabla}_h=\nabla_h$.}
		Owing to \cref{eq:E_stability_L2} together with \cite[Lem.\ 12.1]{EG.2021}, we have that
		\begin{align}
			h_K\| \nabla (\boldsymbol{v}_h-\pmb{\mathsf{\mathcal{E}}}_h  \boldsymbol{v}_h)\|_{\infty,K}\lesssim \sum_{F\in \Gamma_h(K)}{\frac{1}{\vert  F\vert }\int_F{\vert \llbracket{\boldsymbol{v}_h \otimes \boldsymbol{n}}\rrbracket_F\vert\,\textup{d}s}}\,,\label{prop:n-function_E.1}
		\end{align}
		where the constants depends only on $k$ and $\omega_0$. 
		Using  \cref{prop:n-function_E.1}, the $\Delta_2$-condition and
		convexity of $\psi$, in particular, Jensen's inequality, and that $\sup_{h>0}\sup_{K\in \mathcal{T}_h}{\textup{card}(\Gamma_h(K))}\allowbreak\leq c$, where $c>0$ depends only on $\omega_0$,
		we find that
		\begin{align}\label{prop:n-function_E.2}
				\frac{1}{\vert K\vert }\int_K{\psi(h_K\vert \nabla (\boldsymbol{v}_h-\pmb{\mathsf{\mathcal{E}}}_h  \boldsymbol{v}_h)\vert)\,\textup{d}x}
				&\lesssim 
				\psi\bigg(\frac{1}{\textup{card}(\Gamma_h(K))}\sum_{F\in\Gamma_h(K)}{\frac{1}{\vert F\vert }\int_F{\vert \llbracket{\boldsymbol{v}_h \otimes \boldsymbol{n}}\rrbracket_F\vert\,\textup{d}s}}\bigg)\notag
				\\&\lesssim \frac{1}{\textup{card}(\Gamma_h(K))}\sum_{F\in\Gamma_h(K)}{\frac{1}{\vert F\vert }\int_F{\psi(\vert \llbracket{\boldsymbol{v}_h \otimes \boldsymbol{n}}\rrbracket_F\vert)\,\textup{d}s}}
				\\&\lesssim \sum_{F\in\Gamma_h(K)}{\frac{1}{\vert F\vert }\int_F{\psi(\vert \llbracket{\boldsymbol{v}_h \otimes \boldsymbol{n}}\rrbracket_F\vert)\,\textup{d}s}}\,,\notag
		\end{align}
		where the constants depend only on $k$, $\Delta_2(\psi),\Delta_2(\psi^*)$ and $\omega_0$.
		
		\textit{Case $\tilde{\nabla}_h=\pmb{\mathsf{\mathcal{G}}}_h$.} 
		Appealing to \cite[(A.23)]{dkrt-ldg},  for every $F\in \Gamma_h$ and $K\in \mathcal{T}_h$ with $F\subseteq \partial K$, it holds that
		\begin{align}
			\frac{1}{\vert K\vert }\int_K{\psi(h_K\vert \boldsymbol{\mathcal{R}}_h \boldsymbol{v}_h\vert)\,\textup{d}x}\lesssim \frac{1}{\vert F\vert }\int_F{\psi(\vert\llbracket{\boldsymbol{v}_h \otimes \boldsymbol{n}}\rrbracket_F\vert)\,\textup{d}s}\,,\label{prop:n-function_E.3}
		\end{align}
		where the constants depend only on $k$, $\Delta_2(\psi),\Delta_2(\psi^*)$ and $\omega_0$.
		Thus, using \cref{prop:n-function_E.2}, that ${\boldsymbol{\mathcal{R}}_h \pmb{\mathsf{\mathcal{E}}}_h  \boldsymbol{v}_h=\boldsymbol{0}}$, and \cref{prop:n-function_E.3}, we obtain
		\begin{align*}
			\frac{1}{\vert K\vert }\int_K{\psi(h_K\vert \tilde{\nabla}_h (\boldsymbol{v}_h-\pmb{\mathsf{\mathcal{E}}}_h  \boldsymbol{v}_h)\vert)\,\textup{d}x}&\lesssim \frac{1}{\vert K\vert }\int_K{\psi(h_K\vert \nabla (\boldsymbol{v}_h-\pmb{\mathsf{\mathcal{E}}}_h  \boldsymbol{v}_h)\vert)\,\textup{d}x}\\&\quad +\frac{1}{\vert K\vert }\int_K{\psi(h_K\vert \boldsymbol{\mathcal{R}}_h \boldsymbol{v}_h\vert)\,\textup{d}x}\\&\lesssim \sum_{F\in\Gamma_h(K)}{\frac{1}{\vert F\vert }\int_F{\psi(\vert \llbracket{\boldsymbol{v}_h \otimes \boldsymbol{n}}\rrbracket_F\vert)\,\textup{d}s}}\,,
		\end{align*}
		where the constants depend only on $k$, $\Delta_2(\psi),\Delta_2(\psi^*)$ and $\omega_0$.
	\end{proof}

	\section{Primal formulation}\label{sec:primal}
	
	In this section, we examine the primal formulation of the non-linear problem \cref{eq:PDE} and its discretization. 	In doing so, throughout the entire section, we assume that $\pmb{\mathsf{\mathcal{S}}}$ has $(p,\delta)$-structure in the sense of \cref{assum:extra_stress}.
	
	\subsection{Continuous primal formulation} We abbreviate
	\begin{align*}
		V\coloneqq W^{1,p}_0(\Omega)^n\,.
	\end{align*}	
	Then, for given right-hand side $\boldsymbol{f}\in V^*$, the \textit{primal formulation} of \cref{eq:PDE} seeks for a function (or vector field, respectively) $\boldsymbol{u}\in V$ such that for every $\boldsymbol{z}\in V$, it holds that
	\begin{align}\label{eq:primal}
		(\pmb{\mathsf{\mathcal{S}}}(\nabla\boldsymbol{u} ), \nabla \boldsymbol{z})_{\Omega}=\langle \boldsymbol{f},  \boldsymbol{z} \rangle_V\,.
	\end{align}
	Resorting to the celebrated theory of monotone operators, cf.\ \cite{zei-IIB,ru-fa}, it is readily seen that the primal formulation admits a unique solution.
	
	\subsection{Discrete primal formulation}
	For given $k\in \mathbb{N}$, we abbreviate
	\begin{align*}
		V_h\coloneqq V_h^k\coloneqq \mathbb{P}^k(\mathcal T_h)^n\,.
	\end{align*}
	Then, for given right-hand side $\boldsymbol{f}\in V^*$, the \textit{discrete primal formulation} of \cref{eq:PDE} seeks for a discrete function (or vector field, respectively) $\boldsymbol{u}_h\in V_h$ such that for every $\boldsymbol{z}_h\in V_h$, it holds that
	\begin{align}\label{eq:discrete_primal_DG}
		(\pmb{\mathsf{\mathcal{S}}}(\tilde{\nabla}_h \boldsymbol{u}_h),  \nabla \pmb{\mathsf{\mathcal{E}}}_h  \boldsymbol{z}_h)_\Omega
		+ \alpha\, \langle \pmb{\mathsf{\mathcal{S}}}_{\boldsymbol{\beta}_h(\boldsymbol{u}_h)} ( h_\Gamma^{-1} \llbracket{\boldsymbol{u}_h\otimes \boldsymbol{n}}\rrbracket ), \llbracket{\boldsymbol{z}_h \otimes \boldsymbol{n}}\rrbracket\rangle_{\Gamma_h}
		=
		\langle \boldsymbol{f}, \pmb{\mathsf{\mathcal{E}}}_h  \boldsymbol{z}_h \rangle_V
		\,,
	\end{align}
	where $\alpha\hspace*{-0.15em}>\hspace*{-0.15em}0$ is the \textit{stabilisation parameter},
	$\boldsymbol{\beta}_h\colon \hspace*{-0.15em}V_h\hspace*{-0.15em}\to\hspace*{-0.15em} \mathbb{R}^{\ge 0}$ is the
	\textit{max-shift functional},~for~\mbox{every}~${\boldsymbol{v}_h\hspace*{-0.15em}\in\hspace*{-0.15em}
	V_h}$,  defined via 
	\begin{align}\label{def_shift}
		\boldsymbol{\beta}_h(\boldsymbol{v}_h) \coloneqq  \begin{cases}
			0 & \text{if }p\leq 2\,,\\
			\|\tilde{\nabla}_h \boldsymbol{v}_h\|_{\infty,\Omega} & \text{if }p>2\,,
		\end{cases}
	\end{align}
	with $\tilde{\nabla}_h\in \{ \pmb{\mathsf{\mathcal{G}}}_h,\nabla_h\}$, and         $\pmb{\mathsf{\mathcal{S}}}_{\boldsymbol{\beta}_h(\boldsymbol{u}_h)}$ is defined  in \cref{eq:flux}. More precisely, 
	setting $\tilde{\nabla}_h = \pmb{\mathsf{\mathcal{G}}}_h$, results in an LDG-type
	method and setting $\tilde{\nabla}_h=\nabla_h$, results is an
	IIDG-type method.
	
	\begin{Remark}\label{rem:grad_tilde}
		Note that for each choice $\tilde{\nabla}_h\in
		\{\pmb{\mathsf{\mathcal{G}}}_h,\nabla_h\}$ and for every $\boldsymbol{w}_h\in C^0(\mathcal{T}_h)^n$,~we~have~that $\tilde{\nabla}_h\pmb{\mathsf{\mathcal{E}}}_h\boldsymbol{w}_h\!=\!\nabla \pmb{\mathsf{\mathcal{E}}}_h\boldsymbol{w}_h$.
		\!This enables us to replace the classical gradient in the primal formulation~\cref{eq:discrete_primal_DG}, by  $\tilde{\nabla}_h\in \{\pmb{\mathsf{\mathcal{G}}}_h,\nabla_h\}$, which is of crucial importance in the proof of a best-approximation type result in \cref{thm:error_primal}; see also \cref{lem:max-shift}.
	\end{Remark}
	
	\begin{Remark}
		It can be seen that if we add to the left-hand side of
                the LDG discretisation~the~term
                $(\pmb{\mathsf{\mathcal{S}}}(\boldsymbol{\mathcal{R}}_h \boldsymbol{u}_h),
                \nabla_h(\pmb{\mathsf{\mathcal{E}}}_h  \boldsymbol{z}_h - \boldsymbol{z}_h))_{\Omega}$, then we
                arrive at a SIP-like method (in that it reduces to the
                usual Symmetric Interior Penalty method in the linear
                case). Thus, all results proved for~the~LDG~method and the
                IIDG method can be carried over to this method.
	\end{Remark}

	\begin{Remark}\label{rem:motivation_shift}
		The max-shift functional is constructed in such a way that
		is has the following two properties. First, for the globally
		non-constant shift $\vert \tilde{\nabla}_h\boldsymbol{u}_h\vert \in L^p(\Omega)^{n\times
			d}$ and the globally constant shift $\boldsymbol{\beta}_h(\boldsymbol{u}_h)\in \ensuremath{\mathbb{R}}$, it holds that
		$\varphi_{\vert \tilde{\nabla}_h\boldsymbol{u}_h\vert} (t)\le
		\varphi_{\boldsymbol{\beta}_h(\boldsymbol{u}_h)}(t)$ for all $t\ge 0$. To the latter N-function, the approximation
		properties of the smoothing operator $\pmb{\mathsf{\mathcal{E}}}_h \colon V_h\to V$
		apply, cf.\ \cref{eq:max-shift.2} (below). Second, for the
		conjugate N-functions, it holds that $          (\varphi_{\boldsymbol{\beta}_h(\boldsymbol{u}_h)})^*(t)\le (\varphi_{\vert \tilde{\nabla}_h\boldsymbol{u}_h\vert})^* (t)
		$ for all $t\ge 0$. For the latter N-function, the
		equivalences \cref{eq:hammera} apply, cf.\
		\cref{eq:max-shift.3} (below). 
	\end{Remark}
	
	More precisely, the max-shift functional is precisely constructed in such a way that the following lemma applies.
	
	\begin{Lemma}\label{lem:max-shift}
		For every $\kappa>0$, there exists constant
		$c_\kappa>0$, depending  on the characteristics~of~$\pmb{\mathsf{\mathcal{S}}}$, such that for every $\boldsymbol{v}_h,\boldsymbol{z}_h\in V_h$ and $\boldsymbol{v}\in V$, it holds that
		\begin{align*}
			&\vert (\pmb{\mathsf{\mathcal{S}}}(\tilde{\nabla}_h \boldsymbol{v}_h) - \pmb{\mathsf{\mathcal{S}}}(\nabla \boldsymbol{v}), \tilde{\nabla}_h(\pmb{\mathsf{\mathcal{E}}}_h  \boldsymbol{z}_h - \boldsymbol{z}_h))_{\Omega}\vert \\
            &\lesssim \kappa \,\|\pmb{\mathsf{\mathcal{F}}}(\tilde{\nabla}_h \boldsymbol{v}_h) - \pmb{\mathsf{\mathcal{F}}}(\nabla \boldsymbol{v})\|^2_{2,\Omega
			}+
			c_\kappa\, m_{\varphi_{\boldsymbol{\beta}_h(\boldsymbol{v}_h)},h}(\boldsymbol{z}_h)\,,
		\end{align*}
		where the constants depend only on $k$, $\omega_0$, and the characteristics of $\pmb{\mathsf{\mathcal{S}}}$.
	\end{Lemma}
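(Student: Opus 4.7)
The plan is to estimate the integrand pointwise via a shifted Young's inequality, identify the dual side with $\pmb{\mathsf{\mathcal{F}}}$ via \cref{eq:hammera}, and handle the primal side via \cref{prop:n-function_E}. Concretely, I would first apply the $\varepsilon$-Young inequality \cref{ineq:young} pointwise with the conjugate pair $\varphi_{a(x)}$ and $(\varphi_{a(x)})^*$, where $a(x)\coloneqq|\tilde{\nabla}_h\boldsymbol{v}_h(x)|$: by \cref{rem:phi}(iii) the $\Delta_2$-constants of both are uniform in $a\ge 0$, so the constants in \cref{ineq:young} are uniform in $x\in\Omega$. After integrating and invoking \cref{eq:hammera} of \cref{lem:hammer}, the dual side becomes exactly $c\,\kappa\,\|\pmb{\mathsf{\mathcal{F}}}(\tilde{\nabla}_h\boldsymbol{v}_h)-\pmb{\mathsf{\mathcal{F}}}(\nabla\boldsymbol{v})\|_{2,\Omega}^2$, which is the first term in the claimed bound, while the primal side leaves $\int_\Omega \varphi_{|\tilde{\nabla}_h\boldsymbol{v}_h|}(|\tilde{\nabla}_h(\pmb{\mathsf{\mathcal{E}}}_h\boldsymbol{z}_h-\boldsymbol{z}_h)|)\,\textup{d}x$ to be processed.

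Next, I would pass from the variable shift to the global max-shift. By \cref{rem:motivation_shift} (which compiles the relevant shift-monotonicity of \cref{rem:phi}(iii) against the two cases of \cref{def_shift}), one has $\varphi_{|\tilde{\nabla}_h\boldsymbol{v}_h(x)|}(t)\le \varphi_{\boldsymbol{\beta}_h(\boldsymbol{v}_h)}(t)$ for all $t\ge 0$ and a.e.\ $x\in\Omega$, so it suffices to bound $\int_\Omega \varphi_{\boldsymbol{\beta}_h(\boldsymbol{v}_h)}(|\tilde{\nabla}_h(\boldsymbol{z}_h-\pmb{\mathsf{\mathcal{E}}}_h\boldsymbol{z}_h)|)\,\textup{d}x$ by the modular $m_{\varphi_{\boldsymbol{\beta}_h(\boldsymbol{v}_h)},h}(\boldsymbol{z}_h)$. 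This I would achieve by applying \cref{prop:n-function_E} element-wise to the rescaled N-function $\psi_K(t)\coloneqq \varphi_{\boldsymbol{\beta}_h(\boldsymbol{v}_h)}(h_K^{-1}\,t)$, whose $\Delta_2$- and $\Delta_2^*$-constants are uniform in $h_K$ and in the shift (again by \cref{rem:phi}(iii)), then multiplying through by $|K|$, invoking the shape-regular relation $|K|/|F|\sim h_K\sim h_F$ for $F\subseteq \partial K$, and summing over $K\in\mathcal{T}_h$ (each face is charged at most twice).

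The main obstacle is conceptual rather than technical: the \emph{variable} shift $a=|\tilde{\nabla}_h\boldsymbol{v}_h|$ is essentially forced on me in the Young step because only it aligns the dual side with $\pmb{\mathsf{\mathcal{F}}}$ via the hammer equivalence \cref{eq:hammera}; but the subsequent transition to the \emph{global} shift $\boldsymbol{\beta}_h(\boldsymbol{v}_h)$ on the primal side must go in the correct direction of shift-monotonicity, and this is precisely what the piecewise definition \cref{def_shift} of $\boldsymbol{\beta}_h$ is engineered to ensure, simultaneously in the regime $p\le 2$ (where $\boldsymbol{\beta}_h(\boldsymbol{v}_h)=0$ dominates $\varphi_{|\tilde{\nabla}_h\boldsymbol{v}_h|}$ from above) and $p>2$ (where $\boldsymbol{\beta}_h(\boldsymbol{v}_h)=\|\tilde{\nabla}_h\boldsymbol{v}_h\|_{\infty,\Omega}$ does).
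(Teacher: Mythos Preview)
Your proposal is correct and follows essentially the same route as the paper. The only cosmetic difference is the choice of shift in the Young step: the paper applies \cref{ineq:young} with the \emph{constant} shift $\psi=\varphi_{\boldsymbol{\beta}_h(\boldsymbol{v}_h)}$ (after invoking \cref{eq:hammere}) and then uses the dual shift-monotonicity $(\varphi_{\boldsymbol{\beta}_h(\boldsymbol{v}_h)})^*\le(\varphi_{|\tilde{\nabla}_h\boldsymbol{v}_h|})^*$ together with \cref{eq:psi'} to land on \cref{eq:hammera}, whereas you apply Young with the \emph{variable} shift $\psi=\varphi_{|\tilde{\nabla}_h\boldsymbol{v}_h|}$, invoke \cref{eq:hammera} directly on the dual side, and use the primal shift-monotonicity $\varphi_{|\tilde{\nabla}_h\boldsymbol{v}_h|}\le\varphi_{\boldsymbol{\beta}_h(\boldsymbol{v}_h)}$ before \cref{prop:n-function_E}. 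These are dual passes through the same ingredients; your version is arguably one step shorter since it bypasses \cref{eq:hammere} and \cref{eq:psi'}.
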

	
	\begin{proof}
		Using \cref{eq:hammere}  and the $\varepsilon$-Young  inequality
		\cref{ineq:young} with $\psi=\varphi_{\boldsymbol{\beta}_h(\boldsymbol{v}_h)}$, we find that
		\begin{align}
            \SwapAboveDisplaySkip
            \label{eq:max-shift.1}
			\begin{aligned}
				&\vert (\pmb{\mathsf{\mathcal{S}}}(\tilde{\nabla}_h \boldsymbol{v}_h) - \pmb{\mathsf{\mathcal{S}}}(\nabla \boldsymbol{v}), \tilde{\nabla}_h(\pmb{\mathsf{\mathcal{E}}}_h  \boldsymbol{z}_h - \boldsymbol{z}_h))_{\Omega}\vert\\ &\lesssim c_\kappa\,\rho_{\varphi_{\boldsymbol{\beta}_h(\boldsymbol{v}_h)},\Omega}(\tilde{\nabla}_h(\pmb{\mathsf{\mathcal{E}}}_h  \boldsymbol{z}_h - \boldsymbol{z}_h))
				+\kappa\, \rho_{(\varphi_{\boldsymbol{\beta}_h(\boldsymbol{v}_h)})^*,\Omega}(\varphi'_{\smash{|\tilde{\nabla}_h \boldsymbol{v}_h|}}(|\tilde{\nabla}_h  \boldsymbol{v}_h - \nabla \boldsymbol{v}|))\,.
			\end{aligned}
		\end{align}
		Due to $\boldsymbol{\beta}_h(\boldsymbol{u}_h)\in \mathbb{R}$,
		\cref{prop:n-function_E} is applicable and yields, also
                using $h_F\sim h_K$, that
		\begin{align}
            \label{eq:max-shift.2}
			\rho_{\varphi_{\boldsymbol{\beta}_h(\boldsymbol{v}_h)},\Omega}(\tilde{\nabla}_h(\pmb{\mathsf{\mathcal{E}}}_h  \boldsymbol{z}_h - \boldsymbol{z}_h)) \lesssim m_{\varphi_{\boldsymbol{\beta}_h(\boldsymbol{u}_h)},h}(\boldsymbol{z}_h)\,.
		\end{align}
		By definition of the max-shift functional and
		\cref{rem:phi} (iii), we obtain
		$ (\varphi_{\boldsymbol{\beta}_h(\boldsymbol{v}_h)})^*\leq
		(\varphi_{\smash{\vert \tilde{\nabla}_h
				\boldsymbol{v}_h\vert}})^*$. Using this, the equivalence \cref{eq:psi'} for 
		$\psi=\varphi_{\smash{\vert \tilde{\nabla}_h
				\boldsymbol{v}_h\vert}}$,  and  \cref{eq:hammera}, we obtain
		\begin{align}\label{eq:max-shift.3}
			\begin{aligned}
				\rho_{(\varphi_{\boldsymbol{\beta}_h(\boldsymbol{v}_h)})^*,\Omega}(\varphi'_{\smash{|\tilde{\nabla}_h \boldsymbol{v}_h|}}(|\tilde{\nabla}_h  \boldsymbol{v}_h - \nabla \boldsymbol{v}|))&\leq \rho_{(\varphi_{|\tilde{\nabla}_h \boldsymbol{v}_h|})^*,\Omega}(\varphi'_{\smash{|\tilde{\nabla}_h \boldsymbol{v}_h|}}(|\tilde{\nabla}_h  \boldsymbol{v}_h - \nabla \boldsymbol{v}|))\\&\lesssim \|\pmb{\mathsf{\mathcal{F}}}(\tilde{\nabla}_h\boldsymbol{v}_h) - \pmb{\mathsf{\mathcal{F}}}(\nabla \boldsymbol{v})\|^2_{2,\Omega}\,.
			\end{aligned}
		\end{align}
		Eventually, combining \cref{eq:max-shift.2,eq:max-shift.3} in \cref{eq:max-shift.1}, we conclude the claimed estimate.
	\end{proof}

	In the linear case $\pmb{\mathsf{\mathcal{S}}}(\tilde{\nabla}_h \boldsymbol{u}) =
	\tilde{\nabla}_h \boldsymbol{u}$, non-degeneracy and existence of
	discrete solutions is a direct consequence of the construction
	of $\pmb{\mathsf{\mathcal{E}}}_h $, because then the preservation property
	\cref{eq:equivalence_without_E} means that we can forget the
	operator $\pmb{\mathsf{\mathcal{E}}}_h $ on the left-hand side and so the usual
	argument for proving coercivity works (note that, by
	construction, $\tilde{\nabla}_h \boldsymbol{u}_h \in {\mathbb{P}}^{k-1}(\mathcal{T}_h)^{n
		\times d}$). In the non-linear case this argument only works
	for the lowest order DG ansatz 
	for which the gradients are element-wise~constant.~Hence, in general, we would need to check that discrete solutions exist.

	\begin{Proposition} For $\alpha>0$ sufficiently large, 
		the primal formulation \cref{eq:discrete_primal_DG} admits a solution.
	\end{Proposition}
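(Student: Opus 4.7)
The plan is to apply the standard corollary of Brouwer's fixed-point theorem to the continuous map $\boldsymbol{A}_h \colon V_h \to V_h^*$ defined by taking the left-hand side minus the right-hand side of \eqref{eq:discrete_primal_DG}: such a map has a zero provided $\langle \boldsymbol{A}_h(\boldsymbol{v}_h), \boldsymbol{v}_h\rangle > 0$ on a sufficiently large sphere in the finite-dimensional space $V_h$. Continuity of $\boldsymbol{A}_h$ is immediate from continuity of $\pmb{\mathsf{\mathcal{S}}}$, $\pmb{\mathsf{\mathcal{S}}}_a$ (jointly in $a$ and its argument), $\pmb{\mathsf{\mathcal{E}}}_h$, and of $\boldsymbol{\beta}_h$ (a maximum of equivalent norms on $V_h$), so the task reduces to establishing coercivity.

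Testing with $\boldsymbol{z}_h = \boldsymbol{v}_h$ and invoking \cref{rem:grad_tilde} to rewrite $\nabla \pmb{\mathsf{\mathcal{E}}}_h \boldsymbol{v}_h = \tilde{\nabla}_h \pmb{\mathsf{\mathcal{E}}}_h \boldsymbol{v}_h$, I would split
\begin{align*}
(\pmb{\mathsf{\mathcal{S}}}(\tilde{\nabla}_h \boldsymbol{v}_h), \nabla \pmb{\mathsf{\mathcal{E}}}_h \boldsymbol{v}_h)_\Omega
&= (\pmb{\mathsf{\mathcal{S}}}(\tilde{\nabla}_h \boldsymbol{v}_h), \tilde{\nabla}_h \boldsymbol{v}_h)_\Omega \\
&\quad + (\pmb{\mathsf{\mathcal{S}}}(\tilde{\nabla}_h \boldsymbol{v}_h), \tilde{\nabla}_h(\pmb{\mathsf{\mathcal{E}}}_h \boldsymbol{v}_h - \boldsymbol{v}_h))_\Omega.
\end{align*}
The first summand is $\sim \|\pmb{\mathsf{\mathcal{F}}}(\tilde{\nabla}_h \boldsymbol{v}_h)\|_{2,\Omega}^2$ by \eqref{eq:hammera} with $\pmb{\mathsf{P}}=\mathbf{0}$, while the second is precisely the object \cref{lem:max-shift} (with $\boldsymbol{v}=\mathbf{0}$) is designed to handle, being bounded by $\kappa\|\pmb{\mathsf{\mathcal{F}}}(\tilde{\nabla}_h \boldsymbol{v}_h)\|_{2,\Omega}^2 + c_\kappa\, m_{\varphi_{\boldsymbol{\beta}_h(\boldsymbol{v}_h)},h}(\boldsymbol{v}_h)$ for arbitrary $\kappa>0$. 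By \cref{rem:sa} and \eqref{eq:hammera} applied to $\pmb{\mathsf{\mathcal{S}}}_{\boldsymbol{\beta}_h(\boldsymbol{v}_h)}$ with $\pmb{\mathsf{P}}=\mathbf{0}$, the penalty term is equivalent to $\alpha\, m_{\varphi_{\boldsymbol{\beta}_h(\boldsymbol{v}_h)},h}(\boldsymbol{v}_h)$. Picking $\kappa$ small and then $\alpha$ large enough to absorb the resulting $c_\kappa$, one obtains
$$\langle \boldsymbol{A}_h(\boldsymbol{v}_h), \boldsymbol{v}_h\rangle \gtrsim \|\pmb{\mathsf{\mathcal{F}}}(\tilde{\nabla}_h \boldsymbol{v}_h)\|_{2,\Omega}^2 + m_{\varphi_{\boldsymbol{\beta}_h(\boldsymbol{v}_h)},h}(\boldsymbol{v}_h) - \langle \boldsymbol{f}, \pmb{\mathsf{\mathcal{E}}}_h \boldsymbol{v}_h\rangle_V.$$

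To convert the right-hand side into growth in $\|\boldsymbol{v}_h\|_{h,p}$, I use $\|\pmb{\mathsf{\mathcal{F}}}(\tilde{\nabla}_h \boldsymbol{v}_h)\|_{2,\Omega}^2 \sim \rho_{\varphi,\Omega}(\tilde{\nabla}_h \boldsymbol{v}_h)$ and the inequality $m_{\varphi_{\boldsymbol{\beta}_h(\boldsymbol{v}_h)},h}(\boldsymbol{v}_h) \geq m_{\varphi,h}(\boldsymbol{v}_h)$, which is an equality for $p\le 2$ (since $\boldsymbol{\beta}_h\equiv 0$) and follows from the monotonicity $\varphi_a \geq \varphi$ of \cref{rem:phi}\,(iii) for $p>2$. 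Combined with the norm equivalence \eqref{eq:eqiv0}, these dominate $\|\boldsymbol{v}_h\|_{h,p}^p$. Meanwhile, applying \cref{prop:n-function_E} with $\psi(t)=t^p$ to $\boldsymbol{v}_h - \pmb{\mathsf{\mathcal{E}}}_h\boldsymbol{v}_h$ and summing over the elements yields the $W^{1,p}$-stability $\|\nabla \pmb{\mathsf{\mathcal{E}}}_h \boldsymbol{v}_h\|_{p,\Omega} \lesssim \|\boldsymbol{v}_h\|_{h,p}$, so the forcing contribution is bounded by $\|\boldsymbol{f}\|_{V^*}\|\boldsymbol{v}_h\|_{h,p}$. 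Since $p>1$, coercivity on a sufficiently large sphere is achieved, and the Brouwer argument furnishes a solution.

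The main obstacle is the presence of the smoothing operator on the test side: since \eqref{eq:equivalence_without_E} is unavailable for $k\geq 2$ in the non-linear setting, one cannot simply drop $\pmb{\mathsf{\mathcal{E}}}_h$ when testing with $\boldsymbol{v}_h$, and the cross term $(\pmb{\mathsf{\mathcal{S}}}(\tilde{\nabla}_h \boldsymbol{v}_h), \tilde{\nabla}_h(\pmb{\mathsf{\mathcal{E}}}_h \boldsymbol{v}_h - \boldsymbol{v}_h))_\Omega$ lacks an obvious sign. \cref{lem:max-shift} is tailor-made to control it against the shifted-modular jump functional, which is precisely why the penalty is built using the shift $\boldsymbol{\beta}_h$ and must be weighted by an $\alpha$ large enough to dominate the resulting constant $c_\kappa$.
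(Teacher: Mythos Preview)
Your argument is correct and follows essentially the same path as the paper's proof: both test with $\boldsymbol{v}_h$, use \cref{rem:grad_tilde} to split $\nabla\pmb{\mathsf{\mathcal{E}}}_h\boldsymbol{v}_h=\tilde\nabla_h\boldsymbol{v}_h+\tilde\nabla_h(\pmb{\mathsf{\mathcal{E}}}_h\boldsymbol{v}_h-\boldsymbol{v}_h)$, control the cross term by the jump modular via \cref{prop:n-function_E}, and absorb it with the penalty for $\alpha$ large. The only packaging differences are that the paper invokes pseudo-monotone operator theory (hence also verifies boundedness of the operator) rather than the Brouwer corollary, and it estimates the cross term directly with the $\varepsilon$-Young inequality for $\psi=\varphi$ and \cref{prop:n-function_E} with $\psi=\varphi$, landing on $m_{\varphi,h}(\boldsymbol{v}_h)$ without the detour through \cref{lem:max-shift} and the shifted modular $m_{\varphi_{\boldsymbol{\beta}_h(\boldsymbol{v}_h)},h}$; your route via \cref{lem:max-shift} is a legitimate shortcut that reuses an already-proved estimate. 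One minor imprecision: $\rho_{\varphi,\Omega}(\cdot)+m_{\varphi,h}(\cdot)$ dominates $\|\boldsymbol{v}_h\|_{h,p}^p$ only up to an additive constant of order $\delta^p|\Omega|$ (via $\varphi(t)+\delta^p\sim t^p+\delta^p$), but this is harmless for coercivity on large spheres.
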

	
	\begin{proof}
          We equip $V_h$ with the norm
          $\|\cdot\|_{V_h }\coloneqq \|\cdot\|_{h,p}$, and consider
          the operator $\pmb{\mathsf{\mathcal{T}}}_h\colon V_h\to (V_h)^*$,
          for every $\boldsymbol{v}_h\in V_h $ and $\boldsymbol{z}_h\in V_h $, defined via
          \begin{align*}
            \langle \pmb{\mathsf{\mathcal{T}}}_h\boldsymbol{v}_h,\boldsymbol{z}_h\rangle_{V_h}\coloneqq 
            (\pmb{\mathsf{\mathcal{S}}}(\tilde{\nabla}_h \boldsymbol{v}_h),\nabla \pmb{\mathsf{\mathcal{E}}}_h  \boldsymbol{z}_h)_{\Omega}
            + \alpha\, \langle \pmb{\mathsf{\mathcal{S}}}_{\boldsymbol{\beta}_h(\boldsymbol{v}_h)}( h_\Gamma^{-1} \llbracket{\boldsymbol{v}_h \otimes\boldsymbol{n}}\rrbracket ) , \llbracket{\boldsymbol{z}_h \otimes\boldsymbol{n}}\rrbracket\rangle_{\Gamma_h}\,.
          \end{align*}
          Since $V_h$ is finite dimensional, consists of broken
          polynomials, \cite[Lem.~3.18]{bdr-7-5}, the properties of
          $\pmb{\mathsf{\mathcal{S}}}$ and $\pmb{\mathsf{\mathcal{S}}}_a$, $a\ge 0$, imply that the operator
          $\pmb{\mathsf{\mathcal{T}}}_h\colon V_h\to (V_h)^*$, for every fixed
          $h>0$, is well-defined, continuous, and \mbox{monotone}.
          To prove the
          boundedness of $\pmb{\mathsf{\mathcal{T}}}_h\colon V_h\to (V_h)^*$,
          we use the properties of $\pmb{\mathsf{\mathcal{S}}}, \pmb{\mathsf{\mathcal{S}}}_{\boldsymbol{\beta}_h(\boldsymbol{v}_h)}$
          (cf.\ \cref{lem:hammer,rem:sa}),
          Young's inequality, \cref{eq:psi'},
          \cref{prop:n-function_E} with $\psi=\varphi$, the stability of
          $\boldsymbol{\mathcal{R}}_h$ (cf.~\cite[(A.25)]{dkrt-ldg}), a~shift change
          (cf.\ \cref{lem:shift-change}),
          $\sum_{F \in \Gamma_h}\!h_F\mathscr{H}^{d-1}(F)\!\sim\! \vert \Omega\vert$, 
          \cite[Lem- ma 12.1]{EG.2021},  Jensen's inequality, and  $\varphi(t)+t^p\sim
          t^p+ \delta^p$ for all $t\ge 0$ (cf.~\cite{bdr-7-5}), to
          find that 
          \begin{align*}
            \left| \langle \pmb{\mathsf{\mathcal{T}}}_h\boldsymbol{v}_h,\boldsymbol{z}_h\rangle_{V_h} \right|
            &\lesssim \rho_{\varphi,\Omega}(\tilde{\nabla}_h \boldsymbol{v}_h) +
              \rho_{\varphi,\Omega}(\tilde{\nabla}_h(
              \pmb{\mathsf{\mathcal{E}}}_h \boldsymbol{z}_h-\boldsymbol{z}_h)) +\rho_{\varphi,\Omega}(\tilde{\nabla}_h
              \boldsymbol{z}_h)
            \\
            &\quad + \alpha \, m_{\varphi_{\boldsymbol{\beta}_h(\boldsymbol{v}_h)},h}(\boldsymbol{v}_h)
              +\alpha \, m_{\varphi_{\boldsymbol{\beta}_h(\boldsymbol{v}_h)},h}(\boldsymbol{z}_h)
            \\
            &\lesssim \rho_{\varphi,\Omega}({\nabla}_h \boldsymbol{v}_h) +\rho_{\varphi,\Omega}({\nabla}_h
              \boldsymbol{z}_h) +m_{\varphi,h}(\boldsymbol{v}_h)
              + m_{\varphi,h}(\boldsymbol{z}_h) +\varphi (\boldsymbol{\beta}_h(\boldsymbol{v}_h))
            \\
            &\lesssim \|\boldsymbol{v}_h\|^p_{p,h} +\|
              \boldsymbol{z}_h\|^p_{p,h} + \delta^p\,.
          \end{align*}
         Thus, $\pmb{\mathsf{\mathcal{T}}}_h\colon V_h\to (V_h)^*$  is pseudo-monotone.  If we
		can prove its coercivity, then, from pseudo-monotone
		operator theory (cf.\ \cite[Thm.\ 27.A]{zei-IIB}), it follows that
		$\pmb{\mathsf{\mathcal{T}}}_h\colon V_h\to (V_h)^*$~is~\mbox{surjective}.  For every $\boldsymbol{v}_h\in V_h $, using first
		that
		$\nabla \pmb{\mathsf{\mathcal{E}}}_h \boldsymbol{v}_h=\tilde{\nabla}_h \pmb{\mathsf{\mathcal{E}}}_h \boldsymbol{v}_h$,
		then that
		$\pmb{\mathsf{\mathcal{S}}}(\pmb{\mathsf{Q}}):\pmb{\mathsf{Q}}\sim \varphi(\vert
		\pmb{\mathsf{Q}}\vert )$ and
		${\pmb{\mathsf{\mathcal{S}}}_{\boldsymbol{\beta}_h(\boldsymbol{v}_h)} (\pmb{\mathsf{Q}}):\pmb{\mathsf{Q}}\sim
			\varphi_{\boldsymbol{\beta}_h(\boldsymbol{v}_h)}(\vert \pmb{\mathsf{Q}}\vert
			)}$ for~all~${\pmb{\mathsf{Q}}\in \mathbb{R}^{n\times d}}$
		(cf.\ \cref{lem:hammer,rem:sa}), $\varphi_{\boldsymbol{\beta}_h(\boldsymbol{v}_h)}\ge \varphi$
		(cf.\ \cref{rem:phi} (iii)), the
		$\varepsilon$-Young inequality \cref{ineq:young} with
		$\psi=\varphi$, and
		\cref{prop:n-function_E} with $\psi=\varphi$, we find that
		\begin{align*}
			\langle \pmb{\mathsf{\mathcal{T}}}_h\boldsymbol{v}_h,\boldsymbol{v}_h\rangle_{V_h} &=(\pmb{\mathsf{\mathcal{S}}}(\tilde{\nabla}_h\boldsymbol{v}_h),\tilde{\nabla}_h \boldsymbol{v}_h)_{\Omega}+ \alpha\, \langle \pmb{\mathsf{\mathcal{S}}}_{\boldsymbol{\beta}_h(\boldsymbol{v}_h)}( h_\Gamma^{-1} \llbracket{ \boldsymbol{v}_h \otimes\boldsymbol{n}}\rrbracket ) , \llbracket{ \boldsymbol{v}_h \otimes\boldsymbol{n}}\rrbracket\rangle_{\Gamma_h}
			\\&\quad + (\pmb{\mathsf{\mathcal{S}}}(\tilde{\nabla}_h \boldsymbol{v}_h),\tilde{\nabla}_h (\pmb{\mathsf{\mathcal{E}}}_h \boldsymbol{v}_h- \boldsymbol{v}_h))_{\Omega}
			\\&	\ge (c-\varepsilon)\, \rho_{\varphi,\Omega}(\tilde{\nabla}_h \boldsymbol{v}_h)+\alpha \,c\, m_{\varphi_{\boldsymbol{\beta}_h(\boldsymbol{v}_h)},h}(\boldsymbol{v}_h)-c_\varepsilon\, \rho_{\varphi,\Omega}(\tilde{\nabla}_h (\pmb{\mathsf{\mathcal{E}}}_h \boldsymbol{v}_h- \boldsymbol{v}_h))
			\\&	\ge (c-\varepsilon)\, \rho_{\varphi,\Omega}(\tilde{\nabla}_h \boldsymbol{v}_h)+(\alpha \,c-c_\varepsilon)\, m_{\varphi,h}(\boldsymbol{v}_h)\,.
		\end{align*}
		Consequently, choosing first $\varepsilon>0$
		sufficiently small and, subsequently, $\alpha>0$
		sufficiently large, also  using $\varphi(t)+t^p\sim
		t^p+ \delta^p$ for all $t\ge 0$ (cf.~\cite{bdr-7-5}), $h\,\mathscr{H}^{d-1}(\Gamma_h)\sim \vert \Omega\vert$, 
		for every $\boldsymbol{v}_h\in V_h $, and the norm equivalence \cref{eq:eqiv0}, we arrive at
		\begin{align*}
			\langle \pmb{\mathsf{\mathcal{T}}}_h\boldsymbol{v}_h,\boldsymbol{v}_h\rangle_{V_h}&\gtrsim \|\tilde{\nabla}_h \boldsymbol{v}_h\|_{p,\Omega}^p+\alpha \, \|h_{\Gamma}^{\smash{-1/p'}}\llbracket{\boldsymbol{v}_h\otimes \boldsymbol{n}}\rrbracket\|_{p,\Gamma_h}^p-\delta^p\,\vert \Omega\vert\,(1+\alpha)\\&\gtrsim 
			\min\{1,\alpha\}\,\| \boldsymbol{v}_h\|_{V_h }^p-\delta^p\,\vert \Omega\vert\,\max\{1,\alpha\}\,.
		\end{align*}
		Putting everything together, we proved that the operator
		$\pmb{\mathsf{\mathcal{T}}}_h\colon V_h\!\to\! (V_h)^*$ is
		well-defined, bounded, pseudo-monotone, and coercive and, thus, surjective (cf.\ \cite[Thm.\ 27.A]{zei-IIB}). 
	\end{proof}

	From \cref{eq:primal,eq:discrete_primal_DG}, it follows that
	the error equation, for every $\boldsymbol{z}_h\in V_h$, takes the form
	\begin{align}\label{eq:error_equation_primal}
		(\pmb{\mathsf{\mathcal{S}}}(\tilde{\nabla}_h \boldsymbol{u}_h) - \pmb{\mathsf{\mathcal{S}}}(\nabla \boldsymbol{u}),\nabla \pmb{\mathsf{\mathcal{E}}}_h  \boldsymbol{z}_h)_{\Omega}
		+ \alpha\, \langle \pmb{\mathsf{\mathcal{S}}}_{\boldsymbol{\beta}_h(\boldsymbol{u}_h)}(h_\Gamma^{-1} \llbracket{\boldsymbol{u}_h\otimes \boldsymbol{n}}\rrbracket), \llbracket{\boldsymbol{z}_h \otimes \boldsymbol{n}}\rrbracket\rangle_{\Gamma_h}
		= 0\,.
	\end{align}

	\begin{Theorem}\label{thm:error_primal}
	Let $\boldsymbol{u} \in V$ be a solution of \cref{eq:primal} and $\boldsymbol{u}_h \in
        V_h$ ba a solution of \cref{eq:discrete_primal_DG}.	If $\alpha >0$ sufficiently large, we have that
		\begin{align*}
			&\|\pmb{\mathsf{\mathcal{F}}}(\tilde{\nabla}_h \boldsymbol{u}_h) - \pmb{\mathsf{\mathcal{F}}}(\nabla \boldsymbol{u})\|^2_{2,\Omega}
			+ m_{\varphi_{\boldsymbol{\beta}_h(\boldsymbol{u}_h)},h}(\boldsymbol{u}_h) \\
			&\lesssim \inf_{\boldsymbol{v}_h \in V_h} \big(\|\pmb{\mathsf{\mathcal{F}}}(\tilde{\nabla}_h \boldsymbol{v}_h) - \pmb{\mathsf{\mathcal{F}}}(\nabla \boldsymbol{u})\|^2_{2,\Omega}
			+  m_{\varphi_{\boldsymbol{\beta}_h(\boldsymbol{u}_h)},h}(\boldsymbol{v}_h)\big)\,,
		\end{align*}
		where the constant depends only on $k$, $\omega_0$, and the characteristics of $\pmb{\mathsf{\mathcal{S}}}$.
	\end{Theorem}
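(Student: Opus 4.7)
My plan is to test the error equation \cref{eq:error_equation_primal} with $\boldsymbol{z}_h = \boldsymbol{u}_h - \boldsymbol{v}_h$ for an arbitrary $\boldsymbol{v}_h \in V_h$ and rearrange it into two coercive contributions---producing the left-hand side of the claim---plus three error contributions to be absorbed or estimated by $\boldsymbol{v}_h$-dependent quantities. Using \cref{rem:grad_tilde}, I would write $\nabla \pmb{\mathsf{\mathcal{E}}}_h(\boldsymbol{u}_h - \boldsymbol{v}_h) = \tilde{\nabla}_h(\boldsymbol{u}_h - \boldsymbol{v}_h) + \tilde{\nabla}_h(\pmb{\mathsf{\mathcal{E}}}_h(\boldsymbol{u}_h - \boldsymbol{v}_h) - (\boldsymbol{u}_h - \boldsymbol{v}_h))$ and further split $\tilde{\nabla}_h(\boldsymbol{u}_h - \boldsymbol{v}_h) = (\tilde{\nabla}_h \boldsymbol{u}_h - \nabla \boldsymbol{u}) - (\tilde{\nabla}_h \boldsymbol{v}_h - \nabla \boldsymbol{u})$, with an analogous split for the jump in the stabilisation term. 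The diagonal volume piece $(\pmb{\mathsf{\mathcal{S}}}(\tilde{\nabla}_h\boldsymbol{u}_h) - \pmb{\mathsf{\mathcal{S}}}(\nabla\boldsymbol{u}), \tilde{\nabla}_h\boldsymbol{u}_h - \nabla\boldsymbol{u})_\Omega$ is equivalent to $\|\pmb{\mathsf{\mathcal{F}}}(\tilde{\nabla}_h\boldsymbol{u}_h) - \pmb{\mathsf{\mathcal{F}}}(\nabla\boldsymbol{u})\|^2_{2,\Omega}$ by \cref{eq:hammera}, and the diagonal jump piece is equivalent to $\alpha\, m_{\varphi_{\boldsymbol{\beta}_h(\boldsymbol{u}_h)},h}(\boldsymbol{u}_h)$ via the identity $\pmb{\mathsf{\mathcal{S}}}_a(\pmb{\mathsf{Q}}):\pmb{\mathsf{Q}} \sim \varphi_a(|\pmb{\mathsf{Q}}|)$, which follows from \cref{eq:flux} and \cref{rem:phi}(iii).

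Two of the three error terms are routine. The bubble term $(\pmb{\mathsf{\mathcal{S}}}(\tilde{\nabla}_h\boldsymbol{u}_h) - \pmb{\mathsf{\mathcal{S}}}(\nabla\boldsymbol{u}), \tilde{\nabla}_h(\pmb{\mathsf{\mathcal{E}}}_h(\boldsymbol{u}_h - \boldsymbol{v}_h) - (\boldsymbol{u}_h - \boldsymbol{v}_h)))_\Omega$ is controlled directly by \cref{lem:max-shift} applied with $\boldsymbol{z}_h \coloneqq \boldsymbol{u}_h - \boldsymbol{v}_h$; the resulting modular $m_{\varphi_{\boldsymbol{\beta}_h(\boldsymbol{u}_h)},h}(\boldsymbol{u}_h - \boldsymbol{v}_h)$ splits into $m_{\varphi_{\boldsymbol{\beta}_h(\boldsymbol{u}_h)},h}(\boldsymbol{u}_h) + m_{\varphi_{\boldsymbol{\beta}_h(\boldsymbol{u}_h)},h}(\boldsymbol{v}_h)$ by convexity together with the uniform $\Delta_2$-bound from \cref{rem:phi}(iii). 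The cross-jump term $\alpha\,\langle \pmb{\mathsf{\mathcal{S}}}_{\boldsymbol{\beta}_h(\boldsymbol{u}_h)}(h_\Gamma^{-1}\llbracket \boldsymbol{u}_h \otimes \boldsymbol{n}\rrbracket), \llbracket \boldsymbol{v}_h \otimes \boldsymbol{n}\rrbracket\rangle_{\Gamma_h}$ is handled by the $\varepsilon$-Young inequality \cref{ineq:young} with $\psi = \varphi_{\boldsymbol{\beta}_h(\boldsymbol{u}_h)}$ and \cref{eq:psi'} (using $|\pmb{\mathsf{\mathcal{S}}}_a(\pmb{\mathsf{Q}})| \sim \varphi_a'(|\pmb{\mathsf{Q}}|)$ from \cref{eq:flux}), producing $\kappa'\alpha\,m_{\varphi_{\boldsymbol{\beta}_h(\boldsymbol{u}_h)},h}(\boldsymbol{u}_h) + c_{\kappa'}\alpha\,m_{\varphi_{\boldsymbol{\beta}_h(\boldsymbol{u}_h)},h}(\boldsymbol{v}_h)$.

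The hard part is the cross-volume term $(\pmb{\mathsf{\mathcal{S}}}(\tilde{\nabla}_h\boldsymbol{u}_h) - \pmb{\mathsf{\mathcal{S}}}(\nabla\boldsymbol{u}), \tilde{\nabla}_h\boldsymbol{v}_h - \nabla\boldsymbol{u})_\Omega$: here the shift arising naturally from \cref{eq:hammere} is $|\tilde{\nabla}_h\boldsymbol{u}_h|$, whereas the target quantity $\|\pmb{\mathsf{\mathcal{F}}}(\tilde{\nabla}_h\boldsymbol{v}_h) - \pmb{\mathsf{\mathcal{F}}}(\nabla\boldsymbol{u})\|^2_{2,\Omega}$ demands the shift $|\tilde{\nabla}_h\boldsymbol{v}_h|$. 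I would bound the integrand pointwise by $\varphi'_{|\tilde{\nabla}_h\boldsymbol{u}_h|}(|\tilde{\nabla}_h\boldsymbol{u}_h - \nabla\boldsymbol{u}|)\,|\tilde{\nabla}_h\boldsymbol{v}_h - \nabla\boldsymbol{u}|$ via \cref{eq:hammere}, then apply the $\varepsilon$-Young inequality \cref{ineq:young} with $\psi = \varphi_{|\tilde{\nabla}_h\boldsymbol{u}_h|}$ together with \cref{eq:psi'} to peel off an absorbable $\kappa\,\|\pmb{\mathsf{\mathcal{F}}}(\tilde{\nabla}_h\boldsymbol{u}_h) - \pmb{\mathsf{\mathcal{F}}}(\nabla\boldsymbol{u})\|^2_{2,\Omega}$, and invoke the shift change in \cref{lem:shift-change} on the remaining integrand $\varphi_{|\tilde{\nabla}_h\boldsymbol{u}_h|}(|\tilde{\nabla}_h\boldsymbol{v}_h - \nabla\boldsymbol{u}|)$ to transfer the shift to $|\tilde{\nabla}_h\boldsymbol{v}_h|$. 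Using \cref{eq:hammera}, this integrates to $c_\varepsilon\,\|\pmb{\mathsf{\mathcal{F}}}(\tilde{\nabla}_h\boldsymbol{v}_h) - \pmb{\mathsf{\mathcal{F}}}(\nabla\boldsymbol{u})\|^2_{2,\Omega}$ plus an $\varepsilon$-small multiple of $\|\pmb{\mathsf{\mathcal{F}}}(\tilde{\nabla}_h\boldsymbol{u}_h) - \pmb{\mathsf{\mathcal{F}}}(\tilde{\nabla}_h\boldsymbol{v}_h)\|^2_{2,\Omega}$, which by the triangle inequality splits into an absorbable and a target piece. Choosing first $\kappa, \kappa', \varepsilon > 0$ sufficiently small to absorb all copies of $\|\pmb{\mathsf{\mathcal{F}}}(\tilde{\nabla}_h\boldsymbol{u}_h) - \pmb{\mathsf{\mathcal{F}}}(\nabla\boldsymbol{u})\|^2_{2,\Omega}$ on the right and then $\alpha > 0$ large enough so that the $\alpha$-weighted $m_{\varphi_{\boldsymbol{\beta}_h(\boldsymbol{u}_h)},h}(\boldsymbol{u}_h)$ on the left dominates the $\kappa$-dependent multiple of the same quantity produced by the bubble and cross-jump terms, taking the infimum over $\boldsymbol{v}_h \in V_h$ yields the claim.
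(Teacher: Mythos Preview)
Your proposal is correct and follows essentially the same architecture as the paper's proof: test the error equation with $\boldsymbol{z}_h=\boldsymbol{u}_h-\boldsymbol{v}_h$, split into the same three pieces $\mathfrak{I}_1,\mathfrak{I}_2,\mathfrak{I}_3$, and handle each as you describe. The only noteworthy difference is your treatment of the cross-volume term: you place the shift at $|\tilde{\nabla}_h\boldsymbol{u}_h|$ and then invoke \cref{lem:shift-change} to move it to $|\tilde{\nabla}_h\boldsymbol{v}_h|$, producing an extra absorbable $|\pmb{\mathsf{\mathcal{F}}}(\tilde{\nabla}_h\boldsymbol{u}_h)-\pmb{\mathsf{\mathcal{F}}}(\tilde{\nabla}_h\boldsymbol{v}_h)|^2$ term. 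The paper avoids this detour by exploiting the symmetry of \cref{eq:hammera,eq:hammere} (the shift may be placed at either argument) and working with $\psi=\varphi_{|\nabla\boldsymbol{u}|}$ from the outset; then both outputs of the $\varepsilon$-Young inequality are immediately of the form $|\pmb{\mathsf{\mathcal{F}}}(\tilde{\nabla}_h\boldsymbol{u}_h)-\pmb{\mathsf{\mathcal{F}}}(\nabla\boldsymbol{u})|^2$ and $|\pmb{\mathsf{\mathcal{F}}}(\nabla\boldsymbol{u})-\pmb{\mathsf{\mathcal{F}}}(\tilde{\nabla}_h\boldsymbol{v}_h)|^2$, and no shift change is needed. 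Your route is valid but one step longer; what you flagged as ``the hard part'' is in fact no harder than the other cross term once the shift is placed at $|\nabla\boldsymbol{u}|$.
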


	\begin{proof}
		Adding and substracting $\tilde{\nabla}_h \boldsymbol{z}_h\in \mathbb{P}^{k-1}(\mathcal{T}_h)^{n\times d}$ in \cref{eq:error_equation_primal} for arbitrary $\boldsymbol{z}_h\in V_h$, using that $\tilde{\nabla}_h \pmb{\mathsf{\mathcal{E}}}_h  \boldsymbol{z}_h = \nabla \pmb{\mathsf{\mathcal{E}}}_h  \boldsymbol{z}_h$, we get
		\begin{align}\label{eq:error_primal.1}
			\begin{aligned}
				0 &= (\pmb{\mathsf{\mathcal{S}}}(\tilde{\nabla}_h \boldsymbol{u}_h) - \pmb{\mathsf{\mathcal{S}}}(\nabla \boldsymbol{u}), \tilde{\nabla}_h\boldsymbol{z}_h)_{\Omega}
				\\&\quad+ \alpha\, \langle \pmb{\mathsf{\mathcal{S}}}_{\boldsymbol{\beta}_h(\boldsymbol{u}_h)}(h_\Gamma^{-1} \llbracket{\boldsymbol{u}_h\otimes \boldsymbol{n}}\rrbracket), \llbracket{\boldsymbol{z}_h \otimes \boldsymbol{n}}\rrbracket\rangle_{\Gamma_h} \\
				&\quad+ (\pmb{\mathsf{\mathcal{S}}}(\tilde{\nabla}_h \boldsymbol{u}_h) - \pmb{\mathsf{\mathcal{S}}}(\nabla \boldsymbol{u}), \tilde{\nabla}_h(\pmb{\mathsf{\mathcal{E}}}_h  \boldsymbol{z}_h - \boldsymbol{z}_h))_{\Omega}\\&
				\eqqcolon \mathfrak{I}_1 +
				\mathfrak{I}_2 +
				\mathfrak{I}_3\,.
			\end{aligned}
		\end{align}
		Next, we choose  $\boldsymbol{z}_h = \boldsymbol{u}_h - \boldsymbol{v}_h\in V_h$, where $\boldsymbol{v}_h\in V_h$ is arbitrary, and estimate $\mathfrak{I}_1$, $\mathfrak{I}_2$, $\mathfrak{I}_3$:
		
		\textit{ad $\mathfrak{I}_1$.} Using \cref{eq:hammera}, and \cref{eq:hammere}, the
		$\varepsilon$-Young  inequality \cref{ineq:young}
		with $\psi=\varphi_{\vert {\nabla} \boldsymbol{u}\vert}$, we find that
		\begin{align}
			\label{eq:error_primal.2}
			\begin{aligned}
				\mathfrak{I}_1 
				&=(\pmb{\mathsf{\mathcal{S}}}(\tilde{\nabla}_h \boldsymbol{u}_h) - \pmb{\mathsf{\mathcal{S}}}(\nabla \boldsymbol{u}), \tilde{\nabla}_h \boldsymbol{u}_h - \nabla \boldsymbol{u} + \nabla \boldsymbol{u} -\tilde{\nabla}_h \boldsymbol{v}_h)_{\Omega} \\
				&\geq c\, \|\pmb{\mathsf{\mathcal{F}}}(\tilde{\nabla}_h\boldsymbol{u}_h) - \pmb{\mathsf{\mathcal{F}}}(\nabla \boldsymbol{u})\|^2_{2,\Omega}
				-
				\vert  (\pmb{\mathsf{\mathcal{S}}}(\tilde{\nabla}_h\boldsymbol{u}_h) - \pmb{\mathsf{\mathcal{S}}}(\nabla \boldsymbol{u}),\nabla \boldsymbol{u} -\tilde{\nabla}_h \boldsymbol{v}_h)_{\Omega} 
				\vert \\
				&\geq (c-\varepsilon)\, \|\pmb{\mathsf{\mathcal{F}}}(\tilde{\nabla}_h \boldsymbol{u}_h) - \pmb{\mathsf{\mathcal{F}}}(\nabla \boldsymbol{u})\|^2_{2,\Omega}
				- c_\varepsilon \, \|\pmb{\mathsf{\mathcal{F}}}(\nabla \boldsymbol{u}) - \pmb{\mathsf{\mathcal{F}}}(\tilde{\nabla}_h \boldsymbol{v}_h)\|^2_{2,\Omega}\,.
			\end{aligned}
		\end{align}
		
		\textit{ad $\mathfrak{I}_2$.} Using that $\pmb{\mathsf{\mathcal{S}}}_{\boldsymbol{\beta}_h(\boldsymbol{u}_h)}(\pmb{\mathsf{Q}}): \pmb{\mathsf{Q}} \sim \varphi_{\boldsymbol{\beta}_h(\boldsymbol{u}_h)}(\pmb{\mathsf{Q}})$ uniformly in $\pmb{\mathsf{Q}}\in \mathbb{R}^{n\times d}$ (cf.\ \cref{lem:hammer,rem:sa})
        and   the $\varepsilon$-Young  inequality
		\cref{ineq:young} with $\psi=\varphi_{\vert \boldsymbol{\beta}_h(\boldsymbol{u}_h)\vert}$, we obtain
		\begin{align}
			\label{eq:error_primal.3}
			\begin{aligned}
				\mathfrak{I}_2 
				&=
				\alpha\, \langle \pmb{\mathsf{\mathcal{S}}}_{\boldsymbol{\beta}_h(\boldsymbol{u}_h)}(h_\Gamma^{-1} \llbracket{\boldsymbol{u}_h\otimes \boldsymbol{n}}\rrbracket),\llbracket{\boldsymbol{u}_h\otimes \boldsymbol{n}}\rrbracket - \llbracket{\boldsymbol{v}_h\otimes \boldsymbol{n}}\rrbracket\rangle_{\Gamma_h}\\
				&\geq
				\alpha\,c\, m_{\varphi_{\boldsymbol{\beta}_h(\boldsymbol{u}_h)},h} (\boldsymbol{u}_h)
				-
				\alpha\, \langle h_\Gamma (\varphi_{\boldsymbol{\beta}_h(\boldsymbol{u}_h)})'(h_\Gamma^{-1} \llbracket{\boldsymbol{u}_h\otimes \boldsymbol{n}}\rrbracket),h_\Gamma^{-1} \llbracket{\boldsymbol{v}_h\otimes \boldsymbol{n}}\rrbracket\rangle_{\Gamma_h}  \\
				&\geq
				\alpha \,(c- \varepsilon )\, m_{\varphi_{\boldsymbol{\beta}_h(\boldsymbol{u}_h)},h} (\boldsymbol{u}_h)
				- \alpha\, c_\varepsilon\, m_{\varphi_{\boldsymbol{\beta}_h(\boldsymbol{u}_h)},h} (\boldsymbol{v}_h)\,.
			\end{aligned}
		\end{align}
		
		\textit{ad $\mathfrak{I}_3$.} Using \cref{lem:max-shift}, we find that
		\begin{align}
			\label{eq:error_primal.5}
			\begin{aligned}
				\mathfrak{I}_3 &\leq
				\kappa\, \|\pmb{\mathsf{\mathcal{F}}}(\tilde{\nabla}_h\boldsymbol{u}_h) - \pmb{\mathsf{\mathcal{F}}}(\nabla \boldsymbol{u})\|^2_{2,\Omega}
				+
				c_\kappa\, m_{\varphi_{\boldsymbol{\beta}_h(\boldsymbol{u}_h)},h}(\boldsymbol{z}_h)\\
				&\lesssim
				\kappa \,\|\pmb{\mathsf{\mathcal{F}}}(\tilde{\nabla}_h \boldsymbol{u}_h) - \pmb{\mathsf{\mathcal{F}}}(\nabla \boldsymbol{u})\|^2_{2,\Omega}
				+
				c_\kappa\, (m_{\varphi_{\boldsymbol{\beta}_h(\boldsymbol{u}_h)},h}(\boldsymbol{u}_h) + m_{\varphi_{\boldsymbol{\beta}_h(\boldsymbol{u}_h)},h}(\boldsymbol{v}_h))\,.
			\end{aligned}
		\end{align}
		Combining \cref{eq:error_primal.2,eq:error_primal.3,eq:error_primal.5}, for $\varepsilon,\kappa>0$ sufficiently small and $\alpha>0$ sufficiently large, we arrive at 
		the claimed best-approximation result.
	\end{proof}
	
	As a first immediate consequence of the best-approximation result in \cref{thm:error_primal}, we obtain the convergence of the method under minimal regularity assumptions, i.e., merely~${\boldsymbol{u}\hspace*{-0.1em}\in \hspace*{-0.1em}V}$~and~${\boldsymbol{f}\hspace*{-0.1em}\in\hspace*{-0.1em} V^*}$.
	\begin{Corollary}[Convergence]\label{cor:primal_convergence} For $\alpha >0$ sufficiently large, it holds that
		\begin{align*}
			\|\pmb{\mathsf{\mathcal{F}}}(\tilde{\nabla}_h \boldsymbol{u}_h) - \pmb{\mathsf{\mathcal{F}}}(\nabla \boldsymbol{u})\|^2_{2,\Omega}
			+ m_{\varphi_{\boldsymbol{\beta}_h(\boldsymbol{u}_h)},h}(\boldsymbol{u}_h)\to 0\quad (h\to 0)\,.
		\end{align*}
	\end{Corollary}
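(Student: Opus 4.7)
The plan is to apply the best-approximation estimate from \cref{thm:error_primal} and prove that the infimum on its right-hand side tends to zero. By the theorem, it suffices to construct, for each $h>0$, a $\boldsymbol{v}_h\in V_h$ such that both
\[
\|\pmb{\mathsf{\mathcal{F}}}(\tilde{\nabla}_h \boldsymbol{v}_h) - \pmb{\mathsf{\mathcal{F}}}(\nabla \boldsymbol{u})\|^2_{2,\Omega}\quad\text{and}\quad m_{\varphi_{\boldsymbol{\beta}_h(\boldsymbol{u}_h)},h}(\boldsymbol{v}_h)
\]
become arbitrarily small as $h\to 0$. Given $\epsilon>0$, I would invoke density of $C^\infty_c(\Omega)^n$ in $V=W^{1,p}_0(\Omega)^n$ to pick $\boldsymbol{u}_\epsilon\in C^\infty_c(\Omega)^n$ with $\|\nabla\boldsymbol{u}-\nabla\boldsymbol{u}_\epsilon\|_{p,\Omega}<\epsilon$, and then define $\boldsymbol{v}_h\coloneqq I_h^k\boldsymbol{u}_\epsilon\in V_h$, where $I_h^k$ is the standard continuous Lagrange interpolation of degree $k$.

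Since $\boldsymbol{u}_\epsilon$ is smooth and compactly supported in $\Omega$, for every $h>0$ small enough $\boldsymbol{v}_h$ is globally continuous and vanishes on $\partial\Omega$. Consequently, all jumps $\llbracket \boldsymbol{v}_h\otimes\boldsymbol{n}\rrbracket$ on $\Gamma_h$ vanish, which yields simultaneously $m_{\varphi_{\boldsymbol{\beta}_h(\boldsymbol{u}_h)},h}(\boldsymbol{v}_h)=0$ and $\tilde{\nabla}_h\boldsymbol{v}_h=\nabla\boldsymbol{v}_h$ for both choices $\tilde{\nabla}_h\in\{\nabla_h,\pmb{\mathsf{\mathcal{G}}}_h\}$, because $\boldsymbol{\mathcal{R}}_h\boldsymbol{v}_h=\boldsymbol{0}$. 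For the remaining gradient error I would split it by triangle inequality as
\[
\|\pmb{\mathsf{\mathcal{F}}}(\nabla\boldsymbol{v}_h) - \pmb{\mathsf{\mathcal{F}}}(\nabla \boldsymbol{u})\|^2_{2,\Omega}\lesssim \|\pmb{\mathsf{\mathcal{F}}}(\nabla I_h^k\boldsymbol{u}_\epsilon)-\pmb{\mathsf{\mathcal{F}}}(\nabla \boldsymbol{u}_\epsilon)\|^2_{2,\Omega} + \|\pmb{\mathsf{\mathcal{F}}}(\nabla \boldsymbol{u}_\epsilon)-\pmb{\mathsf{\mathcal{F}}}(\nabla \boldsymbol{u})\|^2_{2,\Omega}.
\]
For the first summand, standard interpolation theory and smoothness of $\boldsymbol{u}_\epsilon$ give $\nabla I_h^k\boldsymbol{u}_\epsilon\to\nabla\boldsymbol{u}_\epsilon$ in $L^\infty(\Omega)^{n\times d}$ as $h\to 0$; combined with the pointwise equivalence $|\pmb{\mathsf{\mathcal{F}}}(\pmb{\mathsf{Q}})-\pmb{\mathsf{\mathcal{F}}}(\pmb{\mathsf{P}})|^2\sim\varphi_{|\pmb{\mathsf{Q}}|}(|\pmb{\mathsf{Q}}-\pmb{\mathsf{P}}|)$ from \cref{lem:hammer} and dominated convergence, this summand tends to zero for fixed~$\epsilon>0$.

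The second summand is handled using the global bound $|\pmb{\mathsf{\mathcal{F}}}(\pmb{\mathsf{Q}})-\pmb{\mathsf{\mathcal{F}}}(\pmb{\mathsf{P}})|^2\lesssim (\delta+|\pmb{\mathsf{Q}}|+|\pmb{\mathsf{P}}|)^{p-2}|\pmb{\mathsf{Q}}-\pmb{\mathsf{P}}|^2$, implied by \cref{lem:hammer}. In the regime $p\geq 2$, an application of H\"older's inequality with exponents $\frac{p}{p-2}$ and $\frac{p}{2}$ dominates it by a constant times $(\|\nabla\boldsymbol{u}\|_{p,\Omega}+\|\nabla\boldsymbol{u}_\epsilon\|_{p,\Omega})^{p-2}\,\|\nabla\boldsymbol{u}-\nabla\boldsymbol{u}_\epsilon\|_{p,\Omega}^2$, which vanishes as $\epsilon\to 0$; in the case $p\in(1,2)$ the exponent $p-2$ is non-positive, and a direct estimate yields a bound by $\|\nabla\boldsymbol{u}-\nabla\boldsymbol{u}_\epsilon\|_{p,\Omega}^p$. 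Taking $\limsup_{h\to 0}$ for fixed $\epsilon>0$ and then sending $\epsilon\to 0$ concludes the proof. The only delicate step is this continuity-type estimate for $\pmb{\mathsf{\mathcal{F}}}$, where the $p\geq 2$ and $p<2$ cases have to be treated separately; this is classical in the analysis of problems with $(p,\delta)$-structure and follows the lines of \cite{die-ett,dr-nafsa}.
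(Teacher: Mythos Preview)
Your proof is correct and follows the same overall strategy as the paper: plug a conforming approximation into the best-approximation estimate of \cref{thm:error_primal}, so that the jump modular vanishes and $\tilde{\nabla}_h\boldsymbol{v}_h=\nabla\boldsymbol{v}_h$, and then use the $L^p$-continuity of $\pmb{\mathsf{\mathcal{F}}}$ (which the paper records compactly as $\|\pmb{\mathsf{\mathcal{F}}}(\nabla\boldsymbol{u})-\pmb{\mathsf{\mathcal{F}}}(\nabla\boldsymbol{v})\|_{2,\Omega}^2\lesssim\|\nabla\boldsymbol{u}-\nabla\boldsymbol{v}\|_{p,\Omega}^{\min\{p,2\}}$, exactly your case split). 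The only methodological difference is that the paper chooses $\boldsymbol{v}_h=\Pi_h^{SZ}\boldsymbol{u}\in V_h\cap V$, the Scott--Zhang quasi-interpolant of $\boldsymbol{u}$ itself, and invokes its $W^{1,p}$-stability plus density to get $\nabla\Pi_h^{SZ}\boldsymbol{u}\to\nabla\boldsymbol{u}$ in $L^p$ directly; this avoids your intermediate smooth $\boldsymbol{u}_\epsilon$ and the double limit $h\to 0$ then $\epsilon\to 0$. Your two-step argument (density to $C_c^\infty$, then Lagrange interpolation) is more elementary in that it needs no quasi-interpolation operator, at the price of a slightly longer diagonal-type argument. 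Both buy the same conclusion with no additional hypotheses.
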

	
	\begin{proof}
		Let $\Pi_h^{SZ}\colon V\to V_h\cap V$ denote the
Scott--Zhang quasi-interpolation operator (cf.~\cite{ZS1990}). Then,
due to $\llbracket{\Pi_h^{SZ}\boldsymbol{u}\otimes \boldsymbol{n}}\rrbracket=\pmb{\mathsf{0}}$ a.e.\ on $\Gamma_h$,
we have that
		\begin{align}\label{cor:primal_convergence.1}
m_{\varphi_{\boldsymbol{\beta}_h(\boldsymbol{u}_h)},h}(\Pi_h^{SZ}\boldsymbol{u})=0\,.
		\end{align} Therefore, using the stability and
convergence properties of $\Pi_h^{SZ}\colon V\to V_h\cap V$
(cf.~\cite{ZS1990}) and the density of smooth functions in $V$, we
conclude that $\Pi_h^{SZ}\boldsymbol{u}\to \boldsymbol{u}$ in $V$ $(h\to 0)$. This, together
with \cref{eq:hammera}, \cref{rem:phi} (iii), H\"older's inequality implies
that
		\begin{align}\label{cor:primal_convergence.2}
			\|\pmb{\mathsf{\mathcal{F}}}(\nabla \boldsymbol{u}) -
			\pmb{\mathsf{\mathcal{F}}}(\nabla\Pi_h^{SZ} \boldsymbol{u})\|^2_{2,\Omega} \lesssim \|\nabla \boldsymbol{u} - \nabla\Pi_h^{SZ} \boldsymbol{u}\|_p^{\min\{p,2\}}\to 0\quad (h\to 0)
		\end{align}
		with a constant depending possibly on $\delta$,
                $\|\nabla \boldsymbol{u}\|_p$. Putting everything together, choosing $\boldsymbol{v}_h=\Pi_h^{SZ} \boldsymbol{u}\in V_h$ in \cref{thm:error_primal}, using \cref{cor:primal_convergence.1,cor:primal_convergence.2}, we conclude that
		\begin{align*}
			&\|\pmb{\mathsf{\mathcal{F}}}(\tilde{\nabla}_h \boldsymbol{u}_h) - \pmb{\mathsf{\mathcal{F}}}(\nabla \boldsymbol{u})\|^2_{2,\Omega}
			+ m_{\varphi_{\boldsymbol{\beta}_h(\boldsymbol{u}_h)},h}(\boldsymbol{u}_h) \\
            &\lesssim \|\pmb{\mathsf{\mathcal{F}}}(\nabla \boldsymbol{u}) - \pmb{\mathsf{\mathcal{F}}}(\nabla\Pi_h^{SZ} \boldsymbol{u})\|^2_{2,\Omega}\to 0\quad(h\to 0)\,,
		\end{align*}
		which is the claimed convergence under minimal regularity assumptions.
	\end{proof}
	
	As a second immediate consequence of the best-approximation
	result in \cref{thm:error_primal}, we obtain fractional
	convergence rates of the method given fractional regularity
	assumptions on the solution of the continuous primal problem 
	\cref{eq:primal}. In order to express the fractional
	regularity of the solution of 
	\cref{eq:primal}, we make use of Nikolski\u{\i} spaces. For
	given $p \in  [1, \infty)$, $\beta\in (0,1]$, and $v\in
	L^p(\Omega)$, the  \textit{Nikolski\u{\i} semi-norm} is
	defined via 
	\begin{align*}
		[v]_{N^{\beta,p}(\Omega)}\coloneqq \sup_{h\in \mathbb{R}^d\setminus\{0\}}{\vert h\vert^{-\beta}\bigg(\int_{\Omega\cap (\Omega-h)}{\vert v(x + h)-v(x)\vert^p\,\mathrm{d}x}\bigg)^{\frac{1}{p}}}<\infty\,.
	\end{align*}
	Then, for $p \in  [1, \infty)$ and $\beta\in (0,1]$, the \textit{Nikolski\u{\i} space} is defined via
	\begin{align*}
		N^{\beta,p}(\Omega)\coloneqq \big\{ v\in L^p(\Omega)\mid [v]_{N^{\beta,p}(\Omega)}<\infty\big\}\,,
	\end{align*}
	and the \textit{Nikolski\u{\i} norm} $\|\cdot\|_{N^{\beta,p}(\Omega)}\coloneqq \| \cdot\|_{p,\Omega}+[v]_{N^{\beta,p}(\Omega)}$ turns $N^{\beta,p}(\Omega)$  into a Banach space.
	\begin{Corollary}[Fractional convergence rates]\label{cor:primal_rate}
		Assume that the family of triangulations $\{\mathcal{T}_h\}_{h}$ is quasi-uniform, and that
		$\pmb{\mathsf{\mathcal{F}}}(\nabla \boldsymbol{u})\in N^{\beta,2}(\Omega)^{n\times d}$ for some $\beta\in (0,1]$.
		Then, for $\alpha >0$ sufficiently large, it holds that
		\begin{align*}
			\|\pmb{\mathsf{\mathcal{F}}}(\tilde{\nabla}_h \boldsymbol{u}_h) - \pmb{\mathsf{\mathcal{F}}}(\nabla \boldsymbol{u})\|^2_{2,\Omega}
			+ m_{\varphi_{\boldsymbol{\beta}_h(\boldsymbol{u}_h)},h}(\boldsymbol{u}_h)\lesssim h^{2\beta}\, [\pmb{\mathsf{\mathcal{F}}}(\nabla \boldsymbol{u})]_{N^{\beta,2}(\Omega)}^2\,,
		\end{align*}
		where the constant depends only on $k$, $\omega_0$, and the characteristics of $\pmb{\mathsf{\mathcal{S}}}$.
	\end{Corollary}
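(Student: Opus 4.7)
The natural plan is to feed the best-approximation estimate of \cref{thm:error_primal} with $\boldsymbol{v}_h=\Pi_h^{SZ}\boldsymbol{u}\in V_h\cap V$, the Scott--Zhang quasi-interpolant. As in the proof of \cref{cor:primal_convergence}, $\Pi_h^{SZ}\boldsymbol{u}$ is $W^{1,p}_0$-conforming, so $\llbracket{\Pi_h^{SZ}\boldsymbol{u}\otimes\boldsymbol{n}}\rrbracket=\pmb{\mathsf{0}}$ on $\Gamma_h$; this kills the jump modular $m_{\varphi_{\boldsymbol{\beta}_h(\boldsymbol{u}_h)},h}(\Pi_h^{SZ}\boldsymbol{u})$ and forces $\tilde{\nabla}_h\Pi_h^{SZ}\boldsymbol{u}=\nabla\Pi_h^{SZ}\boldsymbol{u}$ for both $\tilde{\nabla}_h\in\{\nabla_h,\pmb{\mathsf{\mathcal{G}}}_h\}$, since $\boldsymbol{\mathcal{R}}_h\Pi_h^{SZ}\boldsymbol{u}=\pmb{\mathsf{0}}$ once the jumps vanish. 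Consequently, the entire statement reduces to the $\pmb{\mathsf{\mathcal{F}}}$-interpolation estimate
\[
  \|\pmb{\mathsf{\mathcal{F}}}(\nabla\Pi_h^{SZ}\boldsymbol{u})-\pmb{\mathsf{\mathcal{F}}}(\nabla\boldsymbol{u})\|_{2,\Omega}^2
  \lesssim h^{2\beta}\,[\pmb{\mathsf{\mathcal{F}}}(\nabla\boldsymbol{u})]_{N^{\beta,2}(\Omega)}^2,
\]
which I would prove by an elementwise argument based on the shifted N-functions.

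Fix $K\in\mathcal{T}_h$ with Scott--Zhang patch $S_K$ of diameter $\sim h_K\sim h$ (by quasi-uniformity) and pick a constant $\pmb{\mathsf{Q}}_K\in\mathbb{R}^{n\times d}$ such that $\pmb{\mathsf{\mathcal{F}}}(\pmb{\mathsf{Q}}_K)=\langle\pmb{\mathsf{\mathcal{F}}}(\nabla\boldsymbol{u})\rangle_{S_K}$, which is possible because $\pmb{\mathsf{\mathcal{F}}}\colon\mathbb{R}^{n\times d}\to\mathbb{R}^{n\times d}$ is a bijection. Inserting $\pm\pmb{\mathsf{\mathcal{F}}}(\pmb{\mathsf{Q}}_K)$ and invoking the equivalence $|\pmb{\mathsf{\mathcal{F}}}(\pmb{\mathsf{P}})-\pmb{\mathsf{\mathcal{F}}}(\pmb{\mathsf{Q}})|^2\sim\varphi_{|\pmb{\mathsf{Q}}|}(|\pmb{\mathsf{P}}-\pmb{\mathsf{Q}}|)$ from \cref{lem:hammer}, the $W^{1,\varphi_{|\pmb{\mathsf{Q}}_K|}}$-stability of $\Pi_h^{SZ}$ on $S_K$ (using that $\Pi_h^{SZ}$ reproduces constants on $S_K$ together with the Orlicz--Poincar\'e inequality on $S_K$), and the change-of-shift \cref{lem:shift-change}, I would arrive at
\[
  \|\pmb{\mathsf{\mathcal{F}}}(\nabla\Pi_h^{SZ}\boldsymbol{u})-\pmb{\mathsf{\mathcal{F}}}(\nabla\boldsymbol{u})\|_{2,K}^2
  \lesssim\|\pmb{\mathsf{\mathcal{F}}}(\nabla\boldsymbol{u})-\langle\pmb{\mathsf{\mathcal{F}}}(\nabla\boldsymbol{u})\rangle_{S_K}\|_{2,S_K}^2.
\]
Summing over $K$ and exploiting the finite overlap of the patches $S_K$, the right-hand side equals, up to constants, the squared $L^2$-oscillation of $\pmb{\mathsf{\mathcal{F}}}(\nabla\boldsymbol{u})$ at scale $h$, which on a quasi-uniform mesh is bounded by $h^{2\beta}[\pmb{\mathsf{\mathcal{F}}}(\nabla\boldsymbol{u})]_{N^{\beta,2}(\Omega)}^2$ by the standard integral characterisation of the Nikolski\u{\i} semi-norm.

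The only non-routine step is the elementwise $\pmb{\mathsf{\mathcal{F}}}$-stability of $\Pi_h^{SZ}$: the ordinary $L^p$-stability does not translate automatically into a bound involving the shifted N-function $\varphi_{|\pmb{\mathsf{Q}}_K|}$, and one must control the variation of $|\nabla\boldsymbol{u}|$ across $S_K$ by means of \cref{lem:shift-change} in order to realise the single shift $|\pmb{\mathsf{Q}}_K|$ throughout $S_K$. This kind of argument is, however, by now standard in the Diening--R\r{u}\v{z}i\v{c}ka theory of $(p,\delta)$-structures, so I expect no genuinely new obstacle beyond the careful bookkeeping of the shift-change constants.
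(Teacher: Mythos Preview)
Your approach is essentially the same as the paper's: choose $\boldsymbol{v}_h=\Pi_h^{SZ}\boldsymbol{u}$ in \cref{thm:error_primal}, use that the jumps vanish so the modular term disappears and $\tilde{\nabla}_h\Pi_h^{SZ}\boldsymbol{u}=\nabla\Pi_h^{SZ}\boldsymbol{u}$, and reduce to the $\pmb{\mathsf{\mathcal{F}}}$-interpolation estimate $\|\pmb{\mathsf{\mathcal{F}}}(\nabla\Pi_h^{SZ}\boldsymbol{u})-\pmb{\mathsf{\mathcal{F}}}(\nabla\boldsymbol{u})\|_{2,\Omega}^2\lesssim h^{2\beta}[\pmb{\mathsf{\mathcal{F}}}(\nabla\boldsymbol{u})]_{N^{\beta,2}(\Omega)}^2$. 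The only difference is that the paper simply invokes this last estimate as \cite[Thm.~4.2]{breit-lars-etal}, whereas you sketch its proof along the standard Diening--R\r{u}\v{z}i\v{c}ka lines; your sketch is correct in spirit, though the remark that ``$\Pi_h^{SZ}$ reproduces constants on $S_K$'' fails on boundary patches and one has to argue via Orlicz-stability of $\Pi_h^{SZ}$ there instead.
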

	
	\begin{proof}
		Thanks to the quasi-uniformity, appealing to \cite[Thm.\ 4.2]{breit-lars-etal}, we have that
		\begin{align}\label{cor:primal_rate.1}
			\|\pmb{\mathsf{\mathcal{F}}}(\nabla \boldsymbol{u}) - \pmb{\mathsf{\mathcal{F}}}(\nabla\Pi_h^{SZ} \boldsymbol{u})\|^2_{2,\Omega}\lesssim h^{2\beta}\, [\pmb{\mathsf{\mathcal{F}}}(\nabla \boldsymbol{u})]_{N^{\beta,2}(\Omega)}^2\,.
		\end{align}
		Thus, choosing $\boldsymbol{v}_h=\Pi_h^{SZ} \boldsymbol{u}\in V_h\cap V$ in \cref{thm:error_primal}, due to $\llbracket{\Pi_h^{SZ} \boldsymbol{u}\otimes \boldsymbol{n}}\rrbracket=0$ on $\Gamma_h$, we obtain 
		\begin{align*}
			\|\pmb{\mathsf{\mathcal{F}}}(\tilde{\nabla}_h \boldsymbol{u}_h) - \pmb{\mathsf{\mathcal{F}}}(\nabla \boldsymbol{u})\|^2_{2,\Omega}
			+ m_{\varphi_{\boldsymbol{\beta}_h(\boldsymbol{u}_h)},h}(\boldsymbol{u}_h)&\lesssim \|\pmb{\mathsf{\mathcal{F}}}(\nabla \boldsymbol{u}) - \pmb{\mathsf{\mathcal{F}}}(\nabla\Pi_h^{SZ} \boldsymbol{u})\|^2_{2,\Omega}\\&\lesssim h^{2\beta}\, [\pmb{\mathsf{\mathcal{F}}}(\nabla \boldsymbol{u})]_{N^{\beta,2}(\Omega)}^2\,,
		\end{align*}
		which is the claimed fractional a priori error estimate.
	\end{proof}

        \begin{Remark}
          In view of \cite[Corollary 5.8]{dr-interpol} the assertion
          of \cref{cor:primal_rate} for $\beta=1$ is also valid if $\pmb{\mathsf{\mathcal{F}}}(\nabla \boldsymbol{u})\in
          W^{1,2}(\Omega)^{n\times d}$ without the additional
          assumption that the triangulation $\{\mathcal T_h\}_h$ is quasi-uniform. 
        \end{Remark}
	Eventually, resorting to the approximation properties of the \textit{node-averaging quasi-interpolation operator} $\boldsymbol{\mathcal{I}}_h^{av}\colon
	C^0(\mathcal{T}_h)\to V_{h,c}$,
	for every $\boldsymbol{v}_h\in
	\mathbb{P}(\mathcal{T}_h)$, defined via
	\begin{align*}
		\boldsymbol{\mathcal{I}}_h^{av}\boldsymbol{v}_h\coloneqq \sum_{y\in \mathcal{L}_k^{\textup{int}}(\mathcal{T}_h)}{\langle \boldsymbol{v}_h\rangle_y\,\varphi_y^k}\,,\quad\text{ where }\quad \langle \boldsymbol{v}_h\rangle_y\coloneqq 
		\sum_{T\in \mathcal{T}_h;y\in T}{(\boldsymbol{v}_h|_T)(y)}
		\,,
		\end{align*}
		\cref{thm:error_primal} allows to carry out an ansatz class competition, which reveals that the approximation capabilities of the LDG and IIDG approximations and continuous Lagrange approximation of the problem \cref{eq:PDE} are comparable.
	
	\begin{Corollary}[Ansatz class
          competition]\label{cor:primal_competition} Let $\boldsymbol{u}_h^c\in
          V_{h,c}\coloneqq V_h\cap V$ be the continuous Lagrange solution of \cref{eq:PDE}, i.e.,
		for every $\boldsymbol{v}_h\in V_{h,c}$, it holds that
		\begin{align}
			(\pmb{\mathsf{\mathcal{S}}}(\nabla\boldsymbol{u}_h^c ),\nabla \boldsymbol{v}_h)_{\Omega}=(\boldsymbol{f},\boldsymbol{v}_h)_{\Omega}\,.\label{eq:continuous_lagrange}
		\end{align}
		Then, for $\alpha >0$ sufficiently large, it holds that
		\begin{align*}
			\|\pmb{\mathsf{\mathcal{F}}}(\tilde{\nabla}_h \boldsymbol{u}_h) - \pmb{\mathsf{\mathcal{F}}}(\nabla \boldsymbol{u})\|^2_{2,\Omega}+ m_{\varphi_{\boldsymbol{\beta}_h(\boldsymbol{u}_h)},h}(\boldsymbol{u}_h)\sim 	\|\pmb{\mathsf{\mathcal{F}}}(\nabla \boldsymbol{u}_h^c) - \pmb{\mathsf{\mathcal{F}}}(\nabla \boldsymbol{u})\|^2_{2,\Omega}\,,
		\end{align*}
		with constants depending only on $k$, $\omega_0$, and the characteristics of $\pmb{\mathsf{\mathcal{S}}}$.
	\end{Corollary}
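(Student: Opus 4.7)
The equivalence splits into two inequalities. For the upper bound ``$\lesssim$'', the plan is simply to plug $\boldsymbol{v}_h=\boldsymbol{u}_h^c\in V_{h,c}\subseteq V_h$ into the best-approximation estimate of \cref{thm:error_primal}. Since $\boldsymbol{u}_h^c$ is globally continuous and vanishes on $\partial\Omega$, all face jumps $\llbracket\boldsymbol{u}_h^c\otimes\boldsymbol{n}\rrbracket$ vanish, so $\boldsymbol{\mathcal{R}}_h\boldsymbol{u}_h^c=\pmb{\mathsf{0}}$, $\tilde{\nabla}_h\boldsymbol{u}_h^c=\nabla\boldsymbol{u}_h^c$, and $m_{\varphi_{\boldsymbol{\beta}_h(\boldsymbol{u}_h)},h}(\boldsymbol{u}_h^c)=0$. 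Thus \cref{thm:error_primal} immediately yields
\begin{align*}
\|\pmb{\mathsf{\mathcal{F}}}(\tilde{\nabla}_h \boldsymbol{u}_h) - \pmb{\mathsf{\mathcal{F}}}(\nabla \boldsymbol{u})\|^2_{2,\Omega}+ m_{\varphi_{\boldsymbol{\beta}_h(\boldsymbol{u}_h)},h}(\boldsymbol{u}_h) \lesssim \|\pmb{\mathsf{\mathcal{F}}}(\nabla \boldsymbol{u}_h^c) - \pmb{\mathsf{\mathcal{F}}}(\nabla \boldsymbol{u})\|^2_{2,\Omega}\,.
\end{align*}

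For the reverse inequality ``$\gtrsim$'', I would first invoke the standard conforming quasi-optimality of \cref{eq:continuous_lagrange}, namely
\begin{align*}
\|\pmb{\mathsf{\mathcal{F}}}(\nabla \boldsymbol{u}_h^c) - \pmb{\mathsf{\mathcal{F}}}(\nabla \boldsymbol{u})\|^2_{2,\Omega} \lesssim \inf_{\boldsymbol{v}_h\in V_{h,c}} \|\pmb{\mathsf{\mathcal{F}}}(\nabla \boldsymbol{v}_h) - \pmb{\mathsf{\mathcal{F}}}(\nabla \boldsymbol{u})\|^2_{2,\Omega}\,,
\end{align*}
which follows from the Galerkin orthogonality for \cref{eq:continuous_lagrange}, the equivalence \cref{eq:hammera}, and the $\varepsilon$-Young inequality \cref{ineq:young} applied with $\psi=\varphi_{|\nabla\boldsymbol{u}|}$ (no smoothing operator needed in the conforming setting). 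I would then test this estimate with $\boldsymbol{v}_h=\boldsymbol{\mathcal{I}}_h^{av}\boldsymbol{u}_h\in V_{h,c}$ and use the triangle inequality together with $(a+b)^2\le 2a^2+2b^2$ to split
\begin{align*}
\|\pmb{\mathsf{\mathcal{F}}}(\nabla\boldsymbol{\mathcal{I}}_h^{av}\boldsymbol{u}_h) - \pmb{\mathsf{\mathcal{F}}}(\nabla\boldsymbol{u})\|^2_{2,\Omega} \lesssim \|\pmb{\mathsf{\mathcal{F}}}(\nabla\boldsymbol{\mathcal{I}}_h^{av}\boldsymbol{u}_h) - \pmb{\mathsf{\mathcal{F}}}(\tilde{\nabla}_h\boldsymbol{u}_h)\|^2_{2,\Omega} + \|\pmb{\mathsf{\mathcal{F}}}(\tilde{\nabla}_h\boldsymbol{u}_h) - \pmb{\mathsf{\mathcal{F}}}(\nabla\boldsymbol{u})\|^2_{2,\Omega}\,,
\end{align*}
so that the remaining task is to bound the residual $\|\pmb{\mathsf{\mathcal{F}}}(\nabla\boldsymbol{\mathcal{I}}_h^{av}\boldsymbol{u}_h) - \pmb{\mathsf{\mathcal{F}}}(\tilde{\nabla}_h\boldsymbol{u}_h)\|^2_{2,\Omega}$ by $m_{\varphi_{\boldsymbol{\beta}_h(\boldsymbol{u}_h)},h}(\boldsymbol{u}_h)$.

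This residual estimate is the heart of the matter. Using \cref{eq:hammera} and \cref{rem:sa}, the residual is equivalent to the modular $\rho_{\varphi_{|\tilde{\nabla}_h\boldsymbol{u}_h|},\Omega}(\nabla\boldsymbol{\mathcal{I}}_h^{av}\boldsymbol{u}_h - \tilde{\nabla}_h\boldsymbol{u}_h)$, which in the LDG case $\tilde{\nabla}_h=\pmb{\mathsf{\mathcal{G}}}_h=\nabla_h-\boldsymbol{\mathcal{R}}_h$ further splits into contributions from $\nabla(\boldsymbol{\mathcal{I}}_h^{av}\boldsymbol{u}_h-\boldsymbol{u}_h)$ and from $\boldsymbol{\mathcal{R}}_h\boldsymbol{u}_h$. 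Both contributions are controlled by face jumps via an Orlicz node-averaging bound that is a direct replay of \cref{prop:n-function_E}: the classical stability relation for $\boldsymbol{\mathcal{I}}_h^{av}$ (analogous to \cref{eq:E_stability_L2}) combined with Jensen's inequality, the $\Delta_2$-condition, and the bounded cardinality of $\Gamma_h(K)$ produces the desired modular bound with shift $|\tilde{\nabla}_h\boldsymbol{u}_h|$, and \cref{prop:n-function_E} itself (applied with $\psi=\varphi_{|\tilde{\nabla}_h\boldsymbol{u}_h|}$, using the pointwise stability of the shift) handles the $\boldsymbol{\mathcal{R}}_h\boldsymbol{u}_h$ contribution. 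Finally, by \cref{rem:phi} (iii), the pointwise inequality $\varphi_{|\tilde{\nabla}_h\boldsymbol{u}_h|}\le \varphi_{\boldsymbol{\beta}_h(\boldsymbol{u}_h)}$ holds in both regimes: if $p\le 2$, then $\boldsymbol{\beta}_h(\boldsymbol{u}_h)=0$ and the family $\{\varphi_a\}_{a\ge 0}$ is decreasing in $a$, while if $p>2$, then $\boldsymbol{\beta}_h(\boldsymbol{u}_h)\ge |\tilde{\nabla}_h\boldsymbol{u}_h|$ and $\{\varphi_a\}_{a\ge 0}$ is increasing in $a$. This replaces the non-constant shift by the admissible constant shift and closes the chain, yielding the residual bound. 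The main obstacle is this Orlicz-type node-averaging estimate with the correct shift; however, since $\boldsymbol{\mathcal{I}}_h^{av}$ enjoys the same local $L^\infty$-stability on differences as $\pmb{\mathsf{\mathcal{E}}}_h$, the argument is essentially identical to that of \cref{prop:n-function_E}.
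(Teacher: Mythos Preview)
Your proof is correct and follows essentially the same route as the paper: for ``$\lesssim$'' the paper also plugs $\boldsymbol{u}_h^c$ into \cref{thm:error_primal}, and for ``$\gtrsim$'' it likewise invokes the conforming best-approximation (citing \cite[Lem.~5.2]{dr-interpol}), tests with $\boldsymbol{\mathcal{I}}_h^{av}\boldsymbol{u}_h$, and bounds the residual $\|\pmb{\mathsf{\mathcal{F}}}(\nabla\boldsymbol{\mathcal{I}}_h^{av}\boldsymbol{u}_h)-\pmb{\mathsf{\mathcal{F}}}(\tilde{\nabla}_h\boldsymbol{u}_h)\|_{2,\Omega}^2$ by $m_{\varphi_{\boldsymbol{\beta}_h(\boldsymbol{u}_h)},h}(\boldsymbol{u}_h)$ via an Orlicz node-averaging estimate (citing \cite[Prop.~A.1]{AK.2023} rather than re-deriving it). The only adjustment needed is the order of your last two steps: perform the shift swap $\varphi_{|\tilde{\nabla}_h\boldsymbol{u}_h|}\le\varphi_{\boldsymbol{\beta}_h(\boldsymbol{u}_h)}$ \emph{before} the node-averaging bound (this is exactly what the paper does in \cref{eq:primal_competition.1}), since the Jensen-type argument behind \cref{prop:n-function_E} requires a fixed N-function and is not directly applicable with the $x$-dependent shift $|\tilde{\nabla}_h\boldsymbol{u}_h|$.
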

	
	\begin{proof}
		\textit{ad $\lesssim$.} Using
		\cref{thm:error_primal} with $\boldsymbol{v}_h=\boldsymbol{u}_h^c\in
		V_{h,c}\subseteq V_h$ and observing that
		$m_{\varphi_{\boldsymbol{\beta}_h(\boldsymbol{u}_h)},h}(\boldsymbol{u}_h^c)=0$, we find that
		\begin{align*}
			\|\pmb{\mathsf{\mathcal{F}}}(\tilde{\nabla}_h \boldsymbol{u}_h) - \pmb{\mathsf{\mathcal{F}}}(\nabla \boldsymbol{u})\|^2_{2,\Omega}+ m_{\varphi_{\boldsymbol{\beta}_h(\boldsymbol{u}_h)},h}(\boldsymbol{u}_h)\lesssim 	\|\pmb{\mathsf{\mathcal{F}}}(\nabla \boldsymbol{u}_h^c) - \pmb{\mathsf{\mathcal{F}}}(\nabla \boldsymbol{u})\|^2_{2,\Omega}\,.
		\end{align*}

		\textit{ad $\gtrsim$.} 
	Using \cref{eq:hammera}, that $\varphi_{\vert \tilde{\nabla}_h \boldsymbol{u}_h\vert}\leq \varphi_{\boldsymbol{\beta}_h(\boldsymbol{u}_h)}$, and  \cite[Prop.\ A.1]{AK.2023},
	for every $K\in \mathcal{T}_h$,~we~find~that
	\begin{align}\label{eq:primal_competition.1}
		\begin{aligned}
			\|\pmb{\mathsf{\mathcal{F}}}(\nabla 	\boldsymbol{\mathcal{I}}_h^{av}\boldsymbol{u}_h) - \pmb{\mathsf{\mathcal{F}}}(\tilde{\nabla}_h\boldsymbol{u}_h)\|^2_{2,K}&\lesssim \rho_{\varphi_{\vert \tilde{\nabla}_h \boldsymbol{u}_h\vert},K}(\tilde{\nabla}_h(\boldsymbol{u}_h- 	\boldsymbol{\mathcal{I}}_h^{av}\boldsymbol{u}_h))\\&\lesssim
			\rho_{\varphi_{\boldsymbol{\beta}_h(\boldsymbol{u}_h)},K}(\tilde{\nabla}_h(\boldsymbol{u}_h- 	\boldsymbol{\mathcal{I}}_h^{av}\boldsymbol{u}_h))
			\\&\lesssim
                        m_{\varphi_{\boldsymbol{\beta}_h(\boldsymbol{u}_h)},h}(\boldsymbol{u}_h)  \,. 
		\end{aligned}
	\end{align}
	Eventually, using the best-approximation properties of  $\boldsymbol{u}_h^c\in V_{h,c}$ (cf.\ \cite[Lem.\ 5.2]{dr-interpol}) and \cref{eq:primal_competition.1}, we conclude that
	\begin{align}\label{eq:primal_competition.2}
			\|\pmb{\mathsf{\mathcal{F}}}(\nabla \boldsymbol{u}_h^c) - \pmb{\mathsf{\mathcal{F}}}(\nabla
			\boldsymbol{u})\|^2_{2,\Omega}&\lesssim \|\pmb{\mathsf{\mathcal{F}}}(\nabla
			\boldsymbol{\mathcal{I}}_h^{av}\boldsymbol{u}_h) - \pmb{\mathsf{\mathcal{F}}}(\nabla
			\boldsymbol{u})\|^2_{2,\Omega} \notag\\&\lesssim \|\pmb{\mathsf{\mathcal{F}}}(\tilde{\nabla}_h
			\boldsymbol{u}_h) - \pmb{\mathsf{\mathcal{F}}}(\nabla \boldsymbol{u})\|^2_{2,\Omega}+\|\pmb{\mathsf{\mathcal{F}}}(\nabla
			\boldsymbol{\mathcal{I}}_h^{av}\boldsymbol{u}_h) -
			\pmb{\mathsf{\mathcal{F}}}(\tilde{\nabla}_h\boldsymbol{u}_h)\|^2_{2,\Omega} \\&\lesssim
			\|\pmb{\mathsf{\mathcal{F}}}(\tilde{\nabla}_h \boldsymbol{u}_h) - \pmb{\mathsf{\mathcal{F}}}(\nabla
			\boldsymbol{u})\|^2_{2,\Omega}+m_{\varphi_{\boldsymbol{\beta}_h(\boldsymbol{u}_h)},h}(\boldsymbol{u}_h)\,.\notag
	\end{align}
	Putting everything together, we arrive ar the claimed equivalence.
\end{proof}

\begin{Remark}[Application to Crouzeix--Raviart element]\label{rmk:CR}
	If $V_h\coloneqq \mathcal{CR}^k(\mathcal{T}_h)$, where $\mathcal{CR}^k(\mathcal{T}_h)$ is the Crouzeix--Raviart element and its generalization to higher orders (cf.\ \cite[Sec.\ 3.3 (3.17)]{VZ.2019.II}),~and if we use the same primal formulation 
	with $\tilde{\nabla}_h=\nabla_h$, then
	it is possible to prove the same results. However, omitting the stabilization terms in the primal formulation as in the linear case is still an open problem, since, in this case, one cannot simply absorb the term $c_\kappa\, m_{\varphi_{\boldsymbol{\beta}_h(\boldsymbol{u}_h)},h}(\boldsymbol{v}_h)$ in \cref{eq:error_primal.5}  in the term  
	$\alpha\, c_\varepsilon\, m_{\varphi_{\boldsymbol{\beta}_h(\boldsymbol{u}_h)},h}(\boldsymbol{v}_h)$ in \cref{eq:error_primal.3} via choosing $\alpha>0$ large enough.
\end{Remark}

\section{Mixed formulation}\label{sec:mixed}

Since $\pmb{\mathsf{\mathcal{S}}}\colon \mathbb{R}^{n\times d}\to \mathbb{R}^{n\times d}$ is bounded, continuous, and strictly monotone, by the Browder--Minty theorem (cf.\ \cite[Thm.\ 26.A]{zei-IIB}), it is also bijective and its inverse  $\pmb{\mathsf{\mathcal{D}}}\coloneqq\pmb{\mathsf{\mathcal{S}}}^{-1}\colon \mathbb{R}^{n\times d}\to \mathbb{R}^{n\times d}$ is bounded, continuous, and strictly monotone as well. This motivates to consider the following mixed formulation and  corresponding discrete mixed formulation. In doing so, throughout the entire~section, we assume that $\pmb{\mathsf{\mathcal{S}}}\colon \mathbb{R}^{n\times d}\to \mathbb{R}^{n\times d}$ has $(p,\delta)$-structure in the sense of \cref{assum:extra_stress}. To ensure the validity of the identity \cref{eq:equivalence_without_E}, in this formulation, it will be crucial~to~have~that~$\tilde{\nabla}_h(V_h)\subseteq \Sigma_h$.

\subsection{Continuous mixed formulation} We abbreviate
\begin{align*}
	\Sigma\coloneqq L^{p'}(\Omega)^{n\times d}\,.
\end{align*}
Then, for given $\boldsymbol{f}\in V^*$, the \textit{(continuous) mixed formulation} of \cref{eq:PDE} seeks for 
$(\pmb{\mathsf{S}},\boldsymbol{u})^\top \in \Sigma\times V$ such that for every $(\pmb{\mathsf{T}},\boldsymbol{z})^\top\in \Sigma\times V$, it holds that
\begin{subequations}\label{eq:mixed}
\begin{align}
  \label{eq:mixed_1}
		(\pmb{\mathsf{\mathcal{D}}}(\pmb{\mathsf{S}}),\pmb{\mathsf{T}})_{\Omega}
		-(\pmb{\mathsf{T}} ,\nabla \boldsymbol{u})_{\Omega}&=0\,,\\
  \label{eq:mixed_2}
		( \pmb{\mathsf{S}} , \nabla \boldsymbol{z} )_{\Omega}
		&=
		\langle \boldsymbol{f},  \boldsymbol{z} \rangle_{V}\,.
\end{align}
\end{subequations}
The unique solvability of the mixed formulation  \cref{eq:mixed} is a consequence of the unique solvability of the primal formulation \cref{eq:primal}. In fact, both formulations are equivalent: If $(\pmb{\mathsf{S}},\boldsymbol{u})^\top \in \Sigma\times V$ is a solution of \cref{eq:mixed}, then \cref{eq:mixed_1} implies that $\pmb{\mathsf{\mathcal{D}}}(\pmb{\mathsf{S}})=\nabla \boldsymbol{u}$, which is equivalent to $\pmb{\mathsf{S}}=\pmb{\mathsf{\mathcal{S}}}(\nabla\boldsymbol{u})$, and, thus, together with \cref{eq:mixed_2} shows that $\boldsymbol{u} \in V$ is a solution of \cref{eq:primal}. If, in turn, $\boldsymbol{u} \in V$ is~a~solution~of~\cref{eq:primal}, then, setting $\pmb{\mathsf{S}}\coloneqq \pmb{\mathsf{\mathcal{S}}}(\nabla\boldsymbol{u})\in \Sigma$, we have that  $\pmb{\mathsf{\mathcal{D}}}(\pmb{\mathsf{S}})=\nabla \boldsymbol{u}$ and, thus, \cref{eq:mixed_1}. In addition, $\pmb{\mathsf{S}}\in \Sigma$ then satisfies  \cref{eq:mixed_2}, so that $(\pmb{\mathsf{S}},\boldsymbol{u})^\top \in\Sigma\times V$ is a solution of \cref{eq:mixed}. It is  possible to prove the weak solvability of \cref{eq:mixed} directly using pseudo-monotone operator theory. For this, however,~one~first~needs to ascertain that $\pmb{\mathsf{\mathcal{D}}}\colon \mathbb{R}^{n\times d}\to \mathbb{R}^{n\times d}$ has similar~--but~\mbox{dual--~growth}~conditions~to~$\pmb{\mathsf{\mathcal{S}}}\colon \mathbb{R}^{n\times d}\to \mathbb{R}^{n\times d}$.\enlargethispage{1mm}

\begin{Lemma}\label{lem:D_structure}
	Let $\pmb{\mathsf{\mathcal{S}}}\colon \mathbb{R}^{n\times d}\to \mathbb{R}^{n\times d}$ have $(p,\delta)$-structure in the sense of \cref{assum:extra_stress}. Then, its inverse $\pmb{\mathsf{\mathcal{D}}}\coloneqq\pmb{\mathsf{\mathcal{S}}}^{-1}\colon \mathbb{R}^{n\times d}\to \mathbb{R}^{n\times d}$ has $(p',\delta^{p-1})$-structure in the sense of \cref{assum:extra_stress}.
\end{Lemma}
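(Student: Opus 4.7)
The plan is to derive the two structural estimates for $\pmb{\mathsf{\mathcal{D}}}$ by exchanging the roles of argument and value: writing $\pmb{\mathsf{Q}} \coloneqq \pmb{\mathsf{\mathcal{D}}}(\tilde{\pmb{\mathsf{Q}}})$ and $\pmb{\mathsf{P}} \coloneqq \pmb{\mathsf{\mathcal{D}}}(\tilde{\pmb{\mathsf{P}}})$, so that $\tilde{\pmb{\mathsf{Q}}} = \pmb{\mathsf{\mathcal{S}}}(\pmb{\mathsf{Q}})$ and $\tilde{\pmb{\mathsf{P}}} = \pmb{\mathsf{\mathcal{S}}}(\pmb{\mathsf{P}})$, all structural information about $\pmb{\mathsf{\mathcal{D}}}$ on the left-hand side can be translated into statements about $\pmb{\mathsf{\mathcal{S}}}$ on the right-hand side and then handled via the equivalences of \cref{lem:hammer}. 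The normalisation $\pmb{\mathsf{\mathcal{D}}}(\mathbf{0}) = \mathbf{0}$ is immediate from $\pmb{\mathsf{\mathcal{S}}}(\mathbf{0}) = \mathbf{0}$ and the bijectivity of $\pmb{\mathsf{\mathcal{S}}}$.

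Setting $\psi \coloneqq \varphi_{p',\delta^{p-1}}$, the first preparatory step is to verify $\varphi^* \sim \psi$, which is a routine calculation from the Young conjugate formula and the explicit form of $\varphi'$. Because both families satisfy the $\Delta_2$-condition uniformly in the shift (cf.~\cref{rem:phi}(iii)), this modular equivalence transfers to the shifted families, giving $(\varphi^*)_a \sim \psi_a$ (and, by $\psi(t) \sim \psi'(t)\,t$, also $((\varphi^*)_a)' \sim \psi_a'$) uniformly in $a \ge 0$.

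For the coercivity estimate I would apply the lower equivalence in \eqref{eq:hammera} to the pair $(\pmb{\mathsf{Q}},\pmb{\mathsf{P}})$,
\begin{equation*}
(\pmb{\mathsf{\mathcal{D}}}(\tilde{\pmb{\mathsf{Q}}}) - \pmb{\mathsf{\mathcal{D}}}(\tilde{\pmb{\mathsf{P}}})) : (\tilde{\pmb{\mathsf{Q}}} - \tilde{\pmb{\mathsf{P}}}) = (\pmb{\mathsf{Q}}-\pmb{\mathsf{P}}) : (\pmb{\mathsf{\mathcal{S}}}(\pmb{\mathsf{Q}}) - \pmb{\mathsf{\mathcal{S}}}(\pmb{\mathsf{P}})) \sim (\varphi^*)_{|\tilde{\pmb{\mathsf{Q}}}|}(|\tilde{\pmb{\mathsf{Q}}} - \tilde{\pmb{\mathsf{P}}}|) \sim \psi_{|\tilde{\pmb{\mathsf{Q}}}|}(|\tilde{\pmb{\mathsf{Q}}} - \tilde{\pmb{\mathsf{P}}}|),
\end{equation*}
which is precisely the first axiom in \cref{assum:extra_stress} with $\varphi$ replaced by $\psi$. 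For the growth estimate I would start from \eqref{eq:hammere} in the form $|\tilde{\pmb{\mathsf{Q}}}-\tilde{\pmb{\mathsf{P}}}| \sim \varphi'_{|\pmb{\mathsf{Q}}|}(|\pmb{\mathsf{Q}}-\pmb{\mathsf{P}}|)$, multiply by $|\pmb{\mathsf{\mathcal{D}}}(\tilde{\pmb{\mathsf{Q}}})-\pmb{\mathsf{\mathcal{D}}}(\tilde{\pmb{\mathsf{P}}})| = |\pmb{\mathsf{Q}}-\pmb{\mathsf{P}}|$, and use the N-function identity $\varphi'_a(t)\,t \sim \varphi_a(t)$, the coercivity chain already established, and the analogous identity $\psi'_a(t)\,t \sim \psi_a(t)$ to obtain
\begin{equation*}
|\pmb{\mathsf{\mathcal{D}}}(\tilde{\pmb{\mathsf{Q}}})-\pmb{\mathsf{\mathcal{D}}}(\tilde{\pmb{\mathsf{P}}})| \cdot |\tilde{\pmb{\mathsf{Q}}}-\tilde{\pmb{\mathsf{P}}}| \sim \varphi_{|\pmb{\mathsf{Q}}|}(|\pmb{\mathsf{Q}}-\pmb{\mathsf{P}}|) \sim \psi_{|\tilde{\pmb{\mathsf{Q}}}|}(|\tilde{\pmb{\mathsf{Q}}}-\tilde{\pmb{\mathsf{P}}}|) \sim \psi'_{|\tilde{\pmb{\mathsf{Q}}}|}(|\tilde{\pmb{\mathsf{Q}}}-\tilde{\pmb{\mathsf{P}}}|) \cdot |\tilde{\pmb{\mathsf{Q}}}-\tilde{\pmb{\mathsf{P}}}|;
\end{equation*}
cancelling the factor $|\tilde{\pmb{\mathsf{Q}}}-\tilde{\pmb{\mathsf{P}}}|$ (the degenerate case $\tilde{\pmb{\mathsf{Q}}} = \tilde{\pmb{\mathsf{P}}}$ being trivial) yields the growth inequality with constants depending only on the characteristics of $\pmb{\mathsf{\mathcal{S}}}$.

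The step that needs most care is the equivalence $(\varphi^*)_a \sim \psi_a$ uniformly in $a \ge 0$, since shifting and conjugation do not literally commute; it rests on the uniform bounds $\Delta_2(\varphi_a) \lesssim 2^{\max\{2,p\}}$ and $\Delta_2((\varphi_a)^*) \lesssim 2^{\max\{2,p'\}}$ from \cref{rem:phi}(iii) and ultimately on the elementary estimate $(\delta+a)^{p-1} \sim \delta^{p-1}+a^{p-1}$. If one prefers to sidestep this, an alternative is to take $(\varphi^*)_a$ itself as the canonical shifted N-function attached to the $(p',\delta^{p-1})$-structure of $\pmb{\mathsf{\mathcal{D}}}$ and check the equivalence with $\psi_a$ only as an a posteriori identification; in either case the content of the lemma is the same.
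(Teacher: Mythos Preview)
Your proposal is correct and follows essentially the same strategy as the paper: substitute $\pmb{\mathsf{Q}}=\pmb{\mathsf{\mathcal{D}}}(\tilde{\pmb{\mathsf{Q}}})$, $\pmb{\mathsf{P}}=\pmb{\mathsf{\mathcal{D}}}(\tilde{\pmb{\mathsf{P}}})$ and read off the structural bounds for $\pmb{\mathsf{\mathcal{D}}}$ from the equivalences in \cref{lem:hammer}. The coercivity step is identical to the paper's. For the growth estimate the paper takes a slightly different but equivalent shortcut: instead of multiplying by $|\tilde{\pmb{\mathsf{Q}}}-\tilde{\pmb{\mathsf{P}}}|$ and cancelling, it applies the inverse identity $((\varphi_{|\pmb{\mathsf{\mathcal{D}}}(\pmb{\mathsf{Q}})|})^*)'\circ\varphi'_{|\pmb{\mathsf{\mathcal{D}}}(\pmb{\mathsf{Q}})|}=\mathrm{id}$ together with $((\varphi_{|\pmb{\mathsf{\mathcal{D}}}(\pmb{\mathsf{Q}})|})^*)'\sim(\varphi^*)'_{|\pmb{\mathsf{\mathcal{S}}}(\pmb{\mathsf{\mathcal{D}}}(\pmb{\mathsf{Q}}))|}$ and \eqref{eq:hammere} directly. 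Both routes rely on the same circle of N-function identities; yours has the small advantage of making the two-sided equivalence $|\pmb{\mathsf{\mathcal{D}}}(\tilde{\pmb{\mathsf{Q}}})-\pmb{\mathsf{\mathcal{D}}}(\tilde{\pmb{\mathsf{P}}})|\sim\psi'_{|\tilde{\pmb{\mathsf{Q}}}|}(|\tilde{\pmb{\mathsf{Q}}}-\tilde{\pmb{\mathsf{P}}}|)$ transparent. You are also more explicit than the paper about the identification $(\varphi^*)_a\sim(\varphi_{p',\delta^{p-1}})_a$, which the paper leaves implicit when passing from the bounds in terms of $(\varphi^*)_{|\cdot|}$ to the conclusion ``$(p',\delta^{p-1})$-structure''.
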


\begin{proof}
	Due to \cref{eq:hammera}, for every $\pmb{\mathsf{Q}},\pmb{\mathsf{P}}\in \mathbb{R}^{n\times d}$, it holds that
	\begin{align}
		\begin{aligned}
			\big({\pmb{\mathsf{\mathcal{D}}}}(\pmb{\mathsf{Q}}) - {\pmb{\mathsf{\mathcal{D}}}}(\pmb{\mathsf{P}})\big) : (\pmb{\mathsf{Q}}-\pmb{\mathsf{P}})&=	\big({\pmb{\mathsf{\mathcal{S}}}}({\pmb{\mathsf{\mathcal{D}}}}(\pmb{\mathsf{Q}})) - {\pmb{\mathsf{\mathcal{S}}}}({\pmb{\mathsf{\mathcal{D}}}}(\pmb{\mathsf{P}}))\big) : ({\pmb{\mathsf{\mathcal{D}}}}(\pmb{\mathsf{Q}}) - {\pmb{\mathsf{\mathcal{D}}}}(\pmb{\mathsf{P}})
			)
			\\ &\sim  \,(\varphi^*)_{\left| \pmb{\mathsf{\mathcal{S}}}(\pmb{\mathsf{\mathcal{D}}}(\pmb{\mathsf{Q}})) \right|}(\left| \pmb{\mathsf{\mathcal{S}}}(\pmb{\mathsf{\mathcal{D}}}(\pmb{\mathsf{Q}})) -
				\pmb{\mathsf{\mathcal{S}}}(\pmb{\mathsf{\mathcal{D}}}(\pmb{\mathsf{P}})) \right|)
			\\ &=  \,(\varphi^*)_{\vert \pmb{\mathsf{Q}}\vert}(\left| \pmb{\mathsf{Q}} -
				\pmb{\mathsf{P}} \right|) \,.
		\end{aligned}
	\end{align}
	Due to $((\varphi_{\left| \pmb{\mathsf{\mathcal{D}}}(\pmb{\mathsf{Q}})  \right|})^*)'\circ\varphi'_{\left| \pmb{\mathsf{\mathcal{D}}}(\pmb{\mathsf{Q}})  \right|}=\textup{id}_{\mathbb{R}}$, $((\varphi_{\left| \pmb{\mathsf{\mathcal{D}}}(\pmb{\mathsf{Q}})  \right|})^*)'\sim (\varphi^*)'_{\left| \pmb{\mathsf{\mathcal{S}}}(\pmb{\mathsf{\mathcal{D}}}(\pmb{\mathsf{Q}}))  \right|}$, and \cref{eq:hammere}, for every $\pmb{\mathsf{Q}},\pmb{\mathsf{P}}\in \mathbb{R}^{n\times d}$, it holds that
	\begin{align}
		\begin{aligned}
			\left| \pmb{\mathsf{\mathcal{D}}}(\pmb{\mathsf{Q}}) - \pmb{\mathsf{\mathcal{D}}}(\pmb{\mathsf{P}}) \right| &=  (((\varphi_{\left| \pmb{\mathsf{\mathcal{D}}}(\pmb{\mathsf{Q}})  \right|})^*)'\circ\varphi'_{\left| \pmb{\mathsf{\mathcal{D}}}(\pmb{\mathsf{Q}})  \right|})(	\left| \pmb{\mathsf{\mathcal{D}}}(\pmb{\mathsf{Q}}) - \pmb{\mathsf{\mathcal{D}}}(\pmb{\mathsf{P}}) \right| )
			\\&\lesssim  (\varphi^*)'_{\left| \pmb{\mathsf{\mathcal{S}}}(\pmb{\mathsf{\mathcal{D}}}(\pmb{\mathsf{Q}}))  \right|}(	\vert \pmb{\mathsf{\mathcal{S}}}(\pmb{\mathsf{\mathcal{D}}}(\pmb{\mathsf{Q}})) -\pmb{\mathsf{\mathcal{S}}}(\pmb{\mathsf{\mathcal{D}}}(\pmb{\mathsf{P}}))\vert  )
			\\&=  (\varphi^*)'_{\vert \pmb{\mathsf{Q}}\vert}(	\vert \pmb{\mathsf{Q}} -\pmb{\mathsf{P}}\vert  )\,.
		\end{aligned}
	\end{align}
	In other words, $\pmb{\mathsf{\mathcal{D}}}\colon \mathbb{R}^{n\times d}\to \mathbb{R}^{n\times d}$ has $(p',\delta^{p-1})$-structure in the sense of \cref{assum:extra_stress}.
\end{proof}

Since, appealing to \cref{lem:D_structure}, $\pmb{\mathsf{\mathcal{D}}}$ has $(p',\delta^{p-1})$-structure in the sense of \cref{assum:extra_stress},
similar to \cref{lem:hammer}, we have the following connections between $\pmb{\mathsf{\mathcal{D}}},\pmb{\mathsf{\mathcal{F}}},\pmb{\mathsf{\mathcal{F}}}^*\colon \ensuremath{\mathbb{R}}^{n \times d}\to \ensuremath{\mathbb{R}}^{n \times d}$ and $\varphi_a,(\varphi^*)_a,((\varphi^*)_a)^*\colon \mathbb{R}^{\ge 0}\to \mathbb{R}^{\ge 0}$, $a\ge 0$.

\begin{Proposition}\label{lem:hammer_inverse}
	Let $\pmb{\mathsf{\mathcal{S}}}$ satisfy \cref{assum:extra_stress}, let $\varphi$ be defined in \cref{eq:def_phi}, and let $\pmb{\mathsf{\mathcal{F}}},\pmb{\mathsf{\mathcal{F}}}^*$ be defined in \cref{eq:def_F}. Then, uniformly with respect to 
	$\pmb{\mathsf{Q}}, \pmb{\mathsf{P}} \in \ensuremath{\mathbb{R}}^{n \times d}$, we have that
	\begin{align}\label{eq:hammer_inversea}
		\begin{aligned}
			\big(\pmb{\mathsf{\mathcal{D}}}(\pmb{\mathsf{Q}}) - \pmb{\mathsf{\mathcal{D}}}(\pmb{\mathsf{P}})\big)
			:(\pmb{\mathsf{Q}}-\pmb{\mathsf{P}} ) &\sim  \vert \pmb{\mathsf{\mathcal{F}}}^*(\pmb{\mathsf{Q}}) - \pmb{\mathsf{\mathcal{F}}}^*(\pmb{\mathsf{P}})\vert ^2
			\\
			&\sim((\varphi^*)_{\vert \pmb{\mathsf{Q}}\vert})^*(\vert\pmb{\mathsf{\mathcal{D}}}(\pmb{\mathsf{Q}} ) - \pmb{\mathsf{\mathcal{D}}}(\pmb{\mathsf{P}} )\vert )
			\\&\sim \varphi _{|\pmb{\mathsf{\mathcal{D}}} ( \pmb{\mathsf{Q}})|} (|\pmb{\mathsf{\mathcal{D}}} (\pmb{\mathsf{Q}}) - \pmb{\mathsf{\mathcal{D}}} ( \pmb{\mathsf{P}}) |)
			\,,
		\end{aligned}
	\end{align}
	\begin{align}
		\label{eq:hammer_inversee}
		\hspace*{2mm}\vert\pmb{\mathsf{\mathcal{D}}}(\pmb{\mathsf{Q}}) - \pmb{\mathsf{\mathcal{D}}}(\pmb{\mathsf{P}})\vert  &\sim   \smash{(\varphi^*)'_{\vert \pmb{\mathsf{Q}}\vert}(\vert\pmb{\mathsf{Q}}-\pmb{\mathsf{P}}\vert )}\,,
	\end{align}
	\begin{align}
		\mspace{-25mu}
		\label{eq:F-F*_inverse3}
		\mspace{-13mu}\smash{\vert\pmb{\mathsf{\mathcal{F}}}(\pmb{\mathsf{\mathcal{D}}}(\pmb{\mathsf{P}}))-\pmb{\mathsf{\mathcal{F}}}(\pmb{\mathsf{\mathcal{D}}}(\pmb{\mathsf{Q}}))\vert^2}
		&\sim  \smash{\vert\pmb{\mathsf{\mathcal{F}}}^*(\pmb{\mathsf{P}})-\pmb{\mathsf{\mathcal{F}}}^*(\pmb{\mathsf{Q}})\vert ^2}\,.
	\end{align}
	The constants in \crefrange{eq:hammer_inversea}{eq:F-F*_inverse3}
	depend only on the characteristics of ${\pmb{\mathsf{\mathcal{S}}}}$.
\end{Proposition}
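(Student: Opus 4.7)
The plan is to obtain \cref{eq:hammer_inversea} and \cref{eq:hammer_inversee} as direct corollaries of \cref{lem:hammer} applied to $\pmb{\mathsf{\mathcal{D}}}$ in place of $\pmb{\mathsf{\mathcal{S}}}$, exploiting \cref{lem:D_structure}, which says that $\pmb{\mathsf{\mathcal{D}}}$ itself has $(p',\delta^{p-1})$-structure. The only preparatory work is to translate the auxiliary objects attached to the $(p',\delta^{p-1})$-structure back into those attached to the original $(p,\delta)$-structure of $\pmb{\mathsf{\mathcal{S}}}$. Comparing \cref{eq:def_phi} with \cref{rem:phi}~(iii), the N-function associated with $(p',\delta^{p-1})$ is equivalent to $\varphi^*$, with its shifted family equivalent to $\{(\varphi^*)_a\}_{a\ge 0}$. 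Inspection of \cref{eq:def_F} then shows that the ``$\pmb{\mathsf{\mathcal{F}}}$'' attached to $(p',\delta^{p-1})$ is precisely $\pmb{\mathsf{\mathcal{F}}}^*$, and, using $(p-1)(p'-1)=1$, the ``$\pmb{\mathsf{\mathcal{F}}}^*$'' attached to $(p',\delta^{p-1})$ collapses back to $\pmb{\mathsf{\mathcal{F}}}$.

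With these identifications in hand, \cref{eq:hammera} applied to $\pmb{\mathsf{\mathcal{D}}}$ yields \cref{eq:hammer_inversea}: the factor $|\pmb{\mathsf{\mathcal{F}}}(\pmb{\mathsf{Q}})-\pmb{\mathsf{\mathcal{F}}}(\pmb{\mathsf{P}})|^2$ translates to $|\pmb{\mathsf{\mathcal{F}}}^*(\pmb{\mathsf{Q}})-\pmb{\mathsf{\mathcal{F}}}^*(\pmb{\mathsf{P}})|^2$, the shifted conjugate $(\varphi_{|\pmb{\mathsf{Q}}|})^*(|\pmb{\mathsf{\mathcal{S}}}(\pmb{\mathsf{Q}})-\pmb{\mathsf{\mathcal{S}}}(\pmb{\mathsf{P}})|)$ becomes $((\varphi^*)_{|\pmb{\mathsf{Q}}|})^*(|\pmb{\mathsf{\mathcal{D}}}(\pmb{\mathsf{Q}})-\pmb{\mathsf{\mathcal{D}}}(\pmb{\mathsf{P}})|)$, and the conjugate-shifted term $(\varphi^*)_{|\pmb{\mathsf{\mathcal{S}}}(\pmb{\mathsf{Q}})|}(|\pmb{\mathsf{\mathcal{S}}}(\pmb{\mathsf{Q}})-\pmb{\mathsf{\mathcal{S}}}(\pmb{\mathsf{P}})|)$ becomes $\varphi_{|\pmb{\mathsf{\mathcal{D}}}(\pmb{\mathsf{Q}})|}(|\pmb{\mathsf{\mathcal{D}}}(\pmb{\mathsf{Q}})-\pmb{\mathsf{\mathcal{D}}}(\pmb{\mathsf{P}})|)$, where this last identification invokes the biconjugation $(\varphi^*)^*\sim \varphi$, valid since $\varphi,\varphi^*\in \Delta_2$. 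The estimate \cref{eq:hammer_inversee} is the analogous translation of \cref{eq:hammere}, using that $\tilde{\varphi}'_{|\pmb{\mathsf{Q}}|}\sim(\varphi^*)'_{|\pmb{\mathsf{Q}}|}$ for the N-function $\tilde{\varphi}$ of $\pmb{\mathsf{\mathcal{D}}}$.

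For \cref{eq:F-F*_inverse3}, the quickest route bypasses the dualisation altogether: substituting $\pmb{\mathsf{P}}\mapsto \pmb{\mathsf{\mathcal{D}}}(\pmb{\mathsf{P}})$ and $\pmb{\mathsf{Q}}\mapsto \pmb{\mathsf{\mathcal{D}}}(\pmb{\mathsf{Q}})$ into \cref{eq:F-F*3} and simplifying the left-hand side with the identity $\pmb{\mathsf{\mathcal{S}}}\circ \pmb{\mathsf{\mathcal{D}}}=\textup{id}$ gives the claim immediately. The main pitfall, which is really one of care rather than real difficulty, is that the operations $\psi\mapsto \psi_a$ and $\psi\mapsto \psi^*$ do not commute: the explicit expressions $(\varphi_a)^*\sim ((\delta+a)^{p-1}+t)^{p'-2}t^2$ and $(\varphi^*)_a\sim (\delta^{p-1}+a+t)^{p'-2}t^2$ in \cref{rem:phi}~(iii) are genuinely distinct, so in each chain member one must track whether the object is a shift of a conjugate or a conjugate of a shift. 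The statement of the proposition is however phrased exactly in the form produced by \cref{lem:hammer} under the identifications above, making the translation mechanical and the constants depend only on the characteristics of $\pmb{\mathsf{\mathcal{S}}}$, as required.
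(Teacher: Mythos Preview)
Your proposal is correct and matches the paper's approach exactly: the paper does not provide a separate proof block for this proposition, but simply prefaces it with the remark that, since \cref{lem:D_structure} gives $\pmb{\mathsf{\mathcal{D}}}$ a $(p',\delta^{p-1})$-structure, the result follows ``similar to \cref{lem:hammer}''. Your careful bookkeeping of the translations (N-function $\varphi\leadsto\varphi^*$, $\pmb{\mathsf{\mathcal{F}}}\leadsto\pmb{\mathsf{\mathcal{F}}}^*$, $\pmb{\mathsf{\mathcal{F}}}^*\leadsto\pmb{\mathsf{\mathcal{F}}}$ via $(p-1)(p'-1)=1$, and $(\varphi^*)^*=\varphi$) is precisely what is implicit in that one-line justification, and your alternative derivation of \cref{eq:F-F*_inverse3} by substituting $\pmb{\mathsf{P}}\mapsto\pmb{\mathsf{\mathcal{D}}}(\pmb{\mathsf{P}})$, $\pmb{\mathsf{Q}}\mapsto\pmb{\mathsf{\mathcal{D}}}(\pmb{\mathsf{Q}})$ directly into \cref{eq:F-F*3} is a clean shortcut.
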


\subsection{Discrete mixed formulation} For given $k\in \mathbb{N}$, we abbreviate
\begin{align*}
	\Sigma_h \coloneqq\Sigma_h^{k-1} \coloneqq {\mathbb{P}}^{k-1}(\mathcal{T}_h)^{ n \times d}\,.
\end{align*}
Then, for given $\boldsymbol{f}\in V^*$, the \textit{discrete mixed formulation} of \cref{eq:PDE} seeks for    $(\pmb{\mathsf{S}}_h,\boldsymbol{u}_h)\in
\Sigma_h\times V_h $  such that for every $(\pmb{\mathsf{T}}_h,\boldsymbol{z}_h)\in
\Sigma_h\times V_h $, it holds that
\begin{align}\label{eq:discrete_mixed_DG}
	\begin{aligned}
		(\pmb{\mathsf{\mathcal{D}}}(\pmb{\mathsf{S}}_h)-\tilde{\nabla}_h \boldsymbol{u}_h,\pmb{\mathsf{T}}_h)_{\Omega} &=0\,,\\
		( \pmb{\mathsf{S}}_h , \pmb{\mathsf{\mathcal{G}}}_h \boldsymbol{z}_h )_{\Omega}
		+ \alpha\, \langle \pmb{\mathsf{\mathcal{S}}}_{\boldsymbol{\beta}_h(\boldsymbol{u}_h)}( h_\Gamma^{-1} \llbracket{ \boldsymbol{u}_h \otimes\boldsymbol{n}}\rrbracket ) , \llbracket{ \boldsymbol{z}_h \otimes\boldsymbol{n}}\rrbracket\rangle_{\Gamma_h}
		&=
		\langle \boldsymbol{f}, \pmb{\mathsf{\mathcal{E}}}_h  \boldsymbol{z}_h \rangle_{V}\,.
	\end{aligned}
\end{align}

\begin{Proposition} \label{prop:mixed_existence}The following statements apply:
\begin{itemize}
	\item[(i)] 	If $\tilde{\nabla}_h=  \pmb{\mathsf{\mathcal{G}}}_h$, then the mixed formulation \cref{eq:discrete_mixed_DG} admits a solution.
	\item[(ii)] If $\tilde{\nabla}_h=  \nabla_h$, then for sufficiently large $\alpha>0$, the mixed formulation \cref{eq:discrete_mixed_DG} admits~a~solution.\enlargethispage{1mm}
\end{itemize}
\end{Proposition}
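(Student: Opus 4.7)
My plan is to eliminate the discrete flux $\pmb{\mathsf{S}}_h$ and reduce the mixed system to a nonlinear problem in $V_h$ alone, so that the pseudo-monotone operator framework used for the primal formulation applies. First, fix $\boldsymbol{u}_h\in V_h$ and view \cref{eq:discrete_mixed_DG}$_1$ as the problem: find $\pmb{\mathsf{S}}_h\in\Sigma_h$ such that $(\pmb{\mathsf{\mathcal{D}}}(\pmb{\mathsf{S}}_h),\pmb{\mathsf{T}}_h)_\Omega=(\tilde{\nabla}_h\boldsymbol{u}_h,\pmb{\mathsf{T}}_h)_\Omega$ for all $\pmb{\mathsf{T}}_h\in\Sigma_h$. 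By \cref{lem:D_structure}, $\pmb{\mathsf{\mathcal{D}}}$ has $(p',\delta^{p-1})$-structure and, therefore, induces a strictly monotone, bounded, continuous, and coercive operator on $\Sigma_h$. The Browder--Minty theorem then delivers a unique $\pmb{\mathsf{S}}_h=\pmb{\mathsf{S}}_h(\boldsymbol{u}_h)\in\Sigma_h$, and strict monotonicity together with finite-dimensionality shows that the map $\boldsymbol{u}_h\mapsto\pmb{\mathsf{S}}_h(\boldsymbol{u}_h)$ is continuous.

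Substituting $\pmb{\mathsf{S}}_h=\pmb{\mathsf{S}}_h(\boldsymbol{u}_h)$ into \cref{eq:discrete_mixed_DG}$_2$ yields the reduced problem: find $\boldsymbol{u}_h\in V_h$ such that for every $\boldsymbol{z}_h\in V_h$, it holds that $\langle\pmb{\mathsf{\mathcal{T}}}_h\boldsymbol{u}_h,\boldsymbol{z}_h\rangle_{V_h}=\langle\boldsymbol{f},\pmb{\mathsf{\mathcal{E}}}_h\boldsymbol{z}_h\rangle_V$, where
\begin{align*}
\langle\pmb{\mathsf{\mathcal{T}}}_h\boldsymbol{u}_h,\boldsymbol{z}_h\rangle_{V_h}\coloneqq(\pmb{\mathsf{S}}_h(\boldsymbol{u}_h),\pmb{\mathsf{\mathcal{G}}}_h\boldsymbol{z}_h)_\Omega+\alpha\,\langle\pmb{\mathsf{\mathcal{S}}}_{\boldsymbol{\beta}_h(\boldsymbol{u}_h)}(h_\Gamma^{-1}\llbracket\boldsymbol{u}_h\otimes\boldsymbol{n}\rrbracket),\llbracket\boldsymbol{z}_h\otimes\boldsymbol{n}\rrbracket\rangle_{\Gamma_h}\,.
\end{align*}
Mirroring the primal case, $\pmb{\mathsf{\mathcal{T}}}_h\colon V_h\to V_h^{\ast}$ is bounded and continuous on the finite-dimensional space $V_h$ and, hence, pseudo-monotone, so by \cite[Thm.~27.A]{zei-IIB} it suffices to establish coercivity.

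For coercivity, test with $\boldsymbol{z}_h=\boldsymbol{u}_h$. The key observation is that $\pmb{\mathsf{S}}_h\in\Sigma_h$ is itself an admissible test function in \cref{eq:discrete_mixed_DG}$_1$, which yields the identity $(\pmb{\mathsf{S}}_h,\tilde{\nabla}_h\boldsymbol{u}_h)_\Omega=(\pmb{\mathsf{\mathcal{D}}}(\pmb{\mathsf{S}}_h),\pmb{\mathsf{S}}_h)_\Omega\gtrsim\rho_{\varphi^{\ast},\Omega}(\pmb{\mathsf{S}}_h)$ via \cref{lem:hammer_inverse}. In case (i), $\tilde{\nabla}_h=\pmb{\mathsf{\mathcal{G}}}_h$, so this directly controls $(\pmb{\mathsf{S}}_h,\pmb{\mathsf{\mathcal{G}}}_h\boldsymbol{u}_h)_\Omega$; combining this with the identity $\pmb{\mathsf{\mathcal{G}}}_h\boldsymbol{u}_h=\Pi_h^{k-1}(\pmb{\mathsf{\mathcal{D}}}(\pmb{\mathsf{S}}_h))$, the $L^p$-stability of $\Pi_h^{k-1}$, the growth estimate $|\pmb{\mathsf{\mathcal{D}}}(\pmb{\mathsf{Q}})|\lesssim(\delta^{p-1}+|\pmb{\mathsf{Q}}|)^{p'-1}$, the jump stabilization contribution, and the norm equivalence \cref{eq:eqiv0} yields $\langle\pmb{\mathsf{\mathcal{T}}}_h\boldsymbol{u}_h,\boldsymbol{u}_h\rangle_{V_h}\gtrsim\min\{1,\alpha\}\|\boldsymbol{u}_h\|_{h,p}^p-C$ for every $\alpha>0$.

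The main obstacle is case (ii), where $\tilde{\nabla}_h=\nabla_h$ and the decomposition $\pmb{\mathsf{\mathcal{G}}}_h=\nabla_h-\boldsymbol{\mathcal{R}}_h$ produces an extra jump-lifting term,
\begin{align*}
(\pmb{\mathsf{S}}_h,\pmb{\mathsf{\mathcal{G}}}_h\boldsymbol{u}_h)_\Omega=(\pmb{\mathsf{\mathcal{D}}}(\pmb{\mathsf{S}}_h),\pmb{\mathsf{S}}_h)_\Omega-(\pmb{\mathsf{S}}_h,\boldsymbol{\mathcal{R}}_h\boldsymbol{u}_h)_\Omega\,,
\end{align*}
so a direct coercivity argument fails. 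The spurious contribution must be absorbed via the $\varepsilon$-Young inequality \cref{ineq:young} (with $\psi=\varphi^{\ast}$) together with the $L^p$-stability of $\boldsymbol{\mathcal{R}}_h$ (cf.~\cite[(A.25)]{dkrt-ldg}), which gives
\begin{align*}
\left|(\pmb{\mathsf{S}}_h,\boldsymbol{\mathcal{R}}_h\boldsymbol{u}_h)_\Omega\right|\leq\varepsilon\,\rho_{\varphi^{\ast},\Omega}(\pmb{\mathsf{S}}_h)+c_\varepsilon\,\|h_\Gamma^{-1/p'}\llbracket\boldsymbol{u}_h\otimes\boldsymbol{n}\rrbracket\|_{p,\Gamma_h}^p\,.
\end{align*}
First choosing $\varepsilon>0$ small enough to absorb the first contribution into the term $(\pmb{\mathsf{\mathcal{D}}}(\pmb{\mathsf{S}}_h),\pmb{\mathsf{S}}_h)_\Omega$ and, subsequently, $\alpha>0$ large enough so that $\alpha\,c-c_\varepsilon>0$ in the stabilization modular restores coercivity exactly as in the existence proof for the primal formulation, and \cite[Thm.~27.A]{zei-IIB} concludes the argument in both cases.
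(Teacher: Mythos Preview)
Your argument is correct and takes a genuinely different route from the paper. The paper works on the product space $\Sigma_h\times V_h$: it introduces an $\varepsilon$-regularized operator $\pmb{\mathsf{\mathcal{T}}}_h^\varepsilon$ by adding the term $\varepsilon\,(\pmb{\mathsf{\mathcal{S}}}(\nabla_h\boldsymbol{u}_h),\nabla_h\boldsymbol{v}_h)_\Omega$, proves coercivity of this perturbed operator on $\Sigma_h\times V_h$, obtains regularized solutions $(\pmb{\mathsf{S}}_h^\varepsilon,\boldsymbol{u}_h^\varepsilon)$, then derives $\varepsilon$-independent bounds by testing with $(\pmb{\mathsf{S}}_h^\varepsilon,\boldsymbol{u}_h^\varepsilon)$ and invoking the identity $\tilde{\nabla}_h\boldsymbol{u}_h^\varepsilon=\Pi_h^{k-1}\pmb{\mathsf{\mathcal{D}}}(\pmb{\mathsf{S}}_h^\varepsilon)$, and finally passes to the limit $\varepsilon\to 0$ via finite-dimensional compactness. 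You instead exploit the strict monotonicity of $\pmb{\mathsf{\mathcal{D}}}$ up front to eliminate $\pmb{\mathsf{S}}_h$ via Browder--Minty, reducing to a single nonlinear equation on $V_h$; the same identity $\tilde{\nabla}_h\boldsymbol{u}_h=\Pi_h^{k-1}\pmb{\mathsf{\mathcal{D}}}(\pmb{\mathsf{S}}_h)$ then feeds directly into coercivity of the reduced operator, with no regularization or limit passage needed. Your route is more elementary and shorter; the paper's regularization strategy has the advantage of staying on the product space and would extend more readily to constitutive laws for which $\pmb{\mathsf{\mathcal{D}}}$ is not strictly monotone, where your solution map $\boldsymbol{u}_h\mapsto\pmb{\mathsf{S}}_h(\boldsymbol{u}_h)$ would not be well defined.
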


\begin{proof}
We equip $\Sigma_h\times V_h$~with~the~norm
\begin{align*}
	\|(\pmb{\mathsf{T}}_h,\boldsymbol{z}_h)\|_{\Sigma_h\times V_h}\coloneqq \|\pmb{\mathsf{T}}_h\|_{p',\Omega} + \|\boldsymbol{z}_h\|_{p,h}\,.
\end{align*}
For every $\varepsilon>0$, we consider the regularized operator $\pmb{\mathsf{\mathcal{T}}}_{\hspace{-1mm}h}^{\varepsilon}\colon \Sigma_h\times V_h\to (\Sigma_h\times V_h)^*$, 
defined, for every $(\pmb{\mathsf{S}}_h,\boldsymbol{u}_h)\in
\Sigma_h\times V_h $ and $(\pmb{\mathsf{T}}_h,\boldsymbol{v}_h)\in
\Sigma_h\times V_h $, via
\begin{align*}
    \langle \pmb{\mathsf{\mathcal{T}}}_{\hspace{-1mm}h}^{\varepsilon}(\pmb{\mathsf{S}}_h,\boldsymbol{u}_h),(\pmb{\mathsf{T}}_h,\boldsymbol{v}_h)\rangle_{\Sigma_h\times V_h}\coloneqq{}& (\pmb{\mathsf{\mathcal{D}}}(\pmb{\mathsf{S}}_h)-\tilde{\nabla}_h\boldsymbol{u}_h,\pmb{\mathsf{T}}_h)_{\Omega}
	+(\pmb{\mathsf{S}}_h , \pmb{\mathsf{\mathcal{G}}}_h \boldsymbol{v}_h )_{\Omega}
	\\&+\varepsilon\,(\pmb{\mathsf{\mathcal{S}}}( \nabla_h \boldsymbol{u}_h ) ,\nabla_h\boldsymbol{v}_h)_{\Omega}
	\\&+ \alpha\, \langle \pmb{\mathsf{\mathcal{S}}}_{\boldsymbol{\beta}_h(\boldsymbol{u}_h)}( h_\Gamma^{-1} \llbracket{\boldsymbol{u}_h \otimes\boldsymbol{n}}\rrbracket ) , \llbracket{\boldsymbol{v}_h \otimes\boldsymbol{n}}\rrbracket\rangle_{\Gamma_h}\,.
\end{align*}
          Since $V_h$ is finite dimensional, consists of broken
          polynomials, \cite[Lem.~3.18]{bdr-7-5}, the properties of
          $\pmb{\mathsf{\mathcal{D}}}$, $\pmb{\mathsf{\mathcal{S}}}$ and $\pmb{\mathsf{\mathcal{S}}}_a$, $a\ge 0$, imply that the operator
          $\pmb{\mathsf{\mathcal{T}}}_{\hspace{-1mm}h}^{\varepsilon}\colon \Sigma_h\times V_h\to (\Sigma_h\times V_h)^*$, for every fixed
          $h, \varepsilon>0$, is well-defined, continuous, and \mbox{monotone}.
          To prove the
          boundedness of $\pmb{\mathsf{\mathcal{T}}}_{\hspace{-1mm}h}^{\varepsilon}\colon \Sigma_h\times V_h\to (\Sigma_h\times V_h)^*$,
          we use the properties of $\pmb{\mathsf{\mathcal{D}}}, \pmb{\mathsf{\mathcal{S}}}, \pmb{\mathsf{\mathcal{S}}}_{\boldsymbol{\beta}_h(\boldsymbol{v}_h)}$
          (cf.\ \cref{lem:hammer,rem:sa}),
          Young's inequality, \cref{eq:psi'}, the stability of
          $\boldsymbol{\mathcal{R}}_h$ (cf.~\cite[(A.25)]{dkrt-ldg}), a~shift change
          (cf.\ \cref{lem:shift-change}),
          $\sum_{F \in \Gamma_h}\!h_F\mathscr{H}^{d-1}(F)\!\sim\! \vert \Omega\vert$, 
          \cite[Lemma 12.1]{EG.2021},  Jensen's inequality,   $\varphi(t)+t^p\sim
          t^p+ \delta^p$, and  $\varphi^*(t)+t^{p'}\sim
          t^{p'}+ \delta^{p'}$ for all $t\ge 0$ (cf.~\cite{bdr-7-5}), to
          find that 
          \begin{align*}
            &\left| \langle
              \pmb{\mathsf{\mathcal{T}}}_{\hspace{-1mm}h}^{\varepsilon}(\pmb{\mathsf{S}}_h,\boldsymbol{u}_h),(\pmb{\mathsf{T}}_h,\boldsymbol{v}_h)\rangle_{\Sigma_h\times
              V_h} \right|\\
            &\lesssim \rho_{\varphi^*,\Omega}(\pmb{\mathsf{S}}_h) +
              \rho_{\varphi^*,\Omega}(\pmb{\mathsf{T}}_h) +
              \rho_{\varphi,\Omega}(\tilde{\nabla}_h \boldsymbol{u}_h) +
              \rho_{\varphi,\Omega}({\nabla}_h \boldsymbol{u}_h) +
              \rho_{\varphi,\Omega}(\pmb{\mathsf{\mathcal{G}}}_h \boldsymbol{v}_h) \\
            &\quad +
              \rho_{\varphi,\Omega}(\tilde{\nabla}_h \boldsymbol{v}_h) + \alpha \,
              m_{\varphi_{\boldsymbol{\beta}_h(\boldsymbol{v}_h)},h}(\boldsymbol{u}_h) +\alpha \,
              m_{\varphi_{\boldsymbol{\beta}_h(\boldsymbol{v}_h)},h}(\boldsymbol{v}_h) \\
            &\lesssim \rho_{\varphi^*,\Omega}(\pmb{\mathsf{S}}_h) +
              \rho_{\varphi^*,\Omega}(\pmb{\mathsf{T}}_h) +
               \rho_{\varphi,\Omega}({\nabla}_h \boldsymbol{u}_h) +
            \rho_{\varphi,\Omega}(\nabla _h \boldsymbol{v}_h) +
              m_{\varphi,h}(\boldsymbol{u}_h) +\varphi (\boldsymbol{\beta}_h(\boldsymbol{v}_h)) \\
            &\lesssim \|\pmb{\mathsf{S}}_h\|_{p'}^{p'} +\|\pmb{\mathsf{T}}_h\|_{p'}^{p'}
              +\|\boldsymbol{u}_h\|^p_{p,h} +\| \boldsymbol{v}_h\|^p_{p,h} + \delta^p + \delta^{p'}\,.
          \end{align*}
          Thus,
          $\pmb{\mathsf{\mathcal{T}}}_{\hspace{-1mm}h}^{\varepsilon}\colon \Sigma_h\times V_h\to
          (\Sigma_h\times V_h)^*$ is pseudo-monotone.
If we can prove its coercivity, then, owing to the pseudo-monotone operator theory (cf.\ \cite[Thm.\ 27.A]{zei-IIB}), it follows that $\pmb{\mathsf{\mathcal{T}}}_{\hspace{-1mm}h}^{\varepsilon}\colon \Sigma_h\times V_h\to (\Sigma_h\times V_h)^* $  is surjective.
To this end, we distinguish the cases $\tilde{\nabla}_h=  \pmb{\mathsf{\mathcal{G}}}_h$ and $\tilde{\nabla}_h=   \nabla_h$:

%

\textit{Case $\tilde{\nabla}_h=  \pmb{\mathsf{\mathcal{G}}}_h$.} 
For every $(\pmb{\mathsf{S}}_h,\boldsymbol{u}_h)\in \Sigma_h\times V_h $,
using that $\pmb{\mathsf{\mathcal{D}}}(\pmb{\mathsf{Q}}):\pmb{\mathsf{Q}}\sim \varphi^*(\vert \pmb{\mathsf{Q}}\vert
)$ (cf.~\cref{eq:hammer_inversea}) and ${\pmb{\mathsf{\mathcal{S}}}_{\boldsymbol{\beta}_h(\boldsymbol{u}_h)}(\pmb{\mathsf{Q}}):\pmb{\mathsf{Q}} \sim
	\varphi_{\boldsymbol{\beta}_h(\boldsymbol{u}_h)}(\vert \pmb{\mathsf{Q}}\vert )}$  for all
$\pmb{\mathsf{Q}}\in \mathbb{R}^{n\times d}$ (cf.\ \cref{lem:hammer,rem:sa})
as well as that  $
\varphi_{\boldsymbol{\beta}_h(\boldsymbol{u}_h)}\ge   \varphi$,
$\delta^{p'} +\varphi^*(t)\sim  t^{p'}+\delta^{p'}$,
and $\delta^p +\varphi(t)\sim  t^p+\delta^p$ for all ${t\ge  0}$, and
$\sum_{F \in \Gamma_h}h_F\,\mathscr{H}^{d-1}(F) \lesssim \vert
\Omega\vert$, 
we find that
\begin{align*}
    &\langle \pmb{\mathsf{\mathcal{T}}}_{\hspace{-1mm}h}^{\varepsilon}(\pmb{\mathsf{S}}_h,\boldsymbol{u}_h),(\pmb{\mathsf{S}}_h,\boldsymbol{u}_h)\rangle_{\Sigma_h\times V_h}
    \\
    &=(\pmb{\mathsf{\mathcal{D}}}(\pmb{\mathsf{S}}_h),\pmb{\mathsf{S}}_h)_{\Omega} +\varepsilon\,(\pmb{\mathsf{\mathcal{S}}}( \nabla_h \boldsymbol{u}_h ) ,\nabla_h\boldsymbol{u}_h)_{\Omega}
    + \alpha \, \langle \pmb{\mathsf{\mathcal{S}}}_{\boldsymbol{\beta}_h(\boldsymbol{u}_h)}( h_\Gamma^{-1}
    \llbracket{ \boldsymbol{u}_h \otimes\boldsymbol{n}}\rrbracket ) , \llbracket{ \boldsymbol{u}_h
        \otimes\boldsymbol{n}}\rrbracket\rangle_{\Gamma_h}
    \\
        & \gtrsim 
        \rho_{\varphi^*,\Omega}(\pmb{\mathsf{S}}_h)+\varepsilon\,\rho_{\varphi,\Omega}( \nabla_h \boldsymbol{u}_h )+
        \alpha \,
        m_{\varphi_{\boldsymbol{\beta}_h(\boldsymbol{u}_h)},h}(\boldsymbol{u}_h) 
        \\
        & \gtrsim  \|\pmb{\mathsf{S}}_h\|_{p',\Omega}^{p'}+\ \min\{\varepsilon,\alpha\}
        \,\|\boldsymbol{u}_h\|_{p,h}^p-\delta^p\,\vert
        \Omega\vert\,(\varepsilon+\alpha) -\delta^{p'}\,\vert
        \Omega\vert\,.
    \end{align*}
Putting everything together,  $\pmb{\mathsf{\mathcal{T}}}_{\hspace{-1mm}h}^{\varepsilon}\colon \Sigma_h\times V_h\to (\Sigma_h\times V_h)^*$ is well-defined, continuous, and coercive and, thus, surjective (cf.\ \cite[Thm.\ 27.A]{zei-IIB}).

\textit{Case $\tilde{\nabla}_h=  \nabla_h$.} 
For every $(\pmb{\mathsf{S}}_h,\boldsymbol{u}_h)\in \Sigma_h\times V_h $, proceeding as before, also using the $\kappa$-Young inequality \cref{ineq:young} with $\psi=\vert \cdot\vert^{p'}$ and that $\|\boldsymbol{\mathcal{R}}_h  \boldsymbol{u}_h\|_{p,\Omega}\lesssim  \|h^{1/p'}_\Gamma\llbracket{\boldsymbol{u}_h\otimes \boldsymbol{n}}\rrbracket\|_{p,\Gamma_h}$ (cf.\ \cite[(A.25)]{dkrt-ldg}), 
it holds that
\begin{align}\label{prop:mixed_existence.1}
	\begin{aligned}
	\langle \pmb{\mathsf{\mathcal{T}}}_{\hspace{-1mm}h}^{\varepsilon}(\pmb{\mathsf{S}}_h,\boldsymbol{u}_h),(\pmb{\mathsf{S}}_h,\boldsymbol{u}_h)\rangle_{\Sigma_h\times V_h} &=(\pmb{\mathsf{\mathcal{D}}}(\pmb{\mathsf{S}}_h)-\boldsymbol{\mathcal{R}}_h  \boldsymbol{u}_h,\pmb{\mathsf{S}}_h)_{\Omega}+\varepsilon\,(\pmb{\mathsf{\mathcal{S}}}( \nabla_h \boldsymbol{u}_h ) ,\nabla_h\boldsymbol{u}_h)_{\Omega}\\&\quad+ \alpha\, \langle \pmb{\mathsf{\mathcal{S}}}_{\boldsymbol{\beta}_h(\boldsymbol{u}_h)}( h_\Gamma^{-1} \llbracket{ \boldsymbol{u}_h \otimes\boldsymbol{n}}\rrbracket ) , \llbracket{ \boldsymbol{u}_h \otimes\boldsymbol{n}}\rrbracket\rangle_{\Gamma_h}
	\\&	\ge (c-\kappa)\, \|\pmb{\mathsf{S}}_h\|_{p',\Omega}^{p'}+\min\{\varepsilon,\alpha-c_\kappa\}  \,\|\boldsymbol{u}_h\|_{p,h}^p\\&\quad -c\,\delta^p\,\vert \Omega\vert\,(\varepsilon+\alpha) -c\,\delta^{p'}\,\vert
	\Omega\vert\,.
	\end{aligned}
\end{align}
Therefore, choosing first $\kappa>0$ sufficiently small and, subsequently, $\alpha>0$ sufficiently large~in~\cref{prop:mixed_existence.1}, for every $(\pmb{\mathsf{S}}_h,\boldsymbol{u}_h)\in \Sigma_h\times V_h $, we conclude that
\begin{align*}
	&\langle \pmb{\mathsf{\mathcal{T}}}_{\hspace{-1mm}h}^{\varepsilon}(\pmb{\mathsf{S}}_h,\boldsymbol{u}_h),(\pmb{\mathsf{S}}_h,\boldsymbol{u}_h)\rangle_{\Sigma_h\times V_h} \\
    &\gtrsim  \|\pmb{\mathsf{S}}_h\|_{p',\Omega}^{p'}+\ \min\{\varepsilon,\alpha\}
	\,\|\boldsymbol{u}_h\|_{p,h}^p-\delta^p\,\vert
	\Omega\vert\,(\varepsilon+\alpha) -\delta^{p'}\,\vert
	\Omega\vert\,.
\end{align*}
Putting everything together,  $\pmb{\mathsf{\mathcal{T}}}_{\hspace{-1mm}h}^{\varepsilon}\colon \Sigma_h\times V_h\to (\Sigma_h\times V_h)^*$ is well-defined, continuous, and coercive and, thus, surjective (cf.\ \cite[Thm.\ 27.A]{zei-IIB}).

Therefore, for every $\varepsilon,h>0$ and $\tilde{\nabla}_h\in \{\pmb{\mathsf{\mathcal{G}}}_h,\nabla_h\}$, there exists $(\pmb{\mathsf{S}}_h^{\varepsilon},\boldsymbol{u}_h^{\varepsilon})\in \Sigma_h\times V_h$ such that 
$\pmb{\mathsf{\mathcal{T}}}_{\hspace{-1mm}h}^{\varepsilon}(\pmb{\mathsf{S}}_h^{\varepsilon},\boldsymbol{u}_h^{\varepsilon})=(\boldsymbol{0},\pmb{\mathsf{\mathcal{E}}}_h^*\boldsymbol{f})$ in $(\Sigma_h\times V_h)^*\cong \Sigma_h^*\times V_h^*$, i.e., 
for every $(\pmb{\mathsf{T}}_h,\boldsymbol{z}_h)\in \Sigma_h\times V_h$, it holds that
\begin{subequations}
  \label{eq:reg_equation}
  \begin{align}
    \label{eq:reg_equation_1}
    (\pmb{\mathsf{\mathcal{D}}}(\pmb{\mathsf{S}}_h^{\varepsilon}),\pmb{\mathsf{T}}_h)_{\Omega}
    -(\pmb{\mathsf{T}}_h ,\tilde{\nabla}_h \boldsymbol{u}_h^{\varepsilon})_{\Omega}&=0\,,\\
    \label{eq:reg_equation_2}
    \begin{multlined}
      ( \pmb{\mathsf{S}}_h^{\varepsilon}, \pmb{\mathsf{\mathcal{G}}}_h \boldsymbol{z}_h )_{\Omega}
      +\varepsilon\,(\pmb{\mathsf{\mathcal{S}}}(\nabla_h \boldsymbol{u}_h^{\varepsilon}),\nabla_h \boldsymbol{z}_h)_\Omega \\
      +\alpha\, \langle \pmb{\mathsf{\mathcal{S}}}_{\boldsymbol{\beta}_h(\boldsymbol{u}_h^{\varepsilon})}( h_\Gamma^{-1} \llbracket{ \boldsymbol{u}_h^{\varepsilon} \otimes\boldsymbol{n}}\rrbracket ) , \llbracket{ \boldsymbol{z}_h \otimes\boldsymbol{n}}\rrbracket\rangle_{\Gamma_h}
    \end{multlined}
    &
    \begin{aligned}
      &\\
      &= \langle \boldsymbol{f}, \pmb{\mathsf{\mathcal{E}}}_h  \boldsymbol{z}_h \rangle_{V}\,.
    \end{aligned}
  \end{align}
\end{subequations}
Then, choosing $\boldsymbol{z}_h=\boldsymbol{u}_h^{\varepsilon}\in V_h$,
$\pmb{\mathsf{T}}_h=\pmb{\mathsf{S}}_h^\varepsilon \in \Sigma_h$ in \cref{eq:reg_equation}, and
using \cref{rem:grad_tilde}, \cref{eq:equivalence_without_E} we find that
\begin{align}\label{prop:mixed_existence.2}
	\begin{aligned}
			\langle \boldsymbol{f}, \pmb{\mathsf{\mathcal{E}}}_h  \boldsymbol{u}_h^{\varepsilon} \rangle_{V}&=(\pmb{\mathsf{\mathcal{D}}}(\pmb{\mathsf{S}}_h^{\varepsilon}),\pmb{\mathsf{S}}_h^{\varepsilon})_{\Omega} +\varepsilon\,(\pmb{\mathsf{\mathcal{S}}}( \nabla_h \boldsymbol{u}_h^{\varepsilon} ) ,\nabla_h\boldsymbol{u}_h^{\varepsilon})_{\Omega}\\&\quad+ \alpha
			\,	\langle \pmb{\mathsf{\mathcal{S}}}_{\boldsymbol{\beta}_h(\boldsymbol{u}_h^{\varepsilon})}( h_\Gamma^{-1}
			\llbracket{ \boldsymbol{u}_h^{\varepsilon}\otimes\boldsymbol{n}}\rrbracket ) , \llbracket{ \boldsymbol{u}_h^{\varepsilon}
				\otimes\boldsymbol{n}}\rrbracket\rangle_{\Gamma_h} 
			\\&\gtrsim 
			\rho_{\varphi^*,\Omega}(\pmb{\mathsf{S}}_h^{\varepsilon})+\varepsilon\,\rho_{\varphi,\Omega}( \nabla_h \boldsymbol{u}_h^{\varepsilon} )+
			\alpha \,
			m_{\varphi_{\boldsymbol{\beta}_h(\boldsymbol{u}_h^{\varepsilon})},h}(\boldsymbol{u}_h^{\varepsilon}) \,.
	\end{aligned}
\end{align}
From \cref{eq:reg_equation_1}, we obtain $\tilde{\nabla}_h\boldsymbol{u}_h^{\varepsilon}=
\Pi^{k-1}_h\pmb{\mathsf{\mathcal{D}}}(\pmb{\mathsf{S}}_h^{\varepsilon})$.  Using this, the Orlicz stability of $\Pi^{k-1}_h$~(cf.~\mbox{\cite[(A.11)]{dkrt-ldg}}), that~$\pmb{\mathsf{\mathcal{D}}}$~has $(p',\delta^{p-1})$-structure (cf.\ \cref{lem:D_structure}), and the equivalence \cref{eq:psi'}, we obtain
\begin{align}
  \SwapAboveDisplaySkip
  \label{prop:mixed_existence.3}
	\begin{aligned}
	\rho_{\varphi,\Omega}(\tilde{\nabla}_h \boldsymbol{u}_h^{\varepsilon})
	&=\rho_{\varphi,\Omega}(\Pi^{k-1}_h\pmb{\mathsf{\mathcal{D}}}(\pmb{\mathsf{S}}_h^{\varepsilon})) \\&\lesssim
	\rho_{\varphi,\Omega}((\varphi^*)'(\pmb{\mathsf{S}}_h^{\varepsilon}))\\&\lesssim  \rho_{\varphi^*,\Omega}(\pmb{\mathsf{S}}_h^{\varepsilon})\,.
	\end{aligned}
\end{align}
Using \cref{prop:mixed_existence.3} in \cref{prop:mixed_existence.2}  and $
\varphi_{\boldsymbol{\beta}_h(\boldsymbol{u}_h)}\ge  \varphi$,
$\delta^{p'} +\varphi^*(t)\sim t^{p'}+\delta^{p'}$,
and ${\delta^p +\varphi(t)\sim  t^p+\delta^p}$ for all ${t\ge 0}$, 
$\sum_{F \in \Gamma_h}h_F\,\mathscr{H}^{d-1}(F) \lesssim \vert 
\Omega\vert$, and \cref{eq:eqiv0}  if $\tilde \nabla _h=\pmb{\mathsf{\mathcal{G}}}_h$, yields  that
\begin{align}\label{prop:mixed_existence.4}
	\begin{aligned}
		\langle \boldsymbol{f}, \pmb{\mathsf{\mathcal{E}}}_h  \boldsymbol{u}_h^{\varepsilon}\rangle_V
	&\gtrsim
	\rho_{\varphi^*,\Omega}(\pmb{\mathsf{S}}_h^{\varepsilon})+ (1+\varepsilon)\,\rho_{\varphi,\Omega}(\tilde{\nabla}_h \boldsymbol{u}_h^{\varepsilon})+\alpha \,
	m_{\varphi_{\boldsymbol{\beta}_h(\boldsymbol{u}_h^{\varepsilon})},h}(\boldsymbol{u}_h^{\varepsilon})
	\\
	& \gtrsim  \|\pmb{\mathsf{S}}_h^{\varepsilon}\|_{p',\Omega}^{p'}+\ \min\{1,\alpha\}
	\,\|\boldsymbol{u}_h^{\varepsilon}\|_{p,h}^p-\delta^p\,\vert
	\Omega\vert\,(1+\alpha) -\delta^{p'}\,\vert
	\Omega\vert\,.
\end{aligned}
\end{align}
Crucially, the constant in the last inequality is independent of $\varepsilon$. On the other hand, using the $\kappa$-Young inequality \cref{ineq:young} with $\psi=\vert \cdot\vert^p$ and \cref{prop:n-function_E} with $\psi=\vert \cdot\vert^p$, we find that
\begin{align}
    \SwapAboveDisplaySkip
    \label{prop:mixed_existence.5}
	\begin{aligned}
	\vert  	\langle \boldsymbol{f}, \pmb{\mathsf{\mathcal{E}}}_h  \boldsymbol{u}_h^{\varepsilon}\rangle_V\vert &\leq c_\kappa\,\|\boldsymbol{f}\|_{V^*}^{p'}+\kappa\,\| \pmb{\mathsf{\mathcal{E}}}_h  \boldsymbol{u}_h^{\varepsilon}\|_V^p
	\\&\lesssim c_\kappa\,\|\boldsymbol{f}\|_{V^*}^{p'}+\kappa\,\|  \boldsymbol{u}_h^{\varepsilon}\|_{p,h}^p\,.
		\end{aligned}
\end{align}
Therefore, combining \cref{prop:mixed_existence.4,prop:mixed_existence.5} as well as choosing $\kappa>0$ sufficiently small, we~conclude~that
\begin{align*}
    \SwapAboveDisplaySkip
	\|\pmb{\mathsf{S}}_h^{\varepsilon}\|_{p',\Omega}^{p'}+\ \min\{1,\alpha\}
	\,\|\boldsymbol{u}_h^{\varepsilon}\|_{p,h}^p\lesssim 1\,.
\end{align*}
Therefore, due to the finite dimensionality of $\Sigma_h$ and  $V_h$, the Bolzano--Weierstra\ss{} compactness theorem yields the existence of a not re-labeled subsequence as well as limits $\pmb{\mathsf{S}}_h\in \Sigma_h$ and $\boldsymbol{u}_h\in V_h$ such that
\begin{align}\label{prop:mixed_existence.6}
	\begin{aligned}
		\pmb{\mathsf{S}}_h^{\varepsilon}&\to \pmb{\mathsf{S}}_h&&\quad \text{strongly in }\Sigma_h&&\quad (\varepsilon\to 0)\,,\\
		\boldsymbol{u}_h^{\varepsilon}&\to \boldsymbol{u}_h&&\quad \text{strongly in }V_h&&\quad (\varepsilon\to 0)\,.
	\end{aligned}
\end{align}
Using \cref{prop:mixed_existence.6}, by passing for $\varepsilon\to 0$ in  \cref{eq:reg_equation}, we conclude that
\begin{align*}
	\begin{aligned}
		(\pmb{\mathsf{\mathcal{D}}}(\pmb{\mathsf{S}}_h),\pmb{\mathsf{T}}_h)_{\Omega}
		-(\pmb{\mathsf{T}}_h ,\tilde{\nabla}_h \boldsymbol{u}_h )_{\Omega}&=0\,,\\
		( \pmb{\mathsf{S}}_h , \pmb{\mathsf{\mathcal{G}}}_h \boldsymbol{z}_h )_{\Omega}+ \alpha\, \langle \pmb{\mathsf{\mathcal{S}}}_{\boldsymbol{\beta}_h(\boldsymbol{u}_h)}( h_\Gamma^{-1} \llbracket{ \boldsymbol{u}_h \otimes\boldsymbol{n}}\rrbracket ) , \llbracket{ \boldsymbol{z}_h \otimes\boldsymbol{n}}\rrbracket\rangle_{\Gamma_h}
		&=
		\langle \boldsymbol{f}, \pmb{\mathsf{\mathcal{E}}}_h  \boldsymbol{z}_h \rangle_{V}\,,
	\end{aligned}
\end{align*}
i.e., $(\pmb{\mathsf{S}}_h,\boldsymbol{u}_h)\in \Sigma_h\times V_h $ is a solution of the discrete mixed formulation \cref{eq:discrete_mixed_DG}.
\end{proof}

Let us next prove a best-approximation result similar to
\cref{thm:error_primal}, but now for the mixed
formulation \cref{eq:discrete_mixed_DG}. To this end, we need
to restrict ourselves either to the case of element-wise
affine functions or the case $p\leq 2$ together with the
assumption that $(\delta+\vert \overline{\nabla \boldsymbol{u}}\vert )^{2-p}$, where $\overline{\nabla \boldsymbol{u}}\coloneqq \nabla \boldsymbol{u}$ in $\Omega$ and $\overline{\nabla \boldsymbol{u}}\coloneqq \boldsymbol{0}$ in $\mathbb{R}^d\setminus \Omega$, belongs to the Muckenhoupt class $A_2(\ensuremath{\mathbb{R}}^d)$.~More~precisely,~given~$p\in [1,\infty)$, a weight  $\sigma\colon \mathbb{R}^d\to (0,+\infty)$, i.e., $\sigma \in L^1_{\textup{loc}}(\mathbb{R}^d)$ and $0<\sigma(x)<+\infty$ for a.e.\ $x\in \mathbb{R}^d$, is said to satisfy the \textit{$A_p$-condition}, if
\begin{align*}
[\sigma]_{A_p(\mathbb{R}^d)}\coloneqq 	\sup_{B\subseteq \mathbb{R}^d;B\text{ ball}}{\langle \sigma\rangle_B(\langle \sigma^{1-p'}\rangle_B)^{p-1}}<\infty\,.
\end{align*}
We denote by $A_p(\mathbb{R}^d)$ the \textit{class of all weights
satisfying the $A_p$-condition} and use \textit{weighted Lebesgue
spaces} $L^p(\Omega;\sigma)$ equipped with the norm
$\|v\|_{p,\sigma,\Omega}\coloneqq  (\int_\Omega
\left| v \right|^p\, \sigma \, \mathrm{d}x)^{\smash{\frac1p}}$.

\begin{Remark}[Comments on Muckenhoupt weights]
{\rm
	\begin{itemize}
		\item[(i)] If $\sigma \in L^1_{\textup{loc}}(\mathbb{R}^d)$ is a weight and there exist $c,C>0$ such that $c\leq \sigma\leq C$ a.e.\ in $\mathbb{R}^d$, then $\sigma\in A_p(\mathbb{R}^d)$ for all $p\in [1,\infty)$.
		\item[(ii)] If $\sigma\coloneqq \vert \cdot\vert^{\eta}\in L^1_{\textup{loc}}(\mathbb{R}^d)$, then $\sigma\in A_p(\mathbb{R}^d)$, $p\in (1,\infty)$, if $-d<\eta <d(p-1)$.
	\end{itemize}
}
\end{Remark}

\begin{Theorem}\label{thm:error_estimate_mixed} Assume that $k=1$ or that $p\leq 2$ and $(\delta+\vert \overline{\nabla \boldsymbol{u}}\vert )^{2-p}\in A_2(\mathbb{R}^d)$, where $\overline{\nabla \boldsymbol{u}}\coloneqq \nabla \boldsymbol{u}$ in $\Omega$ and $\overline{\nabla \boldsymbol{u}}\coloneqq \boldsymbol{0}$ in $\mathbb{R}^d\setminus \Omega$. Then, for $\alpha>0$ sufficiently large, we have that
\begin{multline*}
	\|\pmb{\mathsf{\mathcal{F}}}(\tilde{\nabla}_h\boldsymbol{u}_h)-\pmb{\mathsf{\mathcal{F}}}(\nabla \boldsymbol{u})\|_{2,\Omega}^2
	+
	\|\pmb{\mathsf{\mathcal{F}}}^*(\pmb{\mathsf{S}}_h) - \pmb{\mathsf{\mathcal{F}}}^*(\pmb{\mathsf{S}})\|_{2,\Omega}^2+
	m_{\varphi_{\boldsymbol{\beta}_h(\boldsymbol{u}_h)},h}(\boldsymbol{u}_h)
	\\
	\lesssim
    \inf_{\substack{(\pmb{\mathsf{T}}_h,\boldsymbol{v}_h) \\ \in \Sigma_h\times V_h}}{\left(
		\|\pmb{\mathsf{\mathcal{F}}}(\nabla \boldsymbol{u})-\pmb{\mathsf{\mathcal{F}}}(\tilde{\nabla}_h \boldsymbol{v}_h)\|_{2,\Omega}^2
		+\|\pmb{\mathsf{\mathcal{F}}}^*(\pmb{\mathsf{S}}) - \pmb{\mathsf{\mathcal{F}}}^*(\pmb{\mathsf{T}}_h)\|_{2,\Omega}^2+
		m_{\varphi_{\boldsymbol{\beta}_h(\boldsymbol{u}_h)},h}(\boldsymbol{v}_h)\right)}\,.
\end{multline*}
where the constant depends only on $k$, $\omega_0$, $[\sigma]_{A_2(\mathbb{R}^d)}$, and the characteristics of~$\pmb{\mathsf{\mathcal{S}}}$ and~$\pmb{\mathsf{\mathcal{D}}}$.
\end{Theorem}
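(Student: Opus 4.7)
The plan is to mirror the proof of \cref{thm:error_primal} but to work with the pair of error equations obtained by subtracting the continuous mixed problem \eqref{eq:mixed} from the discrete one \eqref{eq:discrete_mixed_DG}. Since $\pmb{\mathsf{\mathcal{D}}}(\pmb{\mathsf{S}})=\nabla\boldsymbol{u}$ and since testing \eqref{eq:mixed_2} against $\pmb{\mathsf{\mathcal{E}}}_h\boldsymbol{z}_h\in V$ matches the discrete right-hand side, these read, for every $\pmb{\mathsf{T}}_h\in\Sigma_h$ and $\boldsymbol{z}_h\in V_h$,
\begin{align*}
  (\pmb{\mathsf{\mathcal{D}}}(\pmb{\mathsf{S}}_h)-\pmb{\mathsf{\mathcal{D}}}(\pmb{\mathsf{S}}),\pmb{\mathsf{T}}_h)_\Omega&=(\pmb{\mathsf{T}}_h,\tilde{\nabla}_h\boldsymbol{u}_h-\nabla\boldsymbol{u})_\Omega,\\
  (\pmb{\mathsf{S}}_h-\pmb{\mathsf{S}},\nabla\pmb{\mathsf{\mathcal{E}}}_h\boldsymbol{z}_h)_\Omega+\alpha\,\langle\pmb{\mathsf{\mathcal{S}}}_{\boldsymbol{\beta}_h(\boldsymbol{u}_h)}(h_\Gamma^{-1}\llbracket\boldsymbol{u}_h\otimes\boldsymbol{n}\rrbracket),\llbracket\boldsymbol{z}_h\otimes\boldsymbol{n}\rrbracket\rangle_{\Gamma_h}&=0,
\end{align*}
where in the second identity I silently invoke the preservation identity \eqref{eq:equivalence_without_E} (applicable because $\pmb{\mathsf{S}}_h\in\Sigma_h$) to rewrite $(\pmb{\mathsf{S}}_h,\pmb{\mathsf{\mathcal{G}}}_h\boldsymbol{z}_h)_\Omega=(\pmb{\mathsf{S}}_h,\nabla\pmb{\mathsf{\mathcal{E}}}_h\boldsymbol{z}_h)_\Omega$.

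Given arbitrary $(\pmb{\mathsf{T}}_h,\boldsymbol{v}_h)\in\Sigma_h\times V_h$, I then test the first identity with $\pmb{\mathsf{S}}_h-\pmb{\mathsf{T}}_h\in\Sigma_h$ and the second with $\boldsymbol{u}_h-\boldsymbol{v}_h\in V_h$ and add the two. After inserting $\pm\pmb{\mathsf{S}}$ into the first slot of the $\pmb{\mathsf{\mathcal{D}}}$-term and $\pm\llbracket\boldsymbol{u}_h\otimes\boldsymbol{n}\rrbracket$ into the jump argument, the left-hand side exposes, on the one hand, the coercive quantity $(\pmb{\mathsf{\mathcal{D}}}(\pmb{\mathsf{S}}_h)-\pmb{\mathsf{\mathcal{D}}}(\pmb{\mathsf{S}}))\mathbin{:}(\pmb{\mathsf{S}}_h-\pmb{\mathsf{S}})\sim|\pmb{\mathsf{\mathcal{F}}}^*(\pmb{\mathsf{S}}_h)-\pmb{\mathsf{\mathcal{F}}}^*(\pmb{\mathsf{S}})|^2$ via \eqref{eq:hammer_inversea} and, on the other, from the monotonicity of $\pmb{\mathsf{\mathcal{S}}}_{\boldsymbol{\beta}_h(\boldsymbol{u}_h)}$ (\cref{lem:hammer,rem:sa}), the stabilisation modular $\alpha c\,m_{\varphi_{\boldsymbol{\beta}_h(\boldsymbol{u}_h)},h}(\boldsymbol{u}_h)$, precisely as in the estimate of $\mathfrak{I}_2$ in \cref{thm:error_primal}.

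The remaining right-hand side is then dispatched piece by piece. The pure best-approximation term $(\pmb{\mathsf{\mathcal{D}}}(\pmb{\mathsf{S}}_h)-\pmb{\mathsf{\mathcal{D}}}(\pmb{\mathsf{S}}),\pmb{\mathsf{S}}-\pmb{\mathsf{T}}_h)_\Omega$ and its jump counterpart are bounded via the $\varepsilon$-Young inequality \eqref{ineq:young} with $\psi=(\varphi_{|\pmb{\mathsf{S}}|})^*$, respectively $\psi=\varphi_{\boldsymbol{\beta}_h(\boldsymbol{u}_h)}$, combined with \eqref{eq:psi'} and \cref{lem:hammer_inverse}, producing exactly $\|\pmb{\mathsf{\mathcal{F}}}^*(\pmb{\mathsf{S}})-\pmb{\mathsf{\mathcal{F}}}^*(\pmb{\mathsf{T}}_h)\|_{2,\Omega}^2$ and $m_{\varphi_{\boldsymbol{\beta}_h(\boldsymbol{u}_h)},h}(\boldsymbol{v}_h)$. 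The cross term $(\pmb{\mathsf{S}}_h-\pmb{\mathsf{T}}_h,\tilde{\nabla}_h\boldsymbol{u}_h-\nabla\boldsymbol{u})_\Omega$ I split by inserting $\pm\tilde{\nabla}_h\boldsymbol{v}_h$: the piece involving $\nabla\boldsymbol{u}-\tilde{\nabla}_h\boldsymbol{v}_h$ is controlled by $\varepsilon$-Young and \cref{lem:hammer_inverse} against $\|\pmb{\mathsf{\mathcal{F}}}(\nabla\boldsymbol{u})-\pmb{\mathsf{\mathcal{F}}}(\tilde{\nabla}_h\boldsymbol{v}_h)\|_{2,\Omega}^2$, while the piece involving $\tilde{\nabla}_h(\boldsymbol{u}_h-\boldsymbol{v}_h)\in\Sigma_h$ is reabsorbed using the first error equation once more. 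Finally, the consistency contribution $(\pmb{\mathsf{S}}_h-\pmb{\mathsf{S}},\nabla\pmb{\mathsf{\mathcal{E}}}_h(\boldsymbol{u}_h-\boldsymbol{v}_h))_\Omega$---the analogue of $\mathfrak{I}_3$---is decomposed as $(\pmb{\mathsf{S}}_h-\pmb{\mathsf{T}}_h,\nabla\pmb{\mathsf{\mathcal{E}}}_h(\boldsymbol{u}_h-\boldsymbol{v}_h)-\tilde{\nabla}_h(\boldsymbol{u}_h-\boldsymbol{v}_h))_\Omega$ plus a purely approximation piece; a variant of \cref{lem:max-shift} using $\psi=(\varphi_{\boldsymbol{\beta}_h(\boldsymbol{u}_h)})^*$ on the $\pmb{\mathsf{S}}$-side together with \cref{prop:n-function_E} dominates it by $\kappa\|\pmb{\mathsf{\mathcal{F}}}^*(\pmb{\mathsf{S}}_h)-\pmb{\mathsf{\mathcal{F}}}^*(\pmb{\mathsf{S}})\|_{2,\Omega}^2$ and $c_\kappa\,m_{\varphi_{\boldsymbol{\beta}_h(\boldsymbol{u}_h)},h}$-contributions to be absorbed on either side.

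The hard part, and the sole reason for the dichotomy in the hypothesis, is the recovery of $\|\pmb{\mathsf{\mathcal{F}}}(\tilde{\nabla}_h\boldsymbol{u}_h)-\pmb{\mathsf{\mathcal{F}}}(\nabla\boldsymbol{u})\|_{2,\Omega}^2$ on the left. Testing the first equation of \eqref{eq:discrete_mixed_DG} with $\pmb{\mathsf{T}}_h=\tilde{\nabla}_h\boldsymbol{u}_h\in\Sigma_h$ yields $\tilde{\nabla}_h\boldsymbol{u}_h=\Pi_h^{k-1}\pmb{\mathsf{\mathcal{D}}}(\pmb{\mathsf{S}}_h)$, whence the triangle inequality and \eqref{eq:F-F*3} give
\[
  \|\pmb{\mathsf{\mathcal{F}}}(\tilde{\nabla}_h\boldsymbol{u}_h)-\pmb{\mathsf{\mathcal{F}}}(\nabla\boldsymbol{u})\|_{2,\Omega}^2\lesssim\|\pmb{\mathsf{\mathcal{F}}}(\Pi_h^{k-1}\pmb{\mathsf{\mathcal{D}}}(\pmb{\mathsf{S}}_h))-\pmb{\mathsf{\mathcal{F}}}(\pmb{\mathsf{\mathcal{D}}}(\pmb{\mathsf{S}}_h))\|_{2,\Omega}^2+\|\pmb{\mathsf{\mathcal{F}}}^*(\pmb{\mathsf{S}}_h)-\pmb{\mathsf{\mathcal{F}}}^*(\pmb{\mathsf{S}})\|_{2,\Omega}^2,
\]
whose second summand is already controlled. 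When $k=1$, $\pmb{\mathsf{\mathcal{D}}}(\pmb{\mathsf{S}}_h)$ is element-wise constant and hence fixed by $\Pi_h^0$, so the first summand vanishes identically. When $k\geq2$, I rewrite it through \eqref{eq:hammera} as the modular of $\Pi_h^{k-1}\pmb{\mathsf{\mathcal{D}}}(\pmb{\mathsf{S}}_h)-\pmb{\mathsf{\mathcal{D}}}(\pmb{\mathsf{S}}_h)$ against $\varphi_{|\pmb{\mathsf{\mathcal{D}}}(\pmb{\mathsf{S}}_h)|}$; a shift change (\cref{lem:shift-change}) from $|\pmb{\mathsf{\mathcal{D}}}(\pmb{\mathsf{S}}_h)|$ to $|\nabla\boldsymbol{u}|$, combined with the assumption $p\leq 2$ that lets one drop the extra $|\pmb{\mathsf{\mathcal{D}}}(\pmb{\mathsf{S}}_h)|$-contribution, reduces it to a weighted $L^2$-norm of $\Pi_h^{k-1}\pmb{\mathsf{\mathcal{D}}}(\pmb{\mathsf{S}}_h)-\pmb{\mathsf{\mathcal{D}}}(\pmb{\mathsf{S}}_h)$ with weight $(\delta+|\overline{\nabla\boldsymbol{u}}|)^{p-2}$. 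The Muckenhoupt $A_2(\mathbb{R}^d)$-hypothesis then delivers the weighted $L^2$-stability of $\Pi_h^{k-1}$, so that after adding and subtracting $\Pi_h^{k-1}\nabla\boldsymbol{u}$ the residual is bounded by $\|\pmb{\mathsf{\mathcal{F}}}(\nabla\boldsymbol{u})-\pmb{\mathsf{\mathcal{F}}}(\tilde{\nabla}_h\boldsymbol{v}_h)\|_{2,\Omega}^2$ up to a constant depending on $[\sigma]_{A_2(\mathbb{R}^d)}$. Choosing the Young parameters sufficiently small and $\alpha$ sufficiently large to absorb all $\pmb{\mathsf{S}}_h$- and $\boldsymbol{u}_h$-dependent terms back on the left then yields the claimed quasi-optimal estimate.
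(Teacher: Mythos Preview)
Your overall skeleton---the two error equations, testing with $\pmb{\mathsf{S}}_h-\pmb{\mathsf{T}}_h$ and $\boldsymbol{u}_h-\boldsymbol{v}_h$, the monotonicity step for $\|\pmb{\mathsf{\mathcal{F}}}^*(\pmb{\mathsf{S}}_h)-\pmb{\mathsf{\mathcal{F}}}^*(\pmb{\mathsf{S}})\|_{2,\Omega}^2$, the treatment of the stabilisation term, and the smoothing consistency via \cref{prop:n-function_E}---matches the paper's proof closely, and your handling of the case $k=1$ (where $\pmb{\mathsf{\mathcal{D}}}(\pmb{\mathsf{S}}_h)$ is element-wise constant so $\Pi_h^{0}\pmb{\mathsf{\mathcal{D}}}(\pmb{\mathsf{S}}_h)=\pmb{\mathsf{\mathcal{D}}}(\pmb{\mathsf{S}}_h)$) is correct.

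The gap is in the recovery of $\|\pmb{\mathsf{\mathcal{F}}}(\tilde{\nabla}_h\boldsymbol{u}_h)-\pmb{\mathsf{\mathcal{F}}}(\nabla\boldsymbol{u})\|_{2,\Omega}^2$ in the case $p\leq 2$, $k\geq 2$. After the shift change you bound $\varphi_{|\nabla\boldsymbol{u}|}(t)\lesssim(\delta+|\nabla\boldsymbol{u}|)^{p-2}t^2$ and pass to the weighted $L^2$-norm with weight $\sigma^{-1}=(\delta+|\overline{\nabla\boldsymbol{u}}|)^{p-2}$. The $A_2$-stability of $\Pi_h^{k-1}$ then leaves you with $\|\pmb{\mathsf{\mathcal{D}}}(\pmb{\mathsf{S}}_h)-\nabla\boldsymbol{u}\|_{2,\sigma^{-1},\Omega}^2$ and $\|\nabla\boldsymbol{u}-\tilde{\nabla}_h\boldsymbol{v}_h\|_{2,\sigma^{-1},\Omega}^2$. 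But for $p<2$ the inequality $\varphi_{|\nabla\boldsymbol{u}|}(t)\lesssim(\delta+|\nabla\boldsymbol{u}|)^{p-2}t^2$ is strictly one-sided: when $t\gg\delta+|\nabla\boldsymbol{u}|$ one has $\varphi_{|\nabla\boldsymbol{u}|}(t)\sim t^p\ll(\delta+|\nabla\boldsymbol{u}|)^{p-2}t^2$. Hence the weighted $L^2$-norm is genuinely stronger than the natural distance, and neither of your two remaining terms can be controlled by $\|\pmb{\mathsf{\mathcal{F}}}^*(\pmb{\mathsf{S}}_h)-\pmb{\mathsf{\mathcal{F}}}^*(\pmb{\mathsf{S}})\|_{2,\Omega}^2\sim\rho_{\varphi_{|\nabla\boldsymbol{u}|},\Omega}(\pmb{\mathsf{\mathcal{D}}}(\pmb{\mathsf{S}}_h)-\nabla\boldsymbol{u})$ or by $\|\pmb{\mathsf{\mathcal{F}}}(\nabla\boldsymbol{u})-\pmb{\mathsf{\mathcal{F}}}(\tilde{\nabla}_h\boldsymbol{v}_h)\|_{2,\Omega}^2$. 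The argument therefore does not close.

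The paper avoids this by working on the \emph{conjugate} side. Through a convex-conjugation identity (\cref{lem:convex_conjugation}) it reduces the estimate for $\|\pmb{\mathsf{\mathcal{F}}}(\tilde{\nabla}_h\boldsymbol{u}_h)-\pmb{\mathsf{\mathcal{F}}}(\tilde{\nabla}_h\boldsymbol{v}_h)\|_{2,\Omega}^2$ to the stability of $\Pi_h^{k-1}$ in the modular $\rho_{(\varphi_{|\nabla\boldsymbol{u}|})^*,\Omega}$. The point is that for $p\leq 2$ (so $p'\geq 2$) one has the \emph{two-sided} splitting
\[
(\varphi_{|\nabla\boldsymbol{u}|})^*(t)\sim\bigl((\delta+|\nabla\boldsymbol{u}|)^{p-1}+t\bigr)^{p'-2}t^2\sim(\delta+|\nabla\boldsymbol{u}|)^{2-p}t^2+t^{p'},
\]
so stability of $\Pi_h^{k-1}$ in $L^2(\Omega;\sigma)$ (from $A_2$, \cref{lem:weigthed-stability}) and in $L^{p'}(\Omega)$ combine, via this equivalence, into the required generalized-Orlicz stability without any loss. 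That two-sided equivalence on the conjugate is precisely what your primal-side estimate lacks.
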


Key ingredient in the proof of \cref{thm:error_estimate_mixed} is the following convex conjugation type inequality for the natural distance, which applies, if, e.g., $k=1$ or if $p\leq 2$ and $(\delta+\vert \overline{\nabla \boldsymbol{u}}\vert )^{2-p}\in A_2(\mathbb{R}^d)$. In particular, note that if one is capable of establishing these convex conjugation formula for more general assumptions, then \cref{thm:error_estimate_mixed} immediately applies for these assumptions.

\begin{Lemma}\label{lem:convex_conjugation} The following statements apply:
\begin{itemize}
	\item[(i)] If $k=1$, then   for every $\boldsymbol{v}_h,\boldsymbol{w}_h\in V_h $, it holds that
	\begin{align*}
		&\|\pmb{\mathsf{\mathcal{F}}}(\tilde{\nabla}_h  \boldsymbol{v}_h)-\pmb{\mathsf{\mathcal{F}}}(\tilde{\nabla}_h   \boldsymbol{w}_h)\|_{2,\Omega}^2 \\
        &\lesssim \sup_{\pmb{\mathsf{T}}_h\in \Sigma_h}{\big[(\tilde{\nabla}_h  \boldsymbol{v}_h-\tilde{\nabla}_h   \boldsymbol{w}_h,\pmb{\mathsf{T}}_h)_\Omega-\tfrac{1}{c} 
			\rho_{(\varphi_{\vert \tilde{\nabla}_h   \boldsymbol{v}_h\vert})^*,\Omega}(\pmb{\mathsf{T}}_h)
			\big]}\,,
	\end{align*}
	where the constants depend only on $k$, $\omega_0$, $[\sigma]_{A_2(\mathbb{R}^d)}$, and the characteristics~of~$\pmb{\mathsf{\mathcal{S}}}$~and~$\pmb{\mathsf{\mathcal{D}}}$.
	
	\item[(ii)] If $p\leq 2$, $\pmb{\mathsf{\mathcal{F}}}(\nabla  \boldsymbol{u}) \in
          L^2(\Omega)^{n\times d}$ and $(\delta+\vert\overline{\nabla \boldsymbol{u}}\vert )^{2-p}\in A_2(\mathbb{R}^d)$, then 
	for every $\boldsymbol{v}_h,\boldsymbol{w}_h\in V_h $, it holds that
	\begin{align*}
		&\|\pmb{\mathsf{\mathcal{F}}}(\tilde{\nabla}_h  \boldsymbol{v}_h)-\pmb{\mathsf{\mathcal{F}}}(\tilde{\nabla}_h  \boldsymbol{w}_h)\|_{2,\Omega}^2 \\
        &\lesssim \sup_{\pmb{\mathsf{T}}_h\in \Sigma_h}{\big[(\tilde{\nabla}_h  \boldsymbol{v}_h-\tilde{\nabla}_h \boldsymbol{w}_h,\pmb{\mathsf{T}}_h)_\Omega-\tfrac{1}{c} 
			\rho_{(\varphi_{\smash{\vert\tilde{\nabla}_h  \boldsymbol{v}_h\vert}})^*,\Omega}(\pmb{\mathsf{T}}_h)
			\big]}\\
		&\quad +\|\pmb{\mathsf{\mathcal{F}}}(\tilde{\nabla}_h \boldsymbol{v}_h)-\pmb{\mathsf{\mathcal{F}}}(\nabla  \boldsymbol{u})\|_{2,\Omega}^2\,,
	\end{align*}
	where the constants depend only on $k$, $\omega_0$, $[\sigma]_{A_2(\mathbb{R}^d)}$, and the characteristics~of~$\pmb{\mathsf{\mathcal{S}}}$~and~$\pmb{\mathsf{\mathcal{D}}}$.
\end{itemize}

\end{Lemma}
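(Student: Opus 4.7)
In both parts I realise the supremum by testing with a scaled multiple of the stress difference, $\pmb{\mathsf{T}}_h \approx \lambda\,(\pmb{\mathsf{\mathcal{S}}}(\tilde{\nabla}_h  \boldsymbol{v}_h) - \pmb{\mathsf{\mathcal{S}}}(\tilde{\nabla}_h  \boldsymbol{w}_h))$ projected into $\Sigma_h$, where $\lambda>0$ is a parameter chosen small at the end. The duality between $\pmb{\mathsf{\mathcal{S}}}$ and $\varphi_a$ encoded in \cref{eq:hammera} then identifies both the bilinear pairing $(\tilde{\nabla}_h  \boldsymbol{v}_h-\tilde{\nabla}_h   \boldsymbol{w}_h,\pmb{\mathsf{T}}_h)_\Omega$ and the conjugate modular $\rho_{(\varphi_{\vert \tilde{\nabla}_h   \boldsymbol{v}_h\vert})^*,\Omega}(\pmb{\mathsf{T}}_h)$ with $\|\pmb{\mathsf{\mathcal{F}}}(\tilde{\nabla}_h  \boldsymbol{v}_h)-\pmb{\mathsf{\mathcal{F}}}(\tilde{\nabla}_h   \boldsymbol{w}_h)\|_{2,\Omega}^2$, after which a suitable choice of $\lambda$ absorbs the negative contribution.

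\textbf{Part (i).} For $k=1$, both $\nabla_h$ and $\pmb{\mathsf{\mathcal{G}}}_h$ map $V_h=\mathbb{P}^1(\mathcal{T}_h)^n$ into $\mathbb{P}^0(\mathcal{T}_h)^{n\times d}=\Sigma_h$, so $\tilde{\nabla}_h  \boldsymbol{v}_h$ and $\tilde{\nabla}_h   \boldsymbol{w}_h$ are element-wise constants and $\pmb{\mathsf{\mathcal{S}}}(\tilde{\nabla}_h  \boldsymbol{v}_h) - \pmb{\mathsf{\mathcal{S}}}(\tilde{\nabla}_h   \boldsymbol{w}_h)$ already lies in $\Sigma_h$. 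I then take $\pmb{\mathsf{T}}_h = \lambda\,(\pmb{\mathsf{\mathcal{S}}}(\tilde{\nabla}_h  \boldsymbol{v}_h) - \pmb{\mathsf{\mathcal{S}}}(\tilde{\nabla}_h   \boldsymbol{w}_h))\in \Sigma_h$; the first equivalence of \cref{eq:hammera} gives $(\tilde{\nabla}_h  \boldsymbol{v}_h - \tilde{\nabla}_h   \boldsymbol{w}_h,\pmb{\mathsf{T}}_h)_\Omega \sim \lambda\,\|\pmb{\mathsf{\mathcal{F}}}(\tilde{\nabla}_h  \boldsymbol{v}_h) - \pmb{\mathsf{\mathcal{F}}}(\tilde{\nabla}_h   \boldsymbol{w}_h)\|_{2,\Omega}^2$, while the third equivalence of \cref{eq:hammera} combined with the $\Delta_2$-condition for $(\varphi_a)^*$ (to extract $\lambda$) yields $\rho_{(\varphi_{\vert \tilde{\nabla}_h   \boldsymbol{v}_h\vert})^*,\Omega}(\pmb{\mathsf{T}}_h) \lesssim \lambda\,\|\pmb{\mathsf{\mathcal{F}}}(\tilde{\nabla}_h  \boldsymbol{v}_h) - \pmb{\mathsf{\mathcal{F}}}(\tilde{\nabla}_h   \boldsymbol{w}_h)\|_{2,\Omega}^2$. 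Choosing $\lambda>0$ sufficiently small relative to $1/c$ closes the argument.

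\textbf{Part (ii).} For $k\geq 2$, $\pmb{\mathsf{\mathcal{S}}}(\tilde{\nabla}_h  \boldsymbol{v}_h)-\pmb{\mathsf{\mathcal{S}}}(\tilde{\nabla}_h   \boldsymbol{w}_h)$ is no longer element-wise polynomial, and so I test with its $L^2$-projection, $\pmb{\mathsf{T}}_h=\lambda\,\Pi_h^{k-1}(\pmb{\mathsf{\mathcal{S}}}(\tilde{\nabla}_h  \boldsymbol{v}_h)-\pmb{\mathsf{\mathcal{S}}}(\tilde{\nabla}_h   \boldsymbol{w}_h))\in\Sigma_h$. Because $\tilde{\nabla}_h  \boldsymbol{v}_h - \tilde{\nabla}_h   \boldsymbol{w}_h\in \Sigma_h$, the $L^2$-orthogonality of $\Pi_h^{k-1}$ removes the projection from the inner product, yielding the same equivalence as in (i). For the modular, I exploit the two-sided bound valid for $p\leq 2$,
\begin{equation*}
(\varphi_a)^*(t)\;\lesssim\; (\delta+a)^{2-p}\,t^2 + t^{p'}\,.
\end{equation*}
The $t^{p'}$-contribution is absorbed by the standard $L^{p'}$-stability of $\Pi_h^{k-1}$ and \cref{eq:hammera}. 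For the weighted $L^2$-contribution I first use the shift-change lemma \cref{lem:shift-change} to replace the shift $\vert\tilde{\nabla}_h  \boldsymbol{v}_h\vert$ by $\vert\nabla  \boldsymbol{u}\vert$ --- producing exactly the extra error $\|\pmb{\mathsf{\mathcal{F}}}(\tilde{\nabla}_h \boldsymbol{v}_h)-\pmb{\mathsf{\mathcal{F}}}(\nabla  \boldsymbol{u})\|_{2,\Omega}^2$ that appears on the right-hand side --- then apply the $A_2$-weighted stability of $\Pi_h^{k-1}$ on $L^2(\Omega,\sigma)$ with $\sigma=(\delta+\vert\overline{\nabla \boldsymbol{u}}\vert)^{2-p}$ to remove the projection, and finally change the shift back. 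The resulting weighted $L^2$-integral of $\pmb{\mathsf{\mathcal{S}}}(\tilde{\nabla}_h \boldsymbol{v}_h)-\pmb{\mathsf{\mathcal{S}}}(\tilde{\nabla}_h  \boldsymbol{w}_h)$ is $\sim\|\pmb{\mathsf{\mathcal{F}}}(\tilde{\nabla}_h \boldsymbol{v}_h)-\pmb{\mathsf{\mathcal{F}}}(\tilde{\nabla}_h  \boldsymbol{w}_h)\|_{2,\Omega}^2$, again by \cref{eq:hammera}, and a small enough $\lambda$ finishes the argument.

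\textbf{Main obstacle.} The decisive step is the weighted $L^2$-stability of the local projection $\Pi_h^{k-1}$ in $L^2(\Omega,\sigma)$ for $\sigma=(\delta+\vert\overline{\nabla \boldsymbol{u}}\vert)^{2-p}$. This is precisely what the Muckenhoupt hypothesis $\sigma\in A_2(\mathbb{R}^d)$ supplies, through the boundedness of the Hardy--Littlewood maximal operator on $L^2(\mathbb{R}^d,\sigma)$ and the fact that $\Pi_h^{k-1}$ is pointwise dominated by the maximal function on shape-regular meshes. A secondary subtlety, visible in the statement itself, is the bookkeeping of the two shift changes: they are the sole mechanism that introduces the additional term $\|\pmb{\mathsf{\mathcal{F}}}(\tilde{\nabla}_h \boldsymbol{v}_h)-\pmb{\mathsf{\mathcal{F}}}(\nabla \boldsymbol{u})\|_{2,\Omega}^2$ in (ii), which is absent in (i) because no projection needs to be introduced when $k=1$.
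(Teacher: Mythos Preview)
Your approach is correct but takes a genuinely different route from the paper. The paper argues abstractly via the convex conjugation formula for integral functionals: it writes $\rho_{\varphi_{|\tilde{\nabla}_h\boldsymbol{v}_h|},\Omega}(\tilde{\nabla}_h\boldsymbol{v}_h-\tilde{\nabla}_h\boldsymbol{w}_h)=\sup_{\pmb{\mathsf{T}}}[(\tilde{\nabla}_h\boldsymbol{v}_h-\tilde{\nabla}_h\boldsymbol{w}_h,\pmb{\mathsf{T}})_\Omega-\rho_{(\varphi_{|\tilde{\nabla}_h\boldsymbol{v}_h|})^*,\Omega}(\pmb{\mathsf{T}})]$ over the full Orlicz space, then inserts $\Pi_h^{k-1}\pmb{\mathsf{T}}$ in the pairing (transparent since $\tilde{\nabla}_h\boldsymbol{v}_h-\tilde{\nabla}_h\boldsymbol{w}_h\in\Sigma_h$), and uses Orlicz-stability of $\Pi_h^{k-1}$ on the modular to restrict the supremum to $\Sigma_h$. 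You instead construct an explicit near-maximiser $\pmb{\mathsf{T}}_h=\lambda\,\Pi_h^{k-1}(\pmb{\mathsf{\mathcal{S}}}(\tilde{\nabla}_h\boldsymbol{v}_h)-\pmb{\mathsf{\mathcal{S}}}(\tilde{\nabla}_h\boldsymbol{w}_h))$ and verify both terms directly. Your route is more elementary in that it avoids the measure-theoretic conjugation formula, while the paper's route is more systematic and makes the role of projection stability structurally transparent. Both rely on exactly the same ingredients: the equivalences \cref{eq:hammera}, the shift-change \cref{lem:shift-change}, and the stability of $\Pi_h^{k-1}$ in the relevant (weighted) modulars.

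Two small points. First, in part~(i) you invoke ``the $\Delta_2$-condition for $(\varphi_a)^*$ to extract $\lambda$''. Convexity alone gives $(\varphi_a)^*(\lambda t)\le\lambda(\varphi_a)^*(t)$ for $\lambda\le 1$, so both terms scale linearly in $\lambda$ and you actually need either to choose the constant $c$ in the statement large, or to use the $\nabla_2$-condition of $(\varphi_a)^*$ (equivalently $\Delta_2$ of $\varphi_a$) to get a superlinear power $\lambda^r$, $r>1$, on the modular. Either fix is routine. Second, the paper does not derive the weighted $L^2$-stability of $\Pi_h^{k-1}$ via the maximal function; it proves it by a short direct computation (\cref{lem:weigthed-stability}) using only local inverse estimates, $L^1$-stability of $\Pi_h^{k-1}$, H\"older's inequality, and the $A_p$-definition on a ball containing $K$. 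Your maximal-function argument also works, but the paper's version is self-contained and avoids importing the Muckenhoupt theorem.
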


\cref{lem:convex_conjugation}(ii), in turn, is essentially based on the following (local) stability result 
for the $L^2$-projection in terms of weighted Lebesgue norms. 
\begin{Lemma}\label{lem:weigthed-stability}
If $\sigma \hspace*{-0.1em}\in \hspace*{-0.1em}A_p(\mathbb{R}^d)$, $p\hspace*{-0.1em}\in\hspace*{-0.1em} (1,\infty)$, then for every $\pmb{\mathsf{T}}\hspace*{-0.1em}\in\hspace*{-0.1em} L^p(\Omega;\sigma)^{n\times d}$ and $K\hspace*{-0.1em}\in\hspace*{-0.1em} \mathcal{T}_h$,~we~have~that
\begin{align*}
    \SwapAboveDisplaySkip
	\|\Pi_h^{k-1} \pmb{\mathsf{T}}\|_{p,\sigma,K}\lesssim \|\pmb{\mathsf{T}}\|_{p,\sigma,K}\,,
\end{align*}
where the constant depends only on $k$, $\omega_0$, $p$, and $[\sigma]_{A_p(\mathbb{R}^d)}$.
\end{Lemma}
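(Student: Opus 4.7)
The plan is to reduce the weighted stability estimate to a pointwise $L^\infty$--$L^1$ bound on each element and then to absorb the weight through H\"older's inequality together with the $A_p$-condition. First, because the mesh is simplicial and every $K\in\mathcal{T}_h$ is affine-equivalent to a fixed reference simplex $\hat K$, the local projection $\Pi_h^{k-1}\big|_K$ pulls back to the $L^2$-projection onto $\mathbb{P}^{k-1}(\hat K)$, which on the finite-dimensional space $\mathbb{P}^{k-1}(\hat K)$ is bounded as an operator $L^1(\hat K)\to L^\infty(\hat K)$ by a constant depending only on $k$. Transferring this bound through the affine change of variables and using the chunkiness to control the scaling would give
\begin{align*}
\vert(\Pi_h^{k-1}\pmb{\mathsf{T}})(x)\vert\lesssim \langle\vert\pmb{\mathsf{T}}\vert\rangle_K\quad\text{for a.e.\ }x\in K\,,
\end{align*}
with a constant depending only on $k$ and $\omega_0$.

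Next, I would insert the weight using H\"older's inequality. Splitting $\vert\pmb{\mathsf{T}}\vert=(\vert\pmb{\mathsf{T}}\vert\sigma^{1/p})\cdot\sigma^{-1/p}$ and exploiting the identity $-p'/p=1-p'$, H\"older's inequality with exponents $p$ and $p'$ yields
\begin{align*}
\langle\vert\pmb{\mathsf{T}}\vert\rangle_K\leq\frac{1}{\vert K\vert}\|\pmb{\mathsf{T}}\|_{p,\sigma,K}\bigg(\int_K\sigma^{1-p'}\,\textup{d}x\bigg)^{1/p'}\,.
\end{align*}
Raising the pointwise bound to the $p$-th power and integrating against $\sigma$ over $K$ will then give
\begin{align*}
\|\Pi_h^{k-1}\pmb{\mathsf{T}}\|_{p,\sigma,K}^p\lesssim \langle\vert\pmb{\mathsf{T}}\vert\rangle_K^p\int_K\sigma\,\textup{d}x\lesssim \|\pmb{\mathsf{T}}\|_{p,\sigma,K}^p\cdot\langle\sigma\rangle_K\langle\sigma^{1-p'}\rangle_K^{p-1}\,,
\end{align*}
where the last step uses $(\int_K \sigma^{1-p'}\,\textup{d}x)^{p/p'}=|K|^{p-1}\langle\sigma^{1-p'}\rangle_K^{p-1}$.

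To finish, I would dominate the product $\langle\sigma\rangle_K\langle\sigma^{1-p'}\rangle_K^{p-1}$ by $[\sigma]_{A_p(\mathbb{R}^d)}$. By the chunkiness assumption, the simplex $K$ is contained in a ball $B\subseteq\mathbb{R}^d$ of radius $\sim h_K$ with $\vert K\vert\sim\vert B\vert$ up to constants depending only on $\omega_0$, so the averages over $K$ of $\sigma$ and $\sigma^{1-p'}$ are comparable to those over $B$. Applying the definition of $[\sigma]_{A_p(\mathbb{R}^d)}$ to $B$ then gives
\begin{align*}
\langle\sigma\rangle_K\langle\sigma^{1-p'}\rangle_K^{p-1}\lesssim \langle\sigma\rangle_B\langle\sigma^{1-p'}\rangle_B^{p-1}\leq [\sigma]_{A_p(\mathbb{R}^d)}\,,
\end{align*}
and chaining the three displays produces the claim. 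The only non-routine ingredient is the reference-element $L^\infty$--$L^1$ bound; the remaining H\"older-plus-$A_p$ computation is entirely standard, so I expect no serious obstacle.
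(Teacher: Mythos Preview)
Your proposal is correct and follows essentially the same approach as the paper: both arguments combine an $L^\infty$--$L^1$ bound for the local projection (the paper obtains this from a discrete inverse estimate and the $L^1$-stability of $\Pi_h^{k-1}$, you via the reference element), then split $\vert\pmb{\mathsf{T}}\vert=\vert\pmb{\mathsf{T}}\vert\sigma^{1/p}\cdot\sigma^{-1/p}$ and apply H\"older's inequality, and finally control $\langle\sigma\rangle_K\langle\sigma^{1-p'}\rangle_K^{p-1}$ through the $A_p$-condition on a circumscribed ball of comparable measure. The only cosmetic difference is the order in which the weighted integration and the $L^\infty$ bound are invoked.
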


\begin{proof} 
 Using H\"older's inequality, a local  norm equivalence  (cf.\
 \cite[Lem.\ 12.1]{EG.2021}), the $L^1$-stability~of $\Pi_h^{k-1}$  (cf.~\cite{dkrt-ldg}),   $\vert
B_{h_K}(x_K)\vert\sim \vert K\vert$,
where~$x_K$~is~the~barycenter~of~$K$, and $\sigma\in
A_p(\mathbb{R}^d)$, we observe that
\begin{align*}
	\|\Pi_h^{k-1} \pmb{\mathsf{T}}\|_{p,\sigma,K}&\le\| \Pi_h^{k-1}\pmb{\mathsf{T}}\|_{\infty,K}\|\sigma\|_{1,K}^{1/p}\\&\lesssim\vert K\vert^{-1}\|\Pi_h^{k-1}\pmb{\mathsf{T}}\|_{1,K} \|\sigma\|_{1,K}^{1/p}
	\\&\lesssim	\vert K\vert^{-1}\|\pmb{\mathsf{T}}\|_{1,K} \|\sigma\|_{1,K}^{1/p}
	\\&=	\vert K\vert^{-1}\|\pmb{\mathsf{T}}\sigma^{1/p}\sigma^{-1/p}\|_{1,K} \|\sigma\|_{1,K}^{1/p}
	\\&\lesssim \|\pmb{\mathsf{T}}\|_{p,\sigma,K}\|\vert K\vert^{-1}\sigma^{1-p'}\|_{1,K}^{1/p'} \|\vert K\vert^{-1}\sigma\|_{1,K}^{1/p}
	\\&\lesssim	\|\pmb{\mathsf{T}}\|_{p,\sigma,K}\|\vert B_{h_K}(x_K)\vert^{-1}\sigma^{1-p'}\|_{1,B_{h_K}(x_K)}^{1/p'} \|\vert B_{R_K}(x_K)\vert^{-1}\sigma\|_{1,B_{h_K}(x_K)}^{1/p}
	\\&\lesssim \|\pmb{\mathsf{T}}\|_{p,\sigma,K}\,,
\end{align*}where the constants depend only on $k$, $\omega_0$, $p$, and $[\sigma]_{A_p(\mathbb{R}^d)}$.
\end{proof}

\begin{proof}[Proof (of \cref{lem:convex_conjugation})]

\textit{ad (i).}
For $\tilde \nabla \boldsymbol{v}_h \in \Sigma_h$, the~local Orlicz-stability of $\Pi_h^{k-1}$
(cf.~\cite[(A.11)]{dkrt-ldg}), for every $\pmb{\mathsf{T}}\in
L^{\smash{(\varphi_{\smash{\vert \tilde{\nabla}_h  \boldsymbol{v}_h\vert}})^*}
}(\Omega)^{n\times d}$, implies that
\begin{align*}
	\rho_{(\varphi_{\smash{\vert \tilde{\nabla}_h   \boldsymbol{v}_h\vert}})^*,\Omega}(\Pi_h^{k-1}\pmb{\mathsf{T}})\leq \rho_{(\varphi_{\smash{\vert \tilde{\nabla}_h   \boldsymbol{v}_h\vert}})^*,\Omega}(\pmb{\mathsf{T}})\,.        
\end{align*}
Using this, \cref{lem:hammer}, that $\varphi_{\smash{\vert \tilde{\nabla}_h
	\boldsymbol{v}_h\vert}}=(\varphi_{\smash{\vert\tilde{\nabla}_h
	\boldsymbol{v}_h\vert}})^{**}=((\varphi_{\smash{\vert \tilde{\nabla}_h
	\boldsymbol{v}_h\vert}})^{*})^{*}$, the convex conjugation formula for
integral functionals (cf.\ \cite[Prop.~1.2]{ET.1999}), and  ${\tilde
\nabla \boldsymbol{v}_h, \tilde \nabla \boldsymbol{w}_h \in \Sigma_h}$,
we find that
\begin{align*}
	&\|\pmb{\mathsf{\mathcal{F}}}(\tilde{\nabla}_h  \boldsymbol{v}_h)-\pmb{\mathsf{\mathcal{F}}}(\tilde{\nabla}_h \boldsymbol{w}_h)\|_{2,\Omega}^2
	\lesssim\rho_{\varphi_{\smash{\vert\tilde{\nabla}_h  \boldsymbol{v}_h\vert}},\Omega}(\tilde{\nabla}_h   \boldsymbol{v}_h-\tilde{\nabla}_h   \boldsymbol{w}_h
	)
	\\&=\rho_{(\varphi_{\vert\tilde{\nabla}_h \boldsymbol{v}_h\vert})^{**},\Omega}(\tilde{\nabla}_h   \boldsymbol{v}_h-\tilde{\nabla}_h   \boldsymbol{w}_h)
	\\&=  \sup_{\pmb{\mathsf{T}}\in L^{\smash{(\varphi_{\smash{\vert \tilde{\nabla}_h  \boldsymbol{v}_h\vert}})^*} }(\Omega)^{n\times d}}{\big[(\tilde{\nabla}_h   \boldsymbol{v}_h-\tilde{\nabla}_h   \boldsymbol{w}_h,\pmb{\mathsf{T}})_\Omega-
		\rho_{(\varphi_{\vert \tilde{\nabla}_h\boldsymbol{v}_h\vert})^*,\Omega}(\pmb{\mathsf{T}})}
	\\&=  \sup_{\pmb{\mathsf{T}}\in L^{\smash{(\varphi_{\smash{\vert \tilde{\nabla}_h  \boldsymbol{v}_h\vert}})^*}}(\Omega)^{n\times d}}{\big[(\tilde{\nabla}_h   \boldsymbol{v}_h-\tilde{\nabla}_h   \boldsymbol{w}_h,\Pi_h^{k-1}\pmb{\mathsf{T}})_\Omega-
		\rho_{(\varphi_{\vert \tilde{\nabla}_h\boldsymbol{v}_h\vert})^*,\Omega}(\pmb{\mathsf{T}})}
	\\&\lesssim  \sup_{\pmb{\mathsf{T}}\in L^{\smash{(\varphi_{\smash{\vert \tilde{\nabla}_h  \boldsymbol{v}_h\vert}})^*}}(\Omega)^{n\times d}}{\big[(\tilde{\nabla}_h   \boldsymbol{v}_h-\tilde{\nabla}_h   \boldsymbol{w}_h,\Pi_h^{k-1}\pmb{\mathsf{T}})_\Omega-\tfrac{1}{c}
		\rho_{(\varphi_{\vert\tilde{\nabla}_h \boldsymbol{v}_h\vert})^*,\Omega}(\Pi_h^{k-1}\pmb{\mathsf{T}})}
	\\&\lesssim  \sup_{\pmb{\mathsf{T}}_h\in \Sigma_h}{\big[(\tilde{\nabla}_h   \boldsymbol{v}_h-\tilde{\nabla}_h \boldsymbol{w}_h,\pmb{\mathsf{T}}_h)_\Omega-\tfrac{1}{c}
		\rho_{(\varphi_{\vert \tilde{\nabla}_h   \boldsymbol{v}_h\vert})^*,\Omega}(\pmb{\mathsf{T}}_h)}\,,
\end{align*}
which is the claimed convex conjugation type inequality for the natural distance in the case $k=1$.

\textit{ad (ii).}
Abbreviate $\sigma\coloneqq (\delta+\vert \overline{\nabla \boldsymbol{u}}\vert )^{2-p}$. Due $p\leq 2$, for a.e.\ $x\in \Omega$ and every $t\ge 0$, it holds
that
\begin{align}
  \SwapAboveDisplaySkip
  \label{lem:convex_conjugation.1}
  \begin{aligned}
    &\bigl((\delta + \left| \nabla\boldsymbol{u}(x) \right|)^{p-1}+ t\bigr)^{p'-2} \, t^2 \\
    &\quad \leq 2^{p'-2}\,( \sigma(x) \,t^2 +\, t^{p'}) \leq 2^{p'-2}\,
    \bigl((\delta + \left| \nabla\boldsymbol{u}(x) \right|)^{p-1} + t\bigr)^{p'-2} \,
    t^2\,.
  \end{aligned}
\end{align}
Therefore, using \cref{lem:convex_conjugation.1},
the stability of $\Pi_h^{k-1}$ in  $L^{p'}(\Omega)^{n \times d}$
(cf.~\cite[Cor.~A.8]{kr-phi-ldg}), and in $L^2(\Omega,\sigma)^{n\times
d}$ (cf.~\cref{lem:weigthed-stability}) together with
$(\delta+\vert \overline{\nabla \boldsymbol{u}}\vert )^{2-p}\in
A_2(\mathbb{R}^d)$,  for every $\pmb{\mathsf{T}}\in
L^{p'}(\Omega)^{n\times d}\cap
L^{2}(\Omega;\sigma)^{n\times d}\sim  L^{(\varphi_{\vert\nabla\boldsymbol{u}\vert})^*}(\Omega)^{n\times d}$,~it~holds~that
\begin{align}\label{lem:convex_conjugation.2}
	\begin{aligned}
		\rho_{(\varphi_{\vert\nabla\boldsymbol{u}\vert})^*}(\Pi_h^{k-1} \pmb{\mathsf{T}})
		&\lesssim \|\Pi_h^{k-1}\pmb{\mathsf{T}}\|_{2,\omega}^2+\|\Pi_h^{k-1}\pmb{\mathsf{T}}\|_{p'}^{p'}
		\\&\lesssim \|\pmb{\mathsf{T}}\|_{2,\omega}^2+\|\pmb{\mathsf{T}}\|_{p'}^{p'}
		\\&\lesssim\rho_{(\varphi_{\vert\nabla\boldsymbol{u}\vert})^*}(\pmb{\mathsf{T}})\,,
	\end{aligned}
\end{align}
which, using a~shift change in \cref{lem:shift-change}, implies that
\begin{align}\label{lem:convex_conjugation.3}
	\begin{aligned}
		\rho_{(\varphi_{\vert \tilde{\nabla}_h \boldsymbol{v}_h\vert})^*,\Omega}(\Pi_h^{k-1}\pmb{\mathsf{T}})&\lesssim \rho_{(\varphi_{\vert \nabla\boldsymbol{u}\vert})^*,\Omega}(\Pi_h^{k-1}\pmb{\mathsf{T}})+\|\pmb{\mathsf{\mathcal{F}}}(\tilde{\nabla}_h   \boldsymbol{v}_h)-\pmb{\mathsf{\mathcal{F}}}(\nabla  \boldsymbol{u})\|_{2,\Omega}^2 
		\\&	\lesssim \rho_{(\varphi_{\vert \nabla  \boldsymbol{u}\vert})^*,\Omega}(\pmb{\mathsf{T}})+\|\pmb{\mathsf{\mathcal{F}}}(\tilde{\nabla}_h \boldsymbol{v}_h)-\pmb{\mathsf{\mathcal{F}}}(\nabla  \boldsymbol{u})\|_{2,\Omega}^2 
		\\&	\lesssim \rho_{\smash{(\varphi_{\smash{\vert \tilde{\nabla}_h  \boldsymbol{v}_h\vert}})^*},\Omega}( \pmb{\mathsf{T}})+\|\pmb{\mathsf{\mathcal{F}}}(\tilde{\nabla}_h   \boldsymbol{v}_h)-\pmb{\mathsf{\mathcal{F}}}(\nabla  \boldsymbol{u})\|_{2,\Omega}^2 \,. 
	\end{aligned}
\end{align}
Using \cref{lem:convex_conjugation.3} and proceeding
as in (i), 
we find that
\begin{align*}
	&\|\pmb{\mathsf{\mathcal{F}}}(\tilde{\nabla}_h  \boldsymbol{v}_h)-\pmb{\mathsf{\mathcal{F}}}(\tilde{\nabla}_h   \boldsymbol{w}_h)\|_{2,\Omega}^2
	\lesssim\rho_{\varphi_{\vert\tilde{\nabla}_h   \boldsymbol{v}_h\vert},\Omega}(\tilde{\nabla}_h  \boldsymbol{v}_h-\tilde{\nabla}_h  \boldsymbol{w}_h
	)\\&=\rho_{(\varphi_{\vert \tilde{\nabla}_h   \boldsymbol{v}_h\vert})^{**},\Omega}(\tilde{\nabla}_h \boldsymbol{v}_h-\tilde{\nabla}_h   \boldsymbol{w}_h)
	\\&=  \sup_{\pmb{\mathsf{T}}\in L^{(\varphi_{\vert \tilde{\nabla}_h    \boldsymbol{v}_h\vert})^*}(\Omega)^{n\times d}}{\big[(\tilde{\nabla}_h  \boldsymbol{v}_h-\tilde{\nabla}_h  \boldsymbol{w}_h,\pmb{\mathsf{T}})_\Omega-
		\rho_{(\varphi_{\vert \tilde{\nabla}_h    \boldsymbol{v}_h\vert})^*,\Omega}(\pmb{\mathsf{T}})\big]}
	\\&=  \sup_{\pmb{\mathsf{T}}\in L^{(\varphi_{\vert \tilde{\nabla}_h    \boldsymbol{v}_h\vert})^*}(\Omega)^{n\times d}}{\big[(\tilde{\nabla}_h  \boldsymbol{v}_h-\tilde{\nabla}_h  \boldsymbol{w}_h,\Pi_h^{k-1}\pmb{\mathsf{T}})_\Omega-
		\rho_{(\varphi_{\vert \tilde{\nabla}_h    \boldsymbol{v}_h\vert})^*,\Omega}(\pmb{\mathsf{T}})\big]}
	\\&\lesssim  \sup_{\pmb{\mathsf{T}}\in L^{(\varphi_{\vert \tilde{\nabla}_h    \boldsymbol{v}_h\vert})^*}(\Omega)^{n\times d}}{\big[(\tilde{\nabla}_h   \boldsymbol{v}_h-\tilde{\nabla}_h  \boldsymbol{w}_h,\Pi_h^{k-1}\pmb{\mathsf{T}})_\Omega-\tfrac{1}{c}
		\rho_{(\varphi_{\vert \tilde{\nabla}_h   \boldsymbol{v}_h\vert})^*,\Omega}(\Pi_h^k\pmb{\mathsf{T}})\big]}
	\\
	&\quad +\|\pmb{\mathsf{\mathcal{F}}}(\tilde{\nabla}_h  \boldsymbol{v}_h)-\pmb{\mathsf{\mathcal{F}}}(\nabla  \boldsymbol{u})\|_{2,\Omega}^2
	\\&\lesssim  \sup_{\pmb{\mathsf{T}}_h\in \Sigma_h}{\big[(\tilde{\nabla}_h   \boldsymbol{v}_h-\tilde{\nabla}_h   \boldsymbol{w}_h,\pmb{\mathsf{T}}_h)_\Omega-\tfrac{1}{c}
		\rho_{(\varphi_{\vert \tilde{\nabla}_h \boldsymbol{v}_h\vert})^*,\Omega}(\pmb{\mathsf{T}}_h)\big]}
	\\
	&\quad +\|\pmb{\mathsf{\mathcal{F}}}(\tilde{\nabla}_h  \boldsymbol{v}_h)-\pmb{\mathsf{\mathcal{F}}}(\nabla  \boldsymbol{u})\|_{2,\Omega}^2\,,
\end{align*}
which is the claimed convex conjugation type inequality for the natural distance in the case $p\leq 2$ and $\sigma\in A_2(\mathbb{R}^d)$.
\end{proof}

\begin{proof}[Proof (of \cref{thm:error_estimate_mixed})]
By the continuous mixed formulation \cref{eq:mixed},  the discrete mixed formulation \cref{eq:discrete_mixed_DG} and the crucial identity \cref{eq:equivalence_without_E}, for every $(\pmb{\mathsf{T}}_h,\boldsymbol{z}_h)\in \Sigma_h\times V_h $, we have that
\begin{align}
  \label{eq:error_estimate_mixed.1}
	(\pmb{\mathsf{\mathcal{D}}}(\pmb{\mathsf{S}}_h) - \pmb{\mathsf{\mathcal{D}}}(\pmb{\mathsf{S}}),\pmb{\mathsf{T}}_h)_{\Omega}&= (\tilde{\nabla}_h \boldsymbol{u}_h-\nabla \boldsymbol{u}, \pmb{\mathsf{T}}_h)_{\Omega}\,,\\
	(\pmb{\mathsf{S}}_h-\pmb{\mathsf{S}},{\nabla}_h  \pmb{\mathsf{\mathcal{E}}}_h \boldsymbol{z}_h)_{\Omega}
	&=-\alpha\,\langle\pmb{\mathsf{\mathcal{S}}}_{\boldsymbol{\beta}_h(\boldsymbol{u}_h)}(h_{\Gamma}^{-1}\llbracket{\boldsymbol{u}_h\otimes
		\boldsymbol{n}}\rrbracket),\llbracket{\boldsymbol{z}_h\otimes
		\boldsymbol{n}}\rrbracket\rangle_{\Gamma_h}\,.\label{eq:error_estimate_mixed.1.2}
\end{align}
Therefore, taking an arbitrary $\boldsymbol{v}_h \in V_h$, using
\cref{lem:convex_conjugation},
\cref{eq:error_estimate_mixed.1}, the
$\varepsilon$-Young inequality~\cref{ineq:young} with
$\psi=\varphi_{\vert\tilde{\nabla}_h\boldsymbol{v}_h\vert}$,
\cref{eq:hammera}, and a~shift change in
\cref{lem:shift-change}, choosing $\varepsilon>0$
sufficiently small, and using 
\cref{eq:hammer_inversea} with $\nabla \boldsymbol{u} =\pmb{\mathsf{\mathcal{D}}}(\pmb{\mathsf{S}})$, we find that
\begin{align*}\label{eq:error_estimate_mixed.2}
    &\|\pmb{\mathsf{\mathcal{F}}}(\tilde{\nabla}_h \boldsymbol{u}_h)-\pmb{\mathsf{\mathcal{F}}}(\tilde{\nabla}_h  \boldsymbol{v}_h)\|_{2,\Omega}^2 \\
		&\lesssim
		\sup_{\pmb{\mathsf{T}}_h\in \Sigma_h}{\big[(\tilde{\nabla}_h \boldsymbol{u}_h-\tilde{\nabla}_h \boldsymbol{v}_h,\pmb{\mathsf{T}}_h)_\Omega-\tfrac{1}{c}
			\rho_{(\varphi_{\vert\tilde{\nabla}_h
					\boldsymbol{v}_h\vert})^*,\Omega}(\pmb{\mathsf{T}}_h)\big]}
		+	\|\pmb{\mathsf{\mathcal{F}}}(\nabla \boldsymbol{u})-\pmb{\mathsf{\mathcal{F}}}(\tilde{\nabla}_h \boldsymbol{v}_h)\|_{2,\Omega}^2\\
		&=
		\sup_{\pmb{\mathsf{T}}_h \in \Sigma_h}\big[( \pmb{\mathsf{\mathcal{D}}}(\pmb{\mathsf{S}}_h) - \pmb{\mathsf{\mathcal{D}}}(\pmb{\mathsf{S}}), \pmb{\mathsf{T}}_h)_{\Omega} + ( \nabla\boldsymbol{u} -\tilde{\nabla}_h \boldsymbol{v}_h, \pmb{\mathsf{T}}_h)_{\Omega}-\tfrac{1}{c}
		\rho_{(\varphi_{\smash{\vert\tilde{\nabla}_h  \boldsymbol{v}_h\vert}})^*,\Omega}(\pmb{\mathsf{T}}_h)\big]\\&\quad+	\|\pmb{\mathsf{\mathcal{F}}}(\nabla \boldsymbol{u})-\pmb{\mathsf{\mathcal{F}}}(\tilde{\nabla}_h \boldsymbol{v}_h)\|_{2,\Omega}^2\\
		&\leq 
		c_{\varepsilon}\, \rho_{\varphi_{\vert\tilde{\nabla}_h \boldsymbol{v}_h\vert},\Omega}(\pmb{\mathsf{\mathcal{D}}}(\pmb{\mathsf{S}}_h) - \pmb{\mathsf{\mathcal{D}}}(\pmb{\mathsf{S}}))
		+
		(c_\varepsilon+1)\,\|\pmb{\mathsf{\mathcal{F}}}(\nabla \boldsymbol{u})-\pmb{\mathsf{\mathcal{F}}}(\tilde{\nabla}_h \boldsymbol{v}_h)\|_{2,\Omega}^2\\&\quad
		+  \sup_{\pmb{\mathsf{T}}_h \in \Sigma_h}{\big[2\varepsilon\,\rho_{(\varphi_{\smash{\vert\tilde{\nabla}_h  \boldsymbol{v}_h\vert}})^*,\Omega}(\pmb{\mathsf{T}}_h)-\tfrac{1}{c}\rho_{(\varphi_{\vert\tilde{\nabla}_h \boldsymbol{v}_h\vert})^*,\Omega}(\pmb{\mathsf{T}}_h)\big]}
		\\&\lesssim  \rho_{\varphi_{\vert \tilde{\nabla}_h \boldsymbol{v}_h\vert},\Omega}(\pmb{\mathsf{\mathcal{D}}}(\pmb{\mathsf{S}}_h) - \pmb{\mathsf{\mathcal{D}}}(\pmb{\mathsf{S}}))
		+
		\|\pmb{\mathsf{\mathcal{F}}}(\nabla \boldsymbol{u})-\pmb{\mathsf{\mathcal{F}}}(\tilde{\nabla}_h  \boldsymbol{v}_h)\|_{2,\Omega}^2
		\\&\lesssim  \rho_{\varphi_{\vert \nabla  \boldsymbol{u}\vert},\Omega}(\pmb{\mathsf{\mathcal{D}}}(\pmb{\mathsf{S}}_h) - \pmb{\mathsf{\mathcal{D}}}(\pmb{\mathsf{S}}))
		+
		\|\pmb{\mathsf{\mathcal{F}}}(\nabla \boldsymbol{u})-\pmb{\mathsf{\mathcal{F}}}(\tilde{\nabla}_h  \boldsymbol{v}_h)\|_{2,\Omega}^2
		\\&\lesssim  \|\pmb{\mathsf{\mathcal{F}}}^*(\pmb{\mathsf{S}}_h) - \pmb{\mathsf{\mathcal{F}}}^*(\pmb{\mathsf{S}})\|_{2,\Omega}^2
		+
		\|\pmb{\mathsf{\mathcal{F}}}(\nabla \boldsymbol{u})-\pmb{\mathsf{\mathcal{F}}}(\tilde{\nabla}_h \boldsymbol{v}_h)\|_{2,\Omega}^2
		\,.
	\end{align*}
As a direct consequence of this, we obtain
\begin{align}\label{eq:error_estimate_mixed.3}
	\|\pmb{\mathsf{\mathcal{F}}}(\tilde{\nabla}_h \boldsymbol{u}_h)\!-\!\pmb{\mathsf{\mathcal{F}}}(\nabla  \boldsymbol{u})\|_{2,\Omega}^2\lesssim \|\pmb{\mathsf{\mathcal{F}}}^*(\pmb{\mathsf{S}}_h) \!-\! \pmb{\mathsf{\mathcal{F}}}^*(\pmb{\mathsf{S}})\|_{2,\Omega}^2
	+
	\|\pmb{\mathsf{\mathcal{F}}}(\nabla \boldsymbol{u})-\pmb{\mathsf{\mathcal{F}}}(\tilde{\nabla}_h   \boldsymbol{v}_h)\|_{2,\Omega}^2\,.
\end{align}
On the other hand, for every $(\pmb{\mathsf{T}}_h,\boldsymbol{v}_h)\in
\Sigma_h\times V_h$, using \cref{eq:hammer_inversea},
\cref{eq:error_estimate_mixed.1},  and the
$\varepsilon$-Young inequality \cref{ineq:young} with $\psi =(\varphi^*)_{\vert \pmb{\mathsf{S}}\vert}\sim (\varphi_{\vert \nabla\boldsymbol{u}\vert})^*$ and $\psi =\varphi_{\vert \nabla\boldsymbol{u}\vert}$, we obtain
\begin{align}\label{eq:error_estimate_mixed.4}
		\|\pmb{\mathsf{\mathcal{F}}}^*(\pmb{\mathsf{S}}_h) - \pmb{\mathsf{\mathcal{F}}}^*(\pmb{\mathsf{S}})\|_{2,\Omega}^2&\lesssim (\pmb{\mathsf{D}}(\pmb{\mathsf{S}}_h)-\pmb{\mathsf{D}}(\pmb{\mathsf{S}}),\pmb{\mathsf{S}}_h-\pmb{\mathsf{S}})_{\Omega}\notag
		\\&= (\pmb{\mathsf{D}}(\pmb{\mathsf{S}}_h)-\pmb{\mathsf{D}}(\pmb{\mathsf{S}}),\pmb{\mathsf{S}}_h-\pmb{\mathsf{T}}_h)_{\Omega}+(\pmb{\mathsf{D}}(\pmb{\mathsf{S}}_h)-\pmb{\mathsf{D}}(\pmb{\mathsf{S}}),\pmb{\mathsf{T}}_h-\pmb{\mathsf{S}})_{\Omega}\notag
		\\&= (\tilde{\nabla}_h  \boldsymbol{u}_h -\nabla \boldsymbol{u},\pmb{\mathsf{S}}_h-\pmb{\mathsf{T}}_h)_{\Omega}+(\pmb{\mathsf{D}}(\pmb{\mathsf{S}}_h)-\pmb{\mathsf{D}}(\pmb{\mathsf{S}}),\pmb{\mathsf{T}}_h-\pmb{\mathsf{S}})_{\Omega}\notag
		\\&= (\tilde{\nabla}_h  \boldsymbol{u}_h -\nabla
  \boldsymbol{u},\pmb{\mathsf{S}}-\pmb{\mathsf{T}}_h)_{\Omega}+(\tilde{\nabla}_h  \boldsymbol{u}_h -\nabla
  \boldsymbol{u},\pmb{\mathsf{S}}_h-\pmb{\mathsf{S}})_{\Omega}\notag\\&\quad
  +(\pmb{\mathsf{D}}(\pmb{\mathsf{S}}_h)-\pmb{\mathsf{D}}(\pmb{\mathsf{S}}),\pmb{\mathsf{T}}_h-\pmb{\mathsf{S}})_{\Omega}\notag 
		\\&= (\tilde{\nabla}_h \boldsymbol{u}_h -\nabla \boldsymbol{u},\pmb{\mathsf{S}}-\pmb{\mathsf{T}}_h)_{\Omega}+(\tilde{\nabla}_h  \boldsymbol{v}_h -\nabla \boldsymbol{u},\pmb{\mathsf{S}}_h-\pmb{\mathsf{S}})_{\Omega}\\&\quad+(\tilde{\nabla}_h  \boldsymbol{u}_h-\tilde{\nabla}_h   \boldsymbol{v}_h, \pmb{\mathsf{S}}_h-\pmb{\mathsf{S}})_{\Omega}+(\pmb{\mathsf{D}}(\pmb{\mathsf{S}}_h)-\pmb{\mathsf{D}}(\pmb{\mathsf{S}}),\pmb{\mathsf{T}}_h-\pmb{\mathsf{S}})_{\Omega}\notag
		\\&\leq 
		\varepsilon\, \|\pmb{\mathsf{\mathcal{F}}}(\tilde{\nabla}_h  \boldsymbol{u}_h ) - \pmb{\mathsf{\mathcal{F}}}(\nabla \boldsymbol{u})\|_{2,\Omega}^2+c_\varepsilon\, \|\pmb{\mathsf{\mathcal{F}}}^*(\pmb{\mathsf{T}}_h) - \pmb{\mathsf{\mathcal{F}}}^*(\pmb{\mathsf{S}})\|_{2,\Omega}^2 \notag
		\\&\quad +\varepsilon\,  \|\pmb{\mathsf{\mathcal{F}}}^*(\pmb{\mathsf{S}}_h) - \pmb{\mathsf{\mathcal{F}}}^*(\pmb{\mathsf{S}})\|_{2,\Omega}^2+c_\varepsilon\, \|\pmb{\mathsf{\mathcal{F}}}(\tilde{\nabla}_h   \boldsymbol{v}_h ) - \pmb{\mathsf{\mathcal{F}}}(\nabla \boldsymbol{u})\|_{2,\Omega}^2 \notag
		\\&\quad+(\tilde{\nabla}_h \boldsymbol{u}_h-\tilde{\nabla}_h  \boldsymbol{v}_h, \pmb{\mathsf{S}}_h-\pmb{\mathsf{S}})_{\Omega}\notag
		\,.
\end{align}
Moreover, abbreviating $\boldsymbol{z}_h\coloneqq \boldsymbol{u}_h-\boldsymbol{v}_h\in
V_h$, using \cref{eq:error_estimate_mixed.1.2} and
$\tilde{\nabla}_h  \pmb{\mathsf{\mathcal{E}}}_h \boldsymbol{z}_h=\nabla\pmb{\mathsf{\mathcal{E}}}_h \boldsymbol{z}_h$
(cf.~\cref{rem:grad_tilde}), we have that
\begin{align}
	\label{eq:error_estimate_mixed.5}
	\begin{aligned}
   & (\pmb{\mathsf{S}}_h-\pmb{\mathsf{S}},\tilde{\nabla}_h \boldsymbol{z}_h)_{\Omega} \\
		&=(\pmb{\mathsf{S}}_h-\pmb{\mathsf{S}},\tilde{\nabla}_h  (\boldsymbol{z}_h-\pmb{\mathsf{\mathcal{E}}}_h \boldsymbol{z}_h))_{\Omega}+(\pmb{\mathsf{S}}_h-\pmb{\mathsf{S}},\tilde{\nabla}_h  \pmb{\mathsf{\mathcal{E}}}_h \boldsymbol{z}_h)_{\Omega}
		\\&=(\pmb{\mathsf{S}}_h-\pmb{\mathsf{S}},\tilde{\nabla}_h  (\boldsymbol{z}_h-\pmb{\mathsf{\mathcal{E}}}_h \boldsymbol{z}_h))_{\Omega}-\alpha\,\langle\pmb{\mathsf{\mathcal{S}}}_{\boldsymbol{\beta}_h(\boldsymbol{u}_h)}(h_{\Gamma}^{-1}\llbracket{\boldsymbol{u}_h\otimes \boldsymbol{n}}\rrbracket),\llbracket{\boldsymbol{z}_h\otimes \boldsymbol{n}}\rrbracket\rangle_{\Gamma_h}
		\,.
	\end{aligned}
\end{align}
Using the $\varepsilon$-Young inequality
\cref{ineq:young} with $\psi =\varphi_{\vert \tilde{\nabla}_h\boldsymbol{u}_h\vert}$, that
$\varphi_{\vert \tilde{\nabla}_h\boldsymbol{u}_h\vert}\le
\varphi_{\boldsymbol{\beta}_h(\boldsymbol{u}_h)}$, a~shift change in \cref{lem:shift-change},
\cref{prop:n-function_E},  $\boldsymbol{z}_h =\boldsymbol{u}_h-\boldsymbol{v}_h\in V_h$, and \cref{eq:hammer_inversea} with $\nabla\boldsymbol{u} =\pmb{\mathsf{\mathcal{D}}}(\pmb{\mathsf{S}})$, we obtain
\begin{gather}
    \SwapAboveDisplaySkip
	\label{eq:error_estimate_mixed.6}
	\begin{aligned}
		&(\pmb{\mathsf{S}}_h-\pmb{\mathsf{S}}, \tilde{\nabla}_h
		(\boldsymbol{z}_h-\pmb{\mathsf{\mathcal{E}}}_h \boldsymbol{z}_h))_{\Omega}\\&\leq
		\varepsilon\,\rho_{(\varphi_{\left| \tilde{\nabla}_h\boldsymbol{u}_h \right|})^*,\Omega}(\pmb{\mathsf{S}}_h-\pmb{\mathsf{S}})+c_\varepsilon\,\rho_{\varphi_{\left| \tilde{\nabla}_h\boldsymbol{u}_h \right|},\Omega}(\tilde{\nabla}_h(\boldsymbol{z}_h-\pmb{\mathsf{\mathcal{E}}}_h\boldsymbol{z}_h))
		\\
		&\leq \varepsilon\,\rho_{(\varphi_{\left| \tilde{\nabla}_h\boldsymbol{u}_h \right|})^*,\Omega}(\pmb{\mathsf{S}}_h-\pmb{\mathsf{S}})+c_\varepsilon\,\rho_{\varphi_{\boldsymbol{\beta}_h(\boldsymbol{u}_h)},\Omega}(\tilde{\nabla}_h(\boldsymbol{z}_h-\pmb{\mathsf{\mathcal{E}}}_h \boldsymbol{z}_h))
		\\& \lesssim \varepsilon\,\rho_{(\varphi_{\left| \nabla\boldsymbol{u} \right|})^*,\Omega}(\pmb{\mathsf{S}}_h-\pmb{\mathsf{S}})+\varepsilon\,\|\pmb{\mathsf{\mathcal{F}}}(\tilde{\nabla}_h\boldsymbol{u}_h ) - \pmb{\mathsf{\mathcal{F}}}(\nabla \boldsymbol{u})\|_{2,\Omega}^2\\&\quad+c_\varepsilon\,m_{\varphi_{\boldsymbol{\beta}_h(\boldsymbol{u}_h)},h}(\boldsymbol{u}_h)+c_\varepsilon\,m_{\varphi_{\boldsymbol{\beta}_h(\boldsymbol{u}_h)},h}(\boldsymbol{v}_h)
		\\& \lesssim \varepsilon\,\|\pmb{\mathsf{\mathcal{F}}}^*(\pmb{\mathsf{S}}_h)-\pmb{\mathsf{\mathcal{F}}}^*(\pmb{\mathsf{S}})\|_{2,\Omega}^2+\varepsilon\,\|\pmb{\mathsf{\mathcal{F}}}(\tilde{\nabla}_h \boldsymbol{u}_h ) - \pmb{\mathsf{\mathcal{F}}}(\nabla \boldsymbol{u})\|_{2,\Omega}^2\\&\quad+c_\varepsilon\,m_{\varphi_{\boldsymbol{\beta}_h(\boldsymbol{u}_h)},h}(\boldsymbol{u}_h)+c_\varepsilon\,m_{\varphi_{\boldsymbol{\beta}_h(\boldsymbol{u}_h)},h}(\boldsymbol{v}_h)
		\,,
	\end{aligned}
    \shortintertext{and}
	\label{eq:error_estimate_mixed.7}
	\begin{multlined}
		(\pmb{\mathsf{\mathcal{S}}}_{\boldsymbol{\beta}_h(\boldsymbol{u}_h)}(h_{\Gamma}^{-1}\llbracket{\boldsymbol{u}_h\otimes \boldsymbol{n}}\rrbracket),\llbracket{\boldsymbol{z}_h\otimes \boldsymbol{n}}\rrbracket)_{\Omega}\\\ge (1\!-\!\varepsilon)\,m_{\varphi_{\boldsymbol{\beta}_h(\boldsymbol{u}_h)},h}(\boldsymbol{u}_h)\!-\!c_\varepsilon\,m_{\varphi_{\boldsymbol{\beta}_h(\boldsymbol{u}_h)},h}(\boldsymbol{v}_h)\,.
	\end{multlined}
\end{gather}
Combining \crefrange{eq:error_estimate_mixed.3}{eq:error_estimate_mixed.7}, choosing $\varepsilon>0$ sufficiently small, we arrive at
\begin{align*}
	&\|\pmb{\mathsf{\mathcal{F}}}(\tilde{\nabla}_h\boldsymbol{u}_h)-\pmb{\mathsf{\mathcal{F}}}(\nabla \boldsymbol{u})\|_{2,\Omega}^2
	+
	\|\pmb{\mathsf{\mathcal{F}}}^*(\pmb{\mathsf{S}}_h) - \pmb{\mathsf{\mathcal{F}}}^*(\pmb{\mathsf{S}})\|_{2,\Omega}^2+
	(\alpha-c)\,m_{\varphi_{\boldsymbol{\beta}_h(\boldsymbol{u}_h)},h}(\boldsymbol{u}_h)
    \\
      &\lesssim \inf_{(\pmb{\mathsf{T}}_h,\boldsymbol{v}_h) \in \Sigma_h\times V_h} \Bigl(
      \|\pmb{\mathsf{\mathcal{F}}}^*(\pmb{\mathsf{S}}) - \pmb{\mathsf{\mathcal{F}}}^*(\pmb{\mathsf{T}}_h)\|_{2,\Omega}^2+
      \|\pmb{\mathsf{\mathcal{F}}}(\nabla \boldsymbol{u})-\pmb{\mathsf{\mathcal{F}}}(\tilde{\nabla}_h  \boldsymbol{v}_h)\|_{2,\Omega}^2 \\
      &\quad + (\alpha+c)\,m_{\varphi_{\boldsymbol{\beta}_h(\boldsymbol{u}_h)},h}(\boldsymbol{v}_h)
      \Bigr)\,.
\end{align*}
Eventually, for $\alpha>0$ sufficiently large, we obtain the desired best-approximation result.
\end{proof}

As a first immediate consequence of the best-approximation result in \cref{thm:error_estimate_mixed}, we obtain the convergence of the method under minimal regularity assumptions, i.e., merely $\boldsymbol{u} \in V$, $\pmb{\mathsf{S}}\in \Sigma$, and $\boldsymbol{f}\in V^*$.

\begin{Corollary}[Convergence]\label{cor:mixed_convergence}
For $\alpha >0$ sufficiently large, it holds that
\begin{align*}
	\|\pmb{\mathsf{\mathcal{F}}}(\tilde{\nabla}_h \boldsymbol{u}_h) - \pmb{\mathsf{\mathcal{F}}}(\nabla \boldsymbol{u})\|^2_{2,\Omega}
	+ 	\|\pmb{\mathsf{\mathcal{F}}}^*(\pmb{\mathsf{S}}_h) - \pmb{\mathsf{\mathcal{F}}}^*(\pmb{\mathsf{S}})\|_{2,\Omega}^2+m_{\varphi_{\boldsymbol{\beta}_h(\boldsymbol{u}_h)},h}(\boldsymbol{u}_h)\to 0\quad (h\to 0)\,.
\end{align*}
\end{Corollary}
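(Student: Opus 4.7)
The plan is to mirror the argument of \cref{cor:primal_convergence}, now invoking the best-approximation estimate of \cref{thm:error_estimate_mixed} with carefully chosen test pairs $(\pmb{\mathsf{T}}_h,\boldsymbol{v}_h)\in \Sigma_h\times V_h$. For the primal variable, I would take $\boldsymbol{v}_h=\Pi_h^{SZ}\boldsymbol{u}\in V_h\cap V$, the Scott--Zhang quasi-interpolation, so that $\llbracket{\Pi_h^{SZ}\boldsymbol{u}\otimes\boldsymbol{n}}\rrbracket=\bm{0}$ on $\Gamma_h$ and consequently $m_{\varphi_{\boldsymbol{\beta}_h(\boldsymbol{u}_h)},h}(\Pi_h^{SZ}\boldsymbol{u})=0$. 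For the flux variable, I would take $\pmb{\mathsf{T}}_h=\Pi_h^{k-1}\pmb{\mathsf{S}}\in\Sigma_h$, i.e., the local $L^2$-projection of the exact flux onto $\Sigma_h$.

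Next, I would verify that both data-approximation terms appearing on the right-hand side of \cref{thm:error_estimate_mixed} tend to zero. For the primal term, the stability and convergence properties of $\Pi_h^{SZ}$ together with the density of $C_c^\infty(\Omega)^n$ in $V$ give $\nabla\Pi_h^{SZ}\boldsymbol{u}\to \nabla\boldsymbol{u}$ in $L^p(\Omega)^{n\times d}$; combining this with \cref{eq:hammera}, \cref{rem:phi}~(iii) and H\"older's inequality exactly as in \cref{cor:primal_convergence.2} yields $\|\pmb{\mathsf{\mathcal{F}}}(\nabla\boldsymbol{u})-\pmb{\mathsf{\mathcal{F}}}(\nabla\Pi_h^{SZ}\boldsymbol{u})\|_{2,\Omega}^2\to 0$. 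For the dual term, the density of smooth functions in $L^{p'}(\Omega)^{n\times d}$ together with the (local) $L^{p'}$-stability of $\Pi_h^{k-1}$ (cf.~\cite[Cor.~A.8]{kr-phi-ldg}) gives $\Pi_h^{k-1}\pmb{\mathsf{S}}\to\pmb{\mathsf{S}}$ in $L^{p'}(\Omega)^{n\times d}$; exploiting that $\pmb{\mathsf{\mathcal{D}}}$ has $(p',\delta^{p-1})$-structure (cf.~\cref{lem:D_structure}) and the equivalences \cref{eq:hammer_inversea}, the dual analogue of \cref{cor:primal_convergence.2} then delivers $\|\pmb{\mathsf{\mathcal{F}}}^*(\pmb{\mathsf{S}})-\pmb{\mathsf{\mathcal{F}}}^*(\Pi_h^{k-1}\pmb{\mathsf{S}})\|_{2,\Omega}^2\to 0$ as $h\to 0$.

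Substituting these two choices into \cref{thm:error_estimate_mixed} and passing to the limit $h\to 0$ then yields the claimed convergence of all three error quantities. The argument is essentially routine once \cref{thm:error_estimate_mixed} is in hand; the only point that needs a bit of care is the H\"older-type passage from $L^{p'}$-convergence of the flux approximant to $L^2$-convergence of the associated $\pmb{\mathsf{\mathcal{F}}}^*$-image, which relies on the uniform equivalences of \cref{lem:hammer_inverse} and which, paralleling the primal case, must be split into the regimes $p\le 2$ and $p>2$ in order to recover the correct interpolation exponent $\min\{p',2\}$ in the intermediate estimate.
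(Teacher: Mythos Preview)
Your proposal is correct and follows essentially the same route as the paper: the paper also chooses $(\pmb{\mathsf{T}}_h,\boldsymbol{v}_h)=(\Pi_h^{k-1}\pmb{\mathsf{S}},\Pi_h^{SZ}\boldsymbol{u})$ in \cref{thm:error_estimate_mixed}, uses density plus $L^{p'}$-stability of $\Pi_h^{k-1}$ to get $\Pi_h^{k-1}\pmb{\mathsf{S}}\to\pmb{\mathsf{S}}$ in $L^{p'}$, and then records the dual analogue of \cref{cor:primal_convergence.2}, namely $\|\pmb{\mathsf{\mathcal{F}}}^*(\pmb{\mathsf{S}})-\pmb{\mathsf{\mathcal{F}}}^*(\Pi_h^{k-1}\pmb{\mathsf{S}})\|_{2,\Omega}^2\lesssim\|\pmb{\mathsf{S}}-\Pi_h^{k-1}\pmb{\mathsf{S}}\|_{p'}^{\min\{p',2\}}\to 0$, before combining with \cref{cor:primal_convergence.1,cor:primal_convergence.2}.
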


\begin{proof}
Using the stability and approximation properties of
$\Pi_h^{k-1}$, and the density of smooth functions, we obtain
$\Pi_h^{k-1}\pmb{\mathsf{S}}\to \pmb{\mathsf{S}}$ in $L^{p'}(\Omega)^{n\times d}$ $(h\to
0)$, which implies that
\begin{align}\label{cor:mixed_convergence.1}
	\|\pmb{\mathsf{\mathcal{F}}}^*(\pmb{\mathsf{S}}) -
	\pmb{\mathsf{\mathcal{F}}}^*(\Pi_h^{k-1}\pmb{\mathsf{S}})\|^2_{2,\Omega} \lesssim \|\pmb{\mathsf{S}} - \Pi_h^{k-1} \pmb{\mathsf{S}}\|_{p'}^{\min\{p',2\}}\to 0\quad(h\to 0)
\end{align} 
	with a constant depending possibly on $\delta$,
                $\|\pmb{\mathsf{S}}\|_{p'}$. Therefore, choosing $(\pmb{\mathsf{T}}_h,\boldsymbol{v}_h)=(\Pi_h^{k-1}\pmb{\mathsf{S}},\Pi_h^{SZ}\boldsymbol{u})\in \Sigma_h\times V_h$ in \cref{thm:error_estimate_mixed}, using \cref{cor:mixed_convergence.1,cor:primal_convergence.1,cor:primal_convergence.2},
we conclude that
\begin{align*}
    \SwapAboveDisplaySkip
	&\|\pmb{\mathsf{\mathcal{F}}}(\tilde{\nabla}_h \boldsymbol{u}_h) - \pmb{\mathsf{\mathcal{F}}}(\nabla \boldsymbol{u})\|^2_{2,\Omega}
	+ 	\|\pmb{\mathsf{\mathcal{F}}}^*(\pmb{\mathsf{S}}_h) - \pmb{\mathsf{\mathcal{F}}}^*(\pmb{\mathsf{S}})\|_{2,\Omega}^2+m_{\varphi_{\boldsymbol{\beta}_h(\boldsymbol{u}_h)},h}(\boldsymbol{u}_h)\\&\lesssim \|\pmb{\mathsf{\mathcal{F}}}(\nabla \boldsymbol{u}) - \pmb{\mathsf{\mathcal{F}}}(\nabla\Pi_h^{SZ} \boldsymbol{u})\|^2_{2,\Omega}
	+\|\pmb{\mathsf{\mathcal{F}}}^*(\pmb{\mathsf{S}}) -
	\pmb{\mathsf{\mathcal{F}}}^*(\Pi_h^{k-1}\pmb{\mathsf{S}})\|^2_{2,\Omega}\to 0\quad(h\to 0)\,,
\end{align*}
which is the claimed convergence under minimal regularity assumptions.
\end{proof}
\begin{Corollary}[Fractional convergence rates]\label{cor:mixed_rate}
		Assume that the family of triangulations $\{\mathcal{T}_h\}_{h}$ is quasi-uniform, and that
 $\pmb{\mathsf{\mathcal{F}}}(\nabla \boldsymbol{u})\in N^{\beta,2}(\Omega)^{n\times d}$ for some $\beta\in (0,1]$.
 Then, for $\alpha >0$ sufficiently large, it holds that
\begin{align*}
	&\|\pmb{\mathsf{\mathcal{F}}}(\tilde{\nabla}_h \boldsymbol{u}_h) - \pmb{\mathsf{\mathcal{F}}}(\nabla \boldsymbol{u})\|^2_{2,\Omega}	+ 	\|\pmb{\mathsf{\mathcal{F}}}^*(\pmb{\mathsf{S}}_h) - \pmb{\mathsf{\mathcal{F}}}^*(\pmb{\mathsf{S}})\|_{2,\Omega}^2
	+ m_{\varphi_{\boldsymbol{\beta}_h(\boldsymbol{u}_h)},h}(\boldsymbol{u}_h) \\
    &\lesssim h^{2\beta}\, [\pmb{\mathsf{\mathcal{F}}}(\nabla \boldsymbol{u})]_{N^{\beta,2}(\Omega)}^2\,.
\end{align*}
\end{Corollary}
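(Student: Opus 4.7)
The plan is to invoke the best-approximation result of \cref{thm:error_estimate_mixed} with the natural test pair $(\pmb{\mathsf{T}}_h,\boldsymbol{v}_h) = (\Pi_h^{k-1}\pmb{\mathsf{S}}, \Pi_h^{SZ}\boldsymbol{u}) \in \Sigma_h \times V_h$ and then estimate each of the three resulting best-approximation terms on the right-hand side individually by $h^{2\beta}\,[\pmb{\mathsf{\mathcal{F}}}(\nabla \boldsymbol{u})]_{N^{\beta,2}(\Omega)}^2$.

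Two of these three terms are immediate from what has already been established in the proof of \cref{cor:primal_rate}. Namely, since $\Pi_h^{SZ}\boldsymbol{u} \in V_{h,c} = V_h \cap V$, we have $\llbracket\Pi_h^{SZ}\boldsymbol{u} \otimes \boldsymbol{n}\rrbracket = \mathbf{0}$ a.e.\ on $\Gamma_h$ (so that $m_{\varphi_{\boldsymbol{\beta}_h(\boldsymbol{u}_h)},h}(\Pi_h^{SZ}\boldsymbol{u}) = 0$) and $\tilde{\nabla}_h \Pi_h^{SZ}\boldsymbol{u} = \nabla \Pi_h^{SZ}\boldsymbol{u}$, so that the inequality $\|\pmb{\mathsf{\mathcal{F}}}(\nabla \boldsymbol{u}) - \pmb{\mathsf{\mathcal{F}}}(\tilde{\nabla}_h \Pi_h^{SZ}\boldsymbol{u})\|^2_{2,\Omega} \lesssim h^{2\beta}\,[\pmb{\mathsf{\mathcal{F}}}(\nabla \boldsymbol{u})]_{N^{\beta,2}(\Omega)}^2$ follows verbatim from \cref{cor:primal_rate.1} (i.e., from \cite[Thm.~4.2]{breit-lars-etal}).

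The main obstacle is the third, genuinely new term $\|\pmb{\mathsf{\mathcal{F}}}^*(\pmb{\mathsf{S}}) - \pmb{\mathsf{\mathcal{F}}}^*(\Pi_h^{k-1}\pmb{\mathsf{S}})\|^2_{2,\Omega}$, involving the $L^2$-projection onto the flux space and the dual companion $\pmb{\mathsf{\mathcal{F}}}^*$ instead of $\pmb{\mathsf{\mathcal{F}}}$. To handle it, I would first exploit the identity $\pmb{\mathsf{S}} = \pmb{\mathsf{\mathcal{S}}}(\nabla \boldsymbol{u})$ together with the pointwise equivalence $|\pmb{\mathsf{\mathcal{F}}}^*(\pmb{\mathsf{\mathcal{S}}}(\pmb{\mathsf{P}})) - \pmb{\mathsf{\mathcal{F}}}^*(\pmb{\mathsf{\mathcal{S}}}(\pmb{\mathsf{Q}}))|^2 \sim |\pmb{\mathsf{\mathcal{F}}}(\pmb{\mathsf{P}}) - \pmb{\mathsf{\mathcal{F}}}(\pmb{\mathsf{Q}})|^2$ from \cref{eq:F-F*3}, applied to translates $\pmb{\mathsf{P}} = \nabla \boldsymbol{u}(\cdot + h)$ and $\pmb{\mathsf{Q}} = \nabla \boldsymbol{u}(\cdot)$, to transfer the Nikolski\u{\i} regularity: $\pmb{\mathsf{\mathcal{F}}}^*(\pmb{\mathsf{S}}) \in N^{\beta,2}(\Omega)^{n\times d}$ with $[\pmb{\mathsf{\mathcal{F}}}^*(\pmb{\mathsf{S}})]_{N^{\beta,2}(\Omega)} \sim [\pmb{\mathsf{\mathcal{F}}}(\nabla \boldsymbol{u})]_{N^{\beta,2}(\Omega)}$.

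Then, using that $\pmb{\mathsf{\mathcal{D}}} = \pmb{\mathsf{\mathcal{S}}}^{-1}$ has $(p',\delta^{p-1})$-structure (cf.~\cref{lem:D_structure}), $\pmb{\mathsf{\mathcal{F}}}^*$ plays for $\pmb{\mathsf{\mathcal{D}}}$ precisely the role that $\pmb{\mathsf{\mathcal{F}}}$ plays for $\pmb{\mathsf{\mathcal{S}}}$ (cf.~\cref{lem:hammer_inverse}), so that the dual analogue of \cite[Thm.~4.2]{breit-lars-etal} for the $L^2$-projection applied directly to $\pmb{\mathsf{S}} \in L^{p'}(\Omega)^{n\times d}$ yields
\begin{align*}
  \|\pmb{\mathsf{\mathcal{F}}}^*(\pmb{\mathsf{S}}) - \pmb{\mathsf{\mathcal{F}}}^*(\Pi_h^{k-1}\pmb{\mathsf{S}})\|^2_{2,\Omega}
  \lesssim h^{2\beta}\,[\pmb{\mathsf{\mathcal{F}}}^*(\pmb{\mathsf{S}})]_{N^{\beta,2}(\Omega)}^2
  \lesssim h^{2\beta}\,[\pmb{\mathsf{\mathcal{F}}}(\nabla \boldsymbol{u})]_{N^{\beta,2}(\Omega)}^2\,,
\end{align*}
where quasi-uniformity is used exactly as in the primal case to have such an interpolation estimate available on Nikolski\u{\i} spaces. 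Combining these three ingredients in \cref{thm:error_estimate_mixed} finishes the proof.
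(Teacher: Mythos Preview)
Your overall strategy---invoke \cref{thm:error_estimate_mixed} with $(\pmb{\mathsf{T}}_h,\boldsymbol{v}_h)=(\Pi_h^{k-1}\pmb{\mathsf{S}},\Pi_h^{SZ}\boldsymbol{u})$, dispose of the jump term and the primal term exactly as in \cref{cor:primal_rate}, and transfer Nikolski\u{\i} regularity from $\pmb{\mathsf{\mathcal{F}}}(\nabla\boldsymbol{u})$ to $\pmb{\mathsf{\mathcal{F}}}^*(\pmb{\mathsf{S}})$ via \cref{eq:F-F*3}---is precisely the paper's approach.

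The gap is in your final step for the flux term. You appeal to ``the dual analogue of \cite[Thm.~4.2]{breit-lars-etal} for the $L^2$-projection applied directly to $\pmb{\mathsf{S}}$'', but no such result is available off the shelf: \cite[Thm.~4.2]{breit-lars-etal} is stated for the Scott--Zhang operator, not for $\Pi_h^{k-1}$, and the analogous statement for $\Pi_h^{k-1}$ does not follow by formal duality. The difficulty is that the natural route---Orlicz stability of $\Pi_h^{k-1}$ with respect to the shifted N-function $(\varphi^*)_{|\pmb{\mathsf{S}}|}$---fails because the shift $|\pmb{\mathsf{S}}|$ is in general not element-wise constant (or even polynomial), which is what the local Orlicz stability estimates require.

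The paper closes this gap by a detour through $\Pi_h^0$: one writes
\[
\|\pmb{\mathsf{\mathcal{F}}}^*(\pmb{\mathsf{S}})-\pmb{\mathsf{\mathcal{F}}}^*(\Pi_h^{k-1}\pmb{\mathsf{S}})\|_{2,\Omega}^2
\lesssim \|\pmb{\mathsf{\mathcal{F}}}^*(\pmb{\mathsf{S}})-\pmb{\mathsf{\mathcal{F}}}^*(\Pi_h^0\pmb{\mathsf{S}})\|_{2,\Omega}^2
+\rho_{(\varphi^*)_{|\Pi_h^0\pmb{\mathsf{S}}|},\Omega}\bigl(\Pi_h^{k-1}(\Pi_h^0\pmb{\mathsf{S}}-\pmb{\mathsf{S}})\bigr),
\]
so that the shift $|\Pi_h^0\pmb{\mathsf{S}}|$ is element-wise constant and the Orlicz stability of $\Pi_h^{k-1}$ (cf.\ \cite[Cor.~A.8]{kr-phi-ldg}) applies, reducing everything to $\|\pmb{\mathsf{\mathcal{F}}}^*(\pmb{\mathsf{S}})-\pmb{\mathsf{\mathcal{F}}}^*(\Pi_h^0\pmb{\mathsf{S}})\|_{2,\Omega}^2$. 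A further nontrivial step (\cite[Lem.~4.4]{dkrt-ldg}) swaps $\pmb{\mathsf{\mathcal{F}}}^*$ and $\Pi_h^0$ to obtain $\|\pmb{\mathsf{\mathcal{F}}}^*(\pmb{\mathsf{S}})-\Pi_h^0\pmb{\mathsf{\mathcal{F}}}^*(\pmb{\mathsf{S}})\|_{2,\Omega}^2$, and only then does the Nikolski\u{\i} estimate for $\Pi_h^0$ from \cite[(4.6),(4.7)]{breit-lars-etal} apply. Your sketch correctly anticipates the target inequality, but this chain of reductions is the actual content of the argument and should be supplied.
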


\begin{proof}
First, we note that $\pmb{\mathsf{\mathcal{F}}}(\nabla \boldsymbol{u})\in N^{\beta,2}(\Omega)^{n\times d}$ for $\beta\in (0,1]$ is equivalent to $\pmb{\mathsf{\mathcal{F}}}^*(\pmb{\mathsf{S}})\in N^{\beta,2}(\Omega)^{n\times d}$ for  $\beta\in (0,1]$ and that $ [\pmb{\mathsf{\mathcal{F}}}(\nabla \boldsymbol{u})]_{N^{\beta,2}(\Omega)}\sim  [\pmb{\mathsf{\mathcal{F}}}^*(\pmb{\mathsf{S}})]_{N^{\beta,2}(\Omega)}$.
This is an immediate consequence of the fact that, due to \cref{eq:F-F*3}, for every $h\in \mathbb{R}^d\setminus \{\boldsymbol{0}\}$ and $x\in \Omega\cap (\Omega-h)$, we have that
\begin{align*}
	\vert \pmb{\mathsf{\mathcal{F}}}(\nabla \boldsymbol{u}(x+h))- \pmb{\mathsf{\mathcal{F}}}(\nabla \boldsymbol{u}(x))\vert^2\sim \vert \pmb{\mathsf{\mathcal{F}}}^*(\pmb{\mathsf{S}}(x+h))- \pmb{\mathsf{\mathcal{F}}}^*(\pmb{\mathsf{S}}(x))\vert^2\,.
\end{align*}
Using that $\Pi_h^0\pmb{\mathsf{S}}=\Pi_h^{k-1}\Pi_h^0\pmb{\mathsf{S}}$,
\cref{eq:hammerf}, the Orlicz-stability of
$\Pi_h^{k-1}$
(cf.~\cite[Cor.~A.8]{kr-phi-ldg}),
\cref{eq:hammerf}, \cite[Lem.~4.4]{dkrt-ldg},
and \cite[(4.6), (4.7)]{breit-lars-etal}, we obtain
\begin{align}\label{cor:mixed_rate.1}
		\|  \pmb{\mathsf{\mathcal{F}}}^*(\pmb{\mathsf{S}})- \pmb{\mathsf{\mathcal{F}}}^*(\Pi_h^{k-1}\pmb{\mathsf{S}})\|_{2,\Omega}^2&\lesssim \|  \pmb{\mathsf{\mathcal{F}}}^*(\pmb{\mathsf{S}})-  \pmb{\mathsf{\mathcal{F}}}^*(\Pi_h^0\pmb{\mathsf{S}})\|_{2,\Omega}^2+ \|  \pmb{\mathsf{\mathcal{F}}}^*(\Pi_h^0\pmb{\mathsf{S}})-  \pmb{\mathsf{\mathcal{F}}}^*(\Pi_h^{k-1}\pmb{\mathsf{S}})\|_{2,\Omega}^2\notag
		\\&	\lesssim \|  \pmb{\mathsf{\mathcal{F}}}^*(\pmb{\mathsf{S}})-  \pmb{\mathsf{\mathcal{F}}}^*(\Pi_h^0\pmb{\mathsf{S}})\|_{2,\Omega}^2+ \rho_{\varphi^*_{\vert\Pi_h^0 \pmb{\mathsf{S}}\vert}}(\Pi_h^{k-1}(\Pi_h^0\pmb{\mathsf{S}}-\pmb{\mathsf{S}}))\notag
		\\&	\lesssim \|  \pmb{\mathsf{\mathcal{F}}}^*(\pmb{\mathsf{S}})-  \pmb{\mathsf{\mathcal{F}}}^*(\Pi_h^0\pmb{\mathsf{S}})\|_{2,\Omega}^2+ \rho_{\varphi^*_{\vert\Pi_h^0 \pmb{\mathsf{S}}\vert}}(\Pi_h^0\pmb{\mathsf{S}}-\pmb{\mathsf{S}})\notag
		\\&	\lesssim \|  \pmb{\mathsf{\mathcal{F}}}^*(\pmb{\mathsf{S}})-  \pmb{\mathsf{\mathcal{F}}}^*(\Pi_h^0\pmb{\mathsf{S}})\|_{2,\Omega}^2
		\\&	\lesssim \|  \pmb{\mathsf{\mathcal{F}}}^*(\pmb{\mathsf{S}})-  \Pi_h^0\pmb{\mathsf{\mathcal{F}}}^*(\pmb{\mathsf{S}})\|_{2,\Omega}^2\notag
		\\&	\lesssim h^{2\beta}\,[\pmb{\mathsf{\mathcal{F}}}^*(\pmb{\mathsf{S}})]_{N^{\beta,2}(\Omega)}^2\notag
		\\&	\sim h^{2\beta}\,[\pmb{\mathsf{\mathcal{F}}}(\nabla\boldsymbol{u})]_{N^{\beta,2}(\Omega)}^2\,.\notag
\end{align}
Therefore, choosing $(\pmb{\mathsf{T}}_h,\boldsymbol{v}_h)=(\Pi_h^{k-1}\pmb{\mathsf{S}},\Pi_h^{SZ}\boldsymbol{u})\in \Sigma_h\times V_h$ in \cref{thm:error_estimate_mixed}, using \cref{cor:mixed_rate.1,cor:primal_convergence.1,cor:primal_rate.1},
we conclude 		  that
\begin{align*}
	&\|\pmb{\mathsf{\mathcal{F}}}(\tilde{\nabla}_h \boldsymbol{u}_h) - \pmb{\mathsf{\mathcal{F}}}(\nabla \boldsymbol{u})\|^2_{2,\Omega}
	+ 	\|\pmb{\mathsf{\mathcal{F}}}^*(\pmb{\mathsf{S}}_h) - \pmb{\mathsf{\mathcal{F}}}^*(\pmb{\mathsf{S}})\|_{2,\Omega}^2+m_{\varphi_{\boldsymbol{\beta}_h(\boldsymbol{u}_h)},h}(\boldsymbol{u}_h)\\&\lesssim \|\pmb{\mathsf{\mathcal{F}}}(\nabla \boldsymbol{u}) - \pmb{\mathsf{\mathcal{F}}}(\nabla\Pi_h^{SZ} \boldsymbol{u})\|^2_{2,\Omega}
	+\|\pmb{\mathsf{\mathcal{F}}}^*(\pmb{\mathsf{S}}) -
	\pmb{\mathsf{\mathcal{F}}}^*(\Pi_h^{k-1}\pmb{\mathsf{S}})\|^2_{2,\Omega}\\&\lesssim h^{2\beta}\,[\pmb{\mathsf{\mathcal{F}}}(\nabla\boldsymbol{u})]_{N^{\beta,2}(\Omega)}^2 \,,
\end{align*}
which is the claimed fractional a priori error estimate.
\end{proof}
        \begin{Remark}
          In view of \cite[Corollary 5.8]{dr-interpol} the assertion
          of \cref{cor:mixed_rate} for $\beta=1$ is also valid if $\pmb{\mathsf{\mathcal{F}}}(\nabla \boldsymbol{u})\in
          W^{1,2}(\Omega)^{n\times d}$ without the additional
          assumption that the triangulation $\{\mathcal T_h\}_h$ is quasi-uniform. 
        \end{Remark}
\begin{Corollary}[Ansatz class competition]\label{cor:mixed_competition} Let $k=1$ and $\boldsymbol{u}_h^c\in V_{h,c}\coloneqq V_h\cap W^{1,p}_0(\Omega)$ the continuous Lagrange solution of \cref{eq:PDE}, cf.\ \cref{eq:continuous_lagrange}.
Then, for $\alpha>0$ sufficiently large,  it holds that
\begin{align*}
	&\|\pmb{\mathsf{\mathcal{F}}}(\tilde{\nabla}_h \boldsymbol{u}_h) - \pmb{\mathsf{\mathcal{F}}}(\nabla \boldsymbol{u})\|^2_{2,\Omega}+\|\pmb{\mathsf{\mathcal{F}}}^*(\pmb{\mathsf{S}}_h) - \pmb{\mathsf{\mathcal{F}}}^*(\pmb{\mathsf{S}})\|_{2,\Omega}^2+ m_{\varphi_{\boldsymbol{\beta}_h(\boldsymbol{u}_h)},h}(\boldsymbol{u}_h) \\
    &\sim 	\|\pmb{\mathsf{\mathcal{F}}}(\nabla \boldsymbol{u}_h^c) - \pmb{\mathsf{\mathcal{F}}}(\nabla \boldsymbol{u})\|^2_{2,\Omega}\,,
\end{align*}
i.e., the approximation capabilities of the discrete mixed formulation \cref{eq:discrete_mixed_DG} and the continuous Lagrange approximation \cref{eq:continuous_lagrange} of \cref{eq:PDE} are comparable.
\end{Corollary}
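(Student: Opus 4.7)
The proof will follow the same two-sided strategy as \cref{cor:primal_competition}. For the upper bound ($\lesssim$), I plan to apply \cref{thm:error_estimate_mixed} with carefully selected test elements, exploiting that $\boldsymbol{u}_h^c \in V_{h,c} \subseteq V_h$ and that $k=1$ forces $\pmb{\mathsf{\mathcal{S}}}(\nabla \boldsymbol{u}_h^c)$ to be piecewise constant; for the lower bound ($\gtrsim$), I will reuse the node-averaging quasi-interpolation argument from \cref{cor:primal_competition} virtually verbatim.

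For the $\lesssim$ direction, I would choose $(\pmb{\mathsf{T}}_h,\boldsymbol{v}_h)=(\pmb{\mathsf{\mathcal{S}}}(\nabla \boldsymbol{u}_h^c),\boldsymbol{u}_h^c)\in \Sigma_h\times V_h$ in the infimum of \cref{thm:error_estimate_mixed}. Three facts collapse the right-hand side: (a) since $k=1$ and $\boldsymbol{u}_h^c$ is continuous piecewise affine, $\nabla \boldsymbol{u}_h^c$ is piecewise constant, hence $\pmb{\mathsf{\mathcal{S}}}(\nabla \boldsymbol{u}_h^c)\in \mathbb{P}^0(\mathcal{T}_h)^{n\times d}=\Sigma_h$ so this is an admissible test element; (b) $\llbracket{\boldsymbol{u}_h^c\otimes\boldsymbol{n}}\rrbracket=\boldsymbol{0}$ on $\Gamma_h$ yields $m_{\varphi_{\boldsymbol{\beta}_h(\boldsymbol{u}_h)},h}(\boldsymbol{u}_h^c)=0$ together with $\tilde{\nabla}_h \boldsymbol{u}_h^c=\nabla \boldsymbol{u}_h^c$ (the jump-lifting term in $\pmb{\mathsf{\mathcal{G}}}_h$ vanishes); (c) by $\pmb{\mathsf{S}}=\pmb{\mathsf{\mathcal{S}}}(\nabla \boldsymbol{u})$ and \cref{eq:F-F*3}, we have $\|\pmb{\mathsf{\mathcal{F}}}^*(\pmb{\mathsf{S}})-\pmb{\mathsf{\mathcal{F}}}^*(\pmb{\mathsf{\mathcal{S}}}(\nabla \boldsymbol{u}_h^c))\|_{2,\Omega}^2\sim \|\pmb{\mathsf{\mathcal{F}}}(\nabla \boldsymbol{u})-\pmb{\mathsf{\mathcal{F}}}(\nabla \boldsymbol{u}_h^c)\|_{2,\Omega}^2$. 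Combining these three reductions, the right-hand side of \cref{thm:error_estimate_mixed} collapses to a constant multiple of $\|\pmb{\mathsf{\mathcal{F}}}(\nabla \boldsymbol{u}_h^c) - \pmb{\mathsf{\mathcal{F}}}(\nabla \boldsymbol{u})\|^2_{2,\Omega}$.

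For the $\gtrsim$ direction, the additional $\|\pmb{\mathsf{\mathcal{F}}}^*(\pmb{\mathsf{S}}_h)-\pmb{\mathsf{\mathcal{F}}}^*(\pmb{\mathsf{S}})\|_{2,\Omega}^2$ summand only helps (it is non-negative), so it suffices to show
\begin{align*}
  \|\pmb{\mathsf{\mathcal{F}}}(\nabla \boldsymbol{u}_h^c) - \pmb{\mathsf{\mathcal{F}}}(\nabla \boldsymbol{u})\|^2_{2,\Omega} \lesssim \|\pmb{\mathsf{\mathcal{F}}}(\tilde{\nabla}_h \boldsymbol{u}_h) - \pmb{\mathsf{\mathcal{F}}}(\nabla \boldsymbol{u})\|^2_{2,\Omega}+ m_{\varphi_{\boldsymbol{\beta}_h(\boldsymbol{u}_h)},h}(\boldsymbol{u}_h)\,,
\end{align*}
which is exactly what was already done in the proof of \cref{cor:primal_competition}: apply the best-approximation property of the continuous Lagrange solution (\cite[Lem.~5.2]{dr-interpol}) to replace $\boldsymbol{u}_h^c$ by $\boldsymbol{\mathcal{I}}_h^{av}\boldsymbol{u}_h\in V_{h,c}$, insert $\pmb{\mathsf{\mathcal{F}}}(\tilde{\nabla}_h\boldsymbol{u}_h)$ via the triangle inequality, and estimate $\|\pmb{\mathsf{\mathcal{F}}}(\nabla\boldsymbol{\mathcal{I}}_h^{av}\boldsymbol{u}_h)-\pmb{\mathsf{\mathcal{F}}}(\tilde{\nabla}_h\boldsymbol{u}_h)\|_{2,\Omega}^2$ by $m_{\varphi_{\boldsymbol{\beta}_h(\boldsymbol{u}_h)},h}(\boldsymbol{u}_h)$ using \cref{eq:primal_competition.1}. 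The node-averaging estimate of \cite[Prop.~A.1]{AK.2023} used there is insensitive to the mixed/primal distinction, so it transfers without modification.

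The main subtlety, and the reason the statement is restricted to $k=1$, is the admissibility in step (a): for higher $k$ the image $\pmb{\mathsf{\mathcal{S}}}(\nabla \boldsymbol{u}_h^c)$ is generally not a piecewise polynomial of degree $k-1$, so it does not lie in $\Sigma_h$; one would then have to insert an $L^2$-projection and pay an extra $\pmb{\mathsf{\mathcal{F}}}^*$-approximation term whose control would destroy the clean equivalence. Everything else in the argument is a routine recombination of previously established tools (\cref{thm:error_estimate_mixed,eq:F-F*3,eq:primal_competition.1}) and of the exact same node-averaging block used for the primal case.
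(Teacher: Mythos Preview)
Your proposal is correct and follows essentially the same approach as the paper: for $\lesssim$ you invoke \cref{thm:error_estimate_mixed} with $(\pmb{\mathsf{T}}_h,\boldsymbol{v}_h)=(\pmb{\mathsf{\mathcal{S}}}(\nabla\boldsymbol{u}_h^c),\boldsymbol{u}_h^c)$ and reduce via \cref{eq:F-F*3} (the paper cites \cref{eq:hammera}, which amounts to the same equivalence), and for $\gtrsim$ you defer to the node-averaging chain \cref{eq:primal_competition.1}--\cref{eq:primal_competition.2} exactly as the paper does. Your additional remarks on why $k=1$ is needed for admissibility of $\pmb{\mathsf{\mathcal{S}}}(\nabla\boldsymbol{u}_h^c)\in\Sigma_h$ are correct and make explicit what the paper leaves implicit.
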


\begin{proof}
\textit{ad $\lesssim$.} Using \cref{thm:error_estimate_mixed} with $(\pmb{\mathsf{T}}_h,\boldsymbol{v}_h)=(\pmb{\mathsf{S}}(\nabla \boldsymbol{u}_h^c),\boldsymbol{u}_h^c)\in  \Sigma_h\times V_{h,c}\subseteq  \Sigma_h\times V_h$,  that $m_{\varphi_{\boldsymbol{\beta}_h(\boldsymbol{u}_h)},h}(\boldsymbol{u}_h^c)=0$, and \cref{eq:hammera}, we find that
\begin{multline*}
	\|\pmb{\mathsf{\mathcal{F}}}(\tilde{\nabla}_h \boldsymbol{u}_h) - \pmb{\mathsf{\mathcal{F}}}(\nabla \boldsymbol{u})\|^2_{2,\Omega}+\|\pmb{\mathsf{\mathcal{F}}}^*(\pmb{\mathsf{S}}_h) - \pmb{\mathsf{\mathcal{F}}}^*(\pmb{\mathsf{S}})\|_{2,\Omega}^2+ m_{\varphi_{\boldsymbol{\beta}_h(\boldsymbol{u}_h)},h}(\boldsymbol{u}_h)\\
    \begin{aligned}
      &\lesssim 	\|\pmb{\mathsf{\mathcal{F}}}(\nabla \boldsymbol{u}_h^c) - \pmb{\mathsf{\mathcal{F}}}(\nabla \boldsymbol{u})\|^2_{2,\Omega}+\|\pmb{\mathsf{\mathcal{F}}}^*(\pmb{\mathsf{S}}(\nabla \boldsymbol{u}_h^c)) - \pmb{\mathsf{\mathcal{F}}}^*(\pmb{\mathsf{S}})\|_{2,\Omega}^2\\
      &\lesssim 	\|\pmb{\mathsf{\mathcal{F}}}(\nabla \boldsymbol{u}_h^c) - \pmb{\mathsf{\mathcal{F}}}(\nabla \boldsymbol{u})\|^2_{2,\Omega}\,.
    \end{aligned}
\end{multline*}

\textit{ad $\gtrsim$.} This is proved in \cref{eq:primal_competition.2}.

Putting everything together, we arrive at the claimed equivalence. 
	\end{proof}
	
	\section{Numerical experiments}\label{sec:experiments}
	In this section, we show numerical results that confirm our
          theoretical findings, in particular,
          \cref{cor:primal_rate,cor:mixed_rate}. In our
          implementation, the max-shift in the jump penalisation is
          handled through a fixed point iteration, i.e., when solving
          the discrete primal
          formulation~\cref{eq:discrete_primal_DG}~starting from a
          solution guess $\smash{\boldsymbol{u}}_h^{(k-1)}\in V_h$, we define the
          residual $\mathfrak{F}(\boldsymbol{u}_h^{(k-1)};\,\cdot)\in (V_h)^*$,
          for every $\boldsymbol{v}_h,\boldsymbol{z}_h\in V_h$, via 
	    \begin{align*}
			\langle	\mathfrak{F}(\boldsymbol{u}_h^{k-1};\boldsymbol{v}_h) , \boldsymbol{z}_h \rangle_{V_h}
        &\coloneqq 
       (\pmb{\mathsf{\mathcal{S}}}(\tilde{\nabla}_h \boldsymbol{v}_h) ,\nabla \pmb{\mathsf{\mathcal{E}}}_h \boldsymbol{z}_h)_\Omega
        \\&\quad+
        \alpha \langle \pmb{\mathsf{\mathcal{S}}}_{\beta_h(\boldsymbol{u}^{k-1}_h)}(h_\Gamma^{-1} \llbracket{ \boldsymbol{v}_h\otimes \bm{n}}\rrbracket ), \llbracket{\boldsymbol{z}_h\otimes \bm{n}}\rrbracket\rangle_{\Gamma_h} 
        -
        \langle \bm{f} , \pmb{\mathsf{\mathcal{E}}}_h\boldsymbol{v}_h \rangle_{V}\,,
    \end{align*}
    and, then, apply Newton's method to find the next guess $\boldsymbol{u}_h^{(k)}\in
    V_h$. At a given Newton step, the linear systems are solved using the
    sparse direct solver \texttt{MUMPS} \cite{MUMPS:1}. All the examples were
    implemented using \texttt{Firedrake} \cite{Firedrake} and \texttt{PETSc} \cite{petsc-user-ref}.
    The complete code for reproducing the experiments can be found at
    \cite{ble-gaz-2023} with exact version of its dependencies being
    recorded at \cite{zenodo/Firedrake-20231101.0}, additionally using Gmsh
    version~4.8.4 \cite{Gmsh2009}.

    Only polynomial degree $k=1$ is considered. Note that, in this case, the
    smoothing operator only needs to be applied on the forcing term,
          since    $\pmb{\mathsf{\mathcal{S}}}(\tilde{\nabla}_h \boldsymbol{u}_h) \in \mathbb{P}^0(\mathcal{T}_h)^{n\times
    d}$. In addition, in all the examples, we restrict to the case $d=2$ and
    $n=1$.

		\subsection{Primal formulation}
		For the experiments based on the discrete primal
          formulation \cref{eq:discrete_primal_DG}, we employ the
          non-linear term $\pmb{\mathsf{\mathcal{S}}}\colon \mathbb{R}^{1\times 2}\to
          \mathbb{R}^{1\times 2}$, for every $\pmb{\mathsf{D}}\in
          \mathbb{R}^{1\times 2}$, defined via
		\begin{align}
          \SwapAboveDisplaySkip
          \label{eq:nonlinearity_primal}
          \pmb{\mathsf{\mathcal{S}}}(\pmb{\mathsf{D}}) \coloneqq (\delta + \vert\pmb{\mathsf{D}}\vert)^{p-2} \,\pmb{\mathsf{D}},
		\end{align}
        which has $(p,\delta)$-structure according to \cref{assum:extra_stress}.
        We choose $\delta \coloneqq 0.01$ and various values of $p\in [1.5,4.5]$.
        The shifted constitutive relation~$\pmb{\mathsf{\mathcal{S}}}_a$ in the
        jump penalty term in~\cref{eq:discrete_primal_DG} is then
        \cref{eq:flux} and $\alpha=10$ is chosen.
        The computational domain is defined via $\Omega\coloneqq (-1,1)^2$ and, for $\beta>1-\frac{2}{p}$, the exact solution $\boldsymbol{u}\in V$ with a point singularity~at~the~origin, for every $x\coloneqq (x_1,x_2)^\top\in \Omega$, is defined via 
		\begin{equation*}
\boldsymbol{u}(x) = (1-x_1^2)(1-x_2^2)\vert x\vert ^\beta \,.
		\end{equation*}
		Note that $\beta> 1-\frac{2}{p}$ guarantees that at least $\boldsymbol{u} \in V$.
        \begin{figure}
          \centering
          \includegraphics[trim=15 15 15 15,clip,scale=0.47]{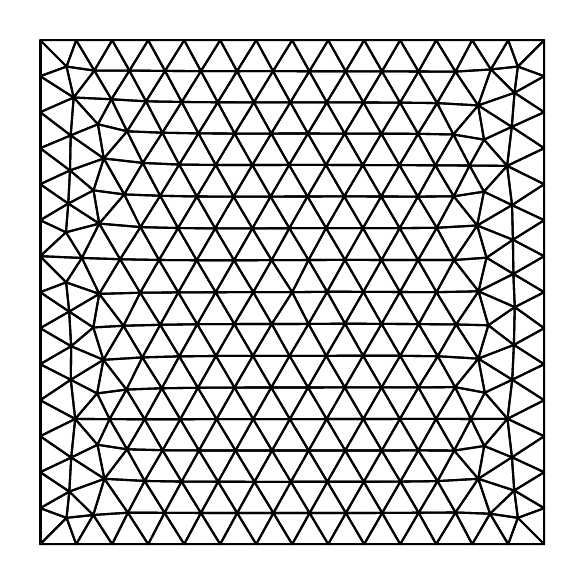}
          \caption{Initial mesh $\mathcal{T}_{h_0}$}
          \label{fig:square2.msh}
        \end{figure}
        We start with an initial unstructured mesh $\mathcal{T}_{h_0}$ with 517 elements, 257 vertices, of which one is at the origin, and $h_0\approx0.1668$; see \cref{fig:square2.msh}. We consider six additional levels of uniform refinement, i.e., $\mathcal{T}_{h_l}$, $l\in \{1,\ldots,6\}$, where $h_l=\frac{h_{l-1}}{2}$ for all  $l\in \{1,\ldots,6\}$. 
		The error corresponding to the discrete solution $\boldsymbol{u}_{h_\ell}\in V_{h_\ell}$ of \cref{eq:discrete_primal_DG}, associated to a given refinement level $\ell \in \{0,\ldots, 6\}$, is defined via
		\begin{equation*}
e_\ell \coloneqq  \|\pmb{\mathsf{\mathcal{F}}}(\nabla_{h_\ell} \boldsymbol{u}_{h_\ell}) - \pmb{\mathsf{\mathcal{F}}}(\nabla \boldsymbol{u})\|_{2,\Omega}
+
\bigl(\alpha \, m_{\varphi_{\beta_h}(\boldsymbol{u}_{h_\ell}),h}(\boldsymbol{u}_{h_\ell})\bigr)^{\frac{1}{2}}.
		\end{equation*}
		The experimental rate of convergence is then set to
		\begin{equation}\label{eq:EOC}
\mathrm{EOC}_\ell \coloneqq  \frac{\log(e_\ell/e_{\ell-1})}{\log(h_\ell/h_{\ell -1})}.
		\end{equation}
		An important observation is that $\beta>0$ determines the regularity of the exact solution~and, thus, also the expected rate of convergence. More precisely, to obtain a rate of convergence $\rho  \in (0,1]$, one needs to choose $\beta > 1 - \frac{2(1-\rho)}{p}$. \Cref{tbl:DG_rate_1.0,tbl:DG_rate_0.5,tbl:DG_rate_0.2} show the results for the IIDG formulation for expected rates of convergence of $1$, $0.5$, and $0.2$, respectively; \cref{tbl:LDG_rate_1.0,tbl:LDG_rate_0.5,tbl:LDG_rate_0.2} show the same for the LDG formulation. It can be observed that the values are in agreement with the theoretical predictions.

\TblDgRateOne
\TblLdgRateOne
\TblDgRateHalf
\TblLdgRateHalf
\TblDgRateFifth
\TblLdgRateFifth

 As mentioned in \cref{rmk:CR}, our results do not cover the case of a~Crouzeix--Raviart discretisation without jump stabilisation terms. However, as seen in \cref{tbl:CR_rate_1.0,tbl:CR_rate_0.5,tbl:CR_rate_0.2}, the rates are roughly in agreement with the same rates as the DG discretisation, suggesting that there might be a~proof strategy that also covers this case.

\TblCrRateOne
\TblCrRateHalf
\TblCrRateFifth

 \subsection{Mixed formulation}
    For the experiments based on the discrete mixed formulation
    \cref{eq:discrete_mixed_DG}, for which now the following nonlinear term is
    employed:
 \begin{equation*}
     \pmb{\mathsf{\mathcal{D}}}(\pmb{\mathsf{S}}) \coloneqq  (\delta^{2(p-1)} + |\pmb{\mathsf{S}}|^2)^{\frac{p'-2}{2}} \,\pmb{\mathsf{S}},
 \end{equation*}
which has $(p',\delta^{p-1})$-structure in the sense of \cref{assum:extra_stress}.
 Note that this relation is the inverse of \cref{eq:nonlinearity_primal} when $\delta = 0$. In the experiments, we set $\delta = 0.01$. The jump penalty term in~\cref{eq:discrete_mixed_DG} is again defined using~\eqref{eq:flux} and~$\alpha=10$.
 The computational domain, once again, $\Omega = (-1,1)^2$ and we choose the exact flux $\pmb{\mathsf{S}}\in \Sigma$ as 
  \begin{align*}
    \SwapAboveDisplaySkip
     \pmb{\mathsf{S}}  \coloneqq  (\delta^2+|\nabla \tilde{\boldsymbol{u}}|^2)^{\frac{p-2}{2}} \nabla\tilde{\boldsymbol{u}},
	 \qquad
	 \tilde{\boldsymbol{u}}(x) = (1-x_1^2)(1-x_2^2)|x|^\beta,
 \end{align*}
 where $p>1$ is specified beforehand, and $\beta\in \ensuremath{\mathbb{R}}$. The gradient of the exact solution can, then, be computed as $\nabla \boldsymbol{u}\coloneqq  \pmb{\mathsf{\mathcal{D}}}(\pmb{\mathsf{S}} )$. In the implementation, we set $\tilde{\nabla}_h = \mathcal{G}_h$ in the discrete formulation~\cref{eq:discrete_mixed_DG}, which corresponds to an LDG method. For the mixed formulation, we define the error corresponding to a solution $(\pmb{\mathsf{S}}_{h_\ell},\boldsymbol{u}_{h_\ell})\in \Sigma_{h_\ell}\times V_{h_\ell}$ associated to a refinement level $\ell\in \{0,\ldots, 6\}$ as:
 		\begin{align*}
e_\ell \coloneqq 
\|\pmb{\mathsf{\mathcal{F}}}^*(\pmb{\mathsf{S}}_{h_\ell}) - \pmb{\mathsf{\mathcal{F}}}^*(\pmb{\mathsf{S}} )\|_{2,\Omega}
+
\|\pmb{\mathsf{\mathcal{F}}}(\nabla_{h_\ell} \boldsymbol{u}_{h_\ell}) - \pmb{\mathsf{\mathcal{F}}}(\nabla \boldsymbol{u})\|_{2,\Omega}
+
\bigl(\alpha \, m_{\varphi_{\beta_h}(\boldsymbol{u}_{h_\ell}),h}(\boldsymbol{u}_{h_\ell})\bigr)^{\frac{1}{2}}.
		\end{align*}
		The experimental order of convergence is then defined analogously to \cref{eq:EOC}. The results in this case can be found in \cref{tbl:mixed_DG_rate_1.0,tbl:mixed_DG_rate_0.5,tbl:mixed_DG_rate_0.2}.

\TblMixedDgRateOne
\TblMixedDgRateHalf
\TblMixedDgRateFifth

\begingroup
\raggedbottom
\interlinepenalty=10000
\bibliographystyle{siamplain}

\endgroup

\end{document}